\documentclass[11pt]{amsart}
\textwidth=14.5cm
\oddsidemargin=1cm
\evensidemargin=1cm
\usepackage{amsmath}
\usepackage{amsxtra}
\usepackage{amscd}
\usepackage{amsthm}
\usepackage{amsfonts}
\usepackage{amssymb}
\usepackage{euscript}   
\usepackage{eepic}


\DeclareMathAlphabet{\eurm}{U}{eur}{m}{n}

\newtheorem{thm}{Theorem}[section]
\newtheorem{conj}{Conjecture}

\newtheorem{lem}[thm]{Lemma}

\theoremstyle{remark}

\theoremstyle{definition}


\numberwithin{equation}{section}

\newcommand{\bean}{\begin{eqnarray}}
\newcommand{\eean}{\end{eqnarray}}
\newcommand{\be}{\begin{displaymath}}
\newcommand{\ee}{\end{displaymath}}
\newcommand{\bea}{\begin{eqnarray*}}
\newcommand{\eea}{\end{eqnarray*}}

\newcommand{\thmref}[1]{Theorem~\ref{#1}}
\newcommand{\secref}[1]{Section~\ref{#1}}

\newcommand{\conjref}[1]{Conjecture~\ref{#1}}
\newcommand{\nc}{\newcommand}
\nc{\on}{\operatorname}
\nc{\ch}{\mbox{ch}}
\nc{\Z}{{\mathbb Z}}
\nc{\C}{{\mathbb C}}
\nc{\pone}{{\mathbb P}^1}
\nc{\pa}{\partial}
\nc{\F}{{\mathcal F}}
\nc{\arr}{\rightarrow}
\nc{\larr}{\longrightarrow}
\nc{\al}{\alpha}
\nc{\ri}{\rangle}
\nc{\lef}{\langle}
\nc{\W}{{\mathcal W}}
\nc{\la}{\lambda}
\nc{\ep}{\epsilon}
\nc{\su}{\widehat{{\mathfrak s}{\mathfrak l}}_2}
\nc{\sw}{{\mathfrak s}{\mathfrak l}}
\nc{\g}{{\mathfrak g}}
\nc{\h}{{\mathfrak h}}
\nc{\n}{{\mathfrak n}}
\nc{\N}{\widehat{\n}}
\nc{\G}{\widehat{\g}}
\nc{\De}{\Delta}
\nc{\gt}{\widetilde{\g}}
\nc{\Ga}{\Gamma}
\nc{\one}{{\mathbf 1}}
\nc{\z}{{\mathfrak Z}}
\nc{\La}{\Lambda}
\nc{\wt}{\widetilde}
\nc{\wh}{\widehat}
\nc{\cri}{_{\kappa_c}}
\nc{\kk}{h^\vee}
\nc{\sun}{\widehat{\sw}_N}
\nc{\si}{\sigma}
\nc{\el}{\ell}
\nc{\bi}{\bibitem}
\nc{\om}{\omega}
\nc{\ol}{\overline}
\nc{\ds}{\displaystyle}
\nc{\dzz}{\frac{dz}{z}}
\nc{\Res}{\on{Res}}
\nc{\mc}{\mathcal}
\nc{\Cal}{\mathcal}
\nc{\bb}{{\mathfrak b}}
\nc{\ot}{\otimes}
\nc{\R}{{\mc R}}
\nc{\yy}{{\mc Y}}
\nc{\ga}{\gamma}

\nc{\us}{\underset}
\nc{\opl}{\oplus}
\nc{\beq}{\begin{equation}}
\nc{\Fq}{{\mathbb F}_q}
\nc{\Mq}{{\mathcal M}}
\nc{\Rep}{\on{Rep}}
\nc{\sssec}{\subsubsection}
\nc{\ssec}{\subsection}
\nc{\lan}{\langle}
\nc{\ran}{\rangle}

\nc{\D}{\mathcal D} \nc{\Vect}{\on{Vect}} \nc{\ghat}{\G}
\nc{\T}{\mc T} \nc{\Tloc}{\T^\g_{\on{loc}}} \nc{\vac}{|0\ran}
\nc{\Wick}{{\mb :}} \nc{\mb}{\mathbf} \nc{\delz}{\partial_z}
\nc{\K}{{\cali K}} \nc{\cali}{\mathcal} \nc{\li}{\mathfrak l}
\nc{\lt}{\widetilde{\li}} \nc{\astar}{a^*} \nc{\cA}{{\mc A}}
\nc{\ka}{\kappa}


\def\BB{\text{\bf \em B}}  
\def\TT{\text{\bf \em T}}  
\def\FF{\text{\bf \em F}}  
\def\PP{\text{\bf \em P}}  

\nc{\OO}{{\mc O}}
\nc{\AutO}{\on{Aut}\OO}
\nc{\DerO}{\on{Der}\OO}
\nc{\DerpO}{\on{Der}_+\OO}
\nc{\Au}{{\mc A}ut}
\nc{\mf}{\mathfrak}
\nc{\V}{{\mathbb V}}
\nc{\hh}{\wh{\h}}

\nc{\pp}{{\mathfrak p}}
\nc{\mm}{{\mathfrak m}}
\nc{\rr}{{\mathfrak r}}
\nc{\ket}{\rangle}
\nc{\zz}{{\mathfrak z}}
\nc{\gr}{\on{gr}}
\nc{\Spe}{\on{Spec}}
\nc{\rv}{\crho}
\nc{\can}{\on{can}}
\nc{\Op}{\on{Op}_G(D)}
\nc{\MOp}{\on{MOp}_G(D)}
\nc{\Db}{{\mathbb D}}
\nc{\ww}{w}
\def\hat{\widehat}
\def\tilde{\widetilde}

\nc{\af}{{\mathbb A}^1}
\nc{\bs}{\backslash}
\nc{\laa}{(\la_i)}
\nc{\zn}{(z_i)}

\nc{\cla}{\check{\la}}
\nc{\cmu}{\check{\mu}}
\nc{\crho}{\check{\rho}}
\nc{\chal}{\check{\al}}
\nc{\cc}{{\mathfrak c}}
\def\bar{\overline}
\nc{\M}{{\mathbb M}}

\nc{\ZZ}{{\mc Z}}

\nc{\UU}{{\mathbb U}}

\nc{\Conn}{\on{Conn}(\Omega^{\crho})}
\nc{\Con}{\on{Conn}(\Omega^{-\rho})}
\nc{\Co}{\on{Conn}(\Omega^{\rho})}

\nc{\ppart}{(\!(t)\!)}
\nc{\pparl}{(\!(\la)\!)}
\nc{\zpart}{(\!(z)\!)}
\nc{\ppzi}{(\!(t-z_i)\!)}
\nc{\ppinf}{(\!(t^{-1})\!)}
\nc{\Ind}{\on{Ind}}
\nc{\I}{{\mathbb I}}

\nc{\Bun}{\on{Bun}}

\nc{\CC}{C}

\nc{\gtil}{\wt{\g}}
\nc{\ntil}{\wt{\n}}
\nc{\htil}{\wt{\h}}
\nc{\gbar}{\ol{\g}}
\nc{\nbar}{\ol{\n}}
\nc{\bbar}{\ol{\bb}}

\def\M{{\mathcal M}}
\def\MH{{\mathcal M}_H}
\def\CP{{\mathbb C}{\mathbb P}}
\def\L{{\mathcal L}}

\def\Tr{{\rm Tr}}
\def\R{{\mb R}}
\def\cal{\mathcal}



\def\eusm{\EuScript}


\def\tilde{\widetilde}

\def\CO{{\cal O}}
\def\O{\CO}

\nc{\mbb}{\mathbb}
\def\neg{\negthinspace}
\nc{\LG}{{}^L\neg G}
\nc{\LH}{{}^L\neg H}
\nc{\LZ}{{}^L\neg Z}

\def\CA{{\eusm A}}

\def\B{{\cal B}}
\def\D{{\cal D}}
\def\cl{{\rm cl}}
\def\A{{\cal A}}
\def\N{{\cal N}}
\def\S{{S}}
\def\M{{\cal M}}
\def\L{{\cal L}}
\nc{\Pic}{\on{Pic}}

\nc{\Irrep}{\on{Irrep}}

\def\E{{\mathcal E}}

\def\R{{\cal R}}
\def\T{{\cal T}}
\def\S{{\cal S}}
\def\Pic{{\rm Pic}}
\def\RR{\mathbb{R}}

\setcounter{tocdepth}{2}

\begin{document}

\title{Geometric Endoscopy and Mirror Symmetry}

\author[Edward Frenkel]{Edward Frenkel$^1$}\thanks{$^1$Supported in
  part by DARPA and AFOSR through the grant FA9550-07-1-0543}

\address{Department of Mathematics, University of California,
  Berkeley, CA 94720, USA }

\author[Edward Witten]{Edward Witten$^{2}$}

\address{School of Natural Sciences, Institute for Advanced Study,
  Princeton, NJ 08540, USA}\thanks{$^{2}$Supported in part by NSF
  Grant PHY-0503584.}

\vspace*{20mm}

\begin{abstract}
The geometric Langlands correspondence has been interpreted as the
mirror symmetry of the Hitchin fibrations for two dual reductive
groups. This mirror symmetry, in turn, reduces to $T$--duality on the
generic Hitchin fibers, which are smooth tori.  In this paper we study
what happens when the Hitchin fibers on the $B$-model side develop
orbifold singularities. These singularities correspond to local
systems with finite groups of automorphisms.  In the classical
Langlands Program local systems of this type are called
endoscopic. They play an important role in the theory of automorphic
representations, in particular, in the stabilization of the trace
formula. Our goal is to use the mirror symmetry of the Hitchin
fibrations to expose the special role played by these local systems in
the geometric theory. The study of the categories of $A$-branes on the
dual Hitchin fibers allows us to uncover some interesting phenomena
associated with the endoscopy in the geometric Langlands
correspondence. We then follow our predictions back to the classical
theory of automorphic functions. This enables us to test and confirm
them. The geometry we use is similar to that which is exploited in
recent work by B.-C. Ng\^o, a fact which could be significant for
understanding the trace formula.

\end{abstract}

\date{October 2007}

\maketitle

\tableofcontents



\section{Introduction}

This paper is concerned with some natural questions arising in the
study of  Langlands duality and mirror symmetry. On the
mathematics side, the question is to describe a geometric analogue
of the phenomenon of endoscopy in the theory of automorphic
representations. On the physics side, it is to explore the limit
of the $T$--duality of supersymmetric sigma models with smooth
dual tori as the target manifolds when the tori become singular.

Somewhat surprisingly, the two questions turn out to be closely
related. The reason is that, according to \cite{KW,GW}, the geometric
Langlands correspondence may be interpreted in terms of the mirror
symmetry of the {\em Hitchin fibrations} for two dual reductive
groups, $G$ and $\LG$:
$$
\begin{array}{ccccc}
{\mc M}_{H}(\LG) & \; & \; & \; & {\mc M}_H(G) \\
\; & \searrow & \; & \swarrow & \; \\
\; & \; & {\BB} & \; & \;
\end{array}
$$
Here ${\mc M}_{H}(G)$ denotes the moduli space of Higgs $G$-bundles on
a smooth Riemann surface $\CC$, and $\BB$ is the common base of the
corresponding two dual Hitchin fibrations \cite{Hi,Hi2}. The mirror
symmetry between them is realized via the fiberwise $T$--duality, in
the framework of the general Strominger--Yau--Zaslow picture
\cite{SYZ}. This duality is also closely related to the $S$-duality of
certain supersymmetric four-dimensional gauge theories corresponding
to $G$ and $\LG$ \cite{KW}.

\subsection{$T$--duality Of Singular Fibers}

The generic fibers of the Hitchin fibrations are smooth dual tori
(which may be described as generalized Prym varieties of spectral
curves when $G=SL_n$), and the $T$--duality is relatively well
understood for these smooth fibers. In particular, it sets up an
equivalence between the category of $B$-branes on the fiber $^L\neg
{\FF}_b$ of ${\mc M}_{H}(\LG)$ at $b \in {\BB}$ and the category of
$A$-branes on the dual fiber $\FF_b$ (more precisely, their connected
components). The simplest $B$-branes are the skyscraper coherent sheaves
supported at the points of $^L\neg {\FF}_b$.\footnote{The category of
$B$-branes should be considered here in the complex structure $J$, in
which ${\mc M}_{H}(\LG)$ is realized as the moduli space of flat
$\LG$-bundles. However, skyscraper sheaves are legitimate $B$-branes
in any complex structure.} Under this equivalence
of categories they correspond to the $A$-branes which are rank one
unitary flat bundles on $\FF_b$. The latter have an important
property: they are {\em eigenbranes} of certain operators which are
two-dimensional shadows of the 't Hooft line operators of the
four-dimensional gauge theory \cite{KW} and are closely related to the
Hecke correspondences on $G$-bundles. This property is dual to the
more easily established property of the skyscraper $B$-branes of being
eigenbranes of the so-called Wilson operators \cite{KW}.

It is natural to ask: what does the $T$--duality look like at the
singular fibers of the Hitchin fibrations? In particular, where
does the $T$--duality map the $B$-branes supported at the singular
points of ${\mc M}_{H}(\LG)$? In this paper we consider the case
that the singularity of the fiber is the mildest possible, namely,
an orbifold singularity. (For example, in the case of $SL_n$,
these are the only singularities if the spectral curve is
irreducible and reduced.) This turns out to be precisely the
situation of ``elliptic endoscopy'', as defined in
\cite{Ngo1,Ngo2} (see below). In the present paper we describe in
detail what happens in the case of the group $G=SL_2$ and explain
how to generalize our results to other groups.

In the case of $G=SL_2$, the singular points of ${\mc M}_{H}(\LG)$
that we are interested in correspond to the $\LG=SO_3$ local systems
(or Higgs bundles) on the curve $\CC$ which are reduced to the
subgroup $O_2 \subset SO_3$ (this is the simplest possible scenario
for elliptic endoscopy, as explained below). Generic local systems of
this type have the group of automorphisms $\Z_2$ (which is the center
of $O_2$) and therefore the corresponding points of ${\mc M}_{H}(\LG)$
are really $\Z_2$-orbifold points.  This means that the category of
$B$-branes supported at such a point is equivalent to the category
$\Rep(\Z_2)$ of representations of $\Z_2$. Thus, it has two
irreducible objects.  Therefore we expect that the dual category of
$A$-branes should also have two irreducible objects. In fact, we show
that the dual Hitchin fiber has two irreducible components in this
case, and the sought-after $A$-branes are the so-called {\em
fractional branes} supported on these two components. Only their sum
(or union) is an eigenbrane of the 't Hooft--Hecke operators
(reflecting the fact that the sole eigenbrane of the Wilson operators
in the $B$-model corresponds to the regular representation of $\Z_2$,
that is, the direct sum of its two irreducible
representations). However, we show that each of the two fractional
$A$-branes separately satisfies a certain natural modification of the
standard Hecke property (the ``fractional Hecke property''), which has
a direct generalization to other groups and is of independent
interest.

\subsection{$A$-branes And ${\mc D}$-modules}

In the conventional formulation of the geometric Langlands
correspondence (see, e.g., \cite{F:houches}, Section 6), the objects
corresponding to $\LG$-local systems on $\CC$ are the so-called Hecke
eigensheaves. These are ${\mc D}$-modules (or perverse sheaves) on the
moduli stack $\Bun_G$ of $G$-bundles on $\CC$ satisfying the Hecke
property. However, their structure is notoriously complicated and it
is difficult to analyze them explicitly. In contrast, in the new
formalism developed in \cite{KW,GW}, the Hecke eigensheaves are
replaced by the $A$-branes which are eigenbranes of the 't Hooft
operators. These $A$-branes are much easier to ``observe
experimentally'' and to analyze explicitly. Our idea is to use this
new language in order to gain insights into the structure of the
geometric Langlands correspondence -- specifically, the part that
pertains to endoscopy (and, more ambitiously, to the general
functoriality principle).

The passage from $A$-branes on ${\mc M}_H(G)$ to ${\mc D}$-modules
on $\Bun_G$ was explained in \cite{KW} (see \cite{NZ,Nadler} for a
possible alternative approach; also, see \cite{ADKMV,DHSV}, where
${\cal D}$-modules have been introduced in physics from a
different point of view). While this has not yet been made
completely rigorous mathematically, it is sufficient to describe
important characteristics of the Hecke eigensheaves associated to
eigenbranes, such as their reducibility, the open subsets of
$\Bun_G$ where the Hecke eigensheaves are represented by local
systems, the ranks of these local systems, and even their
monodromy. Thus, our results on $A$-branes have direct
implications for Hecke eigensheaves. In particular, if an
eigenbrane ${\mc A}$ decomposes into two irreducible branes ${\mc
A}_1$ and ${\mc A}_2$, then we predict that the corresponding
Hecke eigensheaf ${\mc F}$ will also decompose as a direct sum of
two ${\mc D}$-modules, ${\mc F}_1$ and ${\mc F}_2$, corresponding
to ${\mc A}_1$ and ${\mc A}_2$, respectively. Furthermore, these
two ${\mc D}$-modules should then separately satisfy the
fractional Hecke eigensheaf property alluded to above.

\subsection{From Curves Over $\C$ To Curves Over $\Fq$}

The upshot of all this is that by analyzing the categories of
$A$-branes supported on the singular Hitchin fibers, we gain insight
into the geometric Langlands correspondence. We then make another leap
of faith and postulate that the same structures on the Hecke
eigensheaves that we observe for curves over $\C$ (such as their
decomposition into two direct summands) should also hold for curves
over finite fields. In the latter case, to a fractional Hecke
eigensheaf we may associate an automorphic function on the ad\`elic
group $G({\mathbb A}_F)$ by taking the traces of the Frobenius on the
stalks (this is referred to as the {\em Grothendieck
faisceaux--fonctions dictionnaire}, see \secref{pos char}). Our
predictions for the fractional Hecke eigensheaves then get translated
into concrete predictions for the behavior of these automorphic
functions under the action of the classical Hecke operators. We show
that functions satisfying these properties do exist, and this provides
a consistency check for our conjectures.

Thus, our starting point is the homological mirror symmetry between
the categories of branes on the dual Hitchin fibrations. By applying
the following sequence of transformations:

\begin{equation}    \label{passage}
\boxed{\text{$A$-branes}} \; \; \overset{\on{over} \C}\Longrightarrow
\; \; \boxed{\D\text{-modules}}  \; \; \overset{\on{over}
  \C}\Longrightarrow \; \;
\boxed{\begin{matrix} \text{perverse} \\ \text{sheaves} \end{matrix}}
\; \; \overset{\on{over} \Fq}\Longrightarrow \; \;
\boxed{\begin{matrix} \text{automorphic} \\ \text{functions}
\end{matrix}}
\end{equation}

\bigskip

\noindent we link the structure of $A$-branes that we observe in the
study of this mirror symmetry to the classical theory of automorphic
forms. From this point of view, the $A$-branes that are eigenbranes of
the 't Hooft--Hecke operators (in the ordinary sense) are geometric
analogues of the Hecke eigenfunctions that encapsulate irreducible
automorphic representations. This leads to a tantalizing question:
what is the representation theoretic analogue of the fractional
eigenbranes into which the $A$-eigenbranes break in the endoscopic
case, when the Hitchin fiber becomes singular? R. Langlands has
previously suggested a nice analogy between irreducible automorphic
representations and elementary particles \cite{La:gibbs}. From this
point of view, the existence of fractional $A$-branes indicates the
existence of some inner, ``quark--like'', structure of automorphic
representations, which is still waiting to be fully explored and
understood.

We hope that understanding these structures will give us important
clues about the geometric meaning of endoscopy.

\subsection{Classical Endoscopy}

Endoscopy is one of the most fascinating phenomena in the classical
Langlands correspondence. To motivate it, let us recall (see, e.g.,
\cite{F:houches}, Section 2.4, for more details) the Langlands
correspondence for the group $GL_n$.  Let $C$ be a smooth projective
curve over a finite field $\Fq$, and $F$ the field of rational
functions on $C$. The Weil group $W_F$ is a dense subgroup of the
Galois group $\on{Gal}(\ol{F}/F)$ of automorphisms of the (separable)
closure $\ol{F}$ of $F$. The Langlands correspondence sets up a
bijection between $n$-dimensional ($\ell$-adic) representations of the
Weil group $W_F$ and irreducible automorphic representations of
$GL_n({\mathbb A}_F)$, where ${\mathbb A}_F$ is the ring of ad\`eles
of $F$ \cite{Dr,Drinfeld,Laf}.\footnote{This is the Langlands
correspondence for the function fields. There is a similar, but more
complicated, number fields version, in which $F$ is replaced by the
field ${\mathbb Q}$ of rational numbers or its finite extension; see,
e.g., \cite{F:houches}, Part I, for more details.} If we replace
$GL_n$ by a more general reductive group $G$, then, in the first
approximation, we should expect that irreducible automorphic
representations of $G({\mathbb A})$ would be in bijection with
($\ell$-adic) homomorphisms $\sigma$ from $W_F$ to the Langlands dual
group $\LG$ of $G$. However, it turns out that in general to each
$\sigma$ corresponds not one, but several (possibly infinitely many),
irreducible automorphic representation of the ad\`elic group
$G({\mathbb A}_F)$. The set of equivalence classes of these
representations is called the $L$-{\em packet} associated to $\sigma$,
after the work of Labesse--Langlands \cite{LL} in which this
phenomenon was discovered (for the group $G=SL_2$).

The structure of the $L$-packets is most interesting in the case of
homomorphisms $W_F \to \LG = SO_3$ that have their image contained in
the subgroup $O_2 \subset PGL_2$, but not in its connected component
$SO_2$ which implies that their group of automorphisms is disconnected
(generically, it is $\Z_2$). Thus, the automorphic representation
theory of $SL_2({\mathbb A}_F)$ is governed in part by the group
$O_2$. This group (or rather, to keep with the traditional
terminology, the subgroup $H$ of $SL_2(F)$ whose dual is $O_2$) is an
example of an {\em endoscopic group}, and this relation is an example
of the mysterious phenomenon known as the ``endoscopy.''  It was
discovered by Langlands and others in their attempt to organize
automorphic representations in a way that would be compatible with the
structure of the orbital integrals appearing on the geometric side of
the trace formula.

Let us explain this briefly, referring the reader to
\cite{L:debuts,K:cusp,Arthur:trace} for more details. The trace
formula, or rather, its ``regular elliptic part'' (to which we will
restrict ourselves here), has the following general form:

\begin{equation}    \label{trace formula}
\boxed{\begin{matrix} \text{spectral} \\ \text{side} \end{matrix}}
\quad = \quad
\boxed{\begin{matrix} \text{geometric} \\ \text{side}
\end{matrix}}
\end{equation}

\bigskip

\noindent The spectral side is equal to the sum of traces of a test
function $f$ with compact support on $G({\mathbb A}_F)$ over
irreducible tempered cuspidal automorphic representations of
$G({\mathbb A}_F)$ (we recall that those are realized in a certain
space of functions on the quotient $G(F) \bs G({\mathbb A}_F)$):
\begin{equation}    \label{spectral side}
\sum_{\sigma: W_F \to \LG} \quad \sum_{\phi \in L_\sigma}
m_\phi \on{Tr}(f,\pi_\phi).
\end{equation}
Here the sum is over a certain class of homomorphisms $\sigma: W_F \to
\LG$, which are supposed to label the $L$-packets $L_\sigma$ of
equivalence classes of irreducible automorphic representations $\{
\pi_\phi \}_{\phi \in L_\sigma}$, and $m_\phi$ denotes the
multiplicity of $\pi_\phi$ in the space of automorphic functions.

The geometric side is the sum of orbital integrals of $f$, that is,
integrals of $f$ over $G({\mathbb A}_F)$-conjugacy classes of elements
of $G(F)$.

The geometric side needs to be ``stabilized.'' This means rewriting
it as a sum of integrals over {\em stable conjugacy classes} of
elements of $G(F)$ in $G({\mathbb A}_F)$.\footnote{Two elements of
$G(k)$, where $k$ is any field, are called stably conjugate if they
are conjugate in $G(\ol{k})$. Since ${\mathbb A}_F$ is the
restricted product of completions of $F$, we obtain a natural notion
of stable conjugacy in ${\mathbb A}_F$ as well.} This is necessary
for many reasons, one of which is that without this one cannot even
hope to compare the geometric sides of the trace formulas for
different groups (see, e.g., \cite{Arthur:trace}, Sect. 27). The
resulting expression for the geometric side reads \cite{L:debuts}
\begin{equation}    \label{geom side}
\sum_H \imath(H,G) \; ST^{G-\on{reg}}_{\on{ell}}(f^H),
\end{equation}
where the sum is over the {\em elliptic endoscopic groups} $H$ of $G$
(they are {\em not} subgroups of $G$ in general), as well as $H=G$
itself, $ST^{G-\on{reg}}_{\on{ell}}(f^H)$ denotes the sum of stable
orbital integrals for the group $H$, and the $\imath(H,G)$ are certain
numbers. The elliptic endoscopic groups are defined, roughly, as the
dual groups of the centralizers of semi-simple elements in $\LG$ which
are not contained in any proper Levi subgroups of $\LG$.

Formula \eqref{geom side} hinges upon a number of assumptions, the
most important of which is the so-called transfer conjecture. It
states the existence of an assignment $f \mapsto f^H$, from functions
on $G({\mathbb A}_F)$ to those on $H({\mathbb A}_F)$, satisfying the
property that, roughly speaking, the stable orbital integrals of $f^H$
are equal to stable orbital integrals of $f$ modified by a certain
twist (see \cite{L:debuts,Arthur:trace,Dat} and references therein for
details).

A special case of the transfer conjecture (when $f$ is the
characteristic function of a maximal compact subgroup of $G$; then
$f^H$ is required to be of the same kind) is the so-called {\em
fundamental lemma}. The fundamental lemma (in the function field case)
has been recently proved by B.-C. Ng\^o \cite{Ngo2} (see also
\cite{GKM,Laumon:sp,LN,Ngo1}). More precisely, Ng\^o has proved a Lie
algebra version of the fundamental lemma, but Waldspurger has shown
that it implies the fundamental lemma for the group, as well as the
general transfer conjecture \cite{Wa1}. Also, the fundamental lemma in
the function field case that we are discussing is equivalent to the
one in the number field case, provided that the characteristics of the
residue fields are sufficiently large \cite{Wa2,CL}.

Since the geometric side of the trace formula has the form \eqref{geom
side}, it is natural to expect that the spectral side has a similar
form, that is, it may be written as a sum of terms labeled by the
elliptic endoscopic groups, with individual terms on both sides of
\eqref{trace formula} being equal. A formula of this sort has been
first established by Labesse--Langlands for $G=SL_2$ (and its inner
forms). In general, the corresponding formula was conjectured by
R. Kottwitz \cite{K:cusp}. This formula has a number of important
consequences for the theory of automorphic representations. First of
all, it leads to an explicit formula for the multiplicities $m_\phi$ of
the automorphic representations in the $L$-packet associated to a
homomorphism $\sigma: W_F \to \LG$ (appearing in \eqref{spectral
side}). The answer is a linear combination of terms associated to the
elliptic endoscopic groups $H$ such that the image of $\sigma$ is
contained in $\LH \subset \LG$.

Perhaps it would be helpful to explain here the relation between
$\sigma$ and the endoscopic groups. Suppose for simplicity that $\LG$
is a semi-simple group of adjoint type (so it has trivial center), and
the image of a homomorphism $\sigma: W_F \to \LG$ occurring in
\eqref{spectral side} has finite centralizer $S_\sigma$. Then the
Langlands duals $\LH$ of the endoscopic groups $H$ associated to
$\sigma$ are just the centralizers of non-trivial elements $s \in
S_\sigma$ in $\LG$ (hence the image of $\sigma$ is automatically
contained in $\LH$). For instance, if $\LG = SO_3$, the only subgroup
that we can obtain this way is $O_2 \subset SO_3$.\footnote{Note that
as in \cite{Ngo1,Ngo2} and contrary to the standard convention, we do
not consider here $G$ itself as an elliptic endoscopic group.}

The second, and perhaps, more important, consequence of the transfer
conjecture and the stabilized trace formula is that it gives a
concrete realization of the Langlands {\em functoriality principle}
\cite{L} for the homomorphisms $\LH \to \LG$, where $H$ are the
elliptic endoscopic groups. Namely, we obtain a natural map from
$L$-packets of automorphic representations of $H({\mathbb A}_F)$ to
those of $G({\mathbb A}_F)$ (see, e.g., \cite{Arthur:funct} and
\cite{Arthur:trace}, Sect. 26, for more details).

In short, the classical endoscopy establishes an elusive connection
between automorphic representations of $G({\mathbb A}_F)$ and those of
its endoscopic groups $H({\mathbb A}_F)$ which matches, via the trace
formula, the relation between orbital integrals for the two groups
provided by the transfer conjecture.

\subsection{Geometric Endoscopy}

In the last twenty years significant progress has been made in
translating the classical Langlands correspondence to the language of
geometry. The emerging geometric Langlands correspondence has the
advantage that it makes sense not only for curves defined over finite
fields, but also for curves over the complex field, that is, Riemann
surfaces. In this version we have the opportunity to use the vast
resources of complex algebraic geometry and thereby advance our
understanding of the general Langlands duality patterns.

For many concepts of the classical Langlands correspondence
counterparts have been found in the geometric version. One important
phenomenon that has not yet been understood geometrically is the
endoscopy. It is the goal of this paper to make the first steps in the
development of {\em geometric endoscopy}, by which we mean exposing
the special role played by the endoscopic groups in the geometric
Langlands correspondence. Since there is no obvious analogue of the
trace formula in the geometric setting, we are not trying to imitate
the stabilization of the trace formula that leads to the classical
endoscopy. Rather, we wish to describe the new structures that emerge
in the geometric Langlands correspondence for the $\LG$-local systems
of elliptic endoscopic type.  When $\LG$ is semi-simple, this means
(by analogy with the classical setting) that the group of
automorphisms of this local system is a finite group that is strictly
larger than the center of $\LG$. For example, for $\LG=SO_3$ these are
the local systems whose image is contained in $O_2 \subset SO_3$, but
not in the maximal torus of $O_2$. Then the group of automorphisms
is equal to $\Z_2$, the center of $O_2$, unless the image is contained
in a subgroup $\Z_2 \times \Z_2$ (we will mostly ignore this case in
the present paper).

Our approach is to use mirror symmetry of the Hitchin fibrations
associated to $G$ and $\LG$ and to explore the structure of the
$A$-branes corresponding to the endoscopic $\LG$-local systems
(viewed as orbifold points of ${\mc M}_H(\LG)$, on the $B$-model
side), which are realized on the corresponding singular Hitchin
fibers in ${\mc M}_H(G)$ (on the $A$-model side). The first
advantage of this approach is that the endoscopic groups (which are
rather mysterious in the classical theory, where they arise in the
process of stabilization of the trace formula for the group $G$) are
manifest: they occur naturally on the $B$-model side of mirror
symmetry. Indeed, for any subgroup $\LH \subset \LG$ there is a
natural embedding ${\mc M}_H(\LH) \hookrightarrow {\mc M}_H(\LG)$,
and a point of ${\mc M}_H(\LG)$ corresponding to an endoscopic local
system $\E$ belongs to the image of ${\mc M}_H(\LH)$ for all
endoscopic groups $H$ associated to it (i.e., those for which $\LH$
contains the image of $\E$). The second advantage, already mentioned
above, is that, at least in the generically regular semi-simple
case, the corresponding $A$-branes have a simple and transparent
structure (in contrast to Hecke eigensheaves), and this simplifies
our analysis considerably.

We then interpret the structures that we observe in the category of
$A$-branes (on the $A$-model side) in terms of ${\mc D}$-modules (for
curves over $\C$) or perverse sheaves (for curves over $\C$ or over
$\Fq$) on $\Bun_G$, which are the more standard objects in the
geometric Langlands correspondence.\footnote{Alternatively, one may
look at it from the point of view of a non-abelian version of the
Fourier--Mukai transform \cite{Laumon:fm,Roth}, suggested by
A. Beilinson and V. Drinfeld.} Finally, for curves over $\Fq$, we
study the automorphic functions associated to these perverse sheaves
(see the diagram \eqref{passage}). This results in a series of
concrete predictions:

\begin{itemize}

\item Our first prediction is that the Hecke eigensheaves
corresponding to an elliptic endoscopic $\LG$-local system $\E$ with
the finite group of automorphisms $\Gamma$ splits into a direct sum of
irreducible sheaves ${\mc F}_R$ labeled by irreducible representations
$R$ of $\Gamma$, with the multiplicity of ${\mc F}_R$ equal to $\dim
R$.

\item Our second prediction is that the sheaves ${\mc F}_R$ satisfy a
fractional Hecke property described in \secref{categories}. This
prediction is confirmed in the case of curves over $\Fq$: we check
that the functions assigned to our sheaves and satisfying the
function-theoretic analogue of the fractional Hecke property do
exist. Moreover, we express them as linear combinations of the
ordinary Hecke eigenfunctions by a kind of Fourier transform over
$\Gamma$.\footnote{This is somewhat reminiscent of the Fourier
transform observed by G. Lusztig in the theory of character sheaves
\cite{Lu}.}  Thus, it turns out that in the endoscopic case the
functions assigned to irreducible perverse sheaves are not Hecke
eigenfunctions, but linear combinations thereof.

\item Our third prediction is that if the image $b$ of $\E$ in the
Hitchin base $\BB$ is a generically regular semi-simple point, then
the group $\Gamma$ is {\em abelian} and is isomorphic to a subgroup of
the dual group of $\pi_0(\PP_b)$, where $\PP_b$ is the generalized
Prym of $b$. Furthermore, there exists a local system in the dual
Hitchin fiber $^L\neg \FF_b$ for which $\Gamma$ is isomorphic to the
dual group of $\pi_0(\PP_b)$ (note that $\pi_0(\PP_b)$ acts simply
transitively on an open dense subset of the Hitchin fiber $\FF_b$ over
$b$ in ${\mc M}_H(G)$, which is reduced in this case
\cite{Ngo2}).\footnote{In the course of writing this paper we were
informed by B.-C. Ng\^o that he was also aware of this statement.}

\end{itemize}

We hope that proper understanding of these phenomena will lead to
better understanding of endoscopy and related subjects such as the
fundamental lemma.

\subsection{Connection With The Work Of B.-C. Ng\^o}

A link between our analysis and the classical endoscopy comes from
the fact that the geometry we use is similar to what is exploited
in the recent work of Ng\^o Bao-Ch\^au \cite{Ngo1,Ngo2} (see also
the excellent survey \cite{Dat}). Ng\^o has discovered a striking
connection between the orbital integrals appearing on the
geometric side of the trace formula \eqref{trace formula} and the
cohomology of the Hitchin fibers in moduli space ${\mc M}_H(G)$
(more precisely, in generalized versions of ${\mc M}_H(G)$ which
parametrize meromorphic Higgs fields with the divisor of poles
$D$ which is sufficiently large). He used it to prove the
fundamental lemma, in the Lie algebra setting, for function fields
(for unitary groups he had done it earlier together with G. Laumon
\cite{LN}). He achieved that by interpreting the orbital integrals
as numbers of points of the Hitchin fibers $\FF_b$ in the moduli
stack of Higgs bundles ${\mc M}_H(G)$ defined for a curve over a
{\em finite field}. These numbers are in turn interpreted as
traces of the Frobenius acting on the ($\ell$-adic) cohomology of
$\FF_b$. The crucial step in Ng\^o's construction is the
decomposition of this cohomology with respect to the action of the
finite abelian group $\pi_0(\PP_b)$, where $\PP_b$ is the
generalized Prym variety associated to an elliptic point $b \in
\BB$. He identified the $\ka$-isotypic part of this decomposition,
where $\ka$ is a character of $\pi_0(\PP_b)$, with the $\ka$-part
of the decomposition of the cohomology of the corresponding
Hitchin fiber of the endoscopic group $H_\kappa$. Taking traces of
the Frobenius over these subspaces, he obtained the fundamental
lemma. (Here we should mention the earlier works
\cite{GKM,Laumon:sp} in which closely related geometric
interpretations of the fundamental lemma had been given.)

Thus, Ng\^o uses  geometry that seems very close to the geometry we
are using. Indeed, we consider the fractional $A$-branes on ${\mc
M}_H(G)$, supported on the Hitchin fiber $\FF_b$, which are
essentially labeled by $\pi_0(\PP_b)$, and Ng\^o considers the
cohomology of similarly defined Hitchin fibers and their
decomposition under $\pi_0(\PP_b)$. However, there are important
differences.

First of all, we work over $\C$, whereas Ng\^o works over $\Fq$. In
the latter setting there is no obvious analogue of the homological
mirror symmetry between ${\mc M}_H(G)$ and ${\mc M}_H(\LG)$ that is
crucial to our approach. (In fact, neither the dual group $\LG$ nor
the dual Hitchin moduli space ${\mc M}_H(\LG)$ play a role in Ng\^o's
work.) Second, Ng\^o works with generalized moduli spaces of Higgs
bundles labeled by effective divisors $D$ on the curve $\CC$. Our
moduli space ${\mc M}_H(G)$ corresponds to a divisor of the canonical
line bundle on $\CC$.\footnote{Technically, this case is outside of
the scope of Ng\^o's work, since he imposes the condition
$\deg(D)>2g-2$. However, as he explained to us, most of his results
remain true when $D$ is a divisor of the canonical line bundle.}
Third, and most importantly, the objects we assign to the connected
components of the singular Hitchin fiber $\FF_a$ -- the $A$-branes --
are objects of {\em automorphic} nature; we hope to relate them to
Hecke eigensheaves and ultimately to the automorphic functions in the
classical theory. Thus, these objects should live on the {\em spectral
side} of the trace formula \eqref{trace formula}. On the other hand,
Ng\^o relates the numbers of points of the Hitchin fibers (and their
cohomology) to the orbital integrals appearing on the {\em geometric
side} of the trace formula (more precisely, its Lie algebra
version). {\em A priori}, this has nothing to do with automorphic
representations (or automorphic functions)! The connection between
orbital integrals and automorphic representations is provided by the
trace formula, but in a rather indirect, combinatorial way. It arises
only when we sum up all orbital integrals on the geometric side, and
over all representations on the spectral side of the trace formula.

This raises the following question: could there be a {\em direct link}
between individual Hitchin fibers in the moduli space $\MH(G)$
over $\Fq$ and individual automorphic representations? After all, we
have a natural forgetful map from $\MH(G)$ to $\Bun_G$, where
unramified automorphic functions live. Could it be that the passage
from $A$-branes to Hecke eigensheaves discussed above has an analogue
in the classical theory as a passage from orbital integrals to Hecke
eigenfunctions?

In any case, we find it remarkable that the same geometry of the
Hitchin fibration that Ng\^o has used to understand the geometric side
of the trace formula is also used in our study of the geometric
endoscopy via mirror symmetry and therefore appears to be pertinent to
automorphic representations (via the correspondence
\eqref{passage}). This connection could potentially be significant as
it could shed new light on the trace formula and the theory of
automorphic representations in general.\footnote{This seems to
resonate with the views expressed by R. Langlands in his recent Shaw
Prize lecture \cite{La:shaw}.}

\subsection{Quantum Field Theory}

Finally, we wish to relate our computation to various issues in
quantum field theory. Recall that the Strominger--Yau--Zaslow
picture \cite{SYZ} relates homological mirror symmetry of two
manifolds $X$ and $Y$ to the $T$--duality of dual special
Lagrangian fibrations in $X$ and $Y$. This works especially nicely
for the generic fibers, which are smooth. The $T$--duality of
these fibers may be thought of as a kind of abelian version of
mirror symmetry (closely related to the Fourier--Mukai transform).
Therefore, important ``non-abelian'' information about the mirror
symmetry of $X$ and $Y$ is hidden in the duality of the singular
fibers. Special Lagrangian fibrations are difficult to understand
in general, but in the case of Hitchin fibrations, the
hyper-Kahler structure leads to a drastic simplification \cite{HT}
which we will exploit to analyze certain cases of singular fibers.
In particular, we show that under mirror symmetry, the orbifold
singularities on $X$ correspond to reducible Lagrangian fibers in
$Y$; we  believe that this is a fairly general phenomenon. From
this point of view, the geometric endoscopy appears as a special
case of mirror symmetry in the presence of orbifold singularities.

Geometric endoscopy might have a natural realization physically by
means of a supersymmetric domain wall, with ${\cal N}=4$
supersymmetric Yang-Mills theory of gauge group $\LH$ on one side
of the domain wall, and the same theory with gauge group $\LG$ on
the other side (with $H$ being an endoscopic group of $G$). This
domain wall should in particular give rise after duality to a
functor from the category of $A$-branes on $\MH(H)$ to the
category of $A$-branes on $\MH(G)$. This is a geometric analogue
of the notion of transfer, or the functoriality principle, in the
Langlands Program (see \secref{transfer}).

Finally, as we explain in \secref{SeibergWitten}, the Hitchin
fibrations studied in this paper also appear in Seiberg--Witten
theory.  In that context, the important four-manifold invariants
arise as contributions from the endoscopic points.  The appearance
of the same Hitchin fibrations in the two different problems can be
traced to an underlying six-dimensional quantum field theory that
can be compactified to four dimensions in two different ways.

\subsection{Plan Of The Paper}

In \secref{dualbranes} we give an overview of the connection
between the homological mirror symmetry of the dual Hitchin
fibrations and the geometric Langlands Program. In \secref{genus
one}, we take up our main example: the moduli spaces of $SL_2$ and
$SO_3$ Higgs bundles on an elliptic curve with tame ramification
at one point. We describe in detail these moduli spaces, their
singularities and the corresponding categories of branes. This
example will serve as the prototype for the general picture
developed in this paper.

In \secref{genus one A to D}, we discuss in more detail the
passage from $A$-branes to ${\mc D}$-modules. While we focus
largely on the genus one example developed in the preceding
section, many aspects of this discussion apply in a more general
setting. \secref{spectral} describes the generalization of our
results to curves of higher genus. We also compute explicitly the
action of the Wilson and 't Hooft/Hecke operators on the electric
and magnetic branes relevant to   geometric endoscopy. In
\secref{categories}, we explain how these results fit in a general
categorical formalism. In particular, we introduce the notion of
``fractional Hecke eigensheaves'' and conjecture that the ${\mc
D}$-modules associated to the fractional $A$-branes found in
\secref{spectral} are objects of this type.

Our next task is to describe the analogues of these conjectures in the
case when our curve $C$ is defined over a finite field, and to link
them to the theory of automorphic functions. In \secref{classical}, we
recall the set-up of endoscopy and $L$-packets in the classical theory
of automorphic forms. We focus in particular on the unramified case
for $G=SL_2$ analyzed first by Labesse and Langlands \cite{LL}. We
then discuss in \secref{from}, potential implications for the
classical theory of the geometric picture outlined in the earlier
sections. In particular, we compute the automorphic functions
associated to the fractional Hecke eigensheaves. We generalize our
results and conjectures to other groups in \secref{other}. Finally, in
\secref{gerbes} we discuss the tricky point that the two fractional
eigenbranes that we have found in the case of $SL_2$ are
indistinguishable. We trace this phenomenon to a certain $\Z_2$-gerbe
that appears to be a subtle, but important, ingredient of the mirror
symmetry of the Hitchin fibrations. In the Appendix, we explain the
structure of the unramified global $L$-packets for $SL_2$ in concrete
terms using the Whittaker functions.

\subsection{Acknowledgments}

E.F. wishes to express his gratitude to R. Langlands for conveying his
insights on the endoscopy. He also thanks J. Arthur, D.  Kazhdan,
R. Kottwitz, B.-C. Ng\^o, M. Olsson, T. Pantev, B.  Poonen, and
K. Ribet for their helpful answers to various questions related to
this paper, and D. Arinkin, A. Beilinson, R.  Bezrukavnikov, and
V. Lafforgue for stimulating discussions.  E.W. similarly thanks
T. Hausel, D. R. Morrison, T. Pantev, I. M. Singer, and C. Vafa. We
are also grateful to S. Gukov for his comments on the draft.

E.F. thanks the Intitute for Advanced Study for hospitality during
his visits in the course of this work.

\section{Duality, Branes, and Endoscopy}\label{dualbranes}

\subsection{Geometric Langlands Duality And Mirror
Symmetry}\label{geomdual}

 In the gauge theory approach,
the geometric Langlands correspondence is understood as a mirror
symmetry.

Let ${\cal Y}(C;G)$ be the moduli space of flat
$G$-bundles\footnote{The complex form of a Lie group is always meant
unless otherwise specified. This is in keeping with most literature
on the geometric Langlands program, but in contrast to most
literature on gauge theory including \cite{KW,GW}.} over an oriented
two-manifold $C$.  Here $G$ is a complex reductive Lie group.  Then
${\cal Y}(C;G)$ is in a natural way a complex symplectic manifold.
The complex structure of ${\cal Y}(C;G)$ comes simply from the
complex structure of $G$, and its holomorphic two-form $\Omega$ is
defined using the intersection pairing on the tangent space to
${\cal Y}(C;G)$, which is $H^1(C,{\rm ad}(Y))$.

Using the complex structure of ${\cal Y}(C;G)$ (and the triviality
of its canonical bundle, which follows from the fact that ${\cal
Y}(C;G)$ is complex symplectic), one can define a $B$-model of
${\cal Y}(C;G)$.  Similarly, viewing ${\cal Y}(C;G)$ as a real
symplectic manifold, with symplectic structure $\omega={\rm
Im}\,\Omega$, one can define an $A$-model.\footnote{The definition
of $\Omega$ is such that ${\rm Im}\,\Omega$ is cohomologically
trivial, though ${\rm Re}\,\Omega$ is not.  The definition of
$\Omega$ depends on the choice of a nondegenerate quadratic form on
$\mathfrak g$, the Lie algebra of $G$.  However, the $A$-model and
therefore the geometric Langlands duality derived from it is
independent of this choice, up to a natural isomorphism.  Once the
relation between flat bundles and Higgs bundles \cite{Hi} is
incorporated, this follows from the $\C^*$ action on the moduli
space of Higgs bundles. In the gauge theory approach \cite{KW}, it
is clear {\it a priori} that this choice is inessential, since the
dependence of the action on the gauge coupling is of the form
$\{Q,\cdot\}$.}

Now let $^L{}\neg G$ and $G$ be a dual pair of complex reductive Lie
groups.  It turns out that there is a mirror symmetry between the
$B$-model of ${\cal Y}(C;{}^L\neg G)$ and the $A$-model of ${\cal
Y}(C;G)$.  This instance of mirror symmetry was first studied by
Hausel and Thaddeus \cite{HT}. (A closely related duality has been
studied by Donagi and Pantev \cite{DP}, Hitchin \cite{Hi3}, and
Arinkin \cite{Ar}.  The relation between the two is explained in
section 5.3.) It was deduced from electric-magnetic duality of
four-dimensional supersymmetric gauge theory in \cite{KW}, following
earlier arguments \cite{BJSV,HMS}, and shown to underlie geometric
Langlands duality.\footnote{We will restrict ourselves to the most
basic form of the geometric Langlands duality. In \cite{KW}, it is
explained how one can incorporate in the gauge theory approach an
additional complex parameter, leading to what in the mathematical
literature is sometimes called quantum geometric Langlands.}

To establish mirror symmetry between ${\cal Y}(C;{}^L\neg G)$ and
${\cal Y}(C;G)$, one needs the fact that these spaces have another
interpretation as moduli spaces of Higgs bundles.  This comes from
Hitchin's equations. Unlike the complex symplectic structure of
${\cal Y}(C;G),$ which we have considered hitherto, Hitchin's
equations require a choice of conformal structure on $C$.  So
henceforth $C$ is a complex Riemann surface, not just an oriented
two-manifold.

\subsubsection{Hitchin's Equations}\label{hitch}
 Hitchin's equations \cite{Hi} are nonlinear equations for a pair
$(A,\phi)$.  $A$ is a connection on a $G$-bundle $E\to C$ with
structure group the compact form of $G$.  And $\phi$ is a one-form on
$C$ that is valued in ${\mathfrak g}$, the Lie algebra of this compact
form. Hitchin's equations read
\begin{align}\notag F-\phi\wedge \phi&=0\\ \label{otto}
  d_A\phi=d_A\star \phi& = 0.\end{align}
Here $d_A$ is the gauge-covariant exterior derivative and $\star $ is
the Hodge star operator.

If $(A,\phi)$ is a solution of Hitchin's equations, then
$\CA=A+i\phi$ is a complex-valued flat connection, and thus endows
$E$ (or its complexification) with the structure of a local
system. In particular, a solution of Hitchin's equations
determines a point in ${\cal Y}(C;G)$, the moduli space of local
systems with structure group $G$.

Alternatively, the $\bar\partial$ operator given by the $(0,1)$ part
of the operator $d_A$ endows $E$ with a holomorphic structure. And
if we write $\varphi$ for the $(1,0)$ part of $\phi$ and $K$ for the
canonical bundle of $C$, then $\varphi$ is a section of $K\otimes
{\rm ad}(E)$ and is holomorphic according to Hitchin's equations.
The pair $(E,\varphi)$ then defines what is known from a holomorphic
point of view as a Higgs bundle.

A basic result about Hitchin's equations is that the moduli space
$\MH=\MH(C;G)$ of solutions of these equations is a hyper-Kahler
manifold. In one complex structure, called $I$ in \cite{Hi}, $\MH$
is the moduli space of stable Higgs bundles, while in another
complex structure, denoted as $J$, it is the moduli space of stable
local systems and coincides with what we have earlier called ${\cal
Y}$. The fact that the same space has these dual interpretations is
one of the reasons that it is possible to say something about
geometric Langlands by studying Higgs bundles.

As a hyper-Kahler manifold, $\MH$ has a distinguished triple
$\omega_I, $ $\omega_J$, $\omega_K$ of real symplectic forms, which
are Kahler respectively in complex structures $I$, $J$, and $K=IJ$.
Similarly, there are complex two-forms
$\Omega_I=\omega_J+i\omega_K$, $\Omega_J=\omega_K+i\omega_I$,
$\Omega_K=\omega_I+i\omega_J$, that are holomorphic symplectic
forms, respectively, in complex structures $I$, $J$, and $K$. When
${\cal Y}$ is identified with $\MH$, its complex  structure
corresponds to $J$.  Moreover its holomorphic symplectic form is
$\Omega=i\Omega_J$, and $\omega={\rm Im}\,\Omega$ is equal to
$\omega_K$.

\subsubsection{The Hitchin Fibration}\label{hitchfibbo}

The Hitchin fibration is the map $\Pi:\MH\to \BB$ that takes
a Higgs bundle $(E,\varphi)$ to the characteristic polynomial of
$\varphi$. ($\BB$ is a linear space that parametrizes the
possible values of the characteristic polynomial.)  The Hitchin
fibration is holomorphic in complex structure $I$ (in which $\MH$ is
the moduli space of Higgs bundles). The fibers of the fibration are
Lagrangian submanifolds from the point of view of the holomorphic
symplectic structure $\Omega_I$, and hence also from the point of
view of $\omega_K={\rm Im}\,\Omega_I$.

Thus, from the point of view of the $A$-model that is relevant in
the geometric Langlands program, the Hitchin fibration is a
fibration by Lagrangian submanifolds.  These fibers generically are
smooth tori (holomorphic in complex structure $I$).
 This fact is related to
the interpretation of $\MH$ as a completely integrable Hamiltonian
system \cite{Hi2}, and can be seen explicitly using spectral
curves, as we will explain in \secref{spectral}.

{}From the standpoint of the $A$-model, the fibers of the Hitchin
fibration are not merely Lagrangian tori; they are special
Lagrangian.  Indeed, since they are holomorphic in one of the
complex structures (namely $I$), they minimize the volume (computed
using the hyper-Kahler metric of $\MH$) in their cohomology class.
So \cite{HT} this is an example of a fibration by special Lagrangian
tori, a geometric structure that has been proposed \cite{SYZ} to
describe mirror symmetry.

More specifically, if $G$ is simply-laced, the bases of the Hitchin
fibrations for $G$ and $^L\neg G$ are the same.  (For example, in
\secref{genus one}, we will parametrize $\BB$ by the same complex
variable $w$ both for $G=SL_2$ and for $^L\neg G=SO_3$.) For any
$G$, there are natural isomorphisms between these bases.  So we get
a picture:
\begin{equation}\label{celp}
\begin{array}{ccccc}
{\mc M}_{H}(^L\neg G) & \; & \; & \; & {\mc M}_H(G) \\
\; & \searrow & \; & \swarrow & \; \\
\; & \; & {\BB} & \; & \;
\end{array}
\end{equation}
The fibers of the two fibrations over a generic point $b\in \BB$ are
dual tori, as first argued in \cite{HT} for $G=SL_n$. (For other
groups, see \cite{DP,Hi3}.)  This is the usual SYZ picture associated
with mirror symmetry.

This particular example has several advantages. It is usually very
difficult to concretely describe a special Lagrangian fibration,
but in the case of $\MH$, the fact that the fibration is
holomorphic in complex structure $I$ makes it accessible, as we
will see in the examples discussed in Sections \ref{genus one} and
\ref{Hecke}. Also, the relation between mirror symmetry and a
special Lagrangian fibration is typically affected by what
physicists call quantum corrections by disc instantons. However,
the hyper-Kahler nature of $\MH$ ensures the absence of such
corrections and makes more straightforward the application of the
Hitchin fibration to this particular example of mirror symmetry.

\subsection{Branes And Their Duals}\label{braneduals}

 To make contact with geometric
Langlands duality, we must consider $B$-branes and $A$-branes on
$\MH$.

A simple example of a $B$-brane ${\cal B}$ is a brane of rank 1
supported at a smooth point $r\in \MH(C;{}^L\neg G)$.  Such a point
corresponds to an irreducible $^L\neg G$ local system ${\cal E}_r$
over $C$ (that is, one whose automorphism group reduces to the center
of $^L\neg G$; for the moment, assume that the center is trivial). It
is contained in a fiber $^L\neg \FF_b$ of the Hitchin
fibration of $^L\neg G$, and lies over a point $b$ in the base $\BB$
of the fibration.  We let $\FF_b$ denote the fiber over $b$ of the
dual Hitchin fibration of $\MH(C;G)$.

Mirror symmetry is understood as a $T$--duality on the fibers of the
Hitchin fibration.  So it maps ${\mc B}$ to an $A$-brane ${\mc A}$
whose support is $\FF_b$, endowed with a flat unitary line bundle
${\cal L}_r$ that depends on $r$.  This makes sense since the
generic fibers $^L\neg \FF_b$ and $\FF_b$ of the dual Hitchin
fibrations are dual tori. So a point $r\in{} ^L\neg \FF_b$
determines a flat unitary line bundle $\L_r\to \FF_b$.

What has just been described is the usual picture. What happens if
$r$ is a singular point in $\MH(C;{}^L\neg G)$, corresponding to an
$^L\neg G$ local system that has a non-trivial automorphism group?
The category of branes supported at a smooth point is equivalent to
the category of vector spaces, via the map that takes a brane to its
space of  sections. The category of branes supported at a singular
point is generally more complicated and cannot be reduced to a
single object.  Mirror symmetry or geometric Langlands duality must
be applied to this whole category.

Suppose now that the automorphism group of the local system ${\cal
E}_r$ is a non-trivial finite group $\Gamma$.  In this case, the
moduli space of $^L\neg G$ local systems can be modeled near $r$ by
a finite quotient $\C^{2n}/\Gamma$ for some $n$, with a linear
action of $\Gamma$ on $\C^{2n}$ coming from a homomorphism
$\Gamma\to Sp_{2n}\subset U_{2n}$.  ($\Gamma$ acts via a subgroup of
the symplectic group since $\MH$ is complex symplectic  and in fact
hyper-Kahler.)  We identify $r$ with the origin in $\C^{2n}/\Gamma$.
The space of sections of a brane supported at $r$ is now a
finite-dimensional vector space with an action of $\Gamma$. To
describe a brane, we have to say how $\Gamma$ acts on this vector
space.  An irreducible brane corresponds to an irreducible
representation. (For a more complete description, see Section
\ref{gerbes}.)

It makes sense to use this formalism even if $\Gamma$ is simply the
center of $^L\neg G$, in which case $r$ is a smooth point since the
center acts trivially on the fields entering in Hitchin's
equations. An irreducible representation of the center is
one-dimensional, given by a character which in \cite{KW}, section 7,
was called $\mbox{\bf \em e}_0$.  Thus the description of an
irreducible brane at a smooth point can be refined to include
specifying the character $\mbox{\bf \em e}_0$. Under duality,
$\mbox{\bf \em e}_0$ maps to a characteristic class $\mbox{\bf \em
m}_0\in H^2(C,G)$ that determines the topology of a $G$-bundle over
$C$. A brane on $\MH(C;{}^L\neg G)$ of specified $\mbox{\bf \em e}_0$
has a dual that is supported on an irreducible component of $\MH(C;G)$
with definite $\mbox{\bf \em m}_0$.

However, endoscopy arises when the automorphism group is not simply
the center, and we will illustrate the ideas assuming that the
center of $^L\neg G$ is trivial. For instance, in the example of
\secref{genus one}, we will have $^L\neg G=SO_3$, of trivial center.
This being so, $\Gamma$ acts effectively on $\C^{2n}$ and if
$\Gamma$ is non-trivial, then $r$ is an orbifold singularity of
$\MH(C;{}^L\neg G)$.  In this case, for each isomorphism class of
irreducible representation $R_i$ of $\Gamma$, there is a
corresponding irreducible brane ${\cal B}_i$ supported at the point
$r$.

Let $r^*$ be a smooth point of $\MH(C;{}^L\neg G)$, with
irreducible brane ${\cal B}^*$.  Now consider what happens as
$r^*$ approaches an orbifold point $r$.  In the limit, the
irreducible brane ${\cal B}^*$ degenerates to the brane supported
at $r$ and associated with the regular representation of $\Gamma$.
(The reason for this is that a smooth point $r^*\in
\C^{2n}/\Gamma$ corresponds to a free $\Gamma$ orbit on $\C^{2n}$;
the functions on a free orbit furnish a copy of the regular
representation.) The regular representation can be decomposed as
$\bigoplus_i n_iR_i$, where the sum runs over all irreducible
representations $R_i$ of $\Gamma$, and $n_i$ is the dimension of
$R_i$.  So the decomposition of ${\cal B}^*$ is
\begin{equation}\label{omilc}{\cal B}^*\to \bigoplus_i n_i {\cal
B}_i.\end{equation}   In the physics literature, this
decomposition was first analyzed in \cite{DM}, and the branes
${\cal B}_i$ are usually called {\em fractional branes}.

What can this decomposition mean for mirror symmetry or geometric
Langlands duality?  For $r^*$ a smooth point, the brane ${\cal B}^*$
is irreducible, and is mapped by duality to an $A$-brane $\A^*$
supported on the appropriate fiber $\FF^*$ of the Hitchin fibration of
$G$.  $\FF^*$ is irreducible as a Lagrangian submanifold, so the
corresponding brane is irreducible as an $A$-brane.  When $r^*$ is set
to $r$, the brane ${\cal B}^*$ decomposes as a sum of $B$-branes, so
the dual $A$-brane $\A^*$ must also decompose as a sum of $A$-branes,
\begin{equation}\label{omilc1}{\cal A}^*\to \bigoplus_i n_i {\cal
A}_i.\end{equation}

It is attractive if this decomposition occurs geometrically.  (In
fact, we do not know of any other way that it might occur.)  In
Sections \ref{genus one} and \ref{spectral}, we will show in detail
how a geometrical decomposition occurs for $^L\neg G=SO_3$. In this
example, $\MH(C;{}^L\neg G)$ contains $A_1$ singularities,
corresponding to local systems with automorphism group $\Z_2$. (These
are the simplest examples of endoscopic local systems.) Let $r$ be one
of those singularities. Up to isomorphism, there are two irreducible
branes supported at $r$, say $\B_+$ and $\B_-$, corresponding to
one-dimensional representations of $\Z_2$ in which the non-trivial
element acts by $+1$ or $-1$. The decomposition (\ref{omilc}) reads
\begin{equation}\label{imilc}\B^*\to \B_+\oplus
\B_-.\end{equation} Correspondingly, the relevant Hitchin fiber
for $G=SL_2$ should decompose as a sum of two components.

We will see this explicitly for $G=SL_2$ in \secref{genus one} in
genus one and in \secref{spectral} in higher genus.  Above just
those points in the base $\BB$ of the Hitchin fibration at which the
$^L\neg G=SO_3$ moduli space contains a singularity, the fiber of
the Hitchin fibration for $G=SL_2$ decomposes as a union of two
components $\FF_1$ and $\FF_2$, meeting at two double points. In the
example studied in \secref{genus one} (genus one with one
ramification point), $\FF_1$ and $\FF_2$ are both isomorphic to
$\mbb{CP}^1$.  They are smooth and are smoothly embedded in
$\MH(C;G)$ and are each $A$-branes in their own right. Our proposal
is that {\em the two fractional branes $\B_+$ and $\B_-$ supported
at the orbifold singularity $r\in \MH(C;{}^L\neg G)$ map under
duality to the $A$-branes $\A_1$ and $\A_2$ supported on $\FF_1$ and
$\FF_2$}. (Which of $\B_+$ and $\B_-$ maps to $\A_1$ and which to
$\A_2$ is a slightly subtle question that will be discussed in
Section \ref{gerbes}.)

\newpage

\vspace*{-30mm}

\begin{center}
\setlength{\unitlength}{0.8mm}
\begin{picture}(30,30)(-70,90)\label{kart}
\allinethickness{1.5pt}
\put(-24.5,35){\circle*{2}}     
\put(-5,35){\circle{40}}
\put(5,35){\circle{60}}

\thinlines
\qbezier[15](15,35)(28,40)(35,35)
\qbezier(15,35)(22,30)(35,35)
\qbezier[15](9,49)(12,58)(20,60)
\qbezier(9,49)(18,52)(20,60)
\qbezier[15](9.5,20.5)(18,18)(20,10)
\qbezier(9.5,20.5)(12,12)(20,10)

\end{picture}


\begin{picture}(90,90)(40,0)
\linethickness{1pt}
\qbezier(45,0)(-60,35)(45,70)
\qbezier(45,0)(10,35)(45,70)
\qbezier(45,0)(150,35)(45,70)
\qbezier(45,0)(80,35)(45,70)
\thinlines
\qbezier[30](-7.5,35)(15,45)(27.5,35)
\qbezier(-7.5,35)(5,25)(27.5,35)
\qbezier[20](10,15)(25,20)(33.5,15)
\qbezier(10,15)(18,10)(33.5,15)
\qbezier[20](10,55)(25,60)(33.5,55)
\qbezier(10,55)(18,50)(33.5,55)
\qbezier[30](62.5,35)(85,45)(97.5,35)
\qbezier(62.5,35)(75,25)(97.5,35)
\qbezier[20](57,15)(72,20)(80,15)
\qbezier(57,15)(65,10)(80,15)
\qbezier[20](57,55)(72,60)(80,55)
\qbezier(57,55)(65,50)(80,55)
\end{picture}
\end{center}

\noindent
\qquad \hspace*{15mm}{\small Singular Hitchin fiber in} \hspace*{39mm}
{\small Singular Hitchin fiber in}

\noindent
\qquad \hspace*{15mm}{\small the $A$-model, $G=SL_2$.} \hspace*{40mm}
  {\small the $B$-model, $^L\neg G=SO_3$.}

\vspace*{10mm}

Though the detailed analysis in this paper will be limited to
$\LG=SO_3$, $G=SL_2$, our conjecture is that a similar geometric
description of endoscopy holds for all groups (see \secref{other}).

\subsection{From $A$-Branes To ${\cal D}$-Modules}\label{dmodules}

What we have discussed so far are $A$-branes on $\MH(C;G)$ that are
dual to $B$-branes on $\MH(C;{}^L\neg G)$.  However, the geometric
Langlands dual of a $B$-brane is usually described not as an $A$-brane
but as a twisted ${\cal D}$-module on $\M$, the moduli space of
stable $G$-bundles on the curve $C$ (and, more generally, on $\Bun_G$,
the moduli stack of $G$-bundles on $C$).

The two viewpoints were reconciled in section 11 of \cite{KW}. The
most familiar $A$-branes are branes supported on a Lagrangian
submanifold (such as a fiber of the Hitchin fibration), endowed with
a flat unitary connection. However, the $A$-model on a symplectic
manifold $X$ may in general \cite{KO} have additional branes,
supported on coisotropic submanifolds whose dimension may exceed
half the dimension of $X$. In particular, let $X=T^*Y$ be the
cotangent bundle of a complex manifold $Y$, with the natural
holomorphic two-form $\Omega$. Consider the $A$-model of $X$ with
symplectic form $\omega={\rm Im}\,\Omega$.  The $A$-model admits a
special brane, the canonical coisotropic $A$-brane $\A_{cc}$, whose
support is all of $X$ and whose existence bridges the gap between
$A$-branes and $\D$-modules. The endomorphisms of $\A_{cc}$ (in
physical terms, the $\A_{cc}-\A_{cc}$ strings) can be sheafified
along $Y$ to give a sheaf of rings.  This sheaf of rings is the
sheaf of differential operators acting on $K_Y^{1/2}$, where $K_Y$
is the canonical bundle of $Y$.  We write $\D^*$ for the sheaf of
such differential operators, and refer to a sheaf of modules for
this sheaf of rings (or its generalization introduced below) as a
twisted $\D$-module. Now if $\A$ is any other $A$-brane, then ${\rm
Hom}(\A_{cc},\A)$ can be sheafified along $\M$ to give a twisted
$\D$-module.  The association $\A \to {\rm Hom}(\A_{cc},\A)$ gives a
functor from the category of $A$-branes to the category of twisted
$\D$-modules.

To apply this to $A$-branes on $\MH$, we note that although $\MH$ is
not quite a cotangent bundle, it has a Zariski open set that can be
identified with $T^*\M$, where $\M$ is the moduli space of stable
bundles. An $A$-brane $\A$ on $\MH$ can be restricted to $T^*\M$,
and this restriction is non-empty for dimensional reasons.  Then we
can apply the above construction and associate to $\A$ a twisted
$\D$-module on $\M$.

This construction has an analog in which $X$ is not the cotangent
bundle of $Y$ but an affine symplectic deformation of one. This
means that $X\to Y$ is a bundle of affine spaces, with a holomorphic
symplectic form $\Omega$, such that locally along $Y$, $X$ is
equivalent to $T^*Y$. Such an $X$ is obtained by twisting $T^*Y$ by
an element\footnote{To construct $X$, one pulls back $\chi$ to
$H^1(T^*Y,\Omega^{1,{\cl}}(T^*Y))$, and uses the symplectic form of
$T^*Y$ to map this pullback to $H^1(T^*Y,T(T^*Y))$, which classifies
deformations of $T^*Y$.  The resulting deformation is symplectic
because we start with $\Omega^{1,\cl}$.} $\chi\in
H^1(Y,\Omega^{1,{\cl}})$, where $\Omega^{1,{\cl}}$ is the sheaf of
closed one-forms on $Y$. In this situation, one can still
define\footnote{\label{pp} It is necessary to require that the
cohomology class of ${\rm Re}\,\chi$ is equal to a quantum parameter
called $\eta$ in \cite{GW}.  Except in \secref{zame}, we emphasize
the classical picture and suppress the role of $\eta$.} the
canonical coisotropic brane $\A_{cc}$, and the endomorphisms of
$\A_{cc}$ can still be sheafified along $Y$. The sheaf of rings we
get is now the sheaf of differential operators acting on
$K_Y^{1/2}\otimes \L$, where\footnote{In general, the cohomology
class of $\chi$ is not integral, so $\L$ may be the complex power of
a line bundle (or a tensor product of such) rather than an ordinary
complex line bundle. But the sheaf of differential operators acting
on $\L$, or on $K_Y^{1/2}\otimes \L$, still makes sense.} $\L$ is a
``line bundle'' with $c_1(\L)=\chi$. Hence now we get a functor from
$A$-branes on $Y$ to modules for $\D^*(\L)$, the sheaf of
differential operators acting on $K_Y^{1/2}\otimes \L$.

This construction was applied in \cite{KW} to what mathematically is
known as quantum geometric Langlands.  (Here it is necessary to
consider $X=\MH$ in a complex structure obtained by a hyper-Kahler
rotation of $I$.) More relevant for our purposes, it was applied in
\cite{GW} to the ramified case of geometric Langlands. Here one uses
the fact that in the ramified case, a Zariski open set in $\MH$ can be
identified with an affine symplectic deformation of $T^*\M$.
(This is described in detail for our example in section
\ref{compcar}.) For the ramified case of geometric Langlands, this
construction leads to the following statement: there is a natural
functor from $A$-branes of $\MH(C;G)$ to $\D^*(\L)$-modules on $\M$,
where $\M$ is now the moduli space of stable parabolic $G$-bundles on
$C$ and the first Chern class of $\L$ is the logarithm of the
monodromy of the dual $^L\neg G$ local system.

\subsubsection{Relation To A Local System}\label{furpro}

Now let us discuss how the twisted $\D$-modules arising from
$A$-branes in this situation may be related to local systems. We let
$X$ be $T^*Y$ or an affine deformation thereof, and let $\pi:X\to Y$
be the projection. Let $L$ be a compact (complex) Lagrangian
submanifold of $X$ such that the map $\pi:L\to Y$ is an $n$-fold
cover.  Then the functor from $A$-branes to twisted $\D$-modules is
expected to map a rank 1 $A$-brane supported on $L$ to a local
system on $Y$ of rank $n$ (twisted by $K_Y^{1/2}$), or in other
words to a rank $n$ complex vector bundle $V\to Y$ with a flat
connection (or a connection of central curvature in case of an
affine deformation).

This has an important generalization if $L$ is closed in $X$ but not
compact, and the map $\pi:L\to Y$ is generically $n$ to 1, but is of
lower degree on a divisor. This happens if, intuitively, some
branches of $L$ go to infinity over the divisor in question. For
simplicity, suppose  that $Y$ is a curve, as is actually the case in
the example of \secref{genus one}.  Let $u$ be a local parameter on
$Y$, and let $s$ be a function linear on the fibers of $X\to Y$ such
that locally along $Y$ the symplectic form of $X$ is
$\Omega=du\wedge ds$. Then $L$ can be described locally by an
$n$-valued function $s(u)$ and the situation that we are interested
in is that some branches of this function are singular at a point
$r\in Y$ corresponding to, say, $u=u_0$.  We assume that the $n$
branches look like
\begin{equation}\label{normo}s_i(u)\sim  c_i
(u-u_0)^{-d_i},~i=1,\dots,n.\end{equation} (The $d_i$ are not
necessarily integers, since the map $\pi:L\to Y$ may be ramified at
$u=u_0$.)  In this situation, an $A$-brane supported on $L$ will map
to a twisted $\D$-module on $Y$ that is represented by a local
system $V\to Y\backslash \{r\}$ with a singularity at $r$. The
nature of the singularity is largely determined by the $d_i$ and
$c_i$.  For example, the condition for a regular singularity is that
$d_i\leq 1$, and the monodromies at a regular singularity are then
largely determined by the $c_i$.  For this reason, we compute the
$c_i$ and $d_i$ for our example in eqn. (\ref{gesto}). In
\secref{genus one A to D}, we explain how these coefficients are
expected to be related to the singularities of the local system. We
also describe the generalization to higher dimensions.

\subsubsection{Eigenbranes and Eigensheaves}
Mirror symmetry of $\MH$ has many special properties related to its
origin in four dimensions.  For example, the Wilson and 't Hooft
line operators of four-dimensional gauge theory can be reinterpreted
in two-dimensional terms \cite{KW} and are essential for
understanding the Hecke operators of the geometric Langlands
program.

The correspondence from $A$-branes to ${\mc D}$-modules  should map an
$A$-brane which is an eigenbrane of the 't Hooft operators to a ${\mc
D}$-module on ${\mc M}$ (and more generally, on $\Bun_G$, the moduli
stack of $G$-bundles on our curve $C$) which is a Hecke eigensheaf.
These Hecke eigensheaves are the main objects of interest in the
geometric Langlands correspondence (in its usual formulation). The
geometric Langlands conjecture predicts that to each $\LG$-local
system $\E$ on $C$, one may associate a category ${\mc Aut}_E$ of
Hecke eigensheaves on $\Bun_G$. In particular, if the group of
automorphisms of the local system is trivial, then it is expected
that this category is equivalent to the category of vector spaces.
In other words, it contains a unique irreducible object, and all
other objects are direct sums of copies of this object. The
challenge is to describe what happens for local systems with
non-trivial groups of automorphisms.

However, this is rather difficult to do using the language of ${\mc
D}$-modules. In those cases in which Hecke eigensheaves have been
constructed explicitly (for instance, for $G=SL_n$), their structure
is notoriously complicated. This makes it  difficult to extract
useful information. The language of $A$-branes, on the other hand,
is much better adapted to analyzing the structure of the
corresponding categories of eigenbranes. As discussed above, the
generic eigenbranes are unitary flat local systems on the smooth
Hitchin fibers, and the special eigenbranes associated with
endoscopy are supported on the singular Hitchin fibers. It turns
out that one can describe these singular fibers, and hence the
corresponding eigenbranes, very explicitly. For instance, we observe
that the eigenbranes supported on the singular fibers break into
pieces, the ``fractional branes'' discussed earlier. Furthermore,
these fractional branes satisfy a certain modification of the
eigenbrane property discussed in \secref{Hecke} (the fractional
eigenbrane property).

We then translate these results to the language of ${\mc
D}$-modules. Thus, if an eigenbrane ${\mc A}$ decomposes into two
irreducible branes ${\mc A}_1$ and ${\mc A}_2$, then it is natural to
predict that the corresponding Hecke eigensheaf ${\mc F}$ will also
decompose as a direct sum of two ${\mc D}$-modules, ${\mc F}_1$ and
${\mc F}_2$, corresponding to ${\mc A}_1$ and ${\mc A}_2$,
respectively.  Furthermore, these two ${\mc D}$-modules should then
satisfy a fractional Hecke eigensheaf property described in
\secref{categories}.

The upshot of all this is that by analyzing the categories of $A$-branes
supported on the singular Hitchin fibers, we gain a lot of insight
into the geometric Langlands correspondence, and, hopefully, even into
the classical Langlands correspondence for curves over finite
fields. We will describe this in detail in explicit examples presented
below.

\section{Explicit Example In Genus One}    \label{genus one}

\subsection{Higgs Bundles In Genus One}\label{higgsbun}
 To construct an explicit example in which we can see
the geometric analog of endoscopy, we take $G=SL_2$, and we work on
a Riemann surface $C$ of genus $g_C=1$, with a single point $p$ of
ramification. One might think that ramification would bring an extra
complication, but actually, the case of genus 1 with a single
ramification point is particularly simple, and has often been
considered in the literature on Hitchin fibrations. For $g_C\geq 2$,
the relevant moduli spaces have higher dimension and explicit
computation is difficult; for $g_C\leq 1$, the fundamental group of
$C$ is abelian (or trivial), which in the absence of ramification
leads to complications, unrelated to endoscopy, that we prefer to
avoid here.

As we explain in \secref{Hecke}, for $g_C>1$, though explicit
computation is difficult, the method of spectral curves is a
powerful substitute.  But we prefer to begin with the case of
$g_C=1$ for which everything can be computed directly.

First we describe Higgs bundles without ramification on a Riemann
surface $C$ of genus 1. We begin with ordinary $SL_2$ Higgs
bundles, that is pairs $(E,\varphi)$ where $E$ is a rank 2 bundle
of trivial determinant. If $E$ is semi-stable, it must have the
form
\begin{equation} E=\L\oplus\L^{-1}\end{equation}
where $\L$ is a complex line bundle of degree 0.  If $\L$ is
non-trivial, $\phi$ must be in this basis
\begin{equation} \phi=\begin{pmatrix} a & 0 \\ 0 & -a
\end{pmatrix}\end{equation}
where $a$ is an ordinary holomorphic differential on $C$. The
choice of $\L$ is parametrized by a curve $C'$ (the Jacobian of
$C$) that is isomorphic to $C$, and the space of holomorphic
differentials is one-dimensional.  So we have constructed a family
of semi-stable Higgs bundles parametrized by $C'\times \C $, where
the choice of $\L$ gives a point in $C'$ and the choice of $a$
gives a point in $\C$.  (We have implicitly picked a particular
holomorphic differential on $C$ to identify the space of such
differentials with $\C$.) However, replacing $\L$ by $\L^{-1}$ and
changing the sign of $a$ gives back the same Higgs bundle, up to
isomorphism.  This operation can be understood as a gauge
transformation
\begin{equation}\begin{pmatrix}0 & 1\\ -1& 0
\end{pmatrix}.\end{equation}
So we can take the quotient by $\Z_2$ and we get a family of
semi-stable Higgs bundles parametrized by $(C'\times \C)/\Z_2$. This
actually is the moduli space  of rank two semi-stable Higgs bundles
over $C$ of trivial determinant.

\subsection{Ramification}\label{ram}
Now let us incorporate ramification.  In the context of Higgs
bundles, ramification means \cite{Sim} that $\varphi$ may have a
pole at a prescribed point $p\in C$ (or more generally at several
such points) and with a prescribed characteristic polynomial of
the polar part. We will consider the case of a simple pole. In
addition, in the fiber $E_p$ of the bundle $E$ at $p$, one is
given a $\varphi$-invariant parabolic structure, that is, a flag
that is invariant under the action of the polar part of $\varphi$.
This flag, moreover, is endowed with parabolic weights.  The whole
structure can be described uniformly by adapting Hitchin's
equations  to incorporate singularities. The behavior near $p$ is
determined by parameters $\alpha, \beta,\gamma$ valued in the Lie
algebra $\mf t$ of a maximal torus of the compact form of $G$.
Let $z$ be a local parameter near $p$ and write $z=re^{i\theta}$.
In the notation of \cite{GW}, the local behavior near $p$ is
\begin{align} \label{horseg} \notag  A & = \alpha \, d\theta+\dots\\
\phi& = \beta\frac{dr}{r}-\gamma d\theta+\dots.  \end{align}

Since the Higgs field $\varphi$ is simply the $ (1,0)$ part of
$\phi$, these equations immediately determine its polar behavior,
up to gauge-equivalence or conjugacy.  One has $\varphi\sim \sigma
\,dz/z$, where $\sigma=(\beta+i\gamma)/2$. The $\varphi$-invariant
parabolic structure is completely determined by $\varphi$ and a
choice of Borel subgroup containing $\sigma$. If $\sigma$ is
regular, as we will generally assume, there are only finitely many
choices of Borel subgroup containing $\sigma$, and a choice can be
made by ordering the eigenvalues of $\sigma$.  Consequently, for
studying ramified Higgs bundles with regular $\sigma$, the
parabolic structure (or the choice of $\alpha$) need not be
specified explicitly.

Similarly, we can determine the monodromy around $p$ of the local
system with connection $\CA=A+i\phi$. It is
\begin{equation}\label{monodromy}
M=\exp(-2\pi(\alpha-i\gamma)).\end{equation}

Now let us specialize to the case of a genus 1 curve $C$ with one
ramification point.  It is convenient to describe $C$ explicitly
by an algebraic equation
\begin{equation} y^2=f(x)\end{equation}
where we can take $f$ to be of the form
\begin{equation}\label{curl}f(x)=x^3+ax+b\end{equation}
and interpret $p$ as the point at infinity. We assume that $f$ has
distinct roots, so that $C$ is smooth.

To explicitly describe ramified Higgs bundles $(E,\varphi)$, we
first, as before, take $E=\L\oplus \L^{-1}$, where $\L$ is of
degree zero. We can take $\L=\CO(p)\otimes \CO(q)^{-1}$, where $q$
is some point in $C$, corresponding to $(x,y)=(x_0,y_0)$.  A Higgs
field will be of the form
\begin{equation}\label{conagain} \varphi=\begin{pmatrix} h & k \\ g &
-h\end{pmatrix},\end{equation} where $h$ is a section of $K\otimes
\O(p)$, $k$ is a section of $K\otimes \O(p)\otimes \L^2$, and $g$
is a section of $K\otimes \O(p)\otimes \L^{-2}$.  Of course, for
$C$ of genus 1, $K$ is trivialized by the differential $dx/y$. The
relevant line bundles all have one-dimensional spaces of
holomorphic sections, and the general forms for $k, g$, and $h$
are
\begin{align}\label{cute}\notag h & = \frac{dx}{  y}h_0  \\
              k & = \frac{dx}{ y}
              k_0\left((y-y_0)-\frac{f'(x_0)(x-x_0
              )}{2y_0}\right)\\ \notag
              g&=\frac{dx}{ y}g_0\left((y+y_0)+\frac{f'(x_0)(x-x_0
              )}{2y_0}\right)\frac{1}{(x-x_0)^2}.\end{align}
where $h_0,k_0$, and $g_0$ are complex constants, and the formulas
were obtained as follows. The formula for $h$ requires no
explanation. After trivializing $K$, $k$ is supposed to be a
holomorphic section of $\O(p)\otimes \L^2=\O(p)^3\otimes
\O(q)^{-2}$, so we have looked for a function that is holomorphic
except for a triple pole at infinity, and moreover has a double
zero at $q$.  Such a function is $(y-y_0)-f'(x_0)(x-x_0)/2y_0$.
Similarly, $g$ is supposed to be a holomorphic section of
$\O(p)\otimes \L^{-2}=\O(p)^{-1}\otimes \O(q)^2$, so we have
looked for a function that is holomorphic except for a double pole
at $q$ and vanishes at infinity.  Such a function is
$\left((y+y_0)+\frac{f'(x_0)(x-x_0
              )}{2y_0}\right)/(x-x_0)^2$.

The next step is to evaluate the characteristic polynomial of
$\varphi$.  For $G=SL_2$, this simply means that we should
compute $\Tr\,\varphi^2$, which in the present case turns out to
be
\begin{equation}\Tr\,\varphi^2=\left(\frac{dx}{y}\right)^2
\left(2h_0^2+2k_0g_0
\left(x+2x_0-\frac{f'(x_0)^2}{4f(x_0)}\right)
\right)
\end{equation}
The polar part of $\Tr\,\varphi^2$ is simply $(dx/y)^2 2xk_0g_0$,
which has a double pole at infinity.  If $z$ is a local parameter
at infinity, we have $x\sim z^{-2}$, $ y\sim z^{-3}$ and
$(dx/y)^2x\sim 4(dz/z)^2$.  The polar part of $\varphi $ is
supposed to be conjugate to $\sigma dz/z$, so we want
$\Tr\,\varphi^2\sim \Tr\,\sigma^2(dz/z)^2=2\sigma_0^2(dz/z)^2$,
where we denote the eigenvalues of $\sigma$ as $\pm \sigma_0$.  So
we set $k_0g_0=\sigma_0^2/4$, and write
\begin{equation}\label{ulg}\Tr\,\varphi^2=
\left(\frac{dx}{y}\right)^2\left(2h_0^2+\frac{\sigma_0^2}{2}
\left(x+2x_0-\frac{f'(x_0)^2}{4f(x_0)}\right) \right)
\end{equation}
The reason that only the product $k_0g_0$ is determined is that
the bundle $E=\L\oplus \L^{-1}$ has an automorphism group
$\C^\times$, acting on the two summands as multiplication by $
\lambda$ and $\lambda^{-1}$ respectively, and transforming $k_0$
and $g_0$ by $k_0\to \lambda^2k_0$, $g_0\to \lambda^{-2}g_0$.

The formula (\ref{cute}) breaks down if $y_0=0$ (because $y_0$
appears in the denominator in the formulas for $k$ and $g$) or
$y_0=\infty$ (since the formulas for $k$ and $g$ also contain
terms linear in $y_0$), or equivalently if $\L$ is of order 2.
This happens because when $\L$ is of order 2, we have
$\L\cong\L^{-1}$ and (if $\sigma\not=0$) there does not exist a
stable or semi-stable ramified Higgs bundle $(E,\varphi)$ with
$E=\L\oplus \L^{-1}=\L\oplus \L$. Indeed, with that choice of $E$,
a holomorphic section of $K\otimes {\rm ad}(E)\otimes \O(p)\cong
K\otimes \O(p)^{\oplus 3}$ cannot have a pole at $p$, and hence
the condition $\Tr\,\varphi^2\sim 2\sigma_0^2(dz/z)^2$ cannot be
obeyed. Instead, if $\L$ is of order 2, and $\sigma\not=0$, one
must take $E$ to be a non-trivial extension of $\L$ by $\L$.

\subsection{The Moduli Space}\label{modspace}

For each choice of the parameter $\sigma_0^2$, we have constructed
a family of ramified Higgs bundles.  The underlying bundle is
$E=\L\oplus \L^{-1}$, and the Higgs field $\varphi$ has been
described above. The choice of $E$ depends on the point $q$ or
equivalently the pair $(x_0,y_0)$ (with $y_0^2=f(x_0)$), and the
choice of $\varphi$ depends additionally on the parameter $h_0$.
To construct in this situation the moduli space $\MH$ of ramified
Higgs bundles, we must take account of the exchange $\tau:\L\to
\L^{-1}$, which acts by $(x_0,y_0,h_0)\to (x_0,-y_0,-h_0)$.

The pair $(q,h_0)$ or triple $(x_0,y_0,h_0)$ defines a point in
$C\times \C$, and after allowing for the symmetry $\tau$, it seems
that the moduli space is $(C\times \C)/\Z_2$, independent of
$\sigma_0$.  This is actually not quite correct, because of the
point made at the end of Section \ref{ram}.  The space $(C\times
\C)/\Z_2$ has four $A_1$ singularities, at points with $h_0=0$ and
$q$ of order 2.   For $\sigma\not=0$, the $A_1$ singularities are
deformed and the moduli space becomes smooth.  We have not seen the
deformation because our analysis does not cover the case that $q$ is
of order 2.  If $\sigma=0$ and $\alpha\not=0$, the $A_1$
singularities are resolved rather than deformed; from a hyper-Kahler
point of view, the phenomenon, for generic values of $\sigma$ and
$\alpha$, is really a simultaneous deformation and resolution, as in
\cite{Kr}.  In this paper, we will not describe this deformation or
resolution directly, but in equation (\ref{elmy}) below, the
deformed moduli space is described, with the aid of the Hitchin
fibration.

Perhaps we should make a comment here on the role of ramification in
genus 1. For $g_C>1$, a generic semi-stable bundle is actually stable,
but for $g_C=1$ (and no ramification) and simply-connected $G$, there
are no strictly stable bundles. The closest one can come is a
semi-stable bundle, such as $E=\L\oplus \L^{-1}$ for
$G=SL_2$. Likewise, in general $\MH$ parametrizes stable and
semi-stable Higgs bundles. For $g_C=1$, unramified Higgs bundles are at
best semi-stable, so $\MH$ actually parametrizes semi-stable Higgs
bundles. The situation changes with ramification. Semi-stable Higgs
bundles with one ramification point are actually stable (if
$\sigma\not=0$). The reason for this is that the potential
destabilizing sheaves of $E=\L\oplus \L^{-1}$ (namely the summands
$\L$ and $\L^{-1}$) are not $\varphi$-invariant, and so do not
contradict stability of the Higgs bundle $(E,\varphi)$.

\subsection{The Hitchin Fibration}\label{hitchfib}

Next we need to understand the Hitchin fibration.  The Hitchin
fibration is simply the map that takes a Higgs bundle
$(E,\varphi)$ to the characteristic polynomial of $\varphi$.  For
$G=SL_2$, this characteristic polynomial reduces to
$\Tr\,\varphi^2$. In the present context, according to
(\ref{ulg}), that characteristic polynomial is a polynomial in $x$
of degree 1 (times $(dx/y)^ 2$).  The coefficient of the linear
term in $x$ is fixed, and the Hitchin fibration is the map that
extracts the constant term, which we will call $w_0$. In other
words, the Hitchin fibration maps the Higgs bundle $(E,\varphi)$
to
\begin{equation}\label{zelg}w_0= 2h_0^2+\frac{\sigma_0^2}{2}
\left(2x_0-\frac{f'(x_0)^2}{4f(x_0)}\right).\end{equation}

 The
fiber of the Hitchin fibration is described by variables
$x_0,y_0,h_0$ obeying (\ref{zelg}) and
\begin{equation}\label{gelg}y_0^2=f(x_0)\end{equation}
 and subject also to the
$\Z_2$ symmetry $(x_0,y_0,h_0)\to (x_0,-y_0,-h_0)$.  Apart from
$x_0$, the $\Z_2$ invariants are $y_0^2 $, $h_0^2$, and
\begin{equation}\label{helpme}
\rho=(2/\sigma_0)y_0h_0.\end{equation} (The factor of $2/\sigma_0$
has been included for convenience.) Of these, $y_0^2$ and $h_0^2$
are equivalent to rational functions of $x_0$ according to the
last two equations, and so can be omitted, while $\rho$ obeys a
quadratic equation. If henceforth we write $u$ for $x_0$, and set
$w_0=-w\sigma_0^2/2$, then the equation obeyed by $\rho$ is
\begin{equation}\label{elmy}\rho^2=-
(2u+w)f(u)+\frac{f'(u)^2}{4}.\end{equation}

This equation for complex variables $\rho,u,w$ describes a complex
surface which is the moduli space $\MH$ of ramified $SL_2$ Higgs
bundles. (Some points at $u,\rho=\infty$ are omitted in this way
of writing the equation.) There is a simple explanation for why it
does not depend on the parameters $\alpha,\beta,\gamma$ that
characterize ramification.  Complex structure $I$ does not depend
on $\alpha$, and as long as there is only one ramification point,
$\sigma=(\beta+i\gamma)/2$ can be eliminated by rescaling
$\varphi$ (provided it is not zero), as we have done.

If we set $w$ to a fixed complex number, we get an algebraic curve
$\FF_w$ which is the fiber of the Hitchin fibration. The right hand
side is a quartic polynomial in $u$ and, for generic $w$, $\FF_w$ is
a smooth curve of genus 1.

When is $\FF_w$ singular?  This occurs precisely if two roots of the
polynomial $g(u)=(2u+w)f(u)-f'(u)^2/4$ coincide, or in other words
if its discriminant vanishes. Let $e_1, e_2, $ and $e_3$ be the
roots of the polynomial $f(u)=u^3+au+b$.  (Of course,
$e_1+e_2+e_3=0$, and we assume that the $e_i$ are distinct so that
$C$ is smooth.) The discriminant of $g$ is
$(e_1-e_2)^2(e_2-e_3)^2(e_3-e_1)^2(w-e_1)^2(w-e_2)^2(w-e_3)^2$, so
 $\FF_w$ is singular precisely if $w$ is equal to one of the $e_i$.
For example, if $w=e_1$, we find that
\begin{equation}\label{gelmy}
-(2u+w)f(u)+\frac{f'(u)^2}{4}=\frac{1}{4}\left((u-e_1)^2-(e_1-e_2)(e_1-e_3)
\right)^2.\end{equation} Of course, there are similar formulas if
$w=e_2$ or $w=e_3$. The fact that the left hand side of
(\ref{gelmy}) is a perfect square means that at $w=e_1$, the curve
$\FF_{w}$ splits as a union of two components $\FF_{e_1}^\pm$ defined by
\begin{equation}\label{zelmy}\rho=\pm\frac{1}{2}
\left((u-e_1)^2-(e_1-e_2)(e_1-e_3) \right).\end{equation}

The curves $\FF_{e_1}^\pm$ are each of genus 0.  They meet at the
two points given by
\begin{equation}\label{kondo}\rho=0,~~u=e_1 \pm\sqrt{(e_1-e_2)(e_1-e_3)}.
\end{equation}
A pair of genus 0 curves meeting at two double points gives a curve of
arithmetic genus 1.  A smooth curve of genus 1 can degenerate to such
a singular curve.  This is the behavior of the Hitchin fibration in
our example at the three fibers $w=e_1,e_2$, and $e_3$. These singular
fibers are shown on the left picture on page \pageref{kart}.  The fact
that there are two double points when $w$ equals one of the $e_i$ is
the reason that the discriminant of $g$ has a double zero at those
values of $w$.

Although the fiber $\FF_w$ of the Hitchin fibration is singular when
$w=e_i$, the moduli space $\MH$ of stable ramified $SL_2$ Higgs
bundles over $C$ is actually smooth.  Indeed, since $f\not=0$ at
the points $\rho=0$, $w=e_i$, the polynomial
$\rho^2+(2u+w)f(u)-f'(u)^2/4$ whose vanishing characterizes $\MH$
has a nonzero differential at those points.  So $\MH(C;SL_2)$ is
smooth even though the fibers of the Hitchin fibration are
singular.

What we have found is precisely the behavior that was promised in
\secref{braneduals}.  Certain fibers of the Hitchin fibration split
up as the union of two components $\FF^\pm$, leading to a
decomposition of $A$-branes.  In \secref{struc}, we explain how this
is related to singularities of the moduli space for the dual group.

\subsection{Symmetry Group}\label{symgroup}

An $SL_2$ Higgs bundle $(E,\varphi)$ can be twisted by a line
bundle $\N$ of order 2.  To be more precise, this operation is
$E\to E\otimes \N$, $\varphi\to\varphi$.  For $C$ of genus 1, the
group of line bundles of order 2 is $Q=\Z_2\times \Z_2$, and this
group must act on the moduli space $\MH$ of ramified stable Higgs
bundles.

The underlying bundle $E$ is $E=\L\oplus \L^{-1}$, where modulo the
exchange $\L\leftrightarrow \L^{-1}$, $\L$ is parametrized by
$u=x_0$.  The group $Q$ acts on $E$ by holomorphic automorphisms of
the moduli space of semi-stable bundles.  Such an automorphism
is a fractional linear transformation of the $u$-plane.  Moreover,
the condition for $\L$ to be of order 2 is invariant under the
action of $Q$.

$\L$ is trivial if $u=\infty$ and is a nontrivial line bundle of
order 2 if $u=e_1,e_2, $ or $e_3$.  Twisting with a line bundle of
order 2 therefore exchanges $u=\infty$ with one of the values
$u=e_i$ while also exchanging the other two $e$'s.  For example,
there is an element $T_1\in Q$ that exchanges $\infty$ with $e_1$
and also exchanges $e_2$ with $e_3$.  It acts by
\begin{align}\notag  u& \to \frac{e_1u+e_2e_3-e_1e_3-e_1e_2}{u-e_1}\\
               w& \to w\\ \notag
               \rho& \to
               -\frac{(e_1-e_2)(e_1-e_3)}{(u-e_1)^2}\rho.\end{align}
The fact that $w$ is invariant reflects the fact that $\varphi$,
and therefore its characteristic polynomial, is invariant under
twisting by a line bundle of order 2.  The sign in the
transformation of $\rho$ is not obvious (the equation (\ref{elmy})
that defines $\MH$ is invariant under $\rho\to -\rho$) and can be
determined from the fact that the holomorphic two-form $\Omega_I$
of $\MH$ is $Q$-invariant. $Q$ also has elements $T_2$ and $T_3$
that are obtained by cyclic permutation of $e_1,e_2,e_3$. These
are the three non-trivial elements of $Q$.

Since $w$ is $Q$-invariant, the group $Q$ acts on each fiber $\FF_w$
of the Hitchin fibration.  Let us describe the action on the
singular fiber at, say, $w=e_1$.  A short calculation with the
above formulas shows that $T_1$ maps each component $\FF^\pm_{e_1}$
to itself, and leaves fixed the two double points of eqn.
(\ref{kondo}). The curves $\FF^\pm_{e_1}$ have genus zero, and $T_1$
acts on each of these genus zero curves as a fractional linear
transformation with two fixed points, namely the double points.
(For an involution of $\mbb{CP}^1$ with two fixed points, consider
the transformation $z\to -z$ of the complex $z$-plane, with fixed
points at 0 and $\infty$.) On the other hand, $T_2$ and $T_3$
exchange the two components $\FF^\pm_{e_1}$ and also exchange the
two double points. This being the case, $T_2$ and $T_3$ act freely
on the singular curve $\FF_{e_1}$, and therefore also on all nearby
fibers of the Hitchin fibration.

\remark $Q$ acts on each fiber  $\FF_w$ of the Hitchin fibration by
translation by the group of points of order 2.  This is clear in the
spectral curve construction that is described in \secref{Hecke}.
Hence when $\FF_w$ is smooth, $Q$ acts freely.  When $\FF_w$ is
degenerating to a union of two components $\FF^\pm_w$, it has a
``short'' direction corresponding to a one-cycle that collapses at a
double point and a complementary ``long'' direction.  $T_2$ and
$T_3$ are translations in the long direction; they act freely and
exchange the two double points and the two components.  $T_1$ is a
translation in the short direction, maps each component to itself,
and has the double points as fixed points.

\subsection{Langlands Dual Group}\label{struc}

The Langlands dual group of $G=SL_2$ is $^L\neg G=SO_3$ or
equivalently $PGL_2$. We would therefore also like to understand
Higgs bundles on $C$ with structure group $SO_3$.  These are closely
related to $SL_2$ Higgs bundles, because $SO_3=SL_2/\Z_2$ is the
adjoint form of $SL_2$.

We will let $W$ denote a holomorphic $SO_3$ bundle, that is a rank
three holomorphic bundle with a nondegenerate holomorphic quadratic
form and volume form.  Since the three-dimensional representation of
$SO_3$ is the adjoint representation, we need not distinguish
between $W$ and the corresponding adjoint bundle.

The moduli space of $SO_3$ Higgs bundles $(W,\varphi)$ over a
Riemann surface $C$ has two components, distinguished\footnote{See
remark \ref{secondremark} for a more precise statement.} by the
second Stieffel-Whitney class $w_2(W)$ of the underlying bundle $W$.
We denote these components as $\MH(C;SO_3,w_2)$, where $w_2$ is
either 0 or is the nonzero element of $H^2(C,\Z_2)\cong \Z_2$, which
we call $\theta$. Let us first consider the case that $w_2(W)=0$.

The structure group of an $SO_3$-bundle $W$ with $w_2=0$ can be
lifted to $SL_2$, and we can then form an associated rank two bundle
$E$, with structure group $SL_2$. $E$ is uniquely determined up to
twisting by a line bundle of order 2. Consequently, there is a very
simple relation between the moduli space $\MH(C;SO_3,0)$ of $SO(3)$
Higgs bundles with vanishing $w_2$ and the corresponding $SL_2$
moduli space $\MH(C;SL_2)$:
\begin{equation}\label{turgid}\MH(C;SO_3,0)=\MH(C;SL_2)/Q.\end{equation}

We can immediately use our knowledge of the action of $Q$ to describe
the fiber of the Hitchin fibration of $\MH(C;SO_3,0)$ at the special
points $w=e_1,e_2,e_3$.  At, say, $w=e_1$, the effect of dividing by
$T_2$ (or $T_3$) is to identify the two components $\FF^\pm_{e_1}$.
So we can just focus on one of them, say $\FF^+_{e_1}$.  It is a curve
of genus 0 with two points identified (namely the double points that
are exchanged by $T_2$). We still must divide by $T_1$; the quotient
is again a curve of genus 0 with a double point.  So that is the
nature of the exceptional fibers of the Hitchin fibration for $^L\neg
G=SO_3$. They are shown on the right picture on page
\pageref{kart}. In particular, the special Hitchin fibers for $SO_3$
are irreducible, while those for $SL_2$ have two components.

Now we come to another crucial difference between $SL_2$ and
$SO_3$. In the case of $SL_2$, though some fibers of the Hitchin
fibration are singular, the singularities of the fibers are not
singularities of $\MH$; $\MH$ is smooth near the exceptional
fibers (and in fact everywhere, for generic $\sigma$).

But dividing $\MH(C;SL_2)$ by $Q=\Z_2\times \Z_2$ creates a
singularity.  $T_2$ acts freely on the fiber $\FF_{e_1}$, and
therefore on a small neighborhood of it.  But $T_1$ leaves fixed
the two double points of the fiber.  Dividing by $T_1$ therefore
creates two $A_1$ singularities.  These are exchanged by the
action of $T_2$, and therefore the quotient $\MH(C;SO_3,0)$ has
one $A_1$ singularity on each exceptional fiber.

\subsubsection{Relation to Geometric Endoscopy}
What we have just described is the picture promised in
\secref{braneduals}.  For $SO_3$, the special fiber of the Hitchin
fibration has an $A_1$ singularity at a point $r$.  A $B$-brane ${\cal
B}^*$ supported at a generic point $r^*$ is irreducible, but for
$r^*=r$, it can split up as a sum ${\cal B}={\cal B}_+\oplus{\cal
B}_-$. Dually, the special fiber of the Hitchin fibration for $SL_2$
has two components, so that an $A$-brane supported on this fiber can
split up as a sum of two $A$-branes, each supported on one
component. Since the components are simply-connected, an $A$-brane of
rank 1 supported on one of them has no moduli.  This is dual to the
fact that the fractional branes ${\cal B}_+$ and ${\cal B}_-$ have no
moduli. But the brane ${\cal B}={\cal B}_+\oplus {\cal B}_-$ has
moduli (since it can be deformed away from the singularity), and
dually, the sum $\A$ of the two $A$-branes can similarly be
deformed.

If we deform $\B$ to a skyscraper sheaf supported at a smooth point
$r^*$ of a nearby Hitchin fiber, then $\A$ deforms to a rank 1
$A$-brane supported on the dual Hitchin fiber, with a flat unitary
line bundle determined by $r^*$. It is also instructive to see what
happens if we deform $\B$ to a smooth point $r^*$ of the same
singular fiber. Then the dual $A$-brane is a flat unitary line
bundle on the same dual (singular) Hitchin fiber. However, this flat
line bundle now has non-trivial monodromies linking the two
irreducible components of the fiber, and therefore they can no
longer be ``pulled apart''. In other words, a generic rank 1
$A$-brane on the singular Hitchin fiber is actually irreducible, in
agreement with the fact that it corresponds to a rank 1 $B$-brane
supported at a smooth point, which is also irreducible.

\subsubsection{Relation Between The Two Components}\label{relcom}
As hyper-Kahler manifolds, the two components $\MH(SO_3,0)$ and
$\MH(SO_3,\theta)$ are distinct.  But if we view them purely as
complex symplectic manifolds in complex structure $I$, then they are
actually isomorphic as long as $\sigma\not=0$.

One can map between them by making a $\varphi$-invariant Hecke
modification at the point $p$.  This concept is developed more fully
in \secref{modif}.  In brief, decompose $E$ near $p$ as $\L_1\oplus
\L_2$ where $\L_1$ and $\L_2$ are line bundles that are
$\varphi$-invariant, in the sense that $\varphi:E\to E\otimes K$
maps $\L_i\to \L_i\otimes K$, for $i=1,2$.  Order the $\L_i$ so that
the fiber of $\L_1$ at $p$ is the eigenspace of $\sigma$ with
eigenvalue $+\sigma_0$.  (This is the step that requires
$\sigma\not=0$.)  Consider the operation that leaves $(E,\varphi)$
unchanged away from $p$ and acts near $p$ as $\L_1\oplus \L_2\to
\L_1(p)\oplus\L_2$.  When viewed as a transformation of the $SO_3$
Higgs bundle $(W,\varphi)$ (with $W={\rm ad}(E)$), this operation
establishes the isomorphism between the two components of
$\MH(SO_3)$. This isomorphism commutes with the Hitchin fibration,
since the Hecke modification does not change the characteristic
polynomial of $\varphi$.

Going back to Hitchin's equations for ramified Higgs bundles, with
the singularity postulated in eqn. (\ref{horseg}),  the
$\varphi$-invariant Hecke modification is equivalent to a shift of
$\alpha$ by a lattice vector.  (One can compensate for such a shift
by a gauge transformation that is discontinuous at $r=0$ and has the
effect of changing the natural extension of the bundle over that
point. See Section 2.1 of \cite{GW}.) In general \cite{Nak}, $\MH$
when viewed as a complex symplectic manifold in complex structure
$I$ is independent of $\alpha$ as long as $\sigma$ is regular.  The
equivalence between the two components of $\MH(SO_3)$ is a special
case of this.  For  $\sigma$ non-regular (which for $SO_3$ means
$\sigma=0$), the two components of $\MH(SO_3)$ are birational but
not isomorphic.

For unramified $SO_3$ Higgs bundles, there is no such relation
between the two components.

\subsection{$O_2$-Bundles}\label{otwo}

Our next task is to interpret better the singularities that we have
found in the $SO_3$ moduli space.

In general, for generic\footnote{For  non-regular $\sigma$, $\MH$
also has local singularities described in section 3.6 of \cite{GW}.}
$\sigma$, singularities of the moduli space $\MH$ of stable ramified
Higgs bundles come entirely from automorphisms. If a Higgs bundle
$(E,\varphi)$ has a non-trivial finite automorphism group $\Gamma$,
we should expect the corresponding point in $\MH$ to be a singular
point.  If $\Gamma$ is a finite group, the singularity will be an
orbifold singularity, locally of the form $\C^{2n}/\Gamma$ with some
$n$ and some linear action of $\Gamma$ on $\C^{2n}$.  If $\Gamma$
has positive dimension, the singularity is typically (but not
always) more severe than an orbifold singularity.

In the present case, we have encountered some $A_1$ orbifold
singularities, and this strongly suggests that the corresponding
$SO_3$ Higgs bundles $(W,\varphi)$ have automorphism group
$\Gamma=\Z_2$. To describe Higgs bundles with this automorphism
group, we will use the correspondence between Higgs bundles and
local systems given by Hitchin's equations.  This correspondence
preserves the automorphism group, so a Higgs bundle with
automorphism group $\Z_2$ corresponds to a local system with
automorphism group $\Z_2$.

The reason that it is possible to have an $SO_3$ local system with
automorphism group $\Z_2$ is that $SO_3$ contains the subgroup
$O_2$, consisting of $SO_3$ elements of the form
\begin{equation}\begin{pmatrix}* & * & 0 \\ * & * & 0 \\ 0 & 0 &
\pm 1\end{pmatrix},\end{equation} where the upper left block is an
element of $O_2$, and the sign in the lower right corner is chosen
so that the determinant equals $+1$.  $O_2$ has two components
topologically; the component that is connected to the identity
consists of group elements of lower right matrix entry $+1$, and
the disconnected component consists of elements for which that
entry is $-1$.  The subgroup of $SO_3$ that commutes with $O_2$ is
$\Z_2$, generated by
\begin{equation}\begin{pmatrix}-1 & 0 & 0 \\ 0 & -1 & 0 \\ 0 & 0 &
 1\end{pmatrix}.\end{equation}
 So an $SO_3$ local system whose structure group reduces to $O_2$ but
 which is otherwise generic will have automorphism group $\Z_2$.

 Let us describe local systems on a Riemann surface $C$ of genus
 1, first in the unramified case.  Such a local system is
 determined up to isomorphism by the monodromy elements $V_1$ and
 $V_2$ around two one-cycles in $C$.  As the fundamental group of
 $C$ is abelian, they obey
 \begin{equation}\label{hgo} V_1V_2V_1^{-1}V_2^{-1}=1.\end{equation}
 Now suppose that there is a single ramification point $p$, with a
 specified conjugacy class $M$ for the monodromy around $p$.
Then (\ref{hgo}) is modified to
\begin{equation}\label{ggo}V_1V_2V_1^{-1}V_2^{-1}=M.\end{equation}

In our case, the monodromy is $M=\exp(-2\pi(\alpha-i\gamma))$, as
in in eqn. (\ref{monodromy}).   If $V_1$ and $V_2$ take values in
$O_2$ and obey (\ref{ggo}), then $M$ must take values in the
connected component of $O_2$  and has the form
 \begin{equation}M=\begin{pmatrix}* & * & 0 \\ * & * & 0 \\ 0 & 0 &
1\end{pmatrix}. \end{equation} For such an $M$, we want to find $V_1$
and $V_2$, taking values in $O_2$, and obeying (\ref{ggo}).

An $SO_3$ local system $W$ whose structure group actually reduces
to $O_2$ splits up as $U\oplus \S$, where $W$ is a rank two local
system (with structure group $O_2\subset GL_2$) and $\S\cong
\det\,U$ is a line bundle of order 2.  If $\S$ is trivial, then
$V_1$ and $V_2$ both take values in the connected component of
$O_2$. Since this connected component is the abelian group $SO_2$,
eqn. (\ref{ggo}) is then impossible to obey for $M\not=1$.

So we must take $\S$ to be a non-trivial line bundle of order 2.
This means that either $V_1$ or $V_2$, or both, takes values in
the disconnected component of $O_2$.  We can choose the two
one-cycles with holonomies $V_1$ and $V_2$ so that $V_1$ takes
values in the disconnected component and $V_2$ in the connected
component. Any element of the disconnected component is conjugate
to
\begin{equation}\label{vone}V_1=\begin{pmatrix}1 & 0 & 0 \\ 0 & -1 & 0
\\ 0 & 0 & -1\end{pmatrix}.\end{equation} With this choice of $V_1$, we
have $V_1V_2V_1^{-1}=V_2^{-1}$, so (\ref{ggo}) reduces to
\begin{equation}V_2^2=M^{-1},\end{equation}
where $M$ and $V_2$ take values in $SO_2$.

For each $M$, this last equation has precisely two solutions,
differing (in $SO_2$) by $V_2\to -V_2$. Hence, for each
non-trivial line bundle $\S$ of order 2, we can construct
precisely two $SO_3$ local systems each with a group of
automorphisms $R\cong \Z_2$.

These two local systems, however, differ by the value of $w_2$.
Hence, one of them corresponds to a $\Z_2$ orbifold singularity on
the appropriate fiber of $\MH(C;SO_3,0)$, and one corresponds to
such a singularity on the corresponding fiber of $\MH(C;
SO_3,\theta)$.

To explain why the two choices of $V$ correspond to topologically
inequivalent bundles, consider the special case $M=1$. In this
case, the two possibilities for $V_2$ are
\begin{equation}\label{vtwo}V_2^{(1)}=1,~~V_2^{(2)}=\begin{pmatrix}-1&
0 & 0 \\ 0 & -1 & 0 \\ 0 & 0 & 1\end{pmatrix}.\end{equation} In
either case, $V_1$ and $V_2$ are diagonal, so $W$ splits as a direct
sum of three rank 1 local systems $\S_1$, $\S_2$, $\S_3$. We
therefore have
\begin{equation}\label{quirk}w_2(W)=
\sum_{1\leq i<j\leq 3}w_1(\S_i)\cup w_1(\S_j).\end{equation}

If $V_2=1$ then $\S_1$ is trivial and $\S_2$ and $\S_3$ are
isomorphic.  Hence $w_2(W)=w_1(\S_2)^2$, and this is zero since
$x^2=0$ for any $x\in H^1(C,\Z_2)$.

If $V_2=V_2^{(2)}$, then $\S_1$, $\S_2$, and $\S_3$ are the three
distinct non-trivial line bundles of order 2 over $C$.  If
$x=w_1(\S_1)$, $y=w_1(\S_2)$, then we have $w_1(\S_3)=x+y,$ since
$S_3\cong S_1\otimes S_2$. $x$ and $y$ are a basis for
$H^1(C,\Z_2)\cong\Z_2\oplus \Z_2$, and $\theta=x\cup y$ generates
$H^2(C,\Z_2)\cong \Z_2$. Evaluation of (\ref{quirk}) now gives
$w_2(W)=x\cup y$, so in particular $w_2(W)\not=0$.

Thus, for any choice of non-trivial line bundle ${\S}$ of order 2,
and any choice of $w_2(W)$, we find one $SO_3$ local system with
automorphism group $\Z_2$.  This accounts for the singularities that
we found in Section \ref{struc} for $w_2=0$, as well as the
singularities that we will find for $w_2\not=0$.

\remark\label{firstremark} In the example with $w_2\not=0$, instead
of thinking of $W=\S_1\oplus \S_2\oplus \S_3$ as an $SO_3$ local
system, we can understand it as a flat bundle with structure group
the compact form of $SO_3$, or equivalently, as a stable holomorphic
$SO_3$ bundle. As such, $W$ is the unique topologically non-trivial
flat bundle or stable holomorphic $SO_3$ bundle on a curve of genus
1.  In particular, it has no deformations; for instance,
$H^1(C,W)=\oplus_{i=1}^3 H^1(C,\S_i)=0$, as the cohomology of a
non-trivial line bundle vanishes for $C$ of genus 1.

\remark\label{secondremark}  We have been too cavalier in describing
the distinction between the two components of $\MH(C;SO_3)$. In the
unramified case, $w_2(W)$ is a well-defined topological invariant.
But in the ramified case, the definition of $w_2(W)$ depends on a
choice of extension of $W$ (at least as a topological bundle) over
the ramification point $p$.  The moduli space $\MH(C;SO_3)$ has two
components, each of which has the singularities that we have
described, but it is oversimplified to claim that one corresponds to
$w_2(W)=0$ and one to $w_2(W)\not=0$. Indeed, the two components are
exchanged under monodromy of $M$ in the group $SO_2$, whose
fundamental group is $\Z$.  The monodromy in $M$ can be achieved by
varying the parameter $\alpha$, and thus is equivalent to the
operation described in \secref{relcom}.

\subsubsection{Relation To Geometric Endoscopy}
What we have found is, again, fully in keeping with expectations
from \secref{braneduals}.  An $SO_3$ local system whose structure
group reduces to $O_2$ has automorphism group $\Z_2$.  It
corresponds to an $A_1$ singularity of $\MH(C;SO_3)$. So the
category of $B$-branes associated with this local system is
generated by two objects $\B_+$ and $\B_-$.  By contrast,
$\MH(C;SL_2)$ is smooth; some exceptional fibers of the Hitchin
fibration are singular, but the singularities of the fibers are
smooth points of the total space.

\subsection{Second Component}\label{secomp}
In \secref{otwo}, we have constructed $SO_3$ local systems with
$w_2\not=0$ whose structure group reduces to $O_2$.  These should
correspond to $A_1$ singularities of $\MH(C;SO_3,\theta)$.

It will be useful to give a direct description of this
space. $\MH(C;SO_3,\theta)$ paramet\-rizes ramified Higgs bundles
$(W,\varphi)$, where $W$ is an $SO_3$-bundle with nonzero $w_2$. It
will be sufficient for our purposes to construct a Zariski open set in
this moduli space corresponding to Higgs bundles for which $W$ is
stable. According to remark \ref{firstremark}, this means that
$W=\S_1\oplus \S_2\oplus \S_3$ is the direct sum of the three
non-trivial line bundles of order 2.  We can take $\S_i=\CO(p)\otimes
\CO(q_i)^{-1}$, where $q_i$ is the point of order 2 given by
$(x,y)=(e_i,0)$.  As an $SO_3$ bundle, $W$ should have a nondegenerate
quadratic form, which we will denote as a trace (thinking of $W$ as an
adjoint bundle).  This can be defined as follows: if $s=s_1\oplus
s_2\oplus s_3$ is a section of $W=\S_1\oplus \S_2\oplus \S_3$, then
the quadratic form is $\Tr\,s^2=\sum_i s_i^2/(x-e_i)$. The idea is
that $s_i^2$ is a section of $\CO(p)^2\otimes \CO(q_i)^{-2}$, which
can be trivialized by dividing by $x-e_i$.

The bundle $W$ has an automorphism group $Q=\Z_2\times \Z_2$.
Indeed, as a flat bundle, $W$ has monodromies $V_1$ and
$V_2^{(2)}$ presented in eqns. (\ref{vone}) and (\ref{vtwo}) and
$Q$ is the subgroup of $SO_3$ consisting of diagonal matrices with
diagonal entries $\pm 1$.

To construct $\MH(C;SO_3,\theta)$ (or rather a Zariski open set
corresponding to stable bundles $E$), we must consider all Higgs
fields $\varphi\in H^0(C,K\otimes \CO(p)\otimes W)$, impose a
suitable condition on the polar part of $\Tr\,\varphi^2$, and
divide by $Q$.  Before trying to do this, let us discuss what will
happen if  we do {\it not} divide by $Q$.

{}From the point of view of complex algebraic geometry, instead of
considering $SL_2$ Higgs bundles $(E,\varphi)$ where $E$ is a rank 2
holomorphic bundle with $\det E$ trivial, it is natural to pick a
fixed line bundle ${\cal L}$ and consider Higgs bundles
$(E,\varphi)$ with $\det E={\cal L}$.  If one replaces ${\cal L}$ by
${\cal L}\otimes {\cal N}^2$, one can compensate for this by $E\to
E\otimes {\cal N}$.  Modulo this operation, all that really matters
about ${\cal L}$ is its degree modulo 2.  Apart from the familiar
case of ${\cal L}=\CO$,  we can consider a second component with
(say) $\det E=\CO(p)$.  We write $\MH(C;SL_2^*)$ for the moduli
space of stable Higgs bundles of this type. We sometimes refer to
this as the improper component of the $SL_2$ moduli space (and we
refer to the moduli space of ordinary $SL_2$ Higgs bundles
$(E,\varphi)$ with $\det\,E$ trivial as the proper component). If
$(E,\varphi)$ is a Higgs bundle with $\det E=\CO(p)$, then, upon
setting $W={\rm ad}(E)$, we get an $SO_3$ Higgs bundle $(W,\varphi)$
with $w_2(W)\not=0$. All $(W,\varphi)$ can arise this way, and
$(E,\varphi)$ is determined uniquely from $(W,\varphi)$ up to
twisting $E$ by a line bundle of order 2.  So $\MH(C;SL_2^*)$ can be
related to $SO_3$ Higgs bundles by analogy with (\ref{turgid}):
\begin{equation} \MH(C;SO_3,\theta)=\MH(C;SL_2^*)/Q.\end{equation}

 {}From the point of view of differential geometry,
$\MH(C;SL_2^*)$ can be constructed by solving Hitchin's equations
with gauge group the compact group $SO_3$ on a bundle with
$w_2\not=0$ and dividing only by those gauge transformations that
can be lifted to $SU(2)$.

\remark \label{tiplo} The reasoning of Remark \ref{secondremark} may
also be applied to the group $SL_2$ to show that $\MH(C;SL_2)$ and
$\MH(C;SL_2^*)$ are isomorphic as complex symplectic manifolds in
complex structure $I$.

\smallskip Now let us give an explicit description.  We
consider a Higgs field $\varphi$ that is a section of $K\otimes
\CO(p)\otimes W=\oplus_{i=1}^3 K\otimes \CO(p)^2\otimes
\CO(q_i)^{-1}$. For each $i$, we can pick a section $u_i$ of
$K\otimes \CO(p)^2\otimes\CO(q_i)^{-1}$, namely $u_i=(dx/y)(x-e_i)$,
with $\Tr\,u_i^2=(dx/y)^2(x-e_i)$.  The general form of the Higgs
field is
\begin{equation}\varphi=\sum_{i=1}^3 a_i u_i,\end{equation}
with complex constants $a_i$.  This gives
\begin{equation}\label{polt}\Tr\,\varphi^2 =
\left(\frac{dx}{y}\right)^2\sum_{i=1}^3a_i^2(x-e_i).
\end{equation}
As in eqn. (\ref{ulg}), what multiplies $(dx/y)^2$ is a first order
polynomial in $x$. Letting $z\sim x^{-1/2}$ be a local parameter at
infinity, the polar part of $\Tr\,\varphi^2$ is $4(dz/z)^2\sum_i
a_i^2.$ Setting this to $2\sigma_0^2(dz/z)^2$, we require
\begin{equation}\label{mut} \sum_ia_i^2=\frac{\sigma_0^2}{2}.
\end{equation}
This affine quadric describes a Zariski open set in
$\MH(C;SL_2^*)$ (in Section \ref{sothree}, we divide by
$Q=\Z_2\times \Z_2$ and give a similar description of
$\MH(C;SO_3,\theta)$).

The constant term multiplying $(dx/y)^2$ on the right hand side of
(\ref{polt}) is $-\sum_i e_ia_i^2$.  This enables us to describe
the Hitchin fibration; it is the map from $(a_1,a_2,a_3)$ to
\begin{equation}\label{mutt}w_0=-\sum_i e_ia_i^2.\end{equation}
A fiber of the Hitchin fibration is given by the intersection of
the two quadrics (\ref{mut}) and (\ref{mutt}).  For the same
reasons as in our discussion of $SL_2$, the parameter $\sigma_0$
can be scaled out of these equations, assuming that it is nonzero.
We set $w_0=-\sigma_0^2 w/2$, and $b_i=a_i(\sqrt 2/\sigma_0)$ to
put the two quadrics in the form
\begin{align}\label{zelto}\notag  b_1^2+b_2^2+b_3^2& =1\\
                     e_1b_1^2+e_2b_2^2+e_3b_3^2& = w.\end{align}

For generic $w$, this intersection is a smooth curve of genus 1,
with some points omitted because we have assumed $W$ to be stable.
For what values of $w$ is the fiber singular?  If $f=\sum_ib_i^2-1$,
$g=\sum_ie_ib_i^2-w$, then a singularity of the fiber is a point
with $f=g=df\wedge dg=0$.  A short calculation shows that $df\wedge
dg=0$ precisely if two of $b_1,b_2$, and $b_3$ vanish. If $b_i$ is
non-vanishing for some $i$  and $b_j=0$ for $j\not=i$, then we must
have
\begin{equation} w=e_i\end{equation}
and
\begin{equation}\label{belx} b_i=\pm 1.\end{equation}
Thus, there are precisely three singular fibers, one for each choice
of $i$, just as in \secref{hitchfib}. The singular fibers are at
$w=e_i$, $i=1,2,3$, just as we found for the other component of the
moduli space.  These facts of course agree with the relation between
the two components claimed in \secref{relcom}.

Moreover, each singular fiber $\FF_w$ contains two singular points,
given in eqn. (\ref{belx}). The singular fibers consist of two
components of genus 0 joined at two double points.  To see this,
take $i=1$.  If $w=e_1$, then a linear combination of the equations
$f=0$ and $g=0$ gives $(e_2-e_1)b_2^2+(e_3-e_1)b_3^2=0$ or
\begin{equation}\label{elg}b_2=\pm b_3\sqrt{-(e_3-e_1)/(e_2-e_1)}
.\end{equation} This describes a curve $\FF_{w,0}$ that is a union
of two genus zero components   meeting at one point, $b_2=b_3=0$.
Now solving for $b_1$ via $b_1^2=1-b_3^2(e_2-e_3)/(e_2-e_1)$ gives
a double cover of $\FF_{w,0}$.  The double cover, which is the fiber
$\FF_w$ of the Hitchin fibration, is branched over two points in
each component of $\FF_{w,0}$.  A double cover of a curve of genus
zero branched at two points is still of genus zero.  So $\FF_w$
consists of two components of genus zero, meeting at the two
points $b_2=b_3=0$, $b_1=\pm 1$.

\subsubsection{$SO_3$ Moduli Space}\label{sothree}

What we have constructed so far is a Zariski open set in
$\MH(C;SL_2^*)$, together with its Hitchin fibration.  We can also
divide by $Q$, which acts by pairwise sign changes of the $b_i$,
and get a Zariski open set in $\MH(C;SO_3,\theta)$.

The invariants under pairwise sign changes of the $b_i$ are
$b_1^2$, $b_2^2$, $b_3^2$, and $z=b_1b_2b_3$.  They obey
$z^2=b_1^2b_2^2b_3^2$. Using the two equations (\ref{zelto}), one
can solve for $b_i^2, $ $i=1,2,3$ as linear functions of
$t=\sum_{i=1}^3 e_i^2b_i^2$. The equation $z^2=b_1^2b_2^2b_3^2$
becomes
\begin{equation}\label{zot} z^2 = -\prod_{1\leq i< j\leq
3}\frac{(t-w(e_i+e_j)+e_ie_j) }{(e_i - e_j)^2 }.
\end{equation}
This describes the $SO_3$ moduli space (except that one must include
points with $t,z=\infty$ for fixed $w$).

Let us use this description to find the singular fibers of the
Hitchin fibration for $\MH(C;SO_3,\theta)$.  The cubic polynomial on
the right hand side of (\ref{zot}) has distinct roots unless $w=e_i$
for some $i$, in which case precisely two of the roots coincide.  So
the only singular fibers are for $w=e_i$, and they are nodal cubic
curves, that is, copies of $\CP^1$ with two points identified.  By
expanding the cubic equation (\ref{zot}) near a double point, it is
not hard to verify that the double points of the fiber are actually
$A_1$ singularities of $\MH(C;SO_3,\theta)$. This is the same result
that we found at the end of Section \ref{struc} for the singular
fibers of $\MH(C;SO_3,0)$.  This is, of course, completely in
keeping with the claim in \secref{relcom} that the two components of
the  moduli space are equivalent in complex structure $I$.

\subsection{Relation To The Cotangent Bundle}\label{compcar}

One of the most important properties \cite{Hi} of the moduli space
$\MH$ of stable Higgs bundles (without ramification) is that it can be
approximated as $T^*\M$, where $\M$ is the moduli space of
stable bundles. ($\M$ is a Zariski open set the moduli space that
parametrizes stable and semi-stable bundles.) The reason for this is
that the cotangent space to $\M$, at a point corresponding to a
stable bundle $E$, is $H^0(C,K\otimes {\rm ad}(E))$.  So a point in
$T^*\M$ is a pair $(E,\varphi)$, or in other words a Higgs
bundle. This gives an embedding of $T^*\M$ as a Zariski open set
in $\MH$. This map is not surjective because a Higgs bundle
$(E,\varphi)$ may be stable or semi-stable, and so represent a point
in $\MH$, even if the underlying bundle $E$ is not stable.

This has an analog for ramified Higgs bundles.  In this case, one
takes $\M$ to be the moduli space of stable bundles with parabolic
structure at a point $p$, and $\MH$ to be the moduli space of stable
ramified Higgs bundles. (For a generic choice of the parabolic weight
-- the parameter called $\alpha$ in (\ref{horseg}) -- every semi-stable
parabolic bundle is stable.)  Then $\MH$ has a Zariski open set
that is not quite $T^*\M$ but is an affine symplectic deformation
of $T^*\M$, as described in section 3.6 of \cite{GW}.  We denote
such an affine symplectic deformation as $\tilde T^*\M$.  Here
$\tilde T^*\M$ denotes a complex symplectic variety with a map to
$\M$, such that locally in $\M$, $\tilde T^*\M$ is
symplectically isomorphic to $T^*\M$.  For a detailed description
of an example, see \secref{impom}.

For applications to geometric Langlands, it is important to restrict
the fibers of the Hitchin fibration from $\MH$ to $T^*\M$ or $\tilde
T^*\M$, since this is an essential step in interpreting $A$-branes in
terms of ${\cal D}$-modules, as discussed in \secref{dmodules}).  So
let us carry out this step for our example of $SL_2$ Higgs bundles on
a curve $C$ of genus 1 with one point $p$ of ramification.  Of course,
one can divide by $Q$ and make a similar discussion for $SO_3$.

\subsubsection{Calculation}\label{impom}

These questions depend only on $\MH$ as a complex symplectic
manifold in complex structure $I$.  As such, the two components
are equivalent, according to \secref{relcom}, so from now on, we
will consider the improper component.

 We have explicitly described the stable Higgs bundles
$(E,\varphi)$ where $\det E=\CO(p)$,  and $E$ is stable. $E$ is
uniquely determined up to isomorphism and we have found the space of
Higgs bundles of this kind to be precisely the affine quadric
\begin{equation}\label{hytz} b_1^2+b_2^2+b_3^2=1.\end{equation}
which we will denote for the moment as $\M_H^0$.  It differs
slightly from $\MH$, which also parametrizes stable pairs
$(E,\varphi)$ where the underlying bundle $E$ is unstable.

In this situation, what is the moduli space $\M$ of stable parabolic
bundles?  The bundle $E$ has  no deformations, but its parabolic
structure at the point $p$ can vary. The choice of parabolic
structure is the choice of a complex line ${\Psi}$ in the
two-dimensional vector space $E_p$, the fiber of $E$ at $p$. So $\M$
is a copy of $\mbb{CP}^1$, parametrizing the choice of $\Psi$.

With $E$ as above, suppose that we are given a ramified Higgs bundle
$(E,\varphi)$, where $\varphi$ has a pole at $p$ whose residue has
eigenvalues $\pm \sigma_0$. This determines a parabolic structure on
$E$, by setting $\Psi$ to be the eigenspace of the residue with
eigenvalue $+\sigma_0$.  So there is a natural map from $\M_H^0$ to
$\M$, taking $(E,\varphi)$ to the parabolic bundle with this $\Psi$.
This map has no holomorphic section, since given the bundle $E$ and
the choice of $\Psi$, there is no natural way to produce a $\varphi$
with the right pole.  It is this that leads to the affine
deformation.

Alternatively, we could get a second and equally natural parabolic
structure and map $\M_H^0\to \M$ by using the eigenspace with
eigenvalue $-\sigma_0$. This would ultimately lead to a different
functor from $A$-branes to twisted $\cal D$-modules.  In general, in
the ramified case of geometric Langlands, the mirror symmetry
between the $B$-model and the $A$-model is uniquely determined, but
the mapping from $A$-branes to twisted $\cal D$-modules depends on a
choice of Borel subgroup containing the local monodromy. The
different choices lead to ${\mc D}$-modules with different
twistings, related by a Weyl transformation.

In the present context, the choice of Borel subgroup amounts to the
choice of eigenvalue $+\sigma_0$ or $-\sigma_0$. (The isomorphism
between the two components of $\MH$ also depends on this choice of
sign.) For $SL_n$, the analog would be to pick an ordering of the
eigenvalues of the polar part of $\varphi$.

The map from the affine quadric $b_1^2+b_2^2+b_3^2=1$ to
$\mbb{CP}^1$ can of course be described without talking about Higgs
bundles.  The equation  $b_1^2+b_2^2+b_3^2=1$ is equivalent to the
statement that the matrix
\begin{equation}\label{glytz}M=\begin{pmatrix}b_1 &b_2-ib_3\\
b_2+ib_3& -b_1\end{pmatrix}\end{equation} has eigenvalues $\pm 1$.
Taking ${\Psi}$ to be the space of solutions of the equation
$M\psi=\psi$, we get a map from the affine quadric to
$\mbb{CP}^1$.  If we set $\psi=\begin{pmatrix}1\\
z\end{pmatrix}$, we find that in affine coordinates, the map to
$\mbb{CP}^1$ can be described by
$z=(1-b_1)/(b_2-ib_3)=(b_2+ib_3)/(1+b_1)$.

The fibers of the map from $\M_H^0$ to $\mbb{CP}^1$ are copies of
$\C$.  For example, the fiber for $z=0$ is given by $b_1=1$,
$b_2+ib_3=0$, and is parametrized by $b_2$.   However, $\M_H^0$ is
not a holomorphic line bundle over $\mbb{CP}^1$, because the map
from $\M_H^0$ to $\mbb{CP}^1$ has no holomorphic section.  Rather,
the structure group of the bundle $\M_H^0\to \mbb{CP}^1$ is the
group of affine linear motions of $\C$, that is the group of
transformations of the form $x\to ax+b$, $a\in \C^\times$,
$b\in\C$. We might describe $\M_H^0$ as an affine bundle of rank
1.

The group of affine linear motions has a homomorphism to
$\C^\times$, by forgetting $b$.  So to an affine bundle of rank 1,
there is always a canonically associated fiber bundle whose
structure group is $\C^\times$. In the case of the quadric, the
associated variety is the cotangent bundle of $\CP^1$. The quadric
itself admits a holomorphic symplectic form:\footnote{The
multiplicative constant $\sigma_0$ was determined by integrating
$\Omega$ over the two-cycle characterized by $b_1,b_2,b_3$ real
and comparing to the result of \cite{GW}, eqn. (3.77), for the
cohomology class of $\Omega$.}
\begin{equation}\label{holform} \Omega=i\sigma_0\frac{db_1\wedge
    db_2}{b_3}
\end{equation}
which is part of the hyper-Kahler structure on the moduli space.
 Locally on $\mbb{CP}^1$ one can pick a section of the fibration
 $\M_H^0\to \CP^1$ and identify $\M_H^0$ with the cotangent bundle of
 $\CP^1$.

To do so explicitly on the Zariski open set with $b_1\not=-1$, where
$$z=(1-b_1)/(b_2-ib_3)=(b_2+ib_3)/(1+b_1)$$ is defined, let
$v=-\sigma_0 (b_2-ib_3)$. A small computation shows that
$\Omega=dz\wedge dv$, so away from $b_1=-1$, we can embed $\CP^1$ in
$\M_H^0$ as the locus $v=0$, and identify $\M_H^0$ with the
cotangent bundle of the $z$-plane. This identification does not
extend\footnote{The fact that $\Omega$ has nonzero periods, as noted
in the last footnote, implies that it cannot be globally the
symplectic form of a cotangent bundle.} over $z=\infty$, and
globally $\M_H^0$ is what we call an affine deformation of the
cotangent bundle.  It is convenient to also introduce $\tilde
v=v/\sigma_0=-(b_2-ib_3)$.

Since $\M_H^0$ is not the cotangent bundle to $\M$, but is an affine
deformation thereof, we will denote it henceforth as $\tilde T^*\M$.

Now let us consider the fibers of the Hitchin fibration.  Their
intersection with the quadric $\tilde T^*\M$ is obtained by
supplementing the defining equation of the quadric with the equation
\begin{equation}e_1b_1^2+e_2b_2^2+e_3b_3^2=w,\end{equation}
giving an algebraic curve $\FF_w$. This curve, of course, can be
projected to $\M$, and gives a double cover of $\M$. In general,
however, the fibers of the Hitchin fibration are complete (and
generically smooth) algebraic varieties. This is an important part
of the theory in \cite{Hi}. We will explain it, in the present
context, in \secref{Hecke} using spectral covers. So the fiber
$\FF_w$ of the Hitchin fibration of $\MH$ is really, for generic
$w$, the smooth projective curve that corresponds to the affine
curve just described.  A few points are missing in the description
by affine quadrics.

We can describe $\FF_w$ as a projective curve by simply adding another
variable $b_4$, where $b_1,\dots,b_4$ are understood as homogeneous
coordinates on $\CP^3$ and obey
\begin{align}\label{potag}\notag \sum_{i=1}^3b_i^2 & = b_4^2\\
\sum_{i=1}^3e_ib_i^2 & = w b_4^2.\end{align} Missing when one
approximates $\MH$ by an affine deformation of the cotangent bundle
are the four points with $b_4=0$.  These points correspond to stable
Higgs bundles $(E,\varphi)$ where the parabolic bundle $E$ is
unstable.\footnote{$E$ must have determinant $\O(p)$ and must be of
the form $\L\otimes (\O(p)\oplus \O)$, where $\L$ is of order 2, in
order for there to exist a stable ramified Higgs bundle
$(E,\varphi)$. The four choices of $\L$ lead to the four missing
points on each fiber of the Hitchin fibration.} They form a single
orbit of the group $Q=\Z_2\times \Z_2$ of pairwise sign changes of
$b_1,b_2,b_3$. Explicitly, the values of $z$ corresponding to these
four points are
\begin{equation}\label{telmo}z=\pm\sqrt{\frac{e_2-e_1}
{e_2-e_3}}\pm \sqrt{\frac{e_3-e_1}{e_2-e_3}}.\end{equation}

To gain some insight about the ${\cal D}$-modules arising in the
geometric Langlands program, we must describe $\FF_w$ as a curve in
$\tilde T^*\M$. For this, we use the coordinates $z,\tilde v$, and
find, after some algebra, that we can describe the fiber $\FF_w$
by an explicit quadratic equation for $\tilde v$, of the form
\begin{equation}\label{plug}A(z)\tilde v^2+B(z)\tilde v+C(z)=0,
\end{equation}
with
\begin{align}\label{eelg}\notag
A(z)&=(e_2-e_3)z^4+(4e_1-2e_2-2e_3)z^2+(e_2-e_3)\\
\notag B(z)& = -4z((e_2-e_3)z^2+2e_1-e_2-e_3)\\
C(z)&=4((e_2-e_3)z^2+e_1-w).\end{align} Note that
\begin{align}\label{pelg}B=-\frac{dA}{dz}.\end{align}

If we let $z$ approach one of the four values in eqn. (\ref{telmo}),
then one of the roots of the quadratic equation for $\tilde v$ goes
to infinity.  So at any of those values of $z$,  a point in the
Hitchin fiber is ``missing,'' if we restrict to $\tilde T^*\M$. In
fact, the four critical values of $z$ are precisely the zeroes of
the polynomial $A(z)$.  At those values of $z$, $B(z)$ is nonzero,
which means that one of the two roots of the quadratic equation
(\ref{plug}) for $\tilde v$ has a pole, and one does not. It is
convenient to express the result in terms of the variable
$v=\sigma_0\tilde v$ that puts the symplectic form $\Omega$ in a
canonical form.  Let $z^*$ be any one of the zeroes of $A(z)$. The
behavior of the polar branch of $v$ near $z^*$ is $v\sim -\sigma_0
B(z)/A(z)$, which using eqn. (\ref{pelg}) reduces to
\begin{equation}\label{gesto}v\sim \sigma_0\frac{1}{z-z^*}.\end{equation}

\remark\label{endosc} For the geometric endoscopy, we must examine
in a similar way the singular fibers of the Hitchin fibration. For
example, we take $w=e_1$, so that the fiber $\FF_{e_1}$ splits into
components $\FF^\pm_{e_1}$ defined by the ratio of $b_2/b_3$, as in
eqn.  (\ref{elg}). Compactifying the two components in projective
space, we see that of the four points at $b_4=0$, two lie on
$\FF^+_{e_1}$ and two on $\FF^-_{e_1}$. (This actually follows just
from the fact that the four points are an orbit of $Q$, and that $Q$
exchanges the two components.)  If we restrict to $\tilde T^*\M$,
the two curves $\FF^\pm_{e_1}$ behave near the two relevant critical
values of $z$ precisely as found in the last paragraph.  So each
fractional $A$-brane has two points with this sort of behavior.

\subsubsection{Nilpotent Higgs Fields}\label{nilphiggs}

There is a simple way to characterize points in $\M$ over which a
fiber $\FF$ of the Hitchin fibration, when viewed as a curve in
$T^*\M$, goes to infinity.  They correspond to stable parabolic
bundles $E$ that admit a nonzero nilpotent Higgs field (whose polar
residue leaves fixed the parabolic structure of $E$). This is
relevant for characterizing twisted $\D$-modules arising in
geometric Langlands duality (see \cite{Laumon:duke}).

For example, consider the improper component of the $SL_2$ moduli
space. The Higgs field $\varphi=\sum a_iu_i$ obeys
$\Tr\,\varphi^2=\sum_i a_i^2(x-e_i)$, as we computed in eqn.
(\ref{polt}).  So if $0=\sum_ia_i^2=\sum_ie_ia_i^2$, which is the
condition defining a point in $\FF$ that does not lie in $T^*\M$,
then $\Tr\,\varphi^2$ identically vanishes and $\varphi$ is
nilpotent.

For the proper component, if $E=\L\oplus \L^{-1}$, then the
condition for the Hitchin fiber $\FF$ to go to infinity in $\tilde
T^*\M$ is that $\L$ should be of order 2.  (This is clear in the
original coordinates used in eqn. (\ref{zelg}).  $\L$ of order 2
means $x_0=\infty$ or $f(x_0)=0$; for such $x_0$, with $w_0$ fixed,
we have $h_0\to\infty$.) On the other hand, if we endow $E$ with a
parabolic structure that makes it stable, then a Higgs field whose
pole at the point $p$ leaves fixed the parabolic structure cannot be
nilpotent unless $\L$ is of order 2. Indeed, if $\L$ is not of order
2, the general form of this Higgs field is given in eqns.
(\ref{conagain}) and (\ref{cute}), which show that $\varphi$ can be
everywhere  nilpotent only if $h_0$ vanishes along with either $k_0$
or $g_0$; in that case, a parabolic structure at $p$ that is
invariant under the polar part of $\varphi$ corresponds to a complex
line $\Psi\subset E_p$ that lies in either $\L$ or $\L^{-1}$, and
$E$ is unstable. Conversely, if $\L$ is of order 2, we can make a
stable parabolic bundle $E$ that is an extension of $\L$ by $\L$ (a
direct sum $\L\oplus \L$ is unstable with any parabolic structure),
and it admits a nilpotent Higgs field $\varphi$ whose polar part
leaves fixed the parabolic structure.

Here is a brief explanation, in the context of $SL_2$ on a curve
$C$ of genus $g$, of the behavior we have just seen. (Ramification
can be incorporated in the following argument by allowing certain
poles in the Higgs field and the quadratic differentials.)  Let
$\varphi_1,\dots,\varphi_{3g-3}$ be a basis of $H^0(C,K\otimes
{\rm ad}(E))$. So a general Higgs field is
$\varphi=\sum_ia_i\varphi_i$, for some complex numbers $a_i$. The
fiber $U_E$ of $T^*\M$ at the point corresponding to $E$ is thus
parametrized by $a_1,\dots,a_{3g-3}$. Similarly, let
$w_1,\dots,w_{3g-3}$ be a basis of $H^0(C,K^2)$. A fiber $\FF$ of
the Hitchin fibration is specified by $\Tr\,\varphi^2=\sum t_i
w_i$, for some complex numbers $t_i$. Explicitly, this gives a set
of $3g-3$ quadratic equations
\begin{equation}\label{erx}
P_j(a_1,\dots,a_{3g-3})=t_j,~~j=1,\dots,3g-3.\end{equation}
Generically, such a set of equations has precisely $2^{3g-3}$
solutions, weighted by multiplicity.  These are the intersection
points of the fiber $U_E$ with $\FF$. If we add one more variable
$a_0$ to make the equations homogeneous
\begin{equation}\label{terx}
P_j(a_1,\dots,a_{3g-3})=t_ja_0^2,~~j=1,\dots,3g-3,\end{equation}
then the number of solutions in $\mathbb{CP}^{3g-3}$, weighted by
multiplicity, is always precisely $2^{3g-3}$ if it is finite.  The
number of solutions of the affine equations (\ref{erx}) is less
than this precisely if the homogeneous equations (\ref{terx}) have
solutions with $a_0=0$.  But solutions of the homogeneous
equations with $a_0=0$ correspond to Higgs fields $\varphi$ with
$\Tr\,\varphi^2=0$, that is, to nilpotent Higgs fields.  Thus,
some of the intersections of $U_E$ with $\FF$ go to infinity
precisely when $E$ admits a nonzero nilpotent Higgs field, just as
we found in our example.  (The example involves a ramified case in
which the polynomials $P_j$ in eqn. (\ref{erx}) are not
homogeneous, so that the homogeneous form of eqn. (\ref{terx}) is
slightly different.)

\subsubsection{Comparison Of The Two
Components}\label{comparisoncomponents}

What we have learned makes it possible to find an explicit mapping
between the two components of $\MH$, as predicted in
\secref{relcom}.

We want to explicitly map between the description of $\MH(SL_2)$
given by the familiar equation $P=0$, where
\begin{equation}\label{jilk}P=\rho^2+
(2u+w)f(u)-\frac{f'(u)^2}{4}\end{equation} and  its analog
\begin{equation} b_1^2+b_2^2+b_3^2=1\end{equation}
for the improper component $\MH(SL_2^*)$.

We already know from \secref{elmy} that the change of variables we
want preserves $w$, which for the improper component is
\begin{equation}w=e_1b_1^2+e_2b_2^2+e_3b_3^2.\end{equation}

In addition, the desired change of variables commutes with the
projection to $\M$ of a Zariski open set in $\MH$. The reason for
this is that the $\varphi$-invariant Hecke modification of a bundle
$E$ that was used in \secref{relcom} depends only on the parabolic
structure on $E$ that is determined by $\varphi$, and not on
$\varphi$ itself. So it commutes with the projection from $\M_H^0$
to $\M$, the moduli space of stable parabolic bundles.

For the proper component, $\M$ is parametrized by the variable
$u$, and for the improper component it is similarly parametrized
by $z=(1-b_1)/(b_2-ib_3)$.  These parametrizations are unique up
to a fractional linear transformation, so $z$ must map to
$(au+b)/(cu+d)$ for some constants $a,b,c,d$.  The constants can
be determined from the fact that the condition for a parabolic
bundle $E$ to admit a nilpotent Higgs field (with a possible pole
at $p$ whose residue preserves the parabolic structure) is also
preserved by the map between the two components.  Indeed, that map
preserves both the Hitchin fibration and the projection $\M_H^0\to
\M$, so it maps missing points on Hitchin fibers to missing points
on Hitchin fibers.

So the map between the two components identifies the four points
$u=e_1,e_2,e_3,\infty$ with the four values of $z$ given in eqn.
(\ref{telmo}).  Up to the action of $Q$, this determines the
relation between $z$ and $u$ to be
\begin{equation}\label{zu}u=\frac{e_2\sqrt{e_3-e_1}  + e_3
    \sqrt{e_2-e_1} + (e_1 - \sqrt{e_3- e_1 } \sqrt{e_2-e_1 })\sqrt{
   e_2-e_3} z}{\sqrt{e_3- e_1} + \sqrt{e_2-e_1 } + \sqrt{ e_2-e_3}
   z}.\end{equation}

 $\MH(SL_2)$ is a double cover of the $w-u$ plane given by the
quadratic equation (\ref{jilk}) for $\rho$, and $\MH(SL_2^*)$ is
similarly a double cover of the $w-z$ plane given by the quadratic
equation (\ref{plug}) for $\tilde v$.  An elementary computation
shows that the two quadratic equations are equivalent under the
change of variables
\begin{equation}\tilde v
=\rho\frac{du}{dz}\frac{1}{f(u)}-\frac{1}{2A}\frac{dA}{dz},
\end{equation}
where $A$ was defined in eqn. (\ref{eelg}).

This gives the map between the two spaces.  Finally, it is
straightforward to verify that the complex symplectic form of
$\MH(SL_2^*)$, given in eqn. (\ref{holform}), maps to the complex
symplectic form of $\MH(SL_2)$, which is similarly given by
\begin{equation}\Omega=\sigma_0\frac{du\wedge
d\rho}{f(u)}=\sigma_0\frac{du\wedge d\rho}{\partial P/\partial
w}.\end{equation}

Because of this isomorphism, in comparing $A$-branes to $\cal
D$-modules, as we do in \secref{genus one A to D}, it suffices to
consider the improper component. Whatever we can learn about
$A$-branes and their corresponding $\cal D$-modules from the
geometry of $\MH$ will be the same for the two components.

\subsection{Mirror Symmetry Of Orbifolds}\label{orbifolds}

\def\PPP{{\mathbb P}}
Here we will briefly place some of the issues that we have
considered in a more general context of mirror symmetry.

First of all, let us note the following simple fact about the
classical geometry.  $\MH(SO_3)$ has an $A_1$ singularity
contained in a fiber that consists of a single $\PPP^1$
intersecting at two points. (There are three such singular fibers,
but we consider here just one of them.) If we blow up the $A_1$
singularity, we eliminate the singularity (of the total space) and
create a second $\PPP^1$. The blowup creates a new elliptic
fibration with a singular fiber that now contains two $\PPP^1$'s,
each intersecting at two points. The total space is now smooth,
but has a singular fiber as just described.  This is precisely the
geometry of $\MH(SL_2)$ near its special fibers.

The conclusion therefore is that if we change what physicists
would call the ``Kahler modulus'' by this blowup, we could convert
the local geometry for $SO_3$ to that for $SL_2$.  Let us think
about this local geometry more systematically.  From a
hyper-Kahler point of view, the deformation/resolution of an $A_1$
orbifold singularity is controlled by three real parameters
$\vec\mu$.  If one picks one of the complex structures -- it is
natural to select the complex structure $I$  in which the Hitchin
fibration is holomorphic --  then $\vec \mu$  splits up as a real
parameter $\mu_\RR$ that controls the Kahler class and a complex
parameter $\mu_\C$ that controls the complex structure. Varieties
with the same $\mu_\C$ and different $\mu_\RR$ are birationally
equivalent. A singularity occurs only if $\vec\mu=0$.

In the context of two-dimensional sigma models, there is an
additional parameter, a theta-like angle which in section 6.2 of
\cite{GW}, where this geometry is reviewed, is called $\eta$; it
takes values in $\RR/\Z$.  From the point of view of one of the
complex structures, say $I$, $\eta$ combines with $\mu_\RR$ to a
second complex parameter $\mu_\C'=\mu_\RR+i\eta$.  This parameter
controls the complexified Kahler class associated with the
singularity, while $\mu_\C$ controls the complex structure.

The sigma model is singular if and only if $\vec\mu=\eta=0$ or
equivalently $\mu_\C=\mu_\C'=0$. Having a singularity of the sigma
model is an intrinsic condition and hence is invariant under any
duality. In the context of Type IIA superstring theory, the
singularity of the sigma model at $\vec\mu=\eta=0$ leads to an
$SU(2)$ or $A_1$ gauge symmetry in spacetime; this is possibly the
most satisfying statement of the relation of the group $A_1$ to the
$A_1$ singularity.

In geometric Langlands, we want to treat the $\RR^4/\Z_2$
singularity in the sigma model of $\MH(SO_3)$ as an orbifold
point. The sigma model can be treated as an orbifold precisely
\cite{As} if $\vec\mu=0$, $\eta=1/2$. At this value, we have
$\mu_\C=0$, $\mu_\C'\not=0$. This means that the deformation away
from the point $\mu_\C=\mu_\C'=0$ is best understood as a
resolution, not a deformation, though it is a resolution in a
non-classical sense, involving $\eta={\rm Im}\,\mu_\C'$ rather
than $\mu_\RR={\rm Re}\,\mu_\C'$.  By contrast, to get to the
local geometry of $\MH(SL_2)$, we must make, as we explained at
the outset, a classical resolution, with ${\rm
Re}\,\mu_\C'\not=0$.

Thus, in this situation, the $T$-duality on the fibers of the
Hitchin fibration exchanges a non-classical resolution involving
${\rm Im}\,\mu_\C'$ with a classical resolution involving ${\rm
Re}\,\mu_\C'$.   This is consistent with the following: as the
Hitchin fibration is holomorphic  in complex structure $I$,
$T$-duality on the fibers of this fibration map complex parameters
(such as $\mu_\C$) to complex parameters and Kahler parameters
(such as $\mu_\C'$) to Kahler parameters.

A last comment is that in \secref{higher genus}, we will encounter
an analog of all this involving a singularity $\C^{2n}/\Z_2$,
$n>1$.  In this case, in classical geometry, the singularity can
be neither deformed nor resolved, and in the sigma model, it can
only be treated as an orbifold.  Thus there is no close analog of
the above discussion.

\subsection{Relation To Seiberg-Witten
Theory}\label{SeibergWitten}

The Hitchin fibrations that we have considered here are very similar
to elliptic fibrations that appear in Seiberg-Witten theory.  For
example, modulo some elementary changes of variable, eqn.
(\ref{zot}) for the $SO_3$ moduli space coincides with eqn. (16.24)
of \cite{SW1}.  The latter equation describes the Seiberg-Witten
fibration for the four-dimensional ${\cal N}=2^*$
theory\footnote{This is the ${\cal N}=2$ theory with a massive
hypermultiplet in the adjoint representation.  As the hypermultiplet
mass becomes large, this theory can reduce to the minimal ${\cal
N}=2$ theory, which is the basic case of Seiberg-Witten theory.}
 with gauge group $SU(2)$.  The same singular fibers
of the Hitchin fibration that we are relating here to endoscopy also
play a central role in Seiberg-Witten theory.  Physically, they
describe the appearance of massless charged hypermultiplets;
mathematically, they give the main contribution in the application
of Seiberg-Witten theory to four-manifolds.

The Seiberg-Witten fibration of the ${\cal N}=2^*$ theory was not
originally understood as a Hitchin fibration, but this was later
done \cite{DW} as a step toward understanding the ${\cal N}=2^*$
theory for $SU(N)$.

The fact that the same elliptic fibration appears in two different
problems can be explained by reducing the six-dimensional
world-volume of a certain $M$-theory fivebrane from six to four
dimensions in two different ways.  We consider $M$-theory on the
11-manifold $X=\RR^2\times \mathbb{T}^2\times T^*C \times \RR^3$,
where $C$ is an elliptic curve.  We let $D\subset T^*C$ be a
spectral curve\footnote{This notion is reviewed in
\secref{specreview}.} associated with an $SL_2$ Higgs bundle on $C$
with one point of ramification (for some value of $w$).  And we
consider an $M$-theory fivebrane whose worldvolume is $\RR^2\times
\mathbb{T}^2\times D$. There are two ways to look at this situation,
depending on whether $\mathbb{T}^2$ or $C$ is smaller:

(1) If $\mathbb{T}^2$ is smaller, the first step is to consider
compactification of the fivebrane on $\mathbb{T}^2$.  This  leads to
${\cal N}=4$ super Yang-Mills theory with gauge group $U(2)$
(because the projection $D\to C$ is generically a double cover, the
gauge group is $U(2)$, or effectively $SU(2)$, since the center
plays little role). Geometric Langlands is associated with
subsequent compactification to two dimensions on $C$; the particular
curve $D$ describes the behavior in the presence of a surface
operator associated with ramification, as discussed in section 6.4
of \cite{GW}.

(2) If $C$ is smaller, we consider first the compactification on
$C$, which \cite{Wi3} leads to the ${\cal N}=2^*$ theory on
$\RR^2\times \mathbb{T}^2.$   The elliptic fibration (\ref{zot})
describes this theory on $\mathbb{R}^4$.  The result of further
compactification from $\RR^4$ to $\RR^2\times \mathbb{T}^2$ is that the
elliptic fibration must be endowed with a hyper-Kahler metric
\cite{SW2}.

\section{$A$-Branes And $\cal D$-Modules}\label{genus one A to D}

As promised in \secref{furpro}, we will give more detail on the
relation between an $A$-brane on $\MH$ and the corresponding twisted
$\cal D$-module on $\M$.  The main goal is to explain the
significance of eqn. (\ref{gesto}) that describes the behavior of a
Hitchin fiber restricted to $\tilde T^*\M$ near its ``missing''
divisors (which in that example are points, since $\dim \M = 1$).

For this we need the theory of spectral curves, which will also be a
primary tool in \secref{spectral}.  So we begin with a very brief
sketch of this theory.

Although the arguments in section 11 of \cite{KW} relating
$A$-branes to twisted $\cal D$-modules are fairly clear from a
physical point of view, they are completely conjectural
mathematically.  The following discussion involves an extra layer of
conjecture, since although what we will describe is qualitatively in
accord with what one would expect from \cite{KW}, it is not yet
based on equally clear-cut physics arguments.

For other approaches to these questions, see \cite{NZ,Nadler,Mo}.

\subsection{Spectral Curves}\label{specreview}

The idea  is most simply explained for $GL_n$ or $SL_n$. Let
$(E,\varphi)$ be a Higgs bundle, where $E\to C$ is a vector bundle
of rank $n$ and $\varphi\in H^0(C,K\otimes {\rm ad}(E))$ is the
Higgs field.  Associated to this data, one defines a curve $D$ in
the cotangent bundle $T^*C$ as follows.  Letting $z$ denote a linear
function on the fibers of the cotangent bundle, the curve $D$, which
is known as the spectral curve, is defined by the equation
\begin{equation}\label{gelf}\det(z-\varphi)=0.\end{equation}
The projection $\pi:T^*C\to C$ restricts to an $n$-fold cover
$\pi:D\to C$.

Generically, $D$ is smooth and irreducible and has genus
$g'=n^2g-n^2+1$, where $g$ is the genus of $C$. For ordinary Higgs
bundles, $D\subset T^*C$ is a complete projective curve, but for
ramified Higgs bundles, this is not so as $\varphi$ has poles.  In
that case, one completes $D$ to a projective curve by adding a few
points that are not in $T^*C$.

 One defines a line bundle
$\cal L\to D$ as the cokernel of the map
$(\pi^*(\varphi)-z):\pi^*(E)\to \pi^*(E)\otimes K$. To be more
precise, $\cal L$ is a line bundle when $D$ is smooth but in general
may be a torsion-free sheaf.

So to the Higgs bundle $(E,\varphi)$ on $C$, we associate the
following data: the spectral curve $\pi:D\to C$ and the line bundle
or torsion-free sheaf $\cal L\to D$.  From this data, it is possible
to reconstruct the original  Higgs bundle.  One recovers $E$ by
$E=\pi_*(\cal L)$.  And the
 Higgs field $\varphi:E\to E\otimes K$ can similarly be recovered
from $\pi:D\to C$ as $\pi_*(z)$, where here multiplication by $z$ is
understood as a map from $\cal L$ to $\cal L\otimes \pi^*(T^*C)$. In
this way, the Higgs bundle $(E,\varphi)$ can be recovered from the
pair $(D,\cal L)$.

In this description, the base $\BB$ of the Hitchin fibration
parametrizes the possible spectral curves $D$.  For example, for
$G=SL_2$, the equation defining $D$ reduces to $z^2+\det\,\varphi=0
$, so $\BB$ is parametrized by the quadratic differential
$\det\,\varphi=-\Tr\,\varphi^2/2$.  The Hitchin fibration is the map
that takes a pair $(D,\cal L)$ to $D$. The fiber $\FF$ of the
Hitchin fibration is parametrized by the possible choices of $\cal
L$ for a given spectral curve $D$.

What line bundles $\cal L\to D$ arise in this construction? For
$GL_n$, if we allow all Higgs bundles $(E,\varphi)$, with no
restriction on $\det E$, then all $\cal L\to D$ can arise.  Thus,
$\cal L$ is associated with an arbitrary point in $\Pic\,D$.  If we
wish to associate $(E,\varphi)$ to a local system by solving
Hitchin's equations, then $\det E$ must have degree zero and we must
restrict $\cal L$ to a particular component of $\Pic\,D$.  A small
computation with Riemann-Roch shows that in order for $E=\pi_*(\cal
L)$ to be of degree zero, the degree of $\cal L$ must be
\begin{equation}\label{honcho}{\rm
deg}\,\L=n(n-1)(g-1).\end{equation} (See \secref{multidim} for a
version of this computation.)

For $SL_n$, we must further restrict $\cal L$ so that $\det E$ is
trivial, not just of degree zero.

\subsection{Relation To $A$-Branes}

The condition (\ref{honcho}) on the degree of $\cal L$ can be
usefully stated as follows.  Let $K_C^{1/2}$ and $K_D^{1/2}$ denote
square roots of the canonical bundles of $C$ and $D$, respectively.
Then $\cal L$ has the form $\cal L=\pi^*(K_C^{-1/2})\otimes
K_D^{1/2}\otimes \cal N$, where $\cal N\to D$ has degree zero.  An
evocative way to restate this is to say that we can define $E$ by
\begin{equation}\label{pelikan} K_C^{1/2}\otimes E =
  \pi_*(K_D^{1/2}\otimes \cal N).\end{equation}

The virtue of this last description is that it is closely related to
the theory of $A$-branes.  Let $X$ be a symplectic manifold and
$L\subset X$ a Lagrangian submanifold. We begin with the general
case of a real symplectic manifold with a real Lagrangian
submanifold. To define an $A$-brane $\cal B$ supported on $L$, we
choose roughly speaking a flat vector bundle $V\to L$, and a spin
bundle $\cal W$ on $L$. Then the brane $\cal B$ is associated in
differential geometry with the vector bundle $\cal W\otimes V\to L$.

Now specialize to the case that $X$ is a complex symplectic manifold
and $L$ a complex Lagrangian submanifold of complex dimension $d$.
Then, writing $K_L^{1/2}$ for a square root of the canonical bundle
of $L$ and $\Omega^{0,i}$ for the sheaf of $(0,i)$-forms on $L$, a
spin bundle of $L$ takes the form $\cal W=K_L^{1/2}\otimes
\left(\oplus_{i=0}^{d}\Omega^{0,i}\right)$.  As a result, in
algebraic geometry, one associates $\cal B$ with the sheaf of
sections of $K_L^{1/2}\otimes V\to L$; the tensor product with
$\oplus_{i=0}^{d}\Omega^{0,i}$ simply supplies the $\bar\partial$
resolution of this sheaf.  So in eqn. (\ref{pelikan}), the right
hand side involves precisely the defining data of an $A$-brane
supported on the Lagrangian submanifold $D$, that is, the line
bundle $K_D^{1/2}\otimes \cal N$.  We will see shortly in what sense
the left hand side is also natural.

\remark\label{tormo} A more intrinsic formulation avoids choosing
spin bundles over $D$ and $L$. In the most precise description,
$V\to L$ is not a flat vector bundle, but a flat twisted vector
bundle. The twisting is by a complex gerbe $\eusm G$ of order 2
whose local trivializations correspond to spin structures on the
normal bundle to $L$ in $X$ ($\eusm G$ is obtained from a $\Z_2$
gerbe via the embedding $\{\pm 1\}\subset \C^\times$). The twisting
by $\eusm G$ is related to the fact that branes in $X$ actually have
an interpretation in terms of the $K$-theory of $X$. For example,
see \cite{FrW}.  The key point is that a submanifold $L\subset X$
equipped with a  $\eusm G$-twisted vector bundle defines a class in
$K(X)$, while $L$ equipped with an ordinary vector bundle does not
define such a class.

The gerbe $\eusm G$ will also appear in \secref{gerbes}. Except in
that section, we will not incorporate this extra layer of subtlety.
Our main example of an $A$-brane in most of the paper is a brane
supported on a fiber $\FF$ of the Hitchin fibration. In this case,
the determinant of the normal bundle is trivial and has a global
square root that is also trivial, giving a canonical trivialization
of $\eusm G$.

A rank 1 bundle twisted by $\eusm G$ determines what is  called a
${\rm Spin}_c$ structure.  A {\rm flat} rank 1 $\eusm G$-twisted
bundle corresponds to a flat ${\rm Spin}_c$ structure. In our
present example that $L$ is a curve $D$, a line bundle of the form
$K_D^{1/2}\otimes \cal N\to D$, where $\cal N$ has degree zero,
corresponds naturally to  a flat ${\rm Spin}_c$ structure. That is
an intrinsic way to characterize line bundles of the form
$K_D^{1/2}\otimes \cal N$ without introducing separately either
factor in the tensor product.

\subsection{Map From $A$-Branes To Twisted $\cal
D$-Modules}\label{strategy}

Now we can explain the basic strategy for mapping $A$-branes to
twisted $\cal D$-modules. We do this initially for $A$-branes in a
complex symplectic manifold $X=T^*Y$ for the case that $Y$ is a curve.

We let $D\subset X$ be any smooth curve such that the projection
$\pi:X\to Y$ restricts to a finite cover $\pi:D\to Y$.  In
\secref{specreview}, such a curve $D$ arose as the spectral curve of
a Higgs bundle $(E,\varphi)$ over $C$. Though this is the motivating
example, we have given the base curve a new name $Y$ because in our
application, $Y$ will be in fact not the original curve $C$ but a
moduli space $\M$ of stable parabolic  bundles over $C$.

For dimensional reasons, $D$ is a Lagrangian submanifold with
respect to the holomorphic symplectic structure of $X$. We define a
rank 1 $A$-brane supported on $D$ by picking a flat ${\rm Spin}_c$
structure on $D$ that we write not quite intrinsically as a choice
of line bundle $K_D^{1/2}\otimes \N\to D$, where $\N\to D$ is a line
bundle of degree 0.

Then, roughly speaking, according to eqn. (\ref{pelikan}), we define
a vector bundle $E\to Y$ by $K_Y^{1/2}\otimes
E=\pi_*(K_D^{1/2}\otimes \N)$. And we define a Higgs field $\varphi$
by $\varphi=\pi_*(z)$.  So $(E,\varphi)$ is a Higgs bundle over $Y$.

The key step comes next.  By solving Hitchin's equations, we relate
the Higgs bundle $(E,\varphi)$ to a rank $n$ local system over $Y$
that we also call $E$.  Hence instead of just thinking of
$K_Y^{1/2}\otimes E$ as a vector bundle, we can think of its sheaf
of sections as a twisted $\cal D$-module over $Y$, that is, a sheaf
of modules for the sheaf $\cal D^*$ of differential operators acting
on sections of $K_Y^{1/2}$.

An important detail here is that starting with an $A$-brane defined
by a flat ${\rm Spin}_c$ bundle that we write loosely as
$K_D^{1/2}\otimes \cal N$, we do not quite get intrinsically a
degree zero vector bundle $E\to Y$; rather we get a vector bundle of
non-zero degree that we have written non-intrinsically as
$K_Y^{1/2}\otimes E$. There is no canonical choice of $K_Y^{1/2}$,
so there is also no canonical way to define $E$. Only the tensor
product of the two is canonical, and this is why an $A$-brane in
$T^*Y$ supported on the curve $D$ maps canonically not to an
ordinary $\cal D$-module over $Y$ (such as the sheaf of sections of
a local system $E$) but to a twisted $\cal D$-module (the sheaf of
sections of $K_Y^{1/2}\otimes E$).

Though $E$ is only defined up to a twist by a line bundle of order
2, this twist does not affect Hitchin's equations, and the twisted
$\cal D$-module that emerges from the construction depends only on
the original $A$-brane, and not on any other choice.  What is not
quite canonical is to describe this twisted $\cal D$-module via a
tensor product $K_Y^{1/2}\otimes E$.

\subsection{Poles}\label{poles}

For application to geometric Langlands, it is essential to extend
this to the case that the curve $D\subset T^*Y$ is not compact and
the projection $\pi:D\to Y$ is only generically an $n$-fold cover.
As in eqn. (\ref{normo}), we suppose that locally, near some point
$u_0\in Y$, some branches of $\pi:D\to Y$ go to infinity. We let $u$
denote a local coordinate on $Y$ near $u_0$, and pick a second
coordinate $s$ on $X=T^*Y$ so that the symplectic structure is
$du\wedge ds$.  In the most basic case, we assume that the $n$
branches are described by functions $s_i(u)$ whose singularities at
$u=u_0$ are simple poles. Thus,
\begin{equation}\label{formo}
s_i(u)\sim\frac{c_i}{u-u_0},~~i=1,\dots,n\end{equation} with complex
constants $c_i$, some of which may vanish.

Let $\cal U$ be a small neighborhood of the point $u_0\in Y$ and
trivialize $\cal L$ over $\pi^{-1}(\cal U)$.  Then $E=\pi_*(\L)$ is
naturally trivialized over $\cal U$ and relative to this
trivialization, the Higgs field $\varphi$ is diagonal, $\varphi={\rm
diag}(\varphi_1,\dots,\varphi_n)$ where the $\varphi_i$ are
differentials with poles at $u=u_0$:
\begin{equation}\label{kormo} \varphi_i\sim c_i\frac{du}
{u-u_0}.\end{equation}

Just as in the case that $D$ is compact, an $A$-brane supported on
$D$ still gives rise to a Higgs bundle $(E,\varphi)$ over $Y$
(twisted as above by $K_Y^{1/2}$).  However, $\varphi$ now has a
simple pole at the points in $Y$, such as  $u_0$, at which $\pi:D\to
Y$ fails to be an $n$-fold cover.

We can still  solve Hitchin's equations and associate to this data a
(twisted) local system, since Hitchin's equations work nicely in the
presence of prescribed poles \cite{Sim}.   We have already described
in eqn. (\ref{horseg}) the appropriate polar behavior of the fields
entering in Hitchin's equations.  But now we  write $\alpha^*$,
$\beta^*$, and $\gamma^*$ for the parameters characterizing a
ramified Higgs bundle on the curve $Y$:
\begin{align} \label{horsegg} \notag  A & = \alpha^* \, d\theta+\dots\\
\phi& = \beta^*\frac{dr}{r}-\gamma^* d\theta+\dots.
\end{align} (Here $u-u_0=re^{i\theta}$.) In the rank $n$ case,
the parameters $\alpha^*$, $\beta^*$, and $\gamma^*$ take values in
the Lie algebra of a maximal torus in the compact form of $GL_n$;
thus, they are imaginary\footnote{The elements of the real Lie
algebra of a compact group acts as anti-hermitian matrices in a
unitary representation.} diagonal matrices.

We reserve the names $^L\neg\alpha,{}^L\neg\beta,{}^L\neg\gamma$ for
the ramification parameters on $C$ of the original $^L\neg G$ local
system, and $\alpha,\beta,\gamma$ for their counterparts in the
$A$-model.  Until this point, we have not distinguished in the
notation between the ramification parameters for $^L\neg G$ and $G$.
This is because we have been discussing classical geometry, and the
same facts are relevant to both the $A$-model and the $B$-model.
Henceforth, it will help to be more precise.

As explained in \secref{hitch}, a solution of Hitchin's equations
with a singularity as in eqn. (\ref{horsegg}) determines, depending
on how we look at it, either a Higgs bundle with a singularity at
$u_0$ or a local system with such a singularity. The Higgs bundle
has a pole
\begin{equation}\label{zerlo}\varphi\sim
\sigma^*\frac{du}{u-u_0},~~\sigma^*=\frac{\beta^*+i\gamma^*}{2}.
\end{equation}
The local system has monodromy
\begin{equation}\label{zelmo} M^*=\exp(-2\pi(\alpha^*-i\gamma^*)).
\end{equation}

It is useful to set $\sigma^*={\rm
diag}(\sigma_1^*,\dots,\sigma_n^*)$ and
$\sigma_i^*=\frac{1}{2}(\beta_i^*+i\gamma_i^*)$.  Comparing eqns.
(\ref{kormo}) and (\ref{zerlo}), we see that
\begin{equation}\label{otherwords}\sigma^*_i=c_i\end{equation}
 or in other words
\begin{equation}\beta^*_i+i\gamma^*_i=2c_i.\end{equation}

Thus, given a curve $D\subset T^*Y$ with poles of the type we have
considered, $\beta^*$ and $\gamma^*$ are determined by the
geometry. The eigenvalues of the monodromy $M^*$ have absolute
values $\exp(2\pi i\gamma_i^*)$, which  are determined by $D$. The
arguments of the eigenvalues depend on $\alpha_i^*$. The geometry
of $D$ does not determine $\alpha_i^*$; we can solve Hitchin's
equations and get a twisted $\cal D$-module over $Y$ for any
choice at all of $\alpha_i^*$.

\subsection{Application To Our Example}\label{polynomial}

We can get some insight about $\alpha^*$ by considering the familiar
example -- the geometric Langlands program for $SL_2$ on a curve $C$
of genus 1 with 1 point of ramification. In this problem, we do not
expect to be able to determine $\alpha^*$ uniquely from the data
considered so far; it is perfectly natural that it should also
depend on the underlying parameter $^L\neg\alpha$ of the original
$^L\neg G$ local system. A physics-based method to determine
$\alpha^*$ in this problem is not presently available, but
considerable information comes from very simple arguments. (A
conjectural description of $\alpha^*$ is given at the end of
\secref{zame}.)

The moduli space of parabolic $SL_2$ bundles over $C$ in this
situation is a curve $\M$ of genus zero, as we have described.  We
take the curve $Y$ in the above discussion to be $\M$, and we take
$D$ to be a fiber $\FF$ of the Hitchin fibration, regarded as a
curve in $\tilde T^*Y\subset \MH$.  In considering this example, we
are jumping ahead of our story slightly, because here $\tilde T^*Y$
is an affine deformation of the cotangent bundle of $Y$, while in
\secref{poles}, we considered a curve in the undeformed cotangent
bundle. For the moment we overlook this difference, which will be
incorporated in \secref{twist}.

There are two ramified Higgs bundles in this example of the
geometric Langlands program. The input  is a ramified Higgs bundle
$(E,\varphi)$ over the curve $C$, with parameters
$^L\neg\alpha,{}^L\neg\beta,{}^L\neg\gamma$. The output is a
ramified Higgs bundle over the curve $Y=\M$, with parameters
$\alpha^*,\beta^*,\gamma^*$.  We would like to determine the output
parameters as a function of the input parameters.  (The output
parameters may {\it a priori} be different at different points on
$\M$ where the induced local system is singular.)

In geometric Langlands, one views the input Higgs bundle as
determining a local system over $C$ with monodromy $M=\exp(-2\pi
({}^L\neg\alpha-i{}^L\neg\gamma))$.  The output is a twisted $\cal
D$-module over $\M$, represented by a local system with monodromy
$M^*=\exp(-2\pi(\alpha^*-i\gamma^*))$.  From this point of view, it
is clear that if there is a geometric Langlands duality, then
$\alpha^*-i\gamma^*$ varies holomorphically in
${}^L\neg\alpha-i{}^L\neg\gamma$, and is independent of
${}^L\neg\beta$. The last statement holds because the input local
system does not depend on ${}^L\neg\beta$, though its associated
Higgs bundle does. So in particular, we have
\begin{equation}\label{ufrac}
\frac{\partial\gamma^*}{\partial{}^L\neg\beta}=0.
\end{equation}
{\it A priori}, it is not clear that $\alpha^*-i\gamma^*$ is a
function only of ${}^L\neg\alpha-i{}^L\neg\gamma$ (and not of the
modulus of the input local system over $C$), but in a moment it will
be clear that in our example, this is true for $\gamma^*$ and
therefore by holomorphy also for $\alpha^*$.

Alternatively, Hitchin's equations identify the input local system
with a Higgs bundle $(E,\varphi)$ and  here the natural holomorphic
parameter is $^L\neg\sigma=({}^L\neg\beta+i{}^L\neg\gamma)/2$.
Duality maps this to a multiple of $\sigma=(\beta+i\gamma)/2$, on
which $\MH(G)$ depends holomorphically. So
$\sigma^*=(\beta^*+i\gamma^*)/2$ is holomorphic in ${}^L\neg\sigma$,
and is independent of $^L\neg\alpha$; in particular,
\begin{equation}\label{vfrac}
\frac{\partial\gamma^*}{\partial{}^L\neg\alpha}=0.
\end{equation}

Taken together, the statements in the last two paragraphs mean that
the output triple $(\alpha^*,\beta^*,\gamma^*)$ is a  linear
function of the input triple
$({}^L\neg\alpha,{}^L\neg\beta,{}^L\neg\gamma)$. One way to argue
this begins with the Cauchy-Riemann equations asserting that
$\beta^*+i\gamma^*$ is holomorphic in
${}^L\neg\beta+i{}^L\neg\gamma$:
\begin{align}\notag \frac{\partial\beta^*}{\partial {}^L\neg\beta} &
=\frac{\partial\gamma^*}{\partial {}^L\neg\gamma}\\
\frac{\partial\gamma^*}{\partial {}^L\neg\beta} &
=-\frac{\partial\beta^*}{\partial {}^L\neg\gamma}.\end{align} Using
this and eqn. (\ref{ufrac}), we see that
$\partial^2\gamma^*/\partial{}^L\neg\gamma^2
=\partial^2\beta^*/\partial{}^L\neg\beta\partial{}^L\neg
\gamma = -\partial^2\gamma^*/\partial{}^L\neg\beta^2=0$.  Since
$\gamma^*$ is independent of ${}^L\neg\alpha$ and ${}^L\neg\beta$
according to eqns. (\ref{ufrac}), (\ref{vfrac}), it follows that
$\gamma^*=f({}^L\neg\gamma)$ where $f$ is a possibly inhomogeneous
linear function between the two Lie algebras.  Then holomorphy of
$\alpha^*-i\gamma^*$ in ${}^L\neg\alpha-i{}^L\neg\gamma$ and of
$\beta^*+i\gamma^*$ in ${}^L\neg\beta+i{}^L\neg\gamma$ implies that
\begin{equation}(\alpha^*,\beta^*,\gamma^*)
  =(f({}^L\neg\alpha)+d_1,f({}^L\neg\beta)+d_2, f({}^L\neg\gamma))
\end{equation}
with some constants $d_1$ and $d_2$ and the same linear function
$f$. In particular
\begin{equation} \sigma^*=f({}^L\neg\sigma)+d_2/2\end{equation}
must be a linear function of ${}^L\neg\sigma$.

We can compare this prediction of linearity to the results of
eqns. (\ref{gesto}), where we computed $\sigma^*$ in our example.
(As remarked in \secref{impom}, we consider only the improper
component, as the two are equivalent for these questions.)  From
eqn. (\ref{gesto}) we see that, up to permutation of the two
eigenvalues of $\sigma^*$, we have
\begin{equation}\label{celmo}(\sigma_1^*,\sigma_2^*)=(\sigma_0,0).
\end{equation}
Here $\pm \sigma_0$ are the eigenvalues of ${}^L\neg\sigma$. In
particular, the linearity holds, and moreover the linear function
$f$ is homogeneous and $d_2=0$. We would conjecture (without a solid
physics-based argument) that also $d_1=0$, in which case
$\varrho^*=\alpha^*-i\gamma^*$ is a homogeneous linear function of
$^L\neg\varrho={}^L\neg\alpha-i{}^L\neg\gamma$. Then writing
$^L\neg\varrho={\rm diag}({}^L\neg\varrho_0,-{}^L\neg\varrho_0)$
($\varrho$ is traceless as it takes values in $\mathfrak{sl}_2$) and
$\varrho^*=(\varrho^*_1,\varrho^*_2)$, eqn. (\ref{celmo}) implies
that
\begin{equation}\label{pelmo}(\varrho^*_1,\varrho^*_2)=
({}^L\neg\varrho_0,0).
\end{equation}
The Weyl symmetry between $^L\neg\varrho_0$ and $-{}^L\neg\varrho_0$
has been lost because in \secref{impom}, we have chosen one of two
possible maps $\M_H^0\to \M$ as the starting point in associating a
twisted $\cal D$-module on $\M$ to an $A$-brane on $\M_H^0$.

Now go back to the  ramified $SO_3$ local system $E\to C\backslash
p$ with semi-simple monodromy around $p$. If we pick an extension
over $p$ of the holomorphic (or topological) structure of $E$, then
the monodromy $M$ of $E$ around $p$ makes sense as an element of
$SL_2$ (not just $SO_3$), and  is conjugate to ${\rm
diag}(\lambda,\lambda^{-1})$ with
$\lambda=\exp(-2\pi{}^L\neg\varrho)$. The analogous monodromy $M^*$
of the local system on $\M$ is conjugate to  $${\rm
diag}(\exp(-2\pi\varrho_1^*),\exp(-2\pi\varrho_2^*)).$$ So according
to eqn. (\ref{pelmo}), up to conjugacy the monodromies of the local
system on  $\M$ are
\begin{equation}\label{firstone} M^*={\rm diag}(\lambda,1).
\end{equation}

What has been described so far applies to the generic case of an
$A$-brane on $\MH$ supported on a smooth fiber $\FF$ of the Hitchin
fibration.  Such an $A$-brane is associated with a rank 2 twisted
local system on $\M$ with singularities at a few points, and the
monodromies of the singularities are as just described.  They are
actually the same at each singularity.

For geometric endoscopy, we want the analogous description for the
case that $\FF$ is a union of two components $\FF_1$ and $\FF_2$.
The fractional $A$-branes supported on just one component are
associated with rank 1 twisted local systems over a Zariski open set
in $\M$.  It is explained in Remark \ref{endosc} that each
Lagrangian submanifold $\FF_1$ or $\FF_2$ has a simple pole
analogous to that in eqn. (\ref{gesto}) at precisely 2 of the 4
points $z^*$. Hence the rank 1 local system on $\M$ corresponding to
either of these branes has precisely 2 singularities, at each of
which the monodromy is $\lambda$.

This may seem to entail a contradiction. The structure group of a
rank 1 local system is the abelian group $GL_1$, so the product of
its monodromies must equal 1, rather than $\lambda^2$ as we seem to
get in the last paragraph. The resolution of this question entails a
point that we have been omitting: $\M_H^0$ is not the cotangent
bundle of $\M$ but an affine deformation of this cotangent bundle.
As we discuss next, this leads to a twist of the local system such
that the product of monodromies at its singularities is central but
is not simply equal to 1.

\subsection{The Central Twist}\label{twist}

Our next task will be to understand $A$-branes in $X=\tilde T^*Y$,
where $X$ is an affine deformation of $T^*Y$.  The deformation is
classified by a class $\zeta\in H^1(Y,\Omega^{1,cl}(Y))$.  Viewing
$\zeta$ as a closed $(1,1)$-form, the integral $\int_Y\zeta/2\pi i$
is a complex number that we  call $\zeta_\C$.

$A$-branes in $\tilde T^*Y$ are related to twisted $\cal D$-modules
on $Y$ that are twisted in a more interesting way than we have
discussed so far in this paper.  The relevant $\cal D$-modules are
modules not for the sheaf of differential operators acting on
$K_Y^{1/2}$, but for the sheaf of such operators acting on
$K_Y^{1/2}\otimes \cal T_{\chi}$, where $\cal T_{\chi}$ is a ``line
bundle'' of first Chern class $\chi$. ($\chi$ is a class whose
relation to $\zeta$ will be described.) $\cal T_{\chi}$ does not
really exist as a line bundle unless the class $\chi$ is integral,
but the sheaf $\cal D_{\chi}^*$ of differential operators acting on
$K_Y^{1/2}\otimes \cal T_{\chi}$ does exist in any case.

In the ramified case of geometric Langlands, this can be deduced using
representations of affine Kac--Moody algebras at the critical level
\cite{FG,F:ram} or by analyzing the canonical coisotropic brane
$\A_{cc}$, as in Section 4.4 of \cite{GW}.  Aiming to recover this
result from the viewpoint adopted here, we let $D$ be a curve in $X$
such that the projection $\pi:X\to Y$ restricts to a map $\pi:D\to Y$
that is generically a finite cover. However, we allow $D$ to have
``poles'' over finitely many points in $Y$, as in \secref{poles}. $D$
is also endowed with a suitable line bundle $\cal L$.  It is
instructive to first consider the case that $\pi:D\to Y$ is
generically of degree 1, so that $D$ is a section of $\pi:X\to Y$ over
a Zariski open set in $Y$.  In this case, there must be poles, if
$\zeta\not=0$. Otherwise, $D$ would be a global holomorphic section of
$\pi$, whose existence would trivialize the class $\zeta$.

In the case that $X$ actually is globally the cotangent bundle
$T^*Y$, the curve $D$ corresponds to a meromorphic differential on
$X$.  This differential is the Higgs field $\varphi$ of the rank 1
Higgs bundle that corresponds to $D$.  At each point $u_i\in Y$ at
which $\varphi$ has a pole, one can define its residue, called
$\sigma_i^*$ in eqn. (\ref{zerlo}).  According to the residue
theorem, the sum of residues vanishes:
\begin{equation}\sum_i\sigma_i^*=0.\end{equation}
 When we use Hitchin's equations (which in this rank 1 case
reduce to ordinary Hodge theory) to relate the pair $(D,\cal L)$ to
a rank 1 local system $E\to Y$, the absolute value of the monodromy
at $u_i$ is $\exp(-2\pi {\rm Im}\,\sigma_i^*)$. The vanishing of the
sum of the residues maps to the fact that for a $GL_1$ local system,
the product of these absolute values is 1.  (The phases of the
monodromies are determined by $\alpha_i^*$.)

Now let us consider the case that $X=\tilde T^*Y$ is actually an
affine deformation of $T^*Y$.  In this case, at each singular point
$u_i$, we can still define a residue $\sigma_i^*$; indeed, the
affine deformation is trivial locally, and though there is no
canonical way to trivialize it, a change in the trivialization does
not affect the residue.  What is new in the case of an affine
deformation is that the sum of residues no longer vanishes. Instead,
we have
\begin{equation}\label{turk}\sum_i\sigma_i^*-\zeta_\C=0.\end{equation}
(One way to derive this result is to observe that we could
trivialize $\zeta$ away from an arbitrarily chosen point $r\in Y$.
Relative to this trivialization, $D$ has one more singular point,
with residue $-\zeta_\C$, and eqn. (\ref{turk}) is simply the
vanishing of the sum of all residues.) The fact that the sum of the
residues is a nonzero constant corresponds to the fact that the
product of the absolute values of the monodromies is a constant not
equal to 1, in fact equal to $\exp(2\pi {\rm Im}\,\zeta_\C)$.

This resolves the puzzle mentioned at the end of
\secref{polynomial}.  In that computation, we used coordinates $z$
on $Y=\M$ and $v$ on the fiber of $\tilde T^*\M$.  The choice of
these coordinates trivialized the affine deformation except at
$z=\infty$.  In particular, the deformation was trivialized near the
poles of $D$ and hence did not affect the residues of those poles.
With this trivialization, $D$ has an additional pole at $z=\infty$,
and of course, the sum of all residues, including that last one,
vanishes.

In differential geometry, one can represent $\zeta$ as a
closed\footnote{Of course, on a curve, every $(1,1)$-form is closed,
but the construction is actually meaningful in higher dimensions, as
we discuss in \secref{multidim}.} $(1,1)$-form and describe this
situation as follows. A meromorphic section $D$ of $\pi:\tilde
T^*Y\to Y$ does not correspond to a meromorphic differential
$\omega$. Rather it corresponds to a $(1,0)$-form on $Y$, with poles
as above, and obeying
\begin{equation}\label{exce}\bar\partial\varphi = \zeta.\end{equation}
Integration of this formula over $C$ leads back to eqn.
(\ref{turk}); one integrates the left hand side using the poles of
$\varphi$ and expresses the integral of the right hand side in terms
of $\zeta_\C$.

Now let us consider the rank $n$ case of this.  In other words, we
suppose that $D\subset \tilde T^*Y$ is a curve such that $\pi:D\to
Y$ is generically an $n$-fold cover.  $D$ moreover is endowed with a
line bundle $\cal L$.  Can we associate a $GL_n$ Higgs bundle to the
pair $(D,\cal L)$?  We can define the bundle $E$ as usual by
$E=\pi_*(\cal L)$, and we can also imitate the usual definition of
the Higgs field, $\varphi=\pi^*(z)$.  Now, however, $\varphi$ is not
a holomorphic map $E\to E\otimes K_Y$.  Rather, each branch of
$\varphi$, or more exactly the contribution to $\varphi$ from each
branch of $\pi:D\to Y$, is twisted exactly as in eqn. (\ref{exce}).
Consequently, the equation obeyed by $\varphi$ is
\begin{equation}\bar\partial_E\varphi=1\otimes \zeta,\end{equation}
where $\bar\partial_E$ is the $\bar\partial$ operator of the bundle
$E$, and $1:E\to E$ is the identity.

This type of twisting makes sense for Hitchin's equations
(\ref{otto}), which we can slightly generalize to
\begin{align}\label{toto}\notag F-\phi\wedge \phi&=1\otimes i\,\zeta'
\\\notag d_A\star \phi&=1\otimes i\,{\rm Im}\,\zeta\\
d_A \phi& = 1\otimes i\, {\rm Re}\,\zeta.\end{align}  Along with the
deformation by $\zeta$ seen in the last paragraph, which has been
rewritten in terms of real differential geometry, we have included
an additional real\footnote{The reason for the factors of $i$ on the
right hand sides of these equations is that $\phi$ and $F$ are forms
valued  in the Lie algebra of $U_n$, the compact form of $GL_n$; we
represent this Lie algebra by anti-hermitian $n\times n$ matrices.}
closed $(1,1)$-form $\zeta'$ in the first equation. Up to trivial
equivalence, the equations depend only on the cohomology classes of
$\zeta'$, ${\rm Im}\,\zeta$, ${\rm Re}\,\zeta$. Those classes are a
triple of real parameters, like the usual parameters
$(\alpha,\beta,\gamma)$ of ramified Higgs bundles.

We expect that $A$-branes in $\tilde T^*Y$ should be mapped to
twisted $\cal D$-modules on $Y$ by solving this twisted form of
Hitchin's equations.  The right choice of $\zeta'$ is not determined
by the geometry of the $A$-brane, somewhat like the value  of
$\alpha^*$ in \secref{poles}.  This issue will be discussed in
\secref{zame}.

After solving the twisted equations (\ref{toto}), we form the
complex-valued connection $\CA=A+i\phi$.  Usually, Hitchin's
equations imply that $\CA$ is flat.  In the present case, however,
$\CA$ is not flat, but rather has central curvature ${\eusm
F}_0=i(\zeta'+i\,{\rm Re}\,\zeta)$.  A connection with central
curvature $\eusm F_0$ describes not a $\cal D$-module but a twisted
$\cal D$-module, twisted by $\T_\chi$, a ``line bundle'' of first
Chern class $\chi=\eusm F_0/2\pi i$.  In a more intrinsic
description, the twisting is really by $K_Y^{1/2}\otimes \T_\chi$,
where the role of $K_Y^{1/2}$ has been described earlier.

\subsection{The $B$-Field}\label{zame}
At two points we have run into important parameters that are not
determined by the geometry of an $A$-brane, namely the ramification
parameter $\alpha^*$ at a pole of $D\subset \tilde T^*Y$, and the
twisting class $\zeta'$ of eqn. (\ref{toto}).  In the abstract, any
values of these parameters make sense and can be used to construct
twisted $\cal D$-modules by solving the twisted version of Hitchin's
equations with singularities.  We would like to know what values are
relevant in the context of geometric Langlands.

For the twisting parameter, we can make an argument based on
holomorphy that is very similar to what we have already said for the
ramification parameter.  The cohomology class $\chi$, like the
twisting parameter $\varrho^*$, must be holomorphic in
${}^L\neg\alpha-i{}{}^L\neg\gamma$.  As we have just seen, ${\rm
Im}\,\chi$ is determined by the affine deformation by which a
Zariski open set in $\MH$ differs from the cotangent bundle $T^*\M$.
In \cite{GW}, by analyzing the geometry of $\MH$, it was shown that
this is linear in ${}^L\neg\gamma$, and hence ${\rm Re}\,\chi$ is
similarly linear in ${}^L\neg\alpha$.

However, we would like to explain more directly how $\zeta'={\rm
Re}\,\chi$ originates in the quantum field theory. Up until this
point, in our study of $A$-branes on a symplectic manifold such as
$X=\tilde T^*Y$, we have omitted an important aspect of the
$A$-model, namely the $B$-field.  Thus, we have defined the
$A$-model purely in terms of the symplectic form $\omega$ of $X$. In
general, the $A$-model depends holomorphically on the complexified
symplectic form $\hat \omega=\omega+iB$.

In the absence of the $B$-field, $A$-branes on a Lagrangian
submanifold $X$ are characterized by a rank $n$ vector bundle $V\to
L$ with a flat unitary connection $A$.  (For brevity, we omit here
the twisting by $K_L^{1/2}$.)  Thus, writing $F$ for the curvature
of $A$, the condition is $F=0$.  For $B\not=0$, however, the
condition is modified to include a central
curvature:\footnote{Physicists usually omit the factor of $i$ on the
right hand side because of defining $F$ to take values in hermitian
rather than anti-hermitian $n\times n$ matrices.}
\begin{equation}\label{twisted}F=1\otimes i B.\end{equation}
Thus, the twisting field $\zeta'$ of eqn. (\ref{toto}) is precisely
$B$.  $B$ is really only defined modulo the addition of an exact
two-form.

 As analyzed in \cite{GW}, under
duality from the $B$-model of $^L\neg G$ to the $A$-model of $G$,
$^L\neg\alpha$ maps to a quantum parameter $\eta$ of the $A$-model,
and in turn the $B$-field on $\MH(G)$ is linear in $\eta$, in the
following sense. $\eta$ takes values in $\mathfrak t$, the Lie
algebra of the compact form of $G$. The cohomology class $[B]$ of
$B$ is an element of $H^2(\MH,\mathbb{R})$, which has a natural
identification as $\mathbb{R}\oplus\mathfrak t$.  (For simplicity, we
consider the case of only one ramification point; otherwise, one has
a similar analysis with more parameters.) Relative to this,
$[B]=c\oplus \eta$, where $c$ is a constant that vanishes in the
usual case of geometric Langlands (and is nonzero in a
generalization known as quantum geometric Langlands).

Consider first the case that $\M_H^0$ reduces to $T^*\M$. This
actually occurs if $^L\neg \beta={}^L\neg\gamma=0$,  as shown in
\cite{GW}. Then $Y=\M$ can be embedded as a Lagrangian submanifold.
Consider the special case of eqn. (\ref{toto}) with $\varphi=0$. If
we set $\zeta=0$ and $\zeta'=B$, this reduces to eqn.
(\ref{twisted}) and therefore is the right equation to describe a
rank $n$ $A$-brane supported on $\M\subset T^*\M$. Therefore the
twisting parameter $\zeta'$ of the category of $A$-branes is
precisely $B$, or more precisely, its cohomology class; hence,
$\zeta'$ coincides with the ramification parameter $^L\neg\alpha$ of
the underlying $^L\neg G$ ramified local system. Solutions with
$\varphi\not=0$ describe the possible deformation of the $A$-brane
from a rank $n$ brane supported on $\M$ to a rank 1 brane supported
on a more general Lagrangian submanifold $D\subset T^*\M$ ($D$ is an
$n$-fold cover of $\M$).  $A$-branes supported on $D$ are twisted in
the same way, since the twisting applies to the whole $A$-category.

In geometric Langlands, we are actually interested in a case in
which $D\subset T^*\M$ has poles.  However, $B$ when restricted to
$T^*\M$ is a pullback from $\M$ (modulo addition of an irrelevant
exact two-form) and the twisting is just as in the last paragraph.
Turning on ${}^L\neg\beta$ and ${}^L\neg\gamma$, which are multiples
of $\beta$ and $\gamma$, deforms the classical geometry from $T^*\M$
to $\tilde T^*\M$, but (as analyzed in \cite{GW}) does not change
the cohomology class of $B$ and so does not affect this contribution
to the twisting.

In summary, in the standard case of geometric Langlands, with one
ramification point, $\zeta'$ is equal to $0\oplus{} ^L\neg\alpha$.
With $k$ ramification points,
$H^2(\MH,\RR)=\RR\oplus\left(\oplus_{i=1}^k\mathfrak
t^{(i)}\right)$, where the $\mathfrak t^{(i)}$ are copies of
$\mathfrak t$.  Each ramification point has a corresponding
parameter $^L\neg \alpha^{(i)}$ and a similar argument shows that
$\zeta'=0\oplus\left(\oplus_{i=1}^k{}^L\neg\alpha^{(i)}\right)$.

\subsubsection{Conjectural Description of
$\al_i^*$}\label{conjdesc}

Now that we have determined $\zeta'$, we discuss the parameters
$\al_i^*$, where $i$, as before, labels the divisors in $\M$ where
the Hitchin fiber ``goes to infinity.''

Here some general remarks on $B$-fields might be helpful. A general
$B$-field of the type that we consider describes mathematically a
$U(1)$-gerbe with a connection (similar objects will also be
discussed in \secref{gerbes}). A gauge transformation of such an
object is made by taking the tensor product with a line bundle ${\mc
L}$ with a unitary connection $\nabla$.   In our situation, the
gerbe is actually trivial, though not canonically so, which is why
its connection may be represented by the globally defined two-form
$B$ that was discussed above. Under a gauge transformation, this
two-form gets shifted by the curvature of $\nabla$.

Now let $\FF$ be a compact fiber of the Hitchin fibration, and
$\FF_0$ its restriction to $\tilde T^*\M$.  The cohomology class of
$B$ restricted to $\FF$ actually vanishes.  This can be seen from
the analysis in \cite{GW}, and ensures that a rank 1 $A$-brane
supported on $\FF$ can exist.  Such an $A$-brane is defined by
giving a flat trivialization $\cal T$ of the gerbe, that is, a
trivialization relative to which the connection form vanishes.
(Often, it is assumed that the gerbe has an {\it a priori}
trivialization, and then the $A$-brane is defined by a flat line
bundle over $\FF$.  However, the intrinsic formulation is that the
$A$-brane is defined by a flat trivialization of the gerbe.)
 On the other hand, upon restricting $B$ to $\FF_0$, we
may use a gauge in which $B$ is a pullback from $\M$. As such, $B$
may well be non-zero, but we can take it to be smooth and hence
bounded in norm. The difference between the two gauges is given by a
line bundle ${\mc L}\to \FF_0$ with a connection $\nabla$. The
connection is not flat, but its curvature is uniformly bounded,
since $B$ is bounded on $\M$. Therefore the monodromies of ${\mc L}$
around the divisors in $\FF \backslash \FF_0$ are well-defined. We
conjecture that these are actually responsible for the ramification
parameters $\alpha_i^*$.

More precisely, in the neighborhood of each divisor $D_i$ in $\M$
over which the Hitchin fiber $\FF$ goes to infinity, $\FF$ has $n$
branches. Therefore the connection $\nabla$ gives rise to a
collection of monodromies $\la_i^j \in U(1), j=1,\ldots,n$, one for
each branch. We conjecture that $\exp(-2\pi \al_i^*)$ is in the
conjugacy class of $\on{diag}(\la_i^1,\ldots,\la_i^n)$. This
completes our (conjectural) description of the $\al_i^*$.

Finally, we obtain the following description of the ${\mc D}$-module
${\cal F}$ associated to the $A$-brane $(\FF,{\cal T})$.  The
curvature of ${\mc L}\to \FF_0$ is of type $(1,1)$, since $B$ has
this property, so the connection $\nabla$ endows ${\mc L}$ with a
holomorphic structure.  We extend ${\mc L}$ as a holomorphic line
bundle over $\FF$ by saying that a holomorphic section $\psi$ of
${\mc L}\to \FF_0$ is holomorphic on $\FF$ if its magnitude
$|\psi|$, computed using the unitary structure of ${\mc L}$, is
bounded. Moreover, we say that  $\psi$  vanishes to order $\alpha$
near an irreducible component $D'$  of $\FF\backslash\FF_0$ if near
$D'$ one has $|\psi|\sim |z|^\alpha$, with $z$ a normal coordinate
near $D'$. This characterization of the order of vanishing of a
section is well-defined because the curvature of ${\mc L}$ is
bounded.  It endows  ${\mc L}\to \FF$ with parabolic structure along
$\FF\backslash\FF_0$.

Then  $E=\pi_*(\cal L)$, where $\pi: \FF_0 \to \M$ is the
projection, has a natural parabolic structure at the divisors $D_i$.
It is also equipped in the usual way with a natural Higgs field
$\varphi$.

Our proposal is that the restriction of ${\cal F}$ to the complement
of the divisors $D_i$ is a local system obtained by the non-abelian
Hodge transformation of the parabolic Higgs bundle $(E,\varphi)$.

\subsection{Tame And Irregular Singularities}\label{tame}

So far we have considered only the case that the exponents $d_i$
describing the singular behavior of the Lagrangian curve $D\subset
T^*Y$
\begin{equation}\label{normox}s_i(u)\sim  c_i
(u-u_0)^{-d_i},~i=1,\dots,n\end{equation} are equal to 1.  Here we
will briefly relax this assumption.

One case is that the exponents are positive (so $D$ does go to
infinity) but less than 1.  This is only possible if $D$, upon
being completed by adding a point or points at infinity, is
ramified over $u_0$.   For example, taking $u_0=0$, let us
consider a spectral curve $\det(s-\varphi)=0$ with
$\det\,\varphi=1/u$.  The equation for the spectral curve is then
$s^2+u^{-1}=0$, with branches $s=\pm (-u)^{1/2}$, so the exponents
$d_i$ are equal to $1/2$.  A suitable choice of $\varphi$ is
\begin{equation}\label{polfo}\varphi=du\begin{pmatrix}0 & u^{-1}\\
-1 & 0 \end{pmatrix}.\end{equation} $\varphi$ has a pole, but the
polar residue is nilpotent, so $\det\,\varphi$ has only a single
pole, not a double one.   The analysis of \cite{Sim} applies to this
situation; it corresponds to $\beta^*=\gamma^*=0$, as is reviewed in
\cite{GW}, Section 3.3.  Since $\gamma^*=0$, monodromies of the
local system on $Y$ arising from an $A$-brane supported on $D$ will
have modulus 1.

An opposite case is that the exponents $d_i$ are greater than 1. In
this case, $\varphi$ has a pole at $u_0$ of greater than first
order. Hitchin's equations for Higgs bundles in which $\varphi$ has
such an irregular pole have been analyzed in \cite{BB}. The
corresponding local system has an irregular singularity at $u_0$.  The
irregular part of the connection can be read off directly from the
singularity of the Higgs field, in contrast with the subtle interplay
of parameters $\alpha,\beta,\gamma$ that arises in the regular
case. For an informal explanation, see \cite{Wi2}.

\subsection{The Multi-Dimensional Case}\label{multidim}

For geometric Langlands, we require a generalization of all this to
$A$-branes in $\tilde T^*Y$, where $Y$ has dimension greater than 1.
The correspondence between Higgs bundles and local systems has an
extension to higher dimensions \cite{Sim2,Sim3}, and this also
generalizes to Higgs bundles with poles \cite{Bi,Mo}. The
generalization to allow poles is crucial for geometric Langlands,
because a fiber $\FF$ of the Hitchin fibration, when restricted to
$T^*\M$, essentially always has poles. However, here we will
consider only the case without poles, which will suffice for
explaining the basic idea.

A Higgs bundle on a complex manifold $Y$ of dimension $n$ is defined
to be a pair $(E,\varphi)$, where $E\to Y$ is a holomorphic vector
bundle, and $\varphi:E\to E\otimes \Omega^1(Y)$ is a holomorphic map
such that $\varphi^2=0$  ($\varphi^2$ is defined by composing
endomorphisms of $E$ and using the cup product $\wedge^2\Omega^1\to
\Omega^2$).  The last condition is trivial in the case of a curve.
The basic result is that if $Y$ is Kahler, then there is a natural
correspondence between Higgs bundles on $Y$ such that the rational
Chern classes of $E$ vanish and local systems on $Y$.  The
correspondence is made by solving a multi-dimensional generalization
of Hitchin's equation.  (If $Y$ is projective, this operation is
equivalent to solving Hitchin's equations simultaneously on every
curve in $Y$.)

There is also a natural correspondence between a Higgs bundle
$(E,\varphi)$ and a spectral variety $D\subset T^*Y$ endowed with a
line bundle $\cal L$.  Letting $\pi:T^*Y\to Y$ denote the
projection, the correspondence is made in one direction by setting
$E=\pi_*(\cal L)$ and $\varphi=\pi_*(z)$, where $z$ is linear  on
the fiber of the cotangent bundle.

We want to state this correspondence for a rank 1 $A$-brane
supported on a smoothly embedded compact Lagrangian submanifold
$D\subset T^*Y$.  (Examples of this situation for ${\rm dim}\,Y>1$
are actually extremely scarce, and this is another reason that the
extension to allow poles of $D$ is  essential.) Such a rank 1
$A$-brane  exists if and only if the canonical bundle of $D$ admits
a square root (this is related to Remark \ref{tormo}). To specify
such a brane, we pick a flat ${\rm Spin}_c$ structure over $D$ that
we describe somewhat non-intrinsically by a choice of line bundle
$K_D^{1/2}\otimes {\cal N}$, where $\cal N$ has zero first Chern
class.   Then we define\footnote{If $K_Y$ does not have a global
square root, then $E$ must be understood as a twisted vector bundle,
twisted by a certain gerbe.  In any event, that is the most
intrinsic formulation.} a vector bundle $E\to Y$ by
$K_Y^{1/2}\otimes E=\pi_*(K_D^{1/2}\otimes \cal N)$, or in other
words $E=\pi_*(\cal L)$ with $\cal L=K_D^{1/2}\otimes
\pi^*(K_Y^{-1/2})\otimes \cal N$. This extends to a Higgs bundle
$(E,\varphi)$ with $\varphi=\pi_*(z)$.  The rank of $E$ coincides
with the degree $n$ of the cover $\pi:D\to Y$.

The Higgs bundle $(E,\varphi)$ will correspond to a local system
over $Y$ if the Chern character of $E$, which we denote as ${\rm
ch}(E)$, is equal to $n$. An argument via Riemann-Roch shows that
this is always the case.  In stating this argument, we use the $\hat
A$ class, which is related to the Todd class of a complex manifold
$X$ by  $\hat A(X)={\rm Td}(X)\ch(K_X^{1/2})$. The $\hat A$ class is
defined for any real vector bundle $V$, and has the property that
$\hat A(V)=\hat A(V^*)$, where $V^*$ is the dual to $V$.

The Riemann-Roch formula says in this situation that
\begin{equation}\pi_*({\rm Td}(D)\ch(K_D^{1/2}))={\rm
Td}(Y)\ch(K_Y^{1/2})\ch(E).\end{equation} We have used the fact that
as $c_1(\cal N)=0$, $\ch(\cal N)=1$. In terms of $\hat A$ classes,
this says that
\begin{equation} \pi_*(\hat A(D))=\hat A(Y){\rm
ch}(E).\end{equation} Hence ${\rm ch}(E)$ equals $n$ if and only if
$\pi_*(\hat A(D))=n\hat A(Y)$.  Since this is a statement in
rational cohomology, it is equivalent to show that
\begin{equation} 2\pi_*(\hat A(D))=2n\hat A(Y).\end{equation}
Now by definition $\hat A(D)$ is the $\hat A$ class of the tangent
bundle $TD$ of $D$.  So $2\hat A(D)=\hat A(TD\oplus TD)$.  As
$D\subset T^*Y$ is Lagrangian, the normal bundle and tangent bundle
to $D$ are isomorphic (as real vector bundles). Their direct sum is
$T(T^*Y)|_D$, that is, the restriction to $D$ of the tangent bundle
$T(T^*Y)$ to $T^*Y$.   So we can replace $TD\oplus TD$ by
$T(T^*Y)|_D$, and interpret $2\pi_*(\hat A(D))$ as $\pi_*(\hat
A(T(T^*Y)|_D))$. As $Y\subset T^*Y$ is also Lagrangian, we can
similarly interpret $2\hat A(Y)$ as $\hat A(T(T^*Y)|_Y)$. So the
formula we want is that
\begin{equation}\label{hopeful}\pi_*(\hat A(T(T^*Y)|_D))=n\hat
A(T(T^*Y)|_Y).\end{equation} Since $T^*Y$ is contractible to $Y$,
any vector bundle $V\to T^*Y$ is isomorphic to the pullback from $Y$
of its restriction to $Y$, that is $V\cong \pi^*(V|_Y)$. Setting
$V=T(T^*Y)$, we have  $T(T^*Y)=\pi^*(T(T^*Y)|_Y)$. Hence in
particular $T(T^*Y)|_D$ is the restriction to $D$ of
$\pi^*(T(T^*Y)|_Y)$. The left hand side of eqn. (\ref{hopeful}) is
therefore  $\pi_*(\hat A(\pi^*(T(T^*Y)|_Y)))$.   In general, for any
map $\pi:X\to Y$ and real vector bundle $V\to Y$, one has $\hat
A(\pi^*(V))=\pi^*(\hat A(V))$.  So the left hand side of
(\ref{hopeful}) is $\pi_*(\pi^*(\hat A(T(T^*Y)|_Y)))$. For a map
$\pi:D\to Y$ of degree $n$, the composition $\pi_*\pi^*$ acting on
cohomology is multiplication by $n$, and now the validity of eqn.
(\ref{hopeful}) is clear.

The local system on $Y$ corresponding to the Higgs bundle
$(E,\varphi)$ is then the desired ${\mc D}$-module corresponding to
our starting point, the  rank 1 $A$-brane supported on the
Lagrangian submanifold $D \subset T^* Y$. This is a very special
case since we have assumed $D$ to be compact. In most of the
examples relevant to the geometric Langlands Program (such as the
ones considered earlier in this section) this is not so, and the
corresponding ${\mc D}$-modules are not represented by local systems
on the entire $Y$, only on an open subset of $Y$. We have explained
in Sections \ref{polynomial}--\ref{zame} how to construct the
corresponding local systems on an open subset of $Y$ in the case
when $\dim Y = 1$. A similar picture hopefully holds in the
multi-dimensional case.  For some of the necessary analysis, see
\cite{Bi}, \cite{Mo}.

\section{Spectral Covers, Hecke Operators, and Higher Genus}
\label{spectral}

The analysis of \secref{genus one} was primarily based on direct
computations.  One can learn much more using the technique of
spectral curves, which was briefly introduced in
\secref{specreview}.  In \secref{oneagain}, we reconsider the genus
1 example from this point of view, in \secref{higher genus} we apply
similar ideas to the case of genus greater than 1, and in
\secref{Hecke} we use these methods to analyze the action of Hecke
operators on $A$-branes arising in the geometric Langlands program.

\subsection{Genus One Revisited}\label{oneagain}

We return first to the example of \secref{genus one}, involving a
curve $C$ of genus 1, described by a cubic equation
\begin{equation}\label{cubic} y^2=f(x),\end{equation}
where $f(x)=(x-e_1)(x-e_2)(x-e_3)$.  We consider $SL_2$ Higgs bundles
$(E,\varphi)$ ramified at the point $p$ defined by
$x=y=\infty$. Ramification means that near $p$, the eigenvalues of
$\varphi$ behave as $\pm \sigma_0\, dx/2x$, and hence
$\det\,\varphi\sim -\sigma_0^2 dx^2/4x^2\sim
-(\sigma_0^2/4)x(dx/y)^2$. Suppose that $\sigma_0 \neq 0$. Then the
general form of a quadratic differential on $C$ that is holomorphic
except for this behavior near $p$ is
$\det\,\varphi=-(\sigma_0^2/4)(x-a)(dx/y)^2$, for some complex
constant $a$.  So after absorbing in $z$ a factor of
$(dx/y)\sigma_0/2$, the equation (\ref{gelf}) of the spectral curve
$D$ becomes \begin{equation}\label{cover} z^2=x-a.\end{equation}

 $D$ is, therefore, a  cover of the $x$-plane described by the pair
of equations (\ref{cubic}) and (\ref{cover}). Eliminating $x$ by
means of the second equation, the spectral curve can be described by
the equation
\begin{equation}\label{helor}y^2=\prod_{i=1}^3(z^2+a-e_i).\end{equation}
$D$ is a smooth curve of genus 2, unless  $a$ is equal to $e_1,$
$e_2, $ or $e_3$, in which case $D$ reduces to a curve of genus 1
with two points identified.  For example, if $a=e_1$, we let
$w=y/z$, obeying
\begin{equation}\label{pelor}w^2=(z^2+e_1-e_2)(z^2+e_1-e_3).\end{equation}
This equation describes a smooth curve $D'$ of genus 1. $D$ is
obtained from $D'$ by identifying the two points with $z=0$,
$w=\pm\sqrt{(e_1-e_2)(e_1-e_3)}$. These two points, which we will
call $q'$ and $q''$, correspond to just one point $q$ in $D$, as
they are both characterized by $y=z=0$.

$D'$ is an unramified double cover of $C$.  Indeed, $D'$ has the
freely acting symmetry $\tau:w\to -w,\,z\to -z$.  The invariants
are $x=z^2+e_1$, as well as  $w^2$, which can be expressed in
terms of $x$ via eqn. (\ref{pelor}), and $y=zw$, which obeys
$y^2=\prod_{i=1}^3(x-e_i)$, the defining equation of $C$.  So $C$
is the quotient $D'/\{1,\tau\}$.

We write $p'$ and $p''$ for the two points in $D'$ with
$z=\infty$; they lie above the point $p$ at infinity in $C$.   The
symmetry $\tau$ of $D'$ acts freely and is of order 2, so if we
regard $D'$ as an elliptic curve with, say, $p'$ as the origin,
then $\tau$ is the shift by an element of order 2.  This element
is simply $p''$, since $\tau$ exchanges $p'$ and $p''$. If $r$ is
any point in $C$ and $r',r''$ are the points in $D'$ lying above
$r$, then $\tau$ exchanges $r',r''$ so
\begin{equation}\label{useful}r''-r'=p''-p'.\end{equation}
The divisor $p'+p''-q'-q''$ is the divisor of the function $z$, so
its divisor class is trivial. Together with eqn. (\ref{useful}),
this implies that the points $q',q''$ are of order 2.

\subsubsection{Fiber Of The Hitchin Fibration}\label{fibergof}

If $D$ is smooth, the fiber of the Hitchin fibration is a smooth
curve of genus 1, the Prym variety of the double cover $\pi:D\to
C$. Let us discuss what happens when $D$ is singular, for example
at $a=e_1$.

We want to find the line bundles, or more generally torsion-free
sheaves, on $D$ that push down to vector bundles $E\to C$ of trivial
determinant.  We will describe our line bundles and sheaves in terms
of data on the smooth curve $D'$.  So let us begin by ``pushing down''
the trivial line bundle ${\cal O}_{D'}$, using the projection
$\rho:D'\to C$.  We compute the pushdown using the fact that the
quotient $D'/\{1,\tau\}$ is $C$. We can decompose $\rho_*({\cal
O}_{D'})$ in subsheaves that are even or odd under $\tau$. The even
part is ${\cal O}_C$, and the odd part is a locally free sheaf ${\cal
S}\to C$. ${\cal S}$ is uniquely determined by the fact that it is
nontrivial and $\rho^*({\cal S}^{-1})$ has a global section over $D'$.
We can describe ${\cal S}$ via the divisor $-p+q$, since this pulls
back on $D'$ to the trivial divisor $-p'-p''+q'+q''$.

Thus, ${\cal O}_{D'}$ does {\it not} have the property that
$\rho_*({\cal O}_{D'})$ has trivial determinant.  Its determinant is
${\cal O}(p)^{-1}\otimes {\cal O}(q)$.  However, let $p^*$ be either
of the points $p',p''$ and let $q^*$ be either of the points
$q',q''$. Let ${\cal T}_0={\cal O}(p^*)\otimes {\cal O}(q^*)^{-1}$.
Then $\det(\rho_*({\cal T}_0))={\cal O}_C$.  All we need here is
that $p^*$ pushes down to $p$ and $q^*$ to $q$; this implies that
$\det(\rho_*({\cal T}_0))=\CO(p)\otimes \CO(q)^{-1}\otimes
\det(\rho_*({\cal O}_{D'}))=\CO_{C}$. It looks like we have 4
choices of ${\cal T}$, but up to isomorphism there are only 2; in
view of eqn. (\ref{useful}), if we reverse the choice of $q^*$ while
also reversing the choice of $p^*$, ${\cal T}$ is unchanged.

So we have found two  line bundles ${\cal T}_0\to D'$ that push
down to  $SL_2$-bundles $E\to C$. They are the only ones.  But if
we work on the singular curve $D$ rather than its normalization
$D'$,  there are more choices. We first replace the degree zero
line bundle ${\cal T}_0$ with the degree 1 line bundle ${\cal
T}_1(p^*)={\cal O}(p^*)$. We think of ${\cal T}_1(p^*)$ as a line
bundle on $D'$ that is trivialized away from $p^*$. We cannot
interpret ${\cal T}_1(p^*)$ as a line bundle on $D$ (as opposed to
$D'$) since in its definition, we have not taken account of the
identification of the two points $q'$ and $q''$ on $D'$ to a
single point $q \in D$. To do this, we pick $\lambda\in \C^\times$
and define a line bundle ${\cal T}(\lambda;p^*)$ over $D$ by
saying that a section of ${\cal T}(\lambda;p^*)$ is a section $f$
of ${\cal T}_1$ such that $f(q')=\lambda f(q'')$. This gives
(since there are two choices of $p^*$) two families of complex
line bundles over $D$, each parametrized by $\C^\times$.

To compactify these families, we set $\lambda=-v/u$, where $u$ and
$v$ will be understood as homogeneous coordinates for
$\mathbb{CP}^1$, and replace the condition $f(q')=\lambda f(q'')$
by
\begin{equation} \label{hh}u f(q')+v f(q'')=0.\end{equation}
For any $u,v$, the sheaf of sections of ${\cal T}_1(p^*)$ that
obey this condition is a torsion-free sheaf ${\cal R}(u,v;p^*)$ on
$D$ whose pushdown to $C$ is a rank two vector bundle of trivial
determinant. If $u,v\not=0$, this sheaf is locally free and is the
sheaf of sections of the line bundle ${\cal T}(-v/u;p^*)$.  If $u$
or $v$ vanishes, we get a torsion-free but not locally free sheaf
on $D$. This torsion-free sheaf is the pullback from $D'$ of the
line bundle ${\cal O}(p^*)\otimes {\cal O}(q^*)^{-1}$, where $q^*$
is $q'$ if $v=0$ (and eqn. (\ref{hh}) reduces to $f(q')=0$), or
$q''$ if $u=0$.

The four line bundles ${\cal O}(p^*)\otimes {\cal O}(q^*)^{-1}\to
D'$, with the different choices of $p^*$ and $q^*$, are isomorphic
in pairs, because of the relation  (\ref{useful}), with $r=q$.
Consequently, ${\cal R}(0,1;p')$ is isomorphic to ${\cal
R}(1,0;p'')$, and similarly with $p',p''$ exchanged.

This construction gives two families ${\cal R}(u,v;p^*)$ of
torsion-free sheaves on $D$.  Each family is parametrized by
$\mathbb{CP}^1$, with homogeneous coordinates $u,v$.  The two
$\mathbb{CP}^1$'s meet at two points, because of the remark in the
last paragraph.   The fiber of the Hitchin fibration for $SL_2$ is
the union of these $\mathbb{CP}^1$'s.  This agrees with the picture
that we developed in \secref{hitchfib} by direct computation.

\subsubsection{The Improper Component}\label{improper}

We can similarly use spectral curves to describe the Hitchin
fibration for the improper component of the $SL_2$ moduli space,
introduced in \secref{secomp}.  This component parametrizes Higgs
bundles $(E,\varphi)$ with $\det\,E={\cal O}(r)$, $r$ being a
point in $C$.  Here we will treat an issue that was omitted in
\secref{secomp}: the dependence on $r$.  So we refer to the
improper component as $\MH(SL_2^*;r)$.

This space is independent of $r$ up to a not quite canonical
isomorphism; given another point  $\tilde r$, we pick a line
bundle ${\cal N}$ whose square is isomorphic to $\CO(\tilde
r)\otimes \CO(r)^{-1}$, and then tensoring with ${\cal N}$ gives a
map from $\MH(SL_2^*;r)$ to $\MH(SL_2^*;\tilde r)$.   The choice
of ${\cal N}$ is unique modulo tensoring with a line bundle of
order 2, so the identification of $\MH(SL_2^*;r)$ with
$\MH(SL_2^*;\tilde r)$ is unique modulo the action of the group
$Q$ of line bundles of order 2. (Hence $\MH(SL_2^*;r)$ becomes
naturally independent of $r$ if one divides by $Q$; this gives the
moduli space $\MH(SO_3;\theta)$ of $SO_3$ Higgs bundles with
non-zero second Stieffel-Whitney class, whose definition requires
no choice of  $r$.) If $\tilde r$ is close to $r$, we can resolve
the ambiguity by asking that ${\cal N}$ should be near the
identity (in the Picard group of $C$), so locally there is a
natural identification of $\MH(SL_2^*;r)$ with $\MH(SL_2^*;\tilde
r)$. Hence there is a natural monodromy action, the group of
monodromies being simply $Q$.

Regardless of $\det E$, the spectral curve for a Higgs bundle
$(E,\varphi)$ is defined by the equation $\det(z-\varphi)=0$.
Hence, the relevant spectral curves $D$ for the improper component
of the Hitchin fibration are the same as for the proper component.
The difference is only that now the fiber of the Hitchin fibration
parametrizes line bundles ${\cal R}\to D$, or more generally
torsion-free sheaves, such that $\det(\pi_*({\cal R}))=\CO(r)$
(rather than $\det(\pi_*({\cal R}))=\CO$).   Just as before, the
fiber of the Hitchin fibration is smooth if $D$ is smooth; we want
to consider the special fibers for which $D$ is not smooth, but
has for normalization a smooth genus 1 curve $D'$.

  To construct those
special fibers, we repeat the previous construction, now beginning
not with the line bundle $\CO(p^*)\to D'$, but with ${\cal
T}_1=\CO(p^*)\otimes \CO(r^*)$, where $r^* $ is either of
$r',r''$. There are seemingly four choices of ${\cal T}_1$,
involving the choices of $p^*$ and $r^*$, but since
$r''-r'=p''-p'$ on the elliptic curve $D'$, there are only two
choices up to isomorphism. Just as for the proper component of the
moduli space, we associate to either of these line bundles over
$D'$ a family ${\cal T}(u,v;p^*,r^*)$ of torsion-free sheaves on
$D$, by imposing eqn. (\ref{hh}).  Each family is parametrized by
$\mathbb{CP}^1$, and, as before, ${\cal T}(0,1;p^*,r')$ is
isomorphic to ${\cal T}(1,0;p^*,r'')$, and vice-versa.  So the two
$\mathbb{CP}^1$'s meet at two points. Their union is the fiber of
the Hitchin fibration.

Now we can consider monodromies when $r$ varies in $C$.  These
will exchange the two choices of $r^*$, and so will exchange the
two components of the Hitchin fiber. This agrees with the fact
that the monodromy group is the group $Q$ of line bundles of order
2, and that the action of $Q$ exchanges the two components of the
Hitchin fiber, as we saw in \secref{symgroup}.

\subsection{Extension To Higher Genus}\label{higher genus}

Our next goal is to apply the same methods to the case that $C$ is
a smooth curve of genus $g>1$. We will see that the results are
similar. For simplicity, we omit ramification. For $g=1$, the
unramified case is rather degenerate, but that is not so for
$g>1$.

\subsubsection{The Spectral Curves}\label{curves}

What sort of Higgs bundles over $C$ will be related to endoscopy?
We consider endoscopic $SO_3$ local systems whose structure group
reduces to $O_2$, the subgroup consisting of elements of the form
\begin{equation}\begin{pmatrix}* & * & 0 \\ * & * & 0 \\
                               0&0&\pm 1\end{pmatrix},\end{equation}
but not to a proper subgroup. For such local systems, the
automorphism group is equal to $\Z_2$.  The correspondence between
local systems and Higgs bundles given by Hitchin's equations is
compatible with any reduction of the structure group.  So $SO_3$
local systems with structure group reducing to $O_2$ correspond to
Higgs bundles with the same structure group. Those which do not
further reduce to a proper subgroup have the group of automorphisms
equal to $\Z_2$. From now on we will restrict ourselves to these
Higgs bundles.

If we lift such a Higgs bundle to $SL_2$, the structure group lifts
to what we will call $O_2^*$, a double cover of $O_2$ generated by
the diagonal elements of $SL_2$
\begin{equation}\begin{pmatrix}* & 0 \\ 0 & *
\end{pmatrix}\end{equation}
together with the element
\begin{equation}\begin{pmatrix} 0 & 1 \\ -1 & 0
\end{pmatrix}.\end{equation}

An $SL_2$ spectral curve $D$ is defined by an equation
\begin{equation}z^2+\det\,\varphi=0,\end{equation}
where $\det\,\varphi$ is a quadratic differential on $C$.  For $C$
of genus $g$, a quadratic differential has  $4g-4$ zeroes. $D$ is
smooth if and only if the zeroes of $\det\,\varphi$ are distinct, in
which case the genus of $D$ is $4g-3$.

What happens when the structure group reduces to $O_2^*$?
 The Lie
algebra of $O_2^*$ is simply the abelian algebra of traceless
diagonal matrices.  So if $(E,\varphi)$ is an $SL_2$ Higgs bundle
whose structure group reduces to $O_2^*$, then $\varphi$ locally
takes the form
\begin{equation}\varphi=\begin{pmatrix} \omega & 0 \\ 0 &-\omega
\end{pmatrix},\end{equation}
 where $\omega$ is a holomorphic differential on $C$. This leads to
$\det\varphi = -\omega^2$, as a result of which any zeroes of
$\det\varphi$ are double zeroes. Near a double zero at, say,
$x=0$, with $x$ a local parameter on $C$, the equation for $D$
looks something like $z^2-x^2=0$; the point $z=x=0$ is a double
point. Each double point reduces by 1 the genus of the
normalization of $D$. For an $O_2^*$ Higgs bundle, the zeroes are
all double zeroes, so there are  $2g-2$ double points and the
normalization of $D$ is a curve $D'$ of genus $2g-1$.

The fact that the zeroes of $\det\varphi$ are double zeroes does
not imply that globally $\det\varphi=-\omega^2$ for a holomorphic
section  $\omega$ of the canonical bundle $K$.  Rather, it implies
that $\det\varphi=-\omega^2$ where $\omega$ is a holomorphic
section of $K\otimes \cal V$, for some line bundle $\cal V\to C$
of order 2. The case of interest to us is that $\cal V$ is
non-trivial. (The case that $\cal V$ is trivial is related to
Higgs bundles whose structure group reduces to $GL_1$ rather than
$O_2^*$.)
 Associated with the choice of $\cal V$ is an unramified
double cover $\pi':D'\to C$.  This is a curve of genus $2g-1$, and
is the normalization of $D$. Let $\tau:D'\to D'$ be the covering
map that commutes with $\pi'$. Then $C=D'/\{1,\tau\}$. The element
$\omega\in H^0(C,K\otimes \cal V)$ vanishes at $2g-2$ points
$p_1,\dots,p_{2g-2}$.  Above each such point $p_i$ there are two
points $p_i'$, $p_i''$ in $D'$,  exchanged by $\tau$, but only a
single point $\hat p_i\in D$.  $D$ is obtained from $D'$ by gluing
together the pairs of points $p_i'$ and $p_i''$.

What we have just described is a natural analog of the result of
\secref{oneagain} for a genus 1 curve $C$ with one point of
ramification. A curve of genus 1 has precisely three non-trivial
unramified double covers; these are the normalizations $D'$ of the
three singular spectral curves $D$.

\subsubsection{The Prym}    \label{prym}

An open dense subset of the Hitchin fiber $\FF$ for $SL_2$ consists
of line bundles ${\cal L}\to D$ such that $\det\pi_*({\cal
L})={\cal O}$. The full Hitchin fiber parametrizes certain
torsion-free sheaves as well as these line bundles.

Given such a line bundle ${\cal L}$, let ${\cal L}'$ be its pullback
to $D'$.  Then $\det\pi'_*({\cal L}')={\cal
O}(p_1+\dots+p_{2g-2})$. (When $\cal L$ is lifted to $D'$, we drop the
requirement that a section must have equal values at the points $p_i'$
and $p_i''$ lying above $p_i$, and this leads to the claimed result.)
The space of such line bundles is\footnote{This statement means that
if ${\cal L}'$ and ${\cal L}''$ are two line bundles with
$\det\pi'_*(\cal L')=\det\pi'_*(\cal L'')=\O(p_1+\dots+p_{2g-2})$,
then ${\cal L}''={\cal L}'\otimes \cal N$ for a unique $\cal N$ with
$\det\pi'_*(\cal N)=\cal O$.} a ``torsor'' for the group of line
bundles ${\cal N}\to D'$ with $\det\pi'_*({\cal N})={\cal O}$. Since
$\pi':D'\to C$ is unramified, $\det\pi'_*({\cal N})$ is the same as
${\rm Nm}(\cal N)$, the\footnote{For any map of curves $\pi:D\to C$,
the norm is a map from line bundles over $D$ to line bundles over $C$
defined as follows. The norm of $\cal N=\O(\sum_i n_i p_i)\to D$, for
integers $n_i$ and points $p_i\in D$, is defined as ${\rm Nm}(\cal N)=
\O(\sum_i n_i\pi(p_i))$.} ``norm'' of $\cal N$. The group of line
bundles of trivial norm is called the Prym variety of $\pi':D'\to
C$. (Sometimes the term ``Prym variety'' is taken to refer to the
connected component of this group.)  We write $\mbox{\bf \em P}$ for
the Prym and $\mbox{\bf \em T}$ for its torsor of line bundles $\cal
L'\to D'$ obeying $\det\pi'_*({\cal L}')={\cal
O}(p_1+\dots+p_{2g-2})$.

We can easily construct a large family of line bundles over $D'$
of trivial norm.  We take any points $s_i\in C$, $i=1,\dots,k$
(allowing some of the points to coincide), denote as $s_i'$ and
$s_i''$ the points in $D'$ lying above $s_i$ (with any choice of
which one is which), and define a line bundle ${\cal N}$ by
\begin{equation}    \label{tsen}
{\cal N}=\bigotimes_{i=1}^k\O(s_i'-s_i'').\end{equation} Conversely,
one can show that every point of $\mbox{\bf \em P}$ can be represented
by a line bundle of this form. To see that (we thank T. Pantev for
showing us this argument), let $E$ be a divisor on $D'$ such that
${\mc N} = {\mc O}(E)$. Then $\on{Nm}({\mc N}) = {\mc
O}_C(\pi'(E))$. If ${\mc N} \in \mbox{\bf \em P}$, then $\pi'(E) =
(f)$, the divisor of a rational function $f$ on $C$. But it follows
from Tsen's theorem that the norm map from the field of rational
functions on $D'$ to the field of rational functions on $C$ is
surjective.\footnote{The norm of a rational function $g$ on $D'$ is by
definition the product of $g$ and $\tau(g)$, where $\tau$ is the
involution on $D'$ corresponding to the cover $D'\to C$.} Hence there
exists a rational function $g$ on $D'$ whose norm is $f$. Then
$\pi'((g)) = (f)$. Let $E' = D - (g)$. Then $\pi'(E') = 0$ and therefore
$E'$ has the form $\sum_{i=1}^k \left( s_i'- s_i'' \right)$. Thus, we
obtain eqn. \eqref{tsen}.

Now let us find the connected components of the Prym.  Obviously,
the part of $\mbox{\bf \em P}$ that we can construct for fixed $k$ is
connected, since $C$ itself is connected and the points $s_i$ may
move freely.  Line bundles that differ by an exchange
$s_i'\leftrightarrow s_i''$ lie in the same connected component of
$\mbox{\bf \em P}$, since these two points are exchanged under monodromy
of $s_i$. Line bundles that differ by changing $k$ by a multiple
of 2 are also in the same connected component; $k$ is reduced by 2
if we take 2 of the $s_i$ to be equal, with the points labeled
$s_i'$ and $s_i''$ chosen properly, and use the identity
$(s_i'-s_i'')+(s_i''-s_i')=0$.

On the other hand, $\mbox{\bf \em P}$ actually has two connected
components. This is shown in \cite{Mum}, and also follows by a purely
topological argument (see the discussion of eqn.
(\ref{condi})).\footnote{Another proof is presented in \cite{Ngo1},
Section 11.} The argument in the last paragraph shows that the only
possible invariant is the value of $k$ modulo 2.  So it must be that
the component of $\mbox{\bf \em P}$ connected to the identity is
characterized by even $k$, while the disconnected component is
characterized by odd $k$.

The torsor $\TT$ is non-canonically isomorphic to
$\mbox{\bf \em P}$,
so it likewise has two components. We proceed as in
\secref{fibergof} to construct the fiber $\FF$ of the Hitchin
fibration from $\TT$. Let ${\cal L}'$ be any line bundle
representing a point in $\TT$.  For $i=1,\dots,2g-2$,  pick a
pair of homogeneous coordinates $(u_i,v_i)$, and define a line
bundle ${\cal L}'(u_1,v_1;\dots;u_{2g-2},v_{2g-2})\to D$ by saying
that a section of this line bundle is a section $f$ of $\cal L'\to
D'$ that obeys
\begin{equation}\label{gludata}
u_if(p_i')+v_if(p_i'')=0,~~i=1,\dots,2g-2.\end{equation} If, for all
$i$, $u_i$ and $v_i$ are both non-zero, this construction gives a
family of line bundles over $D$, representing (if we also let $\cal
L'$ vary in $\TT$) a Zariski open set of the Hitchin fiber $\FF$. This
open set has two connected components $\wt\FF_1$ and $\wt\FF_2$,
because $\cal L'$ may lie in either component of $\TT$. They are
isomorphic to $(\C^\times)^{2g-2}$-bundles over the two components of
$\TT$.

As in \secref{fibergof}, to get the full Hitchin fiber, we must
compactify by including torsion-free sheaves that are obtained by
allowing $u_i$ or $v_i$ to vanish, for each $i$.  The compactified
fiber has two irreducible components, which we call $\FF_1$ and
$\FF_2$. They are isomorphic to $(\CP^1)^{2g-2}$-bundles over the two
components of $\TT$. However, just as in the genus 1 example, $\FF_1$
and $\FF_2$ intersect over the divisors on which $u_i$ or $v_i$
vanish.  The reason for this is that starting with a line bundle
${\cal L}'\to D'$ and taking $u_i= 0$ in the construction of the last
paragraph is equivalent to starting with a different line bundle
${\cal L}''={\cal L}'\otimes \O(p_i'-p_i'')$ and taking $v_i= 0$.  But
the operation ${\cal L}'\to {\cal L}''$ exchanges the two connected
components of $\TT$.

\subsubsection{The Improper Component}\label{improperly}

We also want to understand the Higgs bundles $(E,\varphi)$ where $\det
E=\O(r)$, $r$ being a specified point in $C$.  The Hitchin fiber in
this case can be analyzed via the same methods. We simply have to
start with a different torsor $\TT(r)$ for the same Prym variety
$\mbox{\bf \em P}$. $\TT(r)$ parametrizes line bundles $ \cal L'\to
D'$ with $\det\pi'_*(\cal L')=\cal O(p_1+\dots+p_{2g-2}+r)$. $\TT(r)$
again has two connected components, exchanged by tensor product with
$\O(s'-s'')$ for any $s\in C$. Correspondingly, the fiber of the
Hitchin fibration has two components, meeting along the divisors that
parametrize torsion-free sheaves that are not locally free.

Let $\FF$ and $\FF^*(r)$ be the Hitchin fibers for Higgs bundles
$(E,\varphi)$ with, respectively, $\det\,E=\O$ and
$\det\,E=\O(r)$. Then $\FF$ and $\FF^*(r)$ are non-canonically
isomorphic.  To make an isomorphism, we simply pick a point $r'\in
D'$ lying above $r$. Then, for $\L'\in \TT$, we define a line
bundle ${\cal L}'_{r'}\in \TT(r)$ by
$\L'_{r'}=\L'\otimes\O(r')$. We map $\FF$ to $\FF^*(r)$ by
$\L'(u_i,v_i)\to\L'_{r'}(u_i,v_i)$. This is an isomorphism between
$\FF$ and $\FF^*(r)$, but it is not quite canonical since it depends
on the choice of $r'$.

Now consider the effect of a monodromy in $r$ that exchanges the
two points $r'$ and $r''$.  The effect of this is to map
$\L'_{r'}$ to $\L'_{r''}=\L'_{r'}\otimes \O(r''-r')$. This
operation exchanges the two components of $\FF^*(r)$.  Thus,
monodromy in $r$ exchanges the two components of $\FF^*(r)$, just as
we saw in the genus 1 example at the end of \secref{improper}.

\subsubsection{Topological Point Of View}\label{topolview}

Here we will explain from a topological point of view the fact
that the Prym $\mbox{\bf \em P}$ for an unramified (but connected)
double cover $D'\to C$ has two components.

This Prym is the fiber of the Hitchin fibration for certain
$O_2^*$ local systems.  We recall that $O_2^*$ is the subgroup of
$SL_2$ generated by the diagonal matrices together with the
element
\begin{equation}\label{belk}\begin{pmatrix} 0 & 1\\ -1 & 0
\end{pmatrix}.\end{equation}
Under the double cover $SL_2\to SO_3$, $O_2^*$ projects to
$O_2\subset SO_3$.  Topologically, $O_2^*$ has two components; the
component containing the identity consists of diagonal elements,
and the other component consists of elements of $O_2^*$ that are
not diagonal.

The statement that the Prym has two components is equivalent to
the statement that even after we pick an unramified double cover
$D'\to C$, the corresponding  component of $\MH(O_2^*)$  actually
has two components. (The base of the Hitchin fibration for a given
$D'$ is connected, so the components of $\MH(O_2^*)$ are simply
the components of the Prym.)  Using the relation of $O_2^*$ Higgs
bundles to $O_2^*$ local systems, the question of determining the
components of $\MH(O_2^*)$ is equivalent to the analogous question
about $O_2^*$ local systems and can be answered topologically.

 Picking suitable generators of the fundamental
group of $C$, and writing $A_i,B_j$, $i,j=1,\dots,g$ for the
monodromies of an $O_2^*$ local system, we have
\begin{equation}\label{condi}[A_1,B_1][A_2,B_2]\cdots
[A_g,B_g]=1,\end{equation} where $[A,B] = ABA^{-1}B^{-1}$. We specify
$\cal V$ by saying, for example, that $B_g$ lies in the disconnected
component of $O_2^*$ and all other $A_i$ and $B_j$ in the connected
component.  Since the connected component is abelian,
eqn. (\ref{condi}) reduces to $A_gB_gA_g^{-1}B_g^{-1}=1$, which
implies (for $B_g$ in the disconnected component) that $A_g$ is one of
the two central elements of $O_2^*$.  The two components of $\mbox{\bf
\em P}$ are distinguished by the choice of $A_g$.

\subsubsection{Analog For $SO_3$}\label{analog}

To understand endoscopy, we must consider the fiber of the Hitchin
fibration for $SO_3$ rather than $SL_2$.  Just as in eqn.
(\ref{turgid}), the moduli space of $SO_3$ Higgs bundles is
obtained from the moduli space of $SL_2$ Higgs bundles by dividing
by the group $Q=H^1(C,\Z_2)$ of line bundles of order 2.  $Q$ acts
on a Higgs bundle $(E,\varphi)$ by $E\to E\otimes {\cal R}$, where
$\cal R$ is a line bundle of order 2.

This operation does not affect $\det \varphi$, so the action of
$Q$ commutes with the Hitchin fibration.  The action on a fiber of
the Hitchin fibration is easily described; if $E=\pi_*({\cal L})$
for a line bundle ${\cal L}$ over the spectral cover, then
$E\otimes \cal R=\pi_*({\cal L}\otimes \pi^*({\cal R}))$.  So the
action of $Q$ on the fiber of the Hitchin fibration is by ${\cal
L}\to {\cal L}\otimes \pi^*(\cal R)$.

In the  case of a smooth spectral curve $D$, the fiber $\FF$ of
the Hitchin fibration is a complex torus, and the operation ${\cal
L}\to {\cal L}\otimes \pi^*(\cal R)$ is a translation on this
torus. It acts without fixed points. Now let us see what happens
in the case related to endoscopy, when the normalization of $D$ is
an unramified double cover $D'\to C$.   $\FF$ is usefully
described,\footnote{The action of $Q$ on the improper fiber
$\FF^*(r)$ can be considered similarly.} as we have seen, in terms
of the torsor $\TT$ that parametrizes line bundles $\L'\to D'$
with $\det\pi_*(\L')=\O(p_1+\dots+p_{2g-2})$. $Q$ acts on this
torsor by $\L'\to \L'\otimes \cal R$.  $Q$ also acts on the gluing
data of eqn. (\ref{gludata}), as  we will discuss momentarily.

A first basic fact about this endoscopic case is that   \cite{Mum}
the action of $Q$ exchanges the two components of the Prym
$\mbox{\bf \em P}$, and hence of the torsor $\TT$. (Of course, a
subgroup of $Q$ of index 2 maps a given component to itself.) This
fact has a simple topological explanation using eqn.
(\ref{condi}). $Q$ acts by independent sign changes on all $A_i$
and $B_j$; the two components of $\mbox{\bf \em P}$ are exchanged
by any element of $Q$ that changes the sign of $A_g$.

 So after dividing by $Q$, the Hitchin fiber
for $SO_3$ has only one component, in contrast to the situation
for $SL_2$. This is familiar from \secref{struc}.

Second, and again familiar from \secref{struc}, the  moduli space
of $SO_3$ Higgs bundles has singularities that arise because the
action of $Q$ is not quite free.  How can this occur?  If $\cal
R\to C$ is a line bundle of order 2 whose pullback to $D'$ is
non-trivial, then the operation $\cal L\to \cal L\otimes \cal R$
acts freely on $\TT$ and hence on $\FF$.

However, $\pi':D'\to C$ is an unramified double cover associated
with a line bundle $\cal V\to C$ of order 2, and tautologically
the pullback  $(\pi')^*(\cal V)$ of $\cal V$ to $D'$ is trivial.
Hence the element of $Q$ corresponding to $\cal V$ acts trivially
on $\TT$.  But it acts non-trivially on the gluing condition
of eqn. (\ref{gludata}).  Triviality of $(\pi')^*(\cal V)$ means
that this line bundle has an everywhere non-zero global section
$w$. This section is odd, rather than even, under the covering map
$\tau:D'\to D'$. (Otherwise $w$ would descend to a section of
$\cal V\to C$, contradicting the fact that  $\cal V$ is
non-trivial.) The action of $\cal V$ modifies the gluing condition
of eqn. (\ref{gludata}) to $u_i\tilde f(p_i')+v_i\tilde
f(p_i'')=0$, where $\tilde f = f w$. Since $w$ is odd under the
covering map, which exchanges $p_i'$ and $p_i''$ for all $i$, the
effect of this is to transform the gluing data by
\begin{equation}\label{zono}(u_i,v_i)\to
(u_i,-v_i),~~i=1,\dots,2g-2.\end{equation} Bearing in mind that
the pair $u_i,v_i$ are homogeneous coordinates for a copy of
$\CP^1$, the condition for a fixed point of $\cal V$ is that $u_i$
or $v_i$ must vanish for all $i$. This is $2g-2$ conditions.

We also adjusted $2g-2$ parameters so that the spectral curve $D$
has for its normalization a double cover $D'\to C$.  As a check, and
also a confirmation that fixed points of this kind only occur for
the sort of spectral curves that we have assumed, we observe that
the fact that the $Q$ action preserves the complex symplectic
structure of the moduli space of Higgs bundles, together with the
Lagrangian nature of the Hitchin fibers, implies that the number of
parameters on the fiber of the Hitchin fibration that must be
adjusted to get a fixed point equals the number of parameters that
must be adjusted on the base.

So altogether, the $\Z_2$ fixed points that we have found occur in
codimension\ $4g-4$. The local structure is $\C^{4g-4}/\Z_2$. These
singularities are not $A_1$ singularities, since they arise in
complex codimension greater than 2.  But the generalities of
\secref{braneduals} still apply. There are two inequivalent
$B$-branes $\cal B_+$ and $\cal B_-$ supported at a $\Z_2$ orbifold
singularity of any codimension; our basic proposal is that {\em
mirror symmetry maps this fact to the fact that the corresponding
Hitchin fiber for $SL_2$ has two components, $\FF_1$ and $\FF_2$.}
Thus, two $B$-branes $\cal B_+$ and $\cal B_-$ give rise to two
$A$-branes $\cal A_1$ and $\cal A_2$ supported on these two
components. Which one of them corresponds to $\cal B_+$ and which to
$\cal B_-$ is a subtle issue, which will be discussed in
\secref{gerbes} (this is the reason for the choice of notation,
$\cal A_1, \cal A_2$, and not $\cal A_+, \cal A_-$).

The locus of $\Z_2$ singularities has dimension $2g-2$.  There are
$g-1$ parameters in picking a spectral curve $z^2+\det\,\varphi=0$,
where $\det\,\varphi$ has only double zeroes, and $g-1$ more
parameters in picking an appropriate line bundle $\cal L'\to D'$.

\subsubsection{The Transfer}    \label{transfer}

Let us now consider this result from the point of view of $SO_3$
local systems.  An $SO_3$ local system should represent a $\Z_2$
orbifold singularity of $\MH$ if its structure group reduces to
$O_2$, but not to a proper subgroup thereof.  The moduli space of
$O_2$ Higgs bundles has dimension $2g-2$, which agrees with the
dimension of the above-described family of orbifold singularities.

More specifically, the Higgs bundles representing the
singularities are precisely $O_2$ Higgs bundles. $\MH(O_2)$ is
simply the quotient by $Q=H^1(C,\Z_2)$ of $\MH(O_2^*)$.  So its
Hitchin fibration is easily understood: the base is the same as it
is for $O_2^*$, and the fiber is the quotient of the Prym by $Q$.
This is the same as the singular locus of $\MH(SO_3)$ that we have
just described.

An argument just like that surrounding eqn. (\ref{condi}) shows
that $\MH(O_2)$ has two components, even after an unramified
double cover $D'\to C$ is specified.  Indeed, from the point of
view of an $O_2$ local system $U$, the choice of unramified double
cover specifies the Stieffel-Whitney class $w_1(U)$, and the two
components for a given choice of $w_1(U)$ differ by the value of
$w_2(U)$.  The map $U\to U\oplus \det\,U$ from an $O_2$ local
system to an $SO_3$ local system kills $w_1$ and leaves $w_2$
unchanged.  So all components of $\MH(O_2)$ with a given value of
$w_2$ appear as $\Z_2$ orbifold singularities in the component of
$\MH(SO_3)$ labeled by the same value of $w_2$.

The fact that $\MH(O_2)$ appears as a locus of singularities in
$\MH(SO_3)$ is not special to the pair $O_2$ and $SO_3$.  For any
reductive Lie group $\LG$ and reductive subgroup $\LH$, one has a
natural embedding $\MH(\LH)\subset \MH(\LG)$.  If the centralizer of
$\LH$ in $\LG$ is non-trivial, then $\MH(\LH)$ will be a locus of
singularities.  This embedding leads to a natural functor (direct
image) from the category of $B$-branes on $\MH(\LH)$ to the category
of $B$-branes on $\MH(\LG)$, and this will have to give rise to a
functor from the category of $A$-branes on $\MH(H)$ to the category of
$A$-branes on $\MH(G)$, as shown on the following diagram:
$$
\begin{CD}
B\text{-branes on } \MH(\LG) @>{\sim}>> A\text{-branes on } \MH(G) \\
@AAA @AA{\text{transfer}}A \\
B\text{-branes on } \MH(\LH) @>{\sim}>> A\text{-branes on } \MH(H)
\end{CD}
$$

This is closely related to what in the Langlands Program is called
the {\em transfer} or the {\em functoriality principle} (see
\cite{L,Arthur:funct}). In the classical setting (discussed in more
detail in \secref{classical}), this means that for any homomorphism
of the dual groups $\LH \to \LG$ one expects to have a map (the
``transfer'') from the set of equivalences classes of irreducible
automorphic representations of $H({\mathbb A})$ (more precisely,
their $L$-packets) to those of $G({\mathbb A})$. In the geometric
setting, automorphic representations are replaced by ${\mc
D}$-modules on $\Bun_G$, or $A$-branes on $\MH(G)$, and the transfer
becomes a functor between appropriate categories associated to $H$
and $G$, as in the above diagram. This functor should be compatible
with the action of the Hecke/'t Hooft operators (discussed in the
next section) on the two categories.

Mirror symmetry of the Hitchin fibrations provides a natural setup for
constructing such a functor.  In physical terms, one might hope to
study this situation by introducing a {\em supersymmetric domain
wall}, with ${\cal N}=4$ supersymmetric Yang-Mills theory of gauge
group $\LH_c$ (the compact form of $\LH$) on one side of the domain
wall, and the same theory with gauge group $\LG_c$ on the other
side. For some examples of string-theoretic constructions of such
domain walls, see \cite{Wi}.

In our example $\LH = O_2$ and $\LG = SO_3$. On the dual side we have
$G = SL_2$ and $H(F)$ is a twisted torus in $SL_2(F)$, where $F$ is
the field of rational functions on $C$, which is defined as
follows. Consider the moduli space $\MH(O_2,0)$ of $O_2$-Higgs bundles
with $w_2=0$, embedded into $\MH(SO_3,0)$. It has components
parametrized by the set $J_2$ of unramified double covers $D' \to
C$. For each $\psi \in J_2^\times$ corresponding to an unramified
cover we define $H_\psi(F)$ as the group of non-zero rational
functions $f$ on $D'$ such that $\tau(f) = f^{-1}$, where $\tau$ is
the involution of the cover. It is naturally realized as a subgroup of
$SL_2(F)$. The transfer functor of the diagram above is then
constructed as follows.

Each component $\MH(O_2,0)_\psi$ of $\MH(O_2,0)$ corresponding to
$\psi \in J_2^\times$ is a toric fibration over the corresponding
locus $\BB_\psi$ in the Hitchin base $\BB$. We have $\BB_\psi =
H^0(C,K \otimes {\mc L}_\psi)$, where ${\mc L}_\psi$ is the line
bundle on $C$ corresponding to $\psi$, and the map $\BB_\psi \to \BB =
H^0(C,K^2)$ is given by $\omega \mapsto \omega^2$. Since we wish to
avoid the local systems that reduce to proper subgroups of $O_2$, we
consider the complement $\MH(O_2,0)_\psi^\circ$ of the zero fiber in
$\MH(O_2,0)_\psi$. This is a subvariety in $\MH(SO_3,0)$ projecting
onto $\BB_\psi^\circ = (\BB_\psi \bs 0) \subset \BB$. The union of
these varieties is precisely the ``elliptic endoscopic locus'' of
$\MH(SO_3,0)$ (see \cite{Ngo1} and \secref{other}). For each point $b
\in \BB_\psi^\circ$ the Hitchin fiber $\FF_b(O_2)_\psi \subset
\MH(O_2,0)_\psi$ is identified with the moduli space of rank one
unitary local systems on each of the two components, $\FF_{b,1}$ and
$\FF_{b,2}$, of the corresponding singular Hitchin fiber $\FF_b$ in
the dual moduli space $\MH(SL_2)$. Indeed, as explained in
\secref{prym}, each $\FF_{b,i}$ is isomorphic to a
$(\CP^1)^{2g-2}$-bundle over an abelian variety
$\FF_b(O_2)_\psi^\vee$, which is dual to $\FF_b(O_2)_\psi$. The
transfer (outside of the zero Hitchin fiber) is then implemented via
the fiberwise $T$--duality of the toric fibration
$\MH(O_2,0)_\psi^\circ$. In particular, the skyscraper $B$-brane
supported at a point $\E \in \FF_b(O_2)_\psi \subset
\MH(O_2,0)^\circ_\psi$ gives rise to a magnetic eigenbrane on $\FF_b
\subset \MH(SL_2)$, which is the sum of the pull-backs of the
corresponding rank one unitary local system on $\FF_b(O_2)_\psi^\vee$
to $\FF_{b,1}$ and $\FF_{b,2}$.

In addition, $\MH(O_2)^\circ_\psi$ is embedded into $\MH(SO_3)$ as the
locus of $\Z_2$-orbifold singularities. This leads to a ``doubling''
of the category of $B$-branes supported on this component. On the dual
side this is reflected in the fact that the dual Hitchin fibers have
two components, $\FF_{b,1}$ and $\FF_{b,2}$, also leading to a
``doubling'' of the corresponding category of $A$-branes.

More general examples will be considered in \secref{other}.

\subsection{'t Hooft/Hecke Operators}\label{Hecke}

Much of the richness of the geometric Langlands program comes from
the fact that the ${\cal D}$-modules dual to local systems are
eigensheaves for the geometric Hecke operators.  In the quantum
field theory approach, this arises from a duality between line
operators that are known as Wilson and 't Hooft operators.

We want to describe the refinement of  this picture associated
with endoscopy.  For simplicity, we focus on our usual example
with $^L\neg G=SO_3$ and a local system with automorphism group
$\Z_2$.

\subsubsection{Review Of Wilson Operators} First we describe the
action of Wilson operators in general (see Section 8 of
\cite{KW}).  We let $(\hat A,\hat\phi)$ be the universal solution
of the $SO_3$ Hitchin equations over $\MH(SO_3)\times C$.  Thus,
$\hat A$ is a unitary connection on an $SO_3$ bundle $\eurm W\to
\MH(SO_3)\times C$, and $\hat\phi$ a section of ${\rm ad}(\eurm
W)\otimes \Omega^1_C\to \MH(SO_3)\times C$, such that if we
restrict to $m\times C$ for a point $m\in \MH(SO_3)$, then $(\hat
A,\hat \phi)$ is gauge-equivalent to the solution of Hitchin's
equations determined by $m$.  As usual, the restriction of $(\hat
A,\hat\phi)$ to $m\times C$ determines either an $SO_3$ local
system over $C$ or an $SO_3 $ Higgs bundle.

A Wilson operator in $^L\neg G$ gauge theory is associated to the
choice of a point $p\in C$ and a representation $^L\neg R$ of $^L\neg
G$.  For simplicity, we will take $^L\neg R$ to be the
three-dimensional representation of $^L\neg G=SO_3$, and we write
$W_p$ for the corresponding Wilson operator.  The action of $W_p$ on
$B$-branes can be described as follows.  Let $\cal B$ be a $B$-brane
associated with a coherent sheaf (or a complex of coherent sheaves)
${\cal K}\to \MH$.  Then $W_p\cdot {\cal B}$ is the $B$-brane
associated with the sheaf ${\cal K}\otimes {\eurm W}|_p$, where
${\eurm W}|_p$ is the restriction of ${\eurm W}$ to $\MH\times p$.
(We understand $\eurm W$ as a rank 3 vector bundle with structure
group $SO_3$.)  Thus, the action of $W_p$ on coherent sheaves is
\begin{equation}\label{donkey} {\cal K}\to {\cal K}\otimes {\eurm
    W}|_p.\end{equation}

This formula makes sense for any of the complex structures that
make up the hyper-Kahler structure of $\MH$, since $\eurm W$, when
restricted to $\MH\times p$, is holomorphic in any complex
structure.  (It carries a natural connection whose curvature is of
type $(1,1)$ for each complex structure.)  In geometric Langlands,
one is interested primarily in complex structure $J$, in which
$\MH(SO_3)$ parametrizes $SO_3$ local systems, but the existence
of the other complex structures simplifies some computations, as
we will see.

A very important special case is that the brane ${\cal B}$ is
simultaneously a $B$-brane for each of the complex structures of
$\MH$.  We then call $\cal B$ a brane of type $(B,B,B)$ -- a $\cal
B$-brane in complex structures $I$, $J$, or $K$ (or any linear
combination).    Examples are a brane supported at a point in
$\MH$ -- the case that we consider momentarily -- and a brane
defined by an inclusion $\MH({}^L\neg G')\subset \MH({}^L\neg G)$,
for some subgroup ${}^L\neg G'\subset {}^L\neg G$. In this case,
$W_r\cdot {\cal B}$ is again a brane of type $(B,B,B)$. Thus, the
action of $W_r$ preserves the full topological symmetry of type
$(B,B,B)$ (that is, of the $B$-model in any complex structure).

Of particular interest are the eigenbranes of the Wilson
operators, also called electric eigenbranes. One defines the
tensor product of a brane with a vector space $V$ as follows:  if
$\cal B$ is defined by a sheaf $\cal K$, then $\cal B\otimes V$ is
defined by the sheaf $\cal K\otimes V$. A brane $\cal B$ is called
an eigenbrane of $W_p$ if
\begin{equation}\label{onerel}W_p\cdot {\cal B}={\cal B}\otimes
  V_p,\end{equation}
for some vector space $V_p$. We will call $V_p$ the multiplier.  In
complex structure $J$, the $B$-model of $\MH$ is related to a
four-dimensional topological field theory and general arguments can be
used (see Section 6.4 of \cite{KW}) to show that if
eqn. (\ref{onerel}) holds for one value of $p$, then it holds for all
$p$ and $V_p$ varies as the fiber of a local system.

Comparing to (\ref{donkey}), we see that for a brane $\cal B$ to be
an electric eigenbrane, the bundle ${\eurm W}|_p$ must be trivial --
equivalent to a constant vector space -- when restricted to the
support of $\cal B$.  This is so if the support of $\cal B$ is a
smooth point $m\in \MH(SO_3)$.  More specifically, we take $\cal B$
to be the brane (known in the physics literature as a zero-brane)
associated with a skyscraper sheaf supported at $m$. Such a brane is
an electric eigenbrane with multiplier the vector space $\eurm
W|_{m\times p}$, that is, the restriction of $\eurm W\to \MH\times
C$ to the point $m\times p\in \MH\times C$:
\begin{equation}\label{tworel}W_p\cdot {\cal B}={\cal B}\otimes
\eurm W|_{m\times p}.\end{equation}

This statement holds in any complex structure, so we can think of
$\cal B$ as an eigenbrane of type $(B,B,B)$.  In other words, it
is a $B$-brane in complex structure $I, J$, or $K$ on $\MH$ (or
any combination thereof), and furthermore is an eigenbrane in any
complex structure.  In the geometric Langlands Program, one cares
primarily about complex structure $J$, but the fact that the
zero-brane is simultaneously an eigenbrane in all three complex
structures facilitates computations, as will become clear.

\subsubsection{$B$-Branes At An Orbifold
Singularity}\label{borbifold}

We want to repeat this analysis for the case of a brane supported
at a $\Z_2$ orbifold singularity $r\in \MH$. Such a singularity is
associated with an $SO_3$ local system whose structure group
reduces to $O_2$.  We recall that $O_2$ is embedded in $SO_3$ as
the subgroup
\begin{equation}\begin{pmatrix} * & * & 0 \\ * & * & 0 \\ 0 & 0 &
\pm 1\end{pmatrix}\end{equation}  and that any $SO_3$ local system
whose structure group reduces to $O_2$ has symmetry group $\Z_2$,
generated by the central element of $O_2$:
\begin{equation}\begin{pmatrix} -1 & 0 & 0 \\ 0 & -1 & 0 \\ 0 & 0
& 1\end{pmatrix}.\end{equation} As usual, we will consider a generic
local systems of this type whose group of automorphisms is precisely
this $\Z_2$. In the present context, $\eurm W|_r$, the restriction
of $\eurm W$ to $r\times C$, is a local system whose structure group
reduces to $O_2$, so it has a decomposition
\begin{equation}\label{inview}\eurm W|_r=U \oplus \det\,
U,\end{equation} where $U$ is a rank 2 local system, with
structure group $O_2$, and $\det\,U$ is its determinant.  The
central generator of $\Z_2$ acts as $-1$ on $U$ and as $+1$ on
$\det\,U$.

As explained in \secref{braneduals}, the category of branes
supported at the orbifold singularity $r$ is generated by two
irreducible objects $\cal B_+$ and $\cal B_-$. Each is associated
with a skyscraper sheaf supported at $r$.  They differ by whether
the non-trivial element of $\Z_2$ acts on this sheaf as
multiplication by $+1$ or by $-1$.

What happens when we act on $\cal B_+$ or $\cal B_-$ by the Wilson
operator $W_p$?  Since $\cal B_+$ and $\cal B_-$ both have skyscraper
support at $r$, $W_p$ acts on either of them by tensor product with
the three-dimensional vector space $\eurm W|_{r\times p}$, the fiber
of $\eurm W$ at $r\times p$.  However, we should be more precise to
keep track of the $\Z_2$ action. In view of eqn.  (\ref{inview}),
there is a decomposition $\eurm W|_{r\times p}=U|_p\oplus \det\,U|_p$,
where the non-trivial element of $\Z_2$ acts as $-1$ on the first
summand and as $+1$ on the second summand.  So we have
\begin{align}\label{elko}\notag W_p\cdot \cal B_+ & = \left( \cal
  B_-\otimes U|_p \right) \oplus \left( \cal B_+\otimes \det U|_p
  \right) \\ W_p\cdot \cal B_- & = \left( \cal B_+\otimes U|_p \right)
  \oplus \left( \cal B_-\otimes \det U|_p \right).\end{align}

A less precise but illuminating way to rewrite this is as follows.
$\det U|_p$ is a one-dimensional vector space on which $\Z_2$ acts
trivially, so $\cal B_\pm \otimes \det U|_p$ is isomorphic,
non-canonically, to $\cal B_\pm$.  And $U|_p$ is a two-dimensional
vector space on which the non-trivial element of $\Z_2$ acts as
multiplication by $-1$.  So $\cal B_\pm \otimes U|_p$ is isomorphic,
non-canonically, to the sum of two copies of $\cal B_\mp$.  Thus up to
isomorphism we have
\begin{align}\label{helko}\notag W_p\cdot \cal B_+ & = \cal B_+ +
  2\cal B_-\\ W_p\cdot \cal B_- & = \cal B_- + 2\cal
  B_+.\end{align}

We want to understand the magnetic dual of these statements. In
Sections \ref{modif} and \ref{interpretation}, we review geometric
Langlands duality for generic Hitchin fibers, and then in
\secref{reducible}, we consider the behavior for special Hitchin
fibers related to endoscopy.

\subsubsection{$\varphi$-Invariant Hecke Modifications}\label{modif}

The magnetic dual of a Wilson operator $W_p$ is an 't Hooft operator
$T_p$. For the definition of 't Hooft operators and their relation
to the usual Hecke operators of the geometric Langlands program, see
Sections 9 and 10 of \cite{KW}. An $A$-brane $\A$ that is an
eigenbrane for the 't Hooft operators, in the sense that, for every
't Hooft operator $T_p$,
\begin{equation}T_p\cdot \A=\A \otimes V_p\end{equation}
for some vector space $V_p$, is known as a magnetic eigenbrane.
Wilson operators of $^L\neg G$ gauge theory are classified by a
choice of representation of $^L\neg G$, and   't Hooft operators of
$G$ gauge theory  are likewise classified by representations of
$^L\neg G$. Electric-magnetic duality is expected to map Wilson
operators to 't Hooft operators and electric eigenbranes to magnetic
eigenbranes.

Let us review the action of an 't Hooft operator $T_p$ on a Higgs
bundle $(E,\varphi)$. In case $\varphi=0$, the possible Hecke
modifications are the usual ones considered in the geometric
Langlands program; they are parametrized by a subvariety of the
affine Grassmannian known as a Schubert variety $\eusm V$, which
depends on a choice of representation $^L\neg R$ of the dual group
$^L\neg G$. For instance, continuing with our example, if $G=SL_2$
and $^L\neg R$ is the three-dimensional representation of $^L\neg
G=SO_3$, then a generic point in $\eusm V$ corresponds to a Hecke
modification of an $SL_2$ bundle $E$ of the following sort: for some
local decomposition of $E$ as a sum of line bundles $\cal N_1\oplus
\N_2$, $E$ is mapped to $\N_1(p)\oplus \N_2(-p)$. Letting $\N_1$ and
$\N_2$ vary, this gives a two-parameter family of Hecke
modifications of $E$. A  family of modifications of $E$ of this type
can degenerate to a trivial modification, and $\eusm V$ contains a
point corresponding to the trivial Hecke transformation.

What we have just described, for this example, is the possible
action of the 't Hooft operator in the most degenerate case that
$\varphi=0$.  If instead $\varphi\not=0$, one must restrict to
Hecke modifications that are in a certain sense
$\varphi$-invariant.  For $G=SL_2$, and assuming $\varphi$ to be
regular semi-simple at the point $p$, $\varphi$-invariance means
that the decomposition $E=\N_1\oplus \N_2$ must be compatible with
the action of $\varphi$, in the sense that $\varphi:E\to E\otimes
K$ maps $\N_1$ to $\N_1\otimes K$ and $\N_2$ to $\N_2\otimes K$.
There are precisely two possible choices of $\N_1$ and $\N_2$:
locally, as $\varphi(p)$ is regular semi-simple, we can
diagonalize $\varphi$
\begin{equation}\varphi=\begin{pmatrix}a & 0 \\ 0 & -a
\end{pmatrix}, \end{equation}
and $\N_1$ and $\N_2$ must equal, up to permutation, the two
``eigenspaces.''

In addition to these two non-trivial $\varphi$-invariant Hecke
modifications, we must remember to include the trivial Hecke
modification (since it corresponds to a point in $\eusm V$), which
is also $\varphi$-invariant. Altogether then there are in this
example three $\varphi$-invariant Hecke modifications, namely a
trivial one and two non-trivial ones, a statement that, as we will
see, is dual to the fact that the representation of $^L\neg G=SO_3$
that we started with is three-dimensional.

Now let us see what the $\varphi$-invariant Hecke modifications
look like from the point of view of the spectral curve $\pi:D\to
C$.  We consider first the case of a generic spectral curve, given
by an equation $\det(z-\varphi)=0$. A $\varphi$-invariant Hecke
modification leaves fixed the characteristic polynomial of
$\varphi$ and hence maps each fiber $\FF$ of the Hitchin fibration
to itself.  How does it act on $\FF$?

A point $p\in C$ at which $\varphi$ is regular semi-simple lies
under two distinct points $p',p''\in D$.  The bundle $E$ is
$\pi_*(\cal L)$ for some line bundle $\cal L\to D$, and
$\varphi=\pi_*(z)$. The latter condition means that the
eigenspaces of $\varphi(p)$ correspond to the two distinct values
of $z$ lying above $p$, or in other words to the two points $p'$
and $p''$.  This being so, a non-trivial $\varphi$-invariant Hecke
modification of $(E,\varphi)$ at the point $p$ comes from a
transformation of $\cal L$ of the specific form
\begin{equation}\label{yelgo}\L\to \L\otimes \O(p'-p'')\end{equation}
for one or another of the two possible labellings of the two points
$p',p''$ lying above $p$. (This notion of a Hecke modification of a
Higgs bundle $(E,\varphi)$ is mathematically natural and was taken
as the starting point in \cite{DP}.)  When this is pushed down to
$C$, it modifies $E$ in the desired fashion.

Now we can see why an $A$-brane $\A_{\FF}$ supported on a fiber $\FF$ of
the Hitchin fibration and endowed with a flat line bundle $\cal R$ is
a magnetic eigenbrane, that is an eigenbrane for the 't Hooft operator
$T_p$.  First of all, $T_p$ maps $\FF$ to itself, since it preserves the
characteristic polynomial of $\varphi$.  Since $T_p$ preserves the
support of $\A_{\FF}$, it is conceivable for $\A_{\FF}$ to be an
eigenbrane for $T_p$.

Now, assuming that we choose $p$ so that $\varphi(p)$ is regular
semi-simple (and we will only treat this case), the evaluation of
$T_p\cdot \A_{\FF}$ comes from a sum of contributions from the three
$\varphi$-invariant Hecke modifications that were just described.  One
of them is the trivial Hecke modification, and this leaves $\A_{\FF}$
invariant. The other two come from transformations $\L\to \L\otimes
\O(p'-p'')$ (for some labeling of the two points). Such a
transformation can be interpreted as an isomorphism $\Phi:\FF\to \FF$
of the Hitchin fiber. If the labeling of the two points $p'$ and $p''$
is reversed, then $\Phi$ is replaced by $\Phi^{-1}$.

$\FF$ is a complex torus, and $\Phi$ is a ``translation'' of $\FF$ by
a constant vector.  In general, if $\cal R\to \FF$ is a flat line
bundle over a complex torus and $\Phi:\FF\to \FF$ is a translation,
then $\Phi^*(\cal R)=\cal R\otimes \cal V$ for some
one-dimensional vector space $\cal V$.  From this it follows that
$\A$ is an eigenbrane for $T_p$.  In fact, we have
\begin{equation}\label{hopscotch} T_p\cdot \A_{\FF}=\A_{\FF}\otimes
(\C\oplus \cal V\oplus \cal V^{-1}),\end{equation} where the three
contributions on the right come respectively from the trivial Hecke
modification and the non-trivial modifications that involve $\Phi$ and
$\Phi^{-1}$.

Let us compare this to what we had on the electric side.  There,
we considered a brane $\cal B$ whose support in $\MH(SO_3)$
corresponds to a Higgs bundle $(E,\varphi)$.   It obeyed $W_p\cdot
\cal B =\cal B\otimes E_p,$ where $E_p$ is the fiber at $p$ of
$E$. Since we assume that $\varphi(p)$ is regular semi-simple (a
property that is invariant under duality), we can decompose $E_p$
according to the eigenspaces of $\varphi(p)$.  $\varphi(p)$ has
two eigenspaces  with non-zero eigenvalues; they are dual to each
other because of the quadratic form on $E_p$, so we call  them
${\cal X}$ and ${\cal X}^{-1}$.  They have a natural isomorphism
to $\cal V$ and $\cal V^{-1}$; this can be established by
arguments similar to those used in demonstrating the geometric
Langlands duality for $GL_1$. See \cite{DP} for a version of this
calculation.  The kernel of $\varphi(p)$ corresponds to the
summand $\C$.

\subsubsection{Interpretation}\label{interpretation}
What perhaps most needs clarification is the interpretation of the
result just described.

That computation was made using the Hitchin fibration and other
tools appropriate for the $B$-model in complex structure $I$.
However, for the geometric Langlands program, we are really
interested in $A$-branes for the $A$-model in symplectic structure
$\omega_K$.

We observed earlier that the electric eigenbrane supported at a
point is a brane of type $(B,B,B)$ -- that is, it is a $B$-brane in
each of complex structures $I,J$, and $K$.  The dual statement is
that the dual magnetic eigenbrane is a brane of type $(B,A,A)$.
 A brane of type $(B,A,A)$ is a brane that
is simultaneously a $B$-brane in complex structure $I$, and an
$A$-brane for the $A$-models of symplectic structure $\omega_J$ and
$\omega_K$.

To explicitly see that a brane $\A_{\FF}$ supported on a fiber $\FF$  of
the Hitchin fibration and endowed with a unitary flat line bundle
$\L$ is of type $(B,A,A)$, we reason as follows. $\FF$ is a complex
Lagrangian submanifold in complex structure $I$. $\A_{\FF}$ is a
$B$-brane in complex structure $I$ because $\FF$ is holomorphic in
that complex structure, and a flat line bundle such as $\cal L$ is
also holomorphic. $\FF$ is Lagrangian for
$\Omega_I=\omega_J+i\omega_K$, and hence is Lagrangian for
$\omega_J$ and $\omega_K$.  The most standard type of $A$-brane is
a Lagrangian submanifold endowed with a unitary flat line bundle;
$\A_{\FF}$ qualifies whether we take the symplectic structure to be
$\omega_J$ or $\omega_K$.

The dual of the fact that the Wilson operator $W_p$ preserves
supersymmetry of type $(B,B,B)$ is that the 't Hooft operator
$T_p$ preserves supersymmetry of type $(B,A,A)$.  This statement
means that if $\tilde \A$ is a brane of type $(B,A,A)$ then so is
$T_p\cdot \tilde \A$. (More generally, if $\tilde \A$ preserves
any linear combination of these supersymmetries, then so does
$T_p\cdot \tilde \A$.)  To actually identify which brane of type
$(B,A,A)$ is $T_p\cdot \tilde \A$, we can compute using whatever
one of the supersymmetries is most convenient. Algebraic geometry
is powerful, so it is likely to be convenient to compute $T_p\cdot
\tilde\A$ viewed as a $B$-brane in complex structure $I$.  This is
enough to determine $T_p\cdot \tilde\A$ as a brane of type
$(B,A,A)$, and in particular, as an $A$-brane of type $K$, which
is what one wants for geometric Langlands.

The reason for the last statement is that given a $B$-brane of
type $I$, there is at most one way to endow it with the structure
of a brane of type $(B,A,A)$.  Let us spell this out concretely in
the present context. In \secref{modif}, we used  spectral curves
and algebraic geometry to construct the brane $T_p\cdot \A_{\FF}$ in
terms of a holomorphic subvariety $\FF\subset \MH(SL_2)$ and a
holomorphic vector bundle ${\cal K}\to \FF$.  This data determines a
$B$-brane in complex structure $I$.  Endowing this brane with a
structure of type $(B,A,A)$ means endowing ${\cal K}$ with a
hermitian metric such that the induced unitary connection is flat;
$\FF$ with such a flat bundle is a brane of type $(B,A,A)$.  The
flat unitary connection with which a holomorphic bundle ${\cal K}$
can be so endowed is unique up to isomorphism if it exists.  In
the present context, because $T_p$ preserves supersymmetry of type
$(B,A,A)$, we expect such a flat connection to exist, and the
computation in \secref{modif} shows that it does.

A final comment concerns the ``multiplier'' $V_p$ in the formula
expressing the fact that $\A_{\FF}$ is a magnetic eigenbrane:
\begin{equation}T_p\cdot \A_{\FF}=\A_{\FF}\otimes V_p.\end{equation}
We want to discuss what happens when $p$ varies. In geometric
Langlands, we expect that $V_p$ will be the fiber of a local
system, that is a flat vector bundle.  That is not what we get in
the most obvious way from the description based on the spectral
curve and complex geometry in complex structure $I$.  From that
point of view, we obtain $V_p$ as the fiber of a rank three
holomorphic vector bundle $V\to C$ (defined to begin with where
$\varphi$ is regular semi-simple) that does not have any obvious
flat structure. However, multiplication by the coordinate $z$ of
the spectral curve gives a $K$-valued endomorphism that we will
call $\theta$, so the multiplier is actually a Higgs bundle
$(V,\theta)$. By solving Hitchin's equations, we associate with
this Higgs bundle a rank three local system, and this we expect
will be isomorphic to the $SO_3$ local system with which we began.

\subsubsection{Reducible Fibers}\label{reducible}

Now we are ready to consider the situation related to endoscopy. We
consider a special fiber $\FF$ of the Hitchin fibration that is a
union of two irreducible components $\FF_1$ and $\FF_2$ that
intersect each other on a divisor.   This being so, we can construct
rank 1 $A$-branes $\A_{1}$ and $\A_{2}$ supported on $\FF_1$ or
$\FF_2$.  These branes are unique if $\FF_1$ and $\FF_2$ are
simply-connected, as in the case of a curve of genus 1 with 1 point
of ramification; otherwise, they depend on parameters that we are
not indicating explicitly.

In the derivation of eqn. (\ref{hopscotch}) describing the action of
$T_p$, a key ingredient was the map $\Phi:\FF\to \FF$ by $\L\to
\L\otimes \O(p'-p'')$.  The essential new fact in the case that
$\FF$ is reducible is simply that $\Phi$ exchanges the two component
of $\FF$.  This was how we characterized the two components in
\secref{curves}.  Likewise $\Phi^{-1}$ exchanges the two components.
Hence $\Phi$ or $\Phi^{-1}$ exchange $\A_{1}$ and $\A_{2}$. Since
$T_p$ acts by $1+\Phi+\Phi^{-1}$, it follows that we have up to
isomorphism
\begin{align}\label{helpmet} \notag T_p\cdot \A_{1}& =
\A_{1}+2\A_{2}\\ T_p\cdot \A_{2}& = \A_{2}+2\A_{1}.\end{align} This
is in perfect parallel with the formula (\ref{helko}) for the
electric case.

If $\A_{1}$ and $\A_{2}$ have moduli, this should be described a
little more precisely.  $\A_{1}$ depends on the choice of a suitable
line bundle $\L\to \FF_1$, and we should take $\A_{2}$ to be the
brane associated with the line bundle $\Phi^*(\L)\to \FF_2$. Note
that $\Phi^*(\L)$ and $(\Phi^{-1})^*(\L)$ are isomorphic, though not
canonically so.

One expects to get the more precise result analogous to (\ref{elko})
via the procedure of \secref{interpretation}.  One uses standard
methods of algebraic geometry to construct $T_p\cdot \A_{1}$ and
$T_p\cdot \A_{2}$ as $B$-branes in complex structure $I$.  This will
give a result more precise than (\ref{helpmet}):
\begin{align}\label{helpmeto} \notag T_p\cdot \A_{1}& =
\left( \A_{1}\otimes {\cal J}_1 \right) \oplus \left( \A_{2}\otimes
{\cal J}_2 \right) \\ T_p\cdot \A_{2}& = \left( \A_{2}\otimes {\cal
J}'_1 \right) \oplus \left( \A_{1}\otimes {\cal J}'_2
\right),\end{align} with vector spaces $\cal J_1$, $\cal J_2$, etc.,
of dimensions indicated by the subscripts.  All these admit natural
$K$-valued endomorphisms $\theta_1$, $\theta_2$, etc., coming from the
Higgs field (that is, multiplication by the coordinate $z$ of the
spectral curve), and $(\cal J_1,\theta_1)$, etc., are Higgs bundles
over $C$. Relating these Higgs bundles to local systems via Hitchin's
equations, one expects to arrive at the analog of (\ref{elko}),
\begin{align}\label{elko1}\notag T_p\cdot \cal A_1 & = \left( \cal A_2\otimes
  U|_p \right) \oplus \left( \cal A_1\otimes \det U|_p \right) \\
T_p\cdot \cal A_2 & = \left( \cal A_1\otimes U|_p \right) \oplus
\left( \cal A_2\otimes \det U|_p \right).\end{align}

\subsubsection{The Reciprocal Case}\label{imthooft}

We can apply similar techniques to the reciprocal case $^L\neg
G=SL_2$, $G=SO_3$.

For gauge group $SL_2$, the basic Wilson operator to consider is the
operator $\tilde W_p$  associated with the two-dimensional
representation. Roughly speaking, it acts by the obvious analog of
eqn. (\ref{donkey}). Letting $(\eurm E,\hat\varphi)$ denote the
universal Higgs bundle over $\MH(SL_2)\times C$, $\tilde W_p$ acts
on the sheaf $\cal K$ defining a $B$-brane $\cal B$ by
\begin{equation}\label{roughly} \cal K\to \cal K\otimes \eurm E|_p
\end{equation}
where $\eurm E|_p$ is the restriction to $\MH\times p$ of the
universal rank two bundle $\eurm E\to \MH\times C$. $\tilde W_p$
obeys
\begin{equation}\label{hoppy}\tilde W_p^2=1+W_p,\end{equation}
expressing the fact that the tensor product of the two-dimensional
representation with itself is a direct sum of the trivial
representation and the three-dimensional representation; they
correspond to the terms 1 and $W_p$ on the right hand side of eqn.
(\ref{hoppy}).

An important subtlety reflects the fact that the center of $SL_2$ acts
non-trivially in the two-dimensional representation. The universal
bundle $\eurm E$ does not exist as a vector bundle in the usual
sense. Rather, it must be understood as a twisted vector bundle,
twisted by a certain $\C^\times$ gerbe over $\MH$.  The gerbe in
question is induced from a $\Z_2$ gerbe ($\Z_2$ being here the center
of $SL_2$) by the inclusion $\Z_2\subset \C^\times$.  As a result of
the fact that $\eurm E|_p$ is a twisted vector bundle, the tensor
product with it maps ordinary sheaves over $\MH$ to twisted ones, and
vice-versa. This means, in the language of \cite{KW}, that the action
of $\tilde W_p$ on a brane shifts the discrete electric flux
$\mbox{\bf \em e}_0$, which is a character of the center of $SL_2$.
The dual statement is that the dual 't Hooft operator $\tilde T_p$
shifts the discrete magnetic flux $\mbox{\bf \em m}_0$, which is the
second Stieffel-Whitney class $w_2$ of an $SO_3$ bundle.

Roughly speaking, a skyscraper sheaf supported at a smooth point
$r\in \MH(SL_2)$ gives an electric eigenbrane $\B$, just as in our
earlier discussion for $^L\neg G=SO_3$.  But this is slightly
oversimplified. The skyscraper sheaf supported at the point $r$
makes sense as either an ordinary sheaf or a twisted one, since
the twisting involves a gerbe that is trivial when restricted to a
smooth point.  The tensor product with $\eurm E$ exchanges the two
cases. So if we write $\B$ and $\B'$ for the ordinary and twisted
versions of the brane related to the skyscraper sheaf, then the
action of the Wilson operator is
\begin{align}\notag\label{hopo}
\tilde W_p\cdot \B &= \B' \otimes {\eurm E}|_{r\times p}\\
               \tilde W_p\cdot \B' &= \B \otimes {\eurm E}|_{r\times
               p}.\end{align}
The sum $\hat\B=\B\oplus \B'$ is therefore an electric eigenbrane
in the usual sense:
\begin{equation}\label{opo}
\tilde W_p\cdot \hat\B=\hat B\otimes {\eurm E}|_{r\times p}.\end{equation}

The action of the dual 't Hooft operator $\tilde T_p$ on an $SO_3$
bundle $W$ or $SO_3$ Higgs bundle $(W,\varphi)$ is quite similar to
what has been described in \secref{modif}.  It is convenient to
describe the action in terms of an $SL_2$ Higgs bundle $(E,\varphi)$
such that $W={\rm ad}(E)$, where the degree of $\det \,E$ is
congruent mod 2 to $w_2(W)$.  $E$ is not quite uniquely determined,
but the following statements, when expressed in terms of $W={\rm
ad}(E)$, do not depend on the choice of $E$. Relative to some local
decomposition of $E$ as a sum of line bundles $E=\N_1\oplus \N_2$,
$\tilde T_p$ acts by $E\to \N_1(p)\oplus \N_2$. This operation
reverses the reduction mod 2 of the degree of $\det \, E$, so it
reverses $\mbox{\bf \em m}_0$, as expected.

If $\varphi=0$, the freedom to choose $\N_1$ leads to a family of
possible Hecke modifications parametrized by $\CP^1$.  This
parameter space is compact (reflecting the fact that the
two-dimensional representation of $SL_2$ is minuscule) so we do not
need to add anything to compactify it.  If $\varphi\not=0$, $\tilde
T_p$ acts by a $\varphi$-invariant Hecke modification.  If
$\varphi(p)$ is regular semi-simple, this means that, in the last
paragraph, we must take $\N_1$ to be one of the two eigenspaces of
$\varphi$.  The existence of two choices is dual to the fact that
the representation of $SL_2$ that we started with is
two-dimensional.

In terms of spectral curves, the action of a $\varphi$-invariant
Hecke modification can be described very similarly to eqn.
(\ref{yelgo}). The line bundle $\L\to D$ such that $E=\pi_*(\cal L)$
transforms by \begin{equation}\label{uret}\L\to \L(p^*)
\end{equation}
where $p^*$ may be either of the two points $p'$ and $p''$ lying
above $p$.

Now $\MH(SO_3)$ has two components, classified by $w_2(W)$.  We
write $\FF$ and $\FF_\theta$ for the fibers of the Hitchin fibration
for these two components. For $p^*$ equal to $p'$ or $p''$,
(\ref{uret}) corresponds to two maps $\Phi':\FF\leftrightarrow
\FF_\theta$ and $\Phi'':\FF\leftrightarrow \FF_\theta$, each of
which exchanges these two components.   Acting on a brane supported
on $\FF$ with a flat bundle $\cal L$, $\tilde T_p$ therefore gives a
brane supported on $\FF_\theta$ with flat bundle
$(\Phi')^*(\L)\oplus (\Phi'')^*(\L)$, and similarly with $\FF$ and
$\FF_\theta$ exchanged. These are all branes of type $(B,A,A)$ and
can be analyzed by arguments similar to those that we have already
described.  This leads to formulas dual to (\ref{hopo}) if we
consider branes supported on only one component, or to (\ref{opo}),
if we form ordinary magnetic eigenbranes with support on both
components.

\subsubsection{The Improper 't Hooft Operator}\label{impropop}
 None of this gives a good example of geometric endoscopy, because an
$SL_2$ local system cannot have a finite automorphism group that does
not merely reduce to the center of $SL_2$. However, we can see
endoscopy at work if we take $\tilde T_p$ to act on the $A$-model not
of $\MH(SO_3)$ but of its cover. We recall that $\MH(SO_3,0)$ and
$\MH(SO_3,\theta)$ have covers (with covering group the finite abelian
group $Q=H^1(C,\Z_2)$) that are the proper and improper components of
the $SL_2$ moduli space, $\MH(SL_2)$ and $\MH(SL_2^*;p)$. The improper
component depends on the choice of a basepoint $p\in C$, as explained
in \secref{improperly}; it is convenient to take this to be the point
$p$ at which we will apply the 't Hooft operator.

After lifting to the these covering spaces, the action of the 't
Hooft operator $\tilde T_p$ is much as we have already described.
Mapping from $\MH(SL_2)$ to  $\MH(SL_2^*;p)$, $\tilde T_p$ tensors
the line bundle on the spectral curve by $\CO(p')\oplus \CO(p'')$;
mapping from $\MH(SL_2^*;p)$ to $\MH(SL_2)$, it tensors that line
bundle by $\pi^*(\CO(-p))\otimes (\CO(p')\oplus
\CO(p''))=\CO(-p')\oplus \CO(-p'')$.  These formulas are compatible
with the fact that the two components parametrize, respectively,
Higgs bundles $(E,\varphi)$ with $\det\,E=\O$ and $\det\,E=\O(p)$.
 Ensuring this has necessitated the prefactor $\pi^*(\CO(-p))$ in one
of the formulas.

As long as the spectral curve is smooth, the action of $\tilde
T_p$ on $A$-branes of $SL_2$, exchanging the two components, is
similar to what we described earlier for $SO_3$.  Now, however, we
can consider the case that the Hitchin fiber has two irreducible
components.  We write $\FF_1$ and $\FF_2$ for the two components
of the special Hitchin fiber of $\MH(SL_2)$, and $\FF^*_1$ and
$\FF^*_2$ for the two components of the special Hitchin fiber of
$\MH(SL_2^*;p)$.  We likewise write $\A_1,\,\A_2$ and
$\A^*_1,\,\A^*_2$ for $A$-branes of the usual type supported on
these fibers.

In the action of the 't Hooft operator, the two operations of
tensoring the line bundle on the spectral cover by $\CO(p')$ and
by $\CO(p'')$ differ by the tensor product with $\CO(p'-p'')$.
This is the basic operation that exchanges the two components. We
can label the points to that $\CO(p')$ maps $\FF_1$ to $\FF_1^*$
and $\FF_2$ to $\FF_2^*$, while $\CO(p'')$ maps $\FF_1$ to
$\FF_2^*$ and $\FF_2$ to $\FF_1^*$.  (The tensor product with
$\CO(p')^{-1}$ or $\CO(p'')^{-1}$ is of course the inverse
operation.)  There is no natural way to say which is which, since
$p'$ and $p''$ are exchanged by monodromy in $p$, and this
monodromy similarly exchanges $\FF^*_1$ and $\FF^*_2$, as we noted
at the end of \secref{improperly}.

The action of the 't Hooft operator $\tilde T_p$ on branes
$\A_{{1,2}}$ and $\A^*_{{1,2}}$ is schematically
\begin{align}\notag\label{schema} \tilde T_p \cdot
  \A_{1}&=\A^*_{{1}}+\A^*_{2}\\
                \tilde T_p \cdot
                \A_{2}&=\A^*_{{1}}+\A^*_{2},\end{align}
and similar formulas with $\A_i$ and $\A_i^*$ exchanged.  These
formulas and the analogous ones for the action of the 't Hooft
operator $T_p$ dual to the three-dimensional representation are
compatible with the relation
\begin{equation}\label{silgo} \tilde T_p^2=1+T_p.\end{equation}
This relation is dual to eqn. (\ref{hoppy}).

Finally let us discuss how natural is the operator $\tilde T_p$.
As an operator acting on branes on $\MH(SO_3)$, it is completely
natural, being dual to the two-dimensional representation of the
dual group $^L\neg G=SL_2$.  However, as an operation acting on
branes on $\MH(SL_2)$, we cannot expect $\tilde T_p$ to be
entirely natural, since it is supposed to be dual to the
two-dimensional representation, which does not exist as a
representation of $^L\neg G=SO_3$.  The unnaturalness shows up in
the fact that if we want $\tilde T_p$ to act on branes on
$\MH(SL_2)$, it maps them to branes on $\MH(SL_2^*;p)$, a space
whose definition depends on $p$, albeit relatively weakly. By
contrast, for any reductive group $G$, 't Hooft operators
associated with representations of $^L\neg G$ always have a
completely natural action on branes on $\MH(G)$.

As an operator on branes on the $SL_2$ moduli spaces, we call
$\tilde T_p$ the improper 't Hooft operator.

\section{Categories Of Eigensheaves}    \label{categories}

In the previous sections we have constructed $A$-branes on the moduli
space of Higgs bundles which satisfy a property very similar to, but
not quite the same, as the usual Hecke property. As explained in
\secref{genus one A to D}, to each of these $A$-branes we should be able
to attach a ${\mc D}$-module on $\Bun_G$. These ${\mc D}$-modules
should then satisfy the same Hecke property. In this section we
explain the general framework in which we can interpret this property
as a natural generalization of the standard notion of Hecke
eigensheaf.

\subsection{Generalities On Categories}    \label{gen cat}

Let $H$ be a reductive algebraic group over $\C$ and $\Rep(H)$ the
tensor category of finite-dimensional representations of $H$. We
consider below two types of abelian categories associated to
$H$:


\smallskip

\noindent (1) Categories equipped with an action of $H$. This means
that any $h \in H$ gives rise to a functor $F_h$ on the category ${\mc
C}$ sending each object $M$ of ${\mc C}$ to an object $F_h(M)$, and
each morphism $f: M \to M'$ to a morphism $F_h(f): F_h(M) \to
F_h(M')$.

These functors have to satisfy natural compatibilities; in particular,
for any $h, h' \in H$ there is an isomorphism $i_{h,h'}: F_{hh'}
\simeq F_h \circ F_{h'}$, and we have $i_{hh',h''} i_{h,h'} =
i_{h,h'h''} i_{h',h''}$.

\smallskip

Example: the category $\on{Coh}(X)$ of coherent sheaves (or
$B$-branes) on an algebraic variety $X$ equip\-ped with an action of
$H$.

\smallskip

\noindent (2) Categories with a monoidal action of the tensor category
$\Rep(H)$. This means that any object $V \in \Rep(H)$ defines a
functor on ${\mc C}$, $M \mapsto V \star M$, and these functors are
compatible with the tensor structure on $\Rep(H)$.

\smallskip

Example: the category $\on{Coh}(X/H)$ of coherent sheaves on the
quotient $X/H$, where $X$ is as in point (1) above.\footnote{Here we
may suppose for simplicity that this action is free, but this is not
necessary if we are willing to consider $X/H$ as an algebraic stack --
or an orbifold, in the case of a finite group $H$.} Then for each $V
\in \Rep(H)$ we have a vector bundle $\mc V$ on $X/H$ associated with
the principal $H$-bundle $X \to X/H$,
$$
{\mc V} = X \underset{H}\times V.
$$
Its sheaf of sections (also denoted by ${\mc V}$) is an object of
$\on{Coh}(X/H)$. For any other object $M$ of $\on{Coh}(X/H)$ we set
$$
V \star M := {\mc V} \underset{\OO_{X/H}}\otimes M.
$$


One can pass between the categories of these two types:

\smallskip

The passage (1) $\longrightarrow$ (2) is called {\em
equivariantization};

\smallskip

The passage (2) $\longrightarrow$ (1) is called {\em
de-equivariantization}.

\smallskip

These two procedures are inverse to each other. Here we only give a
brief sketch as this material is fairly well-known (see, e.g.,
\cite{AG}).

\smallskip

Equivariantization is done as follows: given a category ${\mc C}$ of
type (1), we construct a new category ${\mc C}'$ of type (2). Its
objects are the data $(M,(\mu_h)_{h \in H})$, where $M \in {\mc C}$
and
$$
\mu_h: F_h(M) \overset{\sim}\longrightarrow M, \qquad h \in H,
$$
is a collection of isomorphisms such that $\mu_{hh'} = \mu_h \circ
\mu_{h'}$. Given $V \in \Rep(H)$, we define a new object of ${\mc
  C}'$,
$$
V \star (M,(\mu_h)_{h \in H}) = (\underline{V} \otimes_{\C} M,(\rho(h)
\otimes \mu_h)_{h \in H}),
$$
where $\underline{V}$ is the vector space underlying $V$, and $\rho: H
\to \on{End}(\underline{V})$ is the representation of $H$ on
$V$. Thus, we see that $\Rep(H)$ acts on ${\mc C}'$.

\smallskip

Example: if ${\mc C} = \on{Coh}(X)$, then ${\mc C}'$ is the category
of $H$-equivariant coherent sheaves on $X$ which is the same as the
category $\on{Coh}(X/H)$ of coherent sheaves on $X/H$.

\medskip

De-equivariantization is similar, and this is where the notion of
``Hecke eigenobject'' naturally appears.

Given a category ${\mc C}'$ of type (2), we define a new category ${\mc
C}$ of type (1). Its objects are the data $(M,(\al_V)_{V \in
\on{Rep}(H)})$, where $M \in {\mc C}'$ and
\begin{equation}    \label{al general}
\al_V: V \star M \overset{\sim}\longrightarrow \underline{V}
\otimes_{\C} M, \qquad V \in \Rep(H),
\end{equation}
is a collection of isomorphisms compatible with the tensor product
structure on $\Rep(H)$. We will call $(M,(\al_V)_{V
\in \on{Rep}(H)})$ a {\em Hecke eigenobject} of the category ${\mc
  C}'$.

The group $H$ naturally acts on ${\mc C}$: for $h \in H$ we define
\begin{equation}    \label{action of H}
F_h((M,(\al_V)_{V \in \on{Rep}(H)})) = (M,((\rho(h) \otimes
\on{id}) \circ \al_V)_{V \in \on{Rep}(H)}).
\end{equation}
In other words, $M$ stays the same, but we twist the isomorphism
$\al_V$ by $h$. Thus, ${\mc C}$ is indeed a category of type (1).

\subsection{Examples}    \label{exa}

The simplest example of a category of type (2) is the category
$\Rep(H)$ equipped with the natural monoidal action on
itself. Consider the corresponding de-equivariantized category ${\mc
C}'$. Let ${\mc O}_H$ be the algebra of functions on $H$, that is, the
regular representation of $H$. We have an isomorphism
$$
{\mc O}_H = \bigoplus_{V \in \Irrep(H)} V \otimes V^*
$$
respecting both left and right actions of $H$ on the two sides (here
$\Irrep(H)$ is the set of equivalence classes of irreducible
representations of $H$, and for each representation $V$ we denote by
$V^*$ the dual representation).

The data of the isomorphisms $\al_V$ in eqn. \eqref{al general} and
the compatibilities between them may be neatly summarized as the
structure of an ${\mc O}_H$-module on $M$, compatible with the left
action of $H$. Thus, ${\mc C}'$ is the category of $H$-equivariant
coherent sheaves on $H$. This category is equivalent to the category
$\on{Vect}$ of vector spaces. Indeed, we have a functor $G: \on{Vect}
\to {\mc C}'$ sending $U \in \on{Vect}$ to $U \otimes {\mc O}_H$. Its
quasi-inverse functor ${\mc C}' \to \on{Vect}$ is defined by sending
${\mc M} \in {\mc C}'$ to its fiber at $1 \in H$. The natural action
of $H$ on ${\mc C}'$ described above becomes the trivial action on
$\on{Vect}$. The corresponding $H$-equivariant category of $\on{Vect}$
has as its objects vector spaces equipped with an action of $H$. Thus,
by equivariantizing ${\mc C}'$, we recover the category $\Rep(H)$ we
started with, as promised.

\smallskip

It is useful to consider the case when $H = \Z_2 = \{ 1,-1 \}$ in more
concrete terms. In this case $\Rep(H)$ has two irreducible
one-dimensional representations: the trivial one, $I$, and the sign
representation, $S$.

Let ${\mc C}$ be the following category of type (2): it has two
irreducible objects $A_+$ and $A_-$, and the $\Rep(\Z_2)$ acts as
follows:
\begin{equation}    \label{action of Z2}
I \star A_\pm = A_\pm, \qquad S \star A_\pm = A_\mp.
\end{equation}
All other objects of ${\mc C}$ are direct sums of copies of $A_+$ and
$A_-$.

Let us assume first that there is a preferred object among $A_+$ and
$A_-$ corresponding to the trivial representation, say, $A_+$.

What does the corresponding category ${\mc C}'$ of type (1) look like?
By definition, its objects have the form $(M,(\al_I,\al_S))$, where $M
\in {\mc C}$ and
$$
\al_I: I \star M \simeq M, \qquad \al_S: S \star M \simeq M
$$
are isomorphisms satisfying the following conditions. First of all,
$I \star M$ is $M$ and $\al_I$ is the identity map $M \to M$. Second,
$$
(\al_S)^2: M = I \star M = (S \otimes S) \star M = S \star (S \star M)
\to S \star M \to M
$$
is the identity. Thus, our data may be recorded as pairs $(M,\al)$,
where $M \in {\mc C}$ and $\al=\al_S$ is an isomorphism $S \star M \to
M$ such that $\al^2 = \on{id}$.

It is easy to see that if $M$ is an object of ${\mc C}$ such that
there exists $\al: S \star M \simeq M$ satisfying the above property,
then $M$ is a direct sum of copies of $$A := A_+ \oplus A_-.$$ Let us
look at objects of ${\mc C}'$ of the form $(A,\al)$. We have $S \star
A = S(A_+) \oplus S(A_-) = A$. Therefore $\al: A \to A$ is determined
by two non-zero scalars $\ep_\pm$ corresponding to the action of $\al$
on $A_\pm$ (we have assumed that both $A_+$ and $A_-$ are irreducible
and non-isomorphic). The condition that $\al^2 = \on{id}$ implies that
$\ep_+ \ep_- = 1$. Given two such objects, $(A,(\ep_+,\ep_-))$ and
$(A,(\ep'_+,\ep'_-))$, an isomorphism between them is a pair of
non-zero scalars $(\la_+,\la_-)$ acting on $A_+$ and $A_-$ such that
$\ep_+/\ep'_+ = \la_+/\la_-$. Thus, $\la_+$ is determined by $\la_-$
and vice versa.

Thus, the category ${\mc C}'$ is very simple: up to an isomorphism
there is a unique irreducible object, $A$, and all other objects are
isomorphic to a direct sum of copies of this object. In fact, ${\mc
C}'$ is equivalent to the category $\on{Vect}$ of vector spaces. The
functor ${\mc C}' \to \on{Vect}$ sends $M \in {\mc C}'$ to
$\on{Hom}(A_+,M)$.

In the study of geometric endoscopy for $SL_2$ we encounter a category
of $A$-branes similar to the category ${\mc C}$. It also has two
irreducible objects, ${\mc A}_+$ and ${\mc A}_-$, and $\Rep(\Z_2)$
acts on it as in formula \eqref{action of Z2}. However, there is no
preferred object among $\A_+$ and $\A_-$; in other words, there is no
canonical equivalence between ${\mc C}$ and $\Rep(\Z_2)$. It is
natural to ask how this ambiguity is reflected in the category ${\mc
C}'$.

The answer is clear: the category ${\mc C}'$ is still equivalent to
$\on{Vect}$, but there are two such equivalences and there is no way
to choose one of them over the other. The corresponding functors ${\mc
C}' \to \on{Vect}$ send $M \in {\mc C}'$ to $\on{Hom}({\mc
A}_\pm,M)$. Under both of these equivalences all objects
$(A,(\ep_+,\ep_-))$, with $\ep_+ \ep_- = 1$, of ${\mc C}'$ go to the
one-dimensional vector space $\C$ (viewed as an object of
$\on{Vect}$), but the isomorphism
$$
(A,(\ep_+,\ep_-)) \to (A,(\ep'_+,\ep'_-))
$$
given by $(\la_+,\la_-)$, as above, goes either to the isomorphism $\C
\to \C$ given multiplication by $\la_+$ or by $\la_-$.

Thus, to each of the two objects, $\A_+$ and $\A_-$, corresponds a
particular equivalence ${\mc C}' \simeq \on{Vect}$, but inasmuch as we
cannot choose between $\A_+$ and $\A_-$, we cannot choose one of these
equivalences over the other (it is easy to see that these
equivalences are not isomorphic to each other as functors ${\mc C}'
\to \on{Vect}$). This is what replaces the ambiguity between $\A_+$
and $\A_-$ in the category ${\mc C}$ of type (2) when we pass to the
category ${\mc C}'$ of type (1).

The existence of two different equivalences of categories ${\mc C}'
\simeq \on{Vect}$ certainly looks like a more subtle and esoteric
notion than the more concrete notion that the category ${\mc C}$ has
two indistinguishable objects. This is a good illustration of why,
from the practical point of view, it is often better to work with a
category of type (2) than with a category of type (1).

The point is however that the two descriptions are equivalent to each
other. To convince ourselves of that, it is instructive to see how we
can recover ${\mc C}$ from ${\mc C}'$. Let us apply the
equivariantization procedure to ${\mc C}'$. Hence we define a new
category ${\mc C}''$, whose objects are pairs $(M,\mu)$, where $M$ is
an object of ${\mc C}'$ and $\mu = \mu_{-1}$ is an isomorphism between
$M$ and the new object $M'$ obtained by applying the functor
corresponding to $-1 \in \Z_2$ to $M$. Note that $\mu_1 = \on{id}: M
\to M$ and so we must have
$$
\mu^2 = \on{id}
$$
to satisfy the relations of $\Z_2$. If $M = (\A,(\ep_+,\ep_-))$, then
$M' = (\A,(-\ep_+,-\ep_-))$, and we have exactly two isomorphisms
$\mu_\pm$ between $M$ and $M'$, satisfying $\mu^2=\on{id}$: one acts
by $\pm 1$ on $\A_\pm \subset \A$, and the other acts as $\mp 1$. Thus,
we see that there are two non-isomorphic objects in ${\mc C}''$, which
correspond under a canonical equivalence ${\mc C}'' \simeq {\mc C}$ to
$\A_+$ and $\A_-$, respectively. In fact, it is instructive to think of
$\frac{1}{2}(1+\mu_\pm)$ as a projector onto $\A_\pm$ in $A$.

\subsection{Hecke Eigensheaves}    \label{hecke sheaves}

The reason why we have discussed all these subtleties in such great
detail is that the category that most closely matches the usual notion
of Hecke eigenfunctions of the classical theory of automorphic forms
is a category of type (1), so to understand fully the connection to
the classical theory of automorphic forms we have to go through a
category of type (1). However, the category of $A$-branes considered
in \secref{spectral} is naturally a category of type (2). Therefore we
have to make a link between the two types of categories.

Let us recall the traditional definition of Hecke eigensheaves used in
the geometric Langlands Program (see, e.g., \cite{BD} or
\cite{F:houches}, Section 6.1). These are ${\mc D}$-modules on
$\Bun_G$, the moduli stack of $G$-bundles on a curve $\CC$, satisfying
the Hecke eigenobject property. To explain this more precisely, recall
that for each finite-dimensional representation $V$ of the dual group
$\LG$ we have a Hecke functor $H_V$ acting from the category of ${\mc
D}$-modules on $\Bun_G$ to the category of ${\mc D}$-modules on $\CC
\times \Bun_G$. We will not recall the definition of these functors
here, referring the reader to \cite{BD} and \cite{F:houches}. We note
that these functors are closely related to the 't Hooft operators
discussed in \secref{Hecke}, as explained in \cite{KW}.

Let $\E$ be a flat $\LG$-bundle on $\CC$.\footnote{Here we consider
for simplicity the unramified case, but the definition is easily
generalized to the ramified case; we simply omit the points of
ramification of $\E$.} A {\em Hecke eigensheaf} with ``eigenvalue'' $\E$
is by definition a collection of data
\begin{equation}    \label{hecke eigensheaf}
({\mc F},(\alpha_V)_{V \in \Rep(\LG)}),
\end{equation}
where ${\mc F}$ is a ${\mc D}$-module on $\Bun_G$ and $(\alpha_V)$ is
a collection of isomorphisms
\begin{equation}    \label{al V}
\alpha_V: H_V({\mc F}) \overset{\sim}\longrightarrow  V_\E \boxtimes
{\mc F},
\end{equation}
where
$$
V_\E = \E \underset{\LG}\times V
$$
is the flat vector bundle on $\CC$ associated to $V$.

These isomorphisms must satisfy natural compatibility conditions with
respect to the composition of the Hecke functors $H_V$ on the LHS and
the tensor product of representation on the RHS of \eqref{al V}, as
well as the natural associativity condition.\footnote{We will see in
\secref{comm cond} that an additional commutativity condition needs to
be imposed on the isomorphisms $\al_V$.}

Let ${\mc Aut}_\E$ be the category of all Hecke eigensheaves with
eigenvalue $\E$. To make contact with the categories of type (1)
studied in the previous section, let us fix a point $x \in C$. Then
the restriction of the Hecke operator $H_V$ to $x$ is a functor
$H_{V,x}$ from the category of ${\mc D}$-modules on $\Bun_G$ to
itself. For a Hecke eigensheaf \eqref{hecke eigensheaf}, by
restricting the isomorphisms $\al_V$ to $x$, we obtain a compatible
collection of isomorphisms
\begin{equation}    \label{al V x}
\alpha_{V,x}: H_{V,x}({\mc F}) \overset{\sim}\longrightarrow  V_{\E,x}
\otimes {\mc F}.
\end{equation}
Here
$$
V_{\E,x} = \E_x \underset{\LG}\times V,
$$
where $\E_x$ is the fiber of $\E$ at $x$, is a vector space
isomorphic to $V$. The data of ${\mc F}$ and $(\al_{V,x})_{V \in
\Rep(\LG)})$, is precisely the kind of data that we used above in
the definition of the de-equivariantized category. A Hecke eigensheaf
\eqref{hecke eigensheaf} is therefore an object of this type, except
that instead of just one collection of Hecke isomorphisms \eqref{al
general} we have an entire family of such collections parametrized
by points of the curve $\CC$. The condition \eqref{al V} actually
contains much more information than the data of the isomorphisms
\eqref{al V x} for all $x \in \CC$, because formula \eqref{al V}
also describes the dependence of the ``eigenvalues'' $V_{\E,x}$ of
the Hecke operators on $x$: they ``vary'' according to the local
system $\E$.

If we only impose the condition \eqref{al V x} (for a particular $x
\in C$), then the corresponding category carries an action of the
group $\LG$, or, more precisely, its twist by the $\LG$-torsor $\E_x$,
that is, $\LG_{\E_x} = \E_x \underset{\LG}\times \LG$. This action is
defined as in formula \eqref{action of H}. If, on the other hand, we
impose the full Hecke condition \eqref{al general}, then the group
$\LG_{\E_x}$ no longer acts. Rather, we have an action of the (global)
group of automorphisms of $\E$, that is, $\Gamma(C,\LG_\E)$, where
$\LG_{\E} = \E \underset{\LG}\times \LG$.

Suppose that the group of automorphisms of our local system $\E$ is
trivial. This means, in particular, that the center of $\LG$ is
trivial, so that $\LG$ is a semi-simple group of adjoint type. In this
case it is expected that there is a unique irreducible ${\mc
D}$-module ${\mc F}$ satisfying the Hecke property \eqref{al general},
and all other ${\mc D}$-modules satisfying this property are direct
sums of its copies. If so, then there is essentially a unique way to
construct the isomorphisms $\alpha_V$ in \eqref{al V} for the
irreducible ${\mc D}$-modules ${\mc F}$, and this is why in this case
one usually suppresses the data of $(\alpha_V)$.


However, these data become important in the case when $\E$ has a
non-trivial group of automorphisms
$$
\Gamma := \Gamma(C,\LG_\E).
$$
Then we define an action of the group $\Gamma$ on the category ${\mc
Aut}_\E$ as in formula \eqref{action of H}. Namely, given an object
$({\mc F},(\alpha_V))$ of the category ${\mc Aut}_\E$ as in formula
\eqref{hecke eigensheaf}, we construct a new object $({\mc
F},(\alpha^s_V))$, where
$$
\alpha^s_V = (s \otimes 1) \circ \alpha_V.
$$
This new object may not be isomorphic to the old one (and even if it
is, it may be isomorphic to it in different ways).

\subsection{Category Of Hecke Eigensheaves In The Endoscopic Example}

Let us discuss the category of Hecke eigensheaves in our endoscopic
example, when $G = SL_2, \LG = SO_3$, and $\Gamma = \Z_2$. We expect
that in this case any ${\mc D}$-module satisfying the Hecke eigensheaf
property is a direct sum of copies of a ${\mc D}$-module, which we
will denote by ${\mc F}$. As explained in Sections \ref{dmodules} and
\ref{genus one A to D}, the ${\mc D}$-module ${\mc F}$ corresponds to
an $A$-brane ${\mc A}$ on a singular fiber of $\MH(G)$, which is a
magnetic eigenbrane with respect to the 't Hooft operators. As we
explained in \secref{spectral}, this $A$-branes is reducible:
$$
\A = \A_+ \oplus \A_-,
$$
where the $A$-branes $\A_\pm$ are irreducible. Furthermore, there is
not a preferred one among them (see \secref{gerbes} for a more
detailed discussion of this point). Actually, because of this
ambiguity, we previously used the notation $\A_1$ and $\A_2$ for these
$A$-branes, in order to emphasize that they do not correspond
canonically to the $B$-branes $\B_+$ and $\B_-$. But from now on we
will use the notation $\A_\pm$.

Therefore we expect that the ${\mc D}$-module ${\mc F}$ is also
reducible:
$$
{\mc F} = {\mc F}_+ \oplus {\mc F}_-,
$$
and each ${\mc F}_\pm$ is irreducible. We also expect that neither
of them is preferred over the other one.

Recall that the notion of an eigensheaf, as defined above, includes
the isomorphisms $\alpha_V$ for all representations $V$ of $SO_3$. By
using the compatibility with the tensor product structure, we find
that everything is determined by the adjoint representation of $SO_3$,
which we denote by $W$, as before. A Hecke eigensheaf may therefore be
viewed as a pair $({\mc F},\alpha)$, where
\begin{equation}    \label{h eig}
\alpha: H_W({\mc F}) \overset{\sim}\longrightarrow  W_\E \boxtimes
{\mc F}.
\end{equation}

In the endoscopic case the structure group of our $SO_3$-local system
$\E$ is reduced to the subgroup
$$
O_2 = \Z_2 \ltimes \C^\times
$$
(but not to a smaller subgroup).  Denote by $U$ the defining
two-dimensional representation of $O_2$. Then $\det U$ is the
one-dimensional sign representation induced by the homomorphism $O_2
\to \Z_2$. We have
$$
W = (\det U \otimes I) \oplus (U \otimes S),
$$
as a representation of $O_2 \times \Z_2$, where $\Z_2$ is the
centralizer of $O_2$ in $SO_3$ (which coincides with the
center of $O_2$), and, as before, $S$ is the sign representation
of $\Z_2$, and $I$ is the trivial representation of $\Z_2$. Therefore
we have the following decomposition of the corresponding local system:
\begin{equation}    \label{WE}
W_\E = (\det U_\E \otimes I) \oplus (U_\E \otimes S).
\end{equation}

We may twist the isomorphism $\al$ by the action of the non-trivial
element $\tau \in \Z_2 = \Gamma$, which is in the group of
automorphisms of our local system. This way we obtain a new Hecke
eigensheaf, that is, a pair $({\mc F},\alpha')$, where $\al' = (\tau
\otimes 1) \circ \al$. The objects $({\mc F},\alpha)$ and $({\mc
F},\alpha')$ of ${\mc Aut}_\E$ are isomorphic, but non-canonically.
There are in fact two natural isomorphisms, equal to $\pm 1$ on ${\mc
F}_\pm$ or $\mp 1$ on ${\mc F}_\pm$, and there is no natural way to
choose between them.

We expect that any object of the category ${\mc Aut}_\E$ of Hecke
eigensheaves is isomorphic to a direct sum of copies of $({\mc
F},\alpha)$. As in the toy model discussed in \secref{exa}, this means
that the category ${\mc Aut}_\E$ is equivalent to the category of
vector spaces, but in two different ways, corresponding to the choice
of ${\mc F}_+$ and ${\mc F}_-$.


\subsection{Fractional Hecke Eigensheaves}    \label{type 2}

Next, we introduce a category of Hecke eigensheaves of type (2).

Suppose again that we are given an $\LG$-local system $\E$ on a curve
$\CC$, and let $\Gamma$ be the group of its automorphisms. To simplify
our discussion below, we will identify $\Gamma$ with a subgroup of
$\LG$ by picking a point $x \in C$ and choosing a trivialization of
the fiber $\E_x$ of $\E$ at $x$. (This allows us to assign to $\E$ a
homomorphism $\pi_1(C,x) \to \LG$. The group $\Gamma$ may then be
defined as the centralizer of its image.)

Suppose that we are given an abelian subcategory ${\mc C}$ of the
category of ${\mc D}$-modules on $\Bun_G$ equipped with an action of
the tensor category $\Rep(\Gamma)$. In other words, for each $R \in
\Rep(\Gamma)$ we have a functor
$$
M \mapsto R \star M,
$$
and these functors compose in a way compatible with the tensor product
structure on $\Rep(\Gamma)$. The category of Hecke eigensheaves of
type (2) with ``eigenvalue'' $\E$ will have as objects the following
data:
\begin{equation}    \label{obj type 2}
({\mc F},(\alpha_V)_{V \in \Rep(\LG)}),
\end{equation}
where ${\mc F}$ is an object of ${\mc C}$, and the $\alpha_V$ are
isomorphisms defined below.

Denote by $\on{Res}_{\Gamma}(V)$ the restriction of a representation $V$ of
$\LG$ to $\Gamma$. If $\Rep(\Gamma)$ is a semi-simple category (which is the
case, for example, if $\Gamma$ is a finite group), then we obtain a
decomposition
$$
\on{Res}_{\Gamma}(V) = \bigoplus_i F_i \otimes R_i,
$$
where the $R_i$ are irreducible representations of $\Gamma$ and $F_i$
is the corresponding representation of the centralizer of $\Gamma$ in
$\LG$.  Twisting by $\E$, we obtain a local system
$(\on{Res}_{\Gamma}(V))_\E$ on $\CC$ with a commuting action of
$\Gamma$, which decomposes as follows:
$$
(\on{Res}_{\Gamma}(V))_\E = \bigoplus_i (F_i)_\E \otimes R_i.
$$
Note that since $\Gamma$ is the group of automorphisms of $\E$, the
structure group of $\E$ is reduced to the centralizer of $\Gamma$ in
$G$, and $F_i$ is a representation of this centralizer. Therefore
$F_i$ may be twisted by $\E$, and the resulting local system (or flat
vector bundle) on $C$ is denoted by $(F_i)_\E$.

The isomorphisms $\al_V$ have the form
\begin{equation}    \label{Res isom}
\al_V: H_V(M) \overset{\sim}\longrightarrow (\on{Res}_{\Gamma}(V))_\E \;
\star M = \bigoplus_i (F_i)_{\E} \boxtimes (R_i \star M), \qquad M \in
{\mc C},
\end{equation}
and they have to be compatible in the obvious sense. We will denote
the category with objects \eqref{obj type 2} satisfying the above
conditions by ${\mc Aut}'_\E$.\footnote{As we will see in \secref{comm
cond}, we also need to impose an additional commutativity condition on
the isomorphisms $\al_V$.}

The category of Hecke eigensheaves of type (2) has the advantage of
being more concrete than the category of type (1), and it matches more
closely the structure of the categories of $A$- and $B$-branes that we
have found in \secref{spectral}.

However, the category ${\mc Aut}_\E$ may be reconstructed from ${\mc
Aut}'_\E$ by the procedure of de-equiva\-riantization along the lines
of \secref{gen cat}. Conversely, applying the procedure of
equivariantization to ${\mc Aut}_\E$, we obtain a category that is
equivalent to ${\mc Aut}'_\E$.

What does the category ${\mc Aut}'_\E$ look like in our main example
of geometric endoscopy? In this case the category ${\mc C}$ should
have two irreducible objects, ${\mc F}_+$ and ${\mc F}_-$, which are
the ${\mc D}$-modules on $\Bun_G$ corresponding to the fractional
$A$-branes $\A_+$ and $\A_-$. The category $\Rep(\Z_2)$ acts on them
as follows: the sign representation $S$ of $\Gamma=\Z_2$ permutes
them,
$$
S \star {\mc F}_\pm = {\mc F}_\mp,
$$
while the trivial representation $I$ acts identically.

Since the category of representations of $SO_3$ is generated by the
adjoint representation $W$, it is sufficient to formulate the Hecke
property \eqref{Res isom} only for the adjoint representation $W$ of
$SO_3$. It reads
\begin{align}    \notag
H_W({\mc F}_+) &\simeq (\det U_\E \boxtimes {\mc F}_+) \oplus (U_\E
\boxtimes {\mc F}_-), \\ \label{pm} H_W({\mc F}_-) &\simeq (\det U_\E
\boxtimes {\mc F}_-) \oplus (U_\E \boxtimes {\mc F}_+),
\end{align}
where $\det U_\E$ and $U_\E$ are the summands of $W_\E$ defined in
formula \eqref{WE}. This matches the action of the 't Hooft operators
on the $A$-branes given by formula \eqref{elko1}. Since that formula
describes the behavior of the fractional branes $\A_\pm$, we will call
the property expressed by formulas \eqref{Res isom} and \eqref{pm} the
{\em fractional Hecke property}, and the corresponding ${\mc
D}$-modules {\em fractional Hecke eigensheaves}. On the other hand, we
will call the ordinary Hecke property \eqref{al V} the {\em regular
Hecke property} and the ${\mc D}$-modules satisfying it {\em regular
Hecke eigensheaves} (the reason for this terminology is that such an
eigensheaf corresponds to the regular representation of the group
$\Gamma$).

\subsection{Other Examples}    \label{other examples}

In this section we look at other examples of categories of fractional
Hecke eigensheaves. First, we consider the abelian case, when $G =
\C^\times$ or an arbitrary torus. Then we consider the case of
$\LG$-local systems (where $G$ is an arbitrary reductive group), whose
group of automorphisms is the center of $\LG$. Finally, we discuss the
geometric Eisenstein series.

\subsubsection{The Case Of $\C^\times$}

Let $G=\C^\times$, so that $\LG=\C^\times$ as well. Let $\E$ be a
$\C^\times$-local system on $C$, which we will view as a rank one
local system. The objects of the category ${\mc Aut}_\E$ are the data
$({\mc F},\al)$, where ${\mc F}$ is a ${\mc D}$-module on $\Pic$, the
Picard scheme of $C$, and
\begin{equation}    \label{ab type 1}
\al: H_1({\mc F}) \overset{\sim}\longrightarrow \E \boxtimes {\mc F}.
\end{equation}
Here $H_1$ is the Hecke functor corresponding to the identity
character $\C^\times \to \C^\times$,
$$
H_1({\mc F}) = p^*({\mc F}),
$$
where
$$
p: C \times \Pic \to \Pic, \qquad (x,{\mc L}) \mapsto {\mc L}(-x).
$$
This Hecke property automatically implies the Hecke property for the
Hecke functors $H_n, n \in \Z$, corresponding to other characters of
$\C^\times$. The ${\mc D}$-module Fourier--Mukai equivalence, due to
\cite{Laumon:fm,Roth}, implies that ${\mc F}$ is a direct sum of
copies of a particular flat line bundle on $\Pic$. From now on we will
denote by ${\mc F}$ this flat line bundle.

Since $\Pic$ is the disjoint union of its components $\Pic_n, n \in
\Z$, corresponding to line bundles of degree $n$, each ${\mc
D}$-module ${\mc F}$ on $\Pic$ decomposes into a direct sum
\begin{equation}    \label{dir sum Z}
{\mc F} = \bigoplus_{n \in \Z} {\mc F}_n.
\end{equation}

The automorphism group of any rank one local system $\E$ is
$\C^\times$. Hence we have an action of $\C^\times$ on the category
${\mc Aut}_\E$. It is given by the formula
$$
\lambda \cdot ({\mc F},\al) \mapsto ({\mc F},\lambda \alpha), \qquad
\lambda \in \C^\times.
$$

Now let us describe the corresponding category of fractional Hecke
eigensheaves. Let ${\mc C}$ be the category whose objects are the
direct sums of the ${\mc D}$-modules ${\mc F}_n, n \in \Z$, appearing
in the decomposition \eqref{dir sum Z} of the Hecke eigensheaf ${\mc
F}$. These are flat vector bundles on the components $\Pic_n \subset
\Pic$. The action of the category $\Rep(\C^\times)$ on ${\mc C}$ is
given by the formula
$$
[m] \star {\mc F}_n = {\mc F}_{n+m},
$$
where $[m]$ denotes the one-dimensional representation corresponding
to the character $\C^\times \to \C^\times, a \mapsto a^m$.

An object of the category ${\mc Aut}'_\E$ of fractional Hecke
eigensheaves is then given by the data $({\mc K},\al)$, where ${\mc K}
\in {\mc C}$ and
\begin{equation}    \label{ab type 2}
\al: H_1({\mc K}) \overset{\sim}\longrightarrow \E \boxtimes ([1] \star
   {\mc K}).
\end{equation}
The isomorphism $\al$ gives rise to a unique isomorphism
$$
\al_n: H_n({\mc K}) \overset{\sim}\longrightarrow \E^{\otimes n}
\boxtimes ([n] \star {\mc K}), \qquad n \in \Z,
$$
satisfying the required properties.

Note that the regular Hecke property of ${\mc F}$ given by formula
\eqref{ab type 1} is equivalent to the fractional Hecke property for
${\mc F}_n, n \in \Z$. Therefore we obtain an isomorphism \eqref{ab
type 2} for ${\mc K} = {\mc F}_m$ and hence for all objects of ${\mc
C}$.

This example of a category of fractional Hecke eigensheaves should be
contrasted with our main example for $G=SL_2$ coming from the
endoscopy. In the latter case the group of automorphisms is a finite
group $\Z_2$. Hence we have a finite decomposition of a regular Hecke
eigensheaf, ${\mc F} = {\mc F}_+ \oplus {\mc F}_-$, with the summands
that are labeled, albeit non-canonically, by irreducible
representations of $\Z_2$. In the other example the group of
automorphisms is the Lie group $\C^\times$. Hence a regular Hecke
eigensheaf decomposes into a direct sum \eqref{dir sum Z} of
infinitely many summands that are labeled, this time canonically, by
irreducible representations of $\C^\times$.

An important difference is that in the endoscopic example there is no
canonical equivalence between the category ${\mc C}$ of fractional
Hecke eigensheaves and the category $\Rep(\Z_2)$ (in other words, we
cannot distinguish between ${\mc F}_+$ and ${\mc F}_-$), whereas in
the other example we identify it canonically with $\Rep(\C^\times)$
(indeed, there is a canonical object ${\mc F}_0$ supported on the
component $\Pic_0$ of line bundles of degree $0$). The reason for this
will be discussed in \secref{gerbes}.

\subsubsection{Arbitrary Torus}

This example of category of fractional Hecke eigensheaves generalizes
in a straightforward way to the case when $G$ is an arbitrary
torus. In this case the corresponding moduli space $\Bun_T$ decomposes
into a disjoint union of components $\Bun_{T,\chi}$, where $\chi$ runs
over the lattice $\check{P}$ of characters of $^L\neg T$. The regular
Hecke eigensheaf ${\mc F}$ therefore decomposes into a direct sum
\begin{equation}    \label{dec T}
{\mc F} = \bigoplus_{\chi \in \check{P}} {\mc F}_{\chi},
\end{equation}
where ${\mc F}_{\chi}$ is supported on $\Bun_{T,\chi}$. The ${\mc
D}$-modules ${\mc F}_{\chi}$ are then the building blocks of the
category of fractional Hecke eigensheaves. They satisfy the fractional
Hecke property
\begin{equation}    \label{al la}
H_\la({\mc F}_{\chi}) \simeq {\mc F}_{\la+\chi} = [\la] \star {\mc
F}_{\chi},
\end{equation}
where $H_\la$ is the Hecke functor corresponding to a character $\la
\in \check{P}$. The objects of the category ${\mc Aut}'_\E$ are
collections $({\mc K},(\al_\la)_{\la \in \check{P}})$, where ${\mc K}$
is a direct sum of copies of ${\mc F}_\chi$ with some multiplicities
and $\al_\la$ form a system of compatible isomorphisms \eqref{al la}.

\subsubsection{The Center As The Automorphism Group}    \label{center}

This has an analogue for an arbitrary reductive group $G$. Namely, let
$\LZ$ be the center of the dual group $\LG$. Suppose that $\E$ is an
$\LG$-local system on $C$ whose group of automorphisms is precisely
$\LZ$. A generic local system has this property. In this case we
expect that there is a unique regular Hecke eigensheaf ${\mc F}$
with the eigenvalue $\E$ which is irreducible on each connected
component of $\Bun_G$ (and any other Hecke eigensheaf is a direct sum
of copies of ${\mc F}$). The connected components of $\Bun_G$ are
labeled precisely by the lattice $\check{P}$ of characters of $\LZ$,
so we are in the same situation as above: the sheaf ${\mc F}$
decomposes
$$
{\mc F} = \bigoplus_{\chi \in \check{P}} {\mc F}_{\chi},
$$
where ${\mc F}_{\chi}$ is supported on the component corresponding to
$\chi$. The ${\mc D}$-modules ${\mc F}_{\chi}$ are then the building
blocks of the category of fractional Hecke eigensheaves, satisfying
the property \eqref{al la}. The objects of the category
${\mc Aut}'_\E$ are then defined as in the case when $G$ is a torus.
We expect that this category is equivalent to the category
$\Rep(\LZ)$. This matches the structure of the category of
$B$-branes at the point of the moduli stack of $\LG$-local systems
corresponding to $\E$, which is also equivalent to $\Rep(\LZ)$, because
$\LZ$ is assumed to be the group of automorphisms of $\E$ (see
\secref{braneduals}).

\subsubsection{Geometric Eisenstein Series}

As our last example, we describe the category of fractional Hecke
eigensheaves for non-abelian groups using the construction of
geometric Eisenstein series from \cite{Laumon:eis,BG}. More precisely,
let $G$ be a reductive algebraic group and $T$ its maximal torus. Then
$^L\neg T$ is a maximal torus in $\LG$. Let $\E$ be an $^L\neg
T$-local system, which we view as an $\LG$-local system. Suppose that
$\E$ is generic, in the sense that the rank one local systems
corresponding to the roots of $\LG$ are all non-trivial. Then the
group of automorphisms of $\E$ is equal to $^L\neg T$. In this case
there is a geometric construction of a Hecke eigensheaf ${\mc F}_G$ on
$\Bun_G$ with the eigenvalue $\E$ \cite{Laumon:eis,BG}, starting from
a Hecke eigensheaf ${\mc F}_T$ with respect to $\E$, considered as an
$^L\neg T$-local system.

By construction, the decomposition \eqref{dec T} gives rise to a
decomposition of ${\mc F}_G$,
\begin{equation}    \label{dec G}
{\mc F}_G = \bigoplus_{\chi \in \check{P}} {\mc F}_{G,\chi}.
\end{equation}
The ${\mc D}$-modules ${\mc F}_{G,\chi}$ are then the building blocks
of the category of fractional Hecke eigensheaves corresponding to
$\E$. We define the category ${\mc C}$ as the category whose objects
are direct sums of the ${\mc D}$-modules ${\mc F}_{G,\chi}, \chi \in
\check{P}$. The category $\Rep(^L\neg T)$ acts on it by the formula
$$
[\la] \star {\mc F}_{G,\chi} = {\mc F}_{G,\chi+\la}.
$$
The objects of the corresponding category ${\mc Aut}'_\E$ are
collections $({\mc K},(\al_\la)_{\la \in \check{P}})$, where ${\mc K}
\in {\mc C}$ and the $\al_\la$'s form a system of compatible
isomorphisms (see formula \eqref{Res isom})
$$
\al_\la: H_V({\mc F}_{G,\chi}) \overset{\sim}\longrightarrow
\bigoplus_\mu V(\mu) \otimes ([\la] \star {\mc F}_{G,\chi}) =
\bigoplus_\mu V(\mu) \otimes {\mc F}_{G,\chi+\mu}.
$$
Here $V$ is a representation of $\LG$, and $V(\mu)$ is the subspace of
$V$ of weight $\mu$, so that we have
$$
\on{Res}_{^L\neg T}(V) = \bigoplus_\mu V(\mu).
$$

Note that in the abelian case when $G=T$ the decomposition \eqref{dec
T} has geometric origin: it corresponds to the splitting of $\Bun_T$
into a union of connected components. However, for non-abelian $G$ the
decomposition \eqref{dec G} of the corresponding Eisenstein sheaves is
not directly linked in any obvious way to the geometry of the
underlying moduli stack of $G$-bundles. This is similar to what
happens in the endoscopic case.

It would be interesting to construct explicitly the $A$-branes
corresponding to the Eisenstein sheaves ${\mc F}_{G}$ and ${\mc
F}_{G,\chi}$ using the mirror symmetry of the Hitchin fibrations, by
analogy with the endoscopic example. On the $B$-model side, this
corresponds to a more complicated singularity, with a continuous group
of automorphisms (the maximal torus of $\LG$) rather than a finite
group, as in the endoscopic examples (this is reflected in particular
in the fact that the Hitchin fibers now have infinitely many
irreducible components). Because of that, the analysis of the mirror
symmetry becomes more subtle in this case. We plan to discuss this in
more detail elsewhere.

\section{The Classical Story}    \label{classical}

In this section we recall the set-up of endoscopy and $L$-packets in
the classical theory of automorphic forms, discovered by Labesse and
Langlands \cite{LL}. We will discuss potential implications of the
geometric picture outlined above for the classical theory in the next
section.

\subsection{Local And Global Langlands Correspondence}
\label{recoll}

Let us first recall the general setup of the classical and the
geometric Langlands correspondence. For simplicity, we will restrict
ourselves here to the unramified situation.

The geometric Langlands correspondence predicts, in the first
approximation, that for each irreducible $\LG$-local system $\E$ on
$\CC$ there exists a unique (up to an isomorphism) Hecke eigensheaf
$({\mc F}_\E,(\al_V))$ on $\Bun_G$ with eigenvalue $\E$, a notion
discussed in detail in \secref{hecke sheaves}.

We wish to recall the relation between the geometric Langlands
correspondence and the classical Langlands correspondence, in the case
when the curve $\CC$ is defined over a finite field $k = {\mathbb
F}_q$. So let $\CC$ be such a curve and $F$ the field of rational
functions of $\CC$. For example, if $\CC = \pone$, then $F$ consists
of fractions $f(z)/g(z)$, where $f(z)$ and $g(z)$ are polynomials (in
variable $z$) over $k$ which do not have common factors (and $g(z) =
z^m + \ldots$ is monic). For each closed point $x$ of $\CC$ we have
the local field $F_x \simeq k_x\ppart$, where $t$ is a local
coordinate at $x$, and its ring of integers $\OO_x \simeq
k_x[[t]]$. Here $k_x$ is the residue field of $x$, which is in general
a finite extension of $k$, and hence is isomorphic to ${\mathbb
F}_{q^m}$ for some $m \geq 1$. This number is called the {\em degree}
of $x$ and is denoted by $\deg(x)$. For example, in the case when
$\CC=\pone$ closed points correspond to irreducible monic polynomials
in $k[z]$, and in addition there is the point $\infty$. The points
with residue field $k$ correspond to polynomials of degree one, $z-a,
a \in k$. A general irreducible monic polynomial $P(z)$ of degree $n$
corresponds to a closed point of $\CC$ of degree $n$. The field $k_x$
is just the quotient of $k[z]$ by the principal ideal generated by
$P(z)$.

The ring ${\mathbb A}_F$ of ad\'eles of $F$ is by definition the
restricted product
$$
{\mathbb A}_F = \prod_{x \in \CC}{}' \; F_x,
$$
where the word ``restricted'' (and the prime in the notation) refers
to the fact that elements of ${\mathbb A}_F$ are collections
$(f_x)_{x \in \CC}$, where $f_x \in \OO_x$ for all but finitely many
$x \in \CC$.

Let $\on{Gal}(\ol{F}/F)$ be the Galois group of $F$, the group of
automorphisms of the separable closure $\ol{F}$ of $F$ (obtained by
adjoining to $F$ the roots of all separable polynomials with
coefficients in $F$), which preserve $F$ pointwise. We have a natural
homomorphism $\on{Gal}(\ol{F}/F) \to \on{Gal}(\ol{k}/k)$. The group
$\on{Gal}(\ol{k}/k)$ is topologically generated by the Frobenius
automorphism $\on{Fr}: y \mapsto y^q$, and is isomorphic to the
pro-finite completion $\wh\Z$ of the group of integers $\Z$. The
preimage of $\Z \subset \wh\Z$ in $\on{Gal}(\ol{F}/F)$ is the {\em
Weil group} $W_F$ of $F$.

The Weil group (or, more precisely, its unramified quotient) is the
arithmetic analogue of the fundamental group of $\CC$. Therefore the
arithmetic analogue of a $\LG$-local system on $\CC$ is a
homomorphism\footnote{Here we need to consider $\LG$ over the field
$\ol{\mathbb Q}_\ell$, where $\ell$ does not divide $q$, and the
so-called $\ell$-adic homomorphisms (see, e.g., \cite{F:houches},
Section 2.2).}
$$
\sigma: W_F \to \LG.
$$
The global Langlands conjecture predicts, roughly speaking, that to
each $\sigma$ corresponds an automorphic representation
$\pi(\sigma)$ of the group $G({\mathbb A}_F)$. This means that it may
be realized in a certain space of functions on the quotient $G(F) \bs
G({\mathbb A}_F)$.\footnote{This requires some explanation if
$\pi(\sigma)$ does not appear in the ``discrete spectrum'', but we
will ignore this technical issue here.}

It is useful to relate this global Langlands conjecture to the local
ones. Recall that for each closed point $x$ of $C$ (we will write $x
\in \CC$) we have the local field $F_x \simeq k_x\ppart$, which is a
completion of $F$. We define its Weil group $W_{F_x}$ in the same way
as above, as the preimage of $\Z \subset \wh\Z =
\on{Gal}(\ol{k}_x/k_x)$ in $\on{Gal}(\ol{F}_x/F_x)$ under the
homomorphism $\on{Gal}(\ol{F}_x/F_x) \to \on{Gal}(\ol{k}_x/k_x)$. The
group $W_{F_x}$ may be realized as a subgroup of the global Weil group
$W_F$, but non-canonically. However, its conjugacy class in $W_F$ is
canonical. Hence the equivalence class of $\sigma: W_F \to \LG$ as
above gives rise to an equivalence class of homomorphisms
$$
\sigma_x: W_{F_x} \to \LG.
$$
The local Langlands conjecture predicts, roughly speaking, that to
each $\sigma_x$ we can associate a (smooth) irreducible representation
$\pi_x$ of $G(F_x)$. Taking their restricted tensor product (see
below), we obtain an irreducible representation
\begin{equation}    \label{product}
\pi = \pi(\sigma) = \bigotimes_{x \in \CC}{}' \; \pi_x
\end{equation}
of the ad\`elic group $G({\mathbb A}_F)$. The compatibility between
the global and local Langlands conjectures is the statement that this
$\pi$ is automorphic. $\pi$ is then the automorphic representation
corresponding to the global homomorphism $\sigma$. Schematically,
\begin{align*}
\sigma_x &\overset{\on{local}}\longrightarrow \pi_x \\ \sigma
&\overset{\on{global}}\longrightarrow \pi = \bigotimes_{x \in \CC}{}'
\pi_x.
\end{align*}

Here the homomorphism $\sigma$ is assumed to be unramified at all but
finitely many points of $\CC$. This means for all but finitely many $x
\in C$ the homomorphism $\sigma_x: W_{F_x} \to \LG$ factors through
the quotient $W_{F_x} \to \Z$. The generator of this quotient $\Z
\subset \on{Gal}(\ol{k}_x/k_x)$ is called the Frobenius element
associated to $x$ and is denoted by $\on{Fr}_x$. Since $\sigma_x$ is
only well-defined up to conjugation, we obtain that
$\sigma_x(\on{Fr}_x)$ gives rise to a well-defined conjugacy class in
$\LG$. It is believed that the conjugacy classes obtained this way are
always semi-simple. Thus, we obtain a semi-simple conjugacy class
$\sigma_x(\on{Fr}_x)$ in $\LG$ for all but finitely many $x \in \CC$.

The irreducible representation $\pi_x$ corresponding to an unramified
$\sigma_x$ is also {\em unramified}, which means that its subspace of
invariant vectors $(\pi_x)^{G(\OO_x)} \subset \pi_x$ under the maximal
compact subgroup $G(\OO_x) \subset G(F_x)$ is one-dimensional. We will
fix, once and for all, a non-zero $G(\OO_x)$-invariant vector $v_x \in
\pi_x$ for all such $x \in \CC$. Then the prime in formula
\eqref{product} (indicating the ``restricted tensor product'') means
that this space is spanned by vectors of the form $\bigotimes_{x \in
\CC} w_x$, where for all but finitely many points $x$ we have $w_x =
v_x$.

Since the vector $v_x$ is $G(\OO_x)$-invariant, it is an eigenvector
of the spherical Hecke algebra, defined as the algebra of compactly
supported $G(\OO_x)$ bi-invariant functions on $G(F_x)$. By the {\em
Satake correspondence}, this algebra is isomorphic to the
representation ring $\Rep(\LG)$ (see, e.g., \cite{F:houches}, Section
5.2). Therefore to each $V \in \Rep(\LG)$ corresponds an element of
the spherical Hecke algebra, which we denote by $T_{V,x}$. These
operators, which are function theoretic analogues of the Hecke
functors $H_{V,x}$ discussed in \secref{hecke sheaves}, act on
$\pi_x$. The vector $v_x \in \pi_x$ is a joint eigenvector with
respect to this action, and the eigenvalues are recorded by the
conjugacy class of $\sigma_x(\on{Fr}_x)$. Namely, we
have\footnote{Here and below we skip factors of the form $q_x^{m}$
on the right hand sides of this and other similar formulas.}
\begin{equation}    \label{hecke vector}
T_{V,x} \cdot v_x = \on{Tr}(\sigma(\on{Fr}_x),V) v_x.
\end{equation}
Actually, this property determines $\pi_x$ uniquely.


\subsection{$L$-packets}    \label{L-packets}

The picture of the local and global Langlands correspondences outlined
above is correct (and has been proved) for $G=GL_n$. But for other
groups one needs to make some adjustments. The most important
adjustment is that in general the local Langlands correspondence
assigns to each $\sigma_x$ not a single equivalence class of
irreducible representations $\pi_x$, but a collection $\{ \pi_{x,\al}
\}_{\al \in A_x}$, called a (local) $L$-{\em packet}. Usually, this
happens for infinitely many points $x \in \CC$, and so picking a
particular representation in each of these $L$-packets, we obtain
infinitely many representations of $G({\mathbb A}_F)$. It turns out
that in general not all of them are automorphic, and those which are
automorphic may occur in the space of functions on $G(F) \bs
G({\mathbb A}_F)$ with different multiplicities. There is a beautiful
combinatorial formula for this multiplicity for those homomorphisms
$\sigma: W_F \to \LG$ which factor through the groups $\LH$ dual to
the endoscopic group of $G$ (see \cite{LL,K:cusp}).

This phenomenon was first discovered in the case of $G=SL_2$ by
Labesse and Langlands in \cite{LL}. Let us briefly summarize their
results in the unramified case.

In this case $\LG = SO_3 = PGL_2$. Let $E$ be an unramified quadratic
extension of $F$. This is the field of functions $k(\CC')$ of a degree
two unramified covering $\CC'$ of our curve $\CC$. Let $\mu$ be a
character, that is, a one-dimensional ($\ell$-adic) representation of
the Weil group $W_E$. The group $W_F$ contains $W_E$ as a normal
subgroup, and the quotient is isomorphic to $\on{Gal}(E/F) = \{ 1,\tau
\}$. Let
$$
\sigma: W_F \to PGL_2
$$
be the projectivization of the two-dimensional representation
$\wt\sigma$ of $W_F$ induced from $\mu$,
$$
\wt\sigma = \on{Ind}_{W_E}^{W_F} \mu.
$$
It is clear that the image of $\sigma$ belongs to the subgroup $O_2
\subset SO_3 = PGL_2$. We will assume in what follows that the image
does not belong to the subgroup $\Z_2 \times \Z_2 \subset O_2$ or to
the connected component $SO_2$ of $O_2$. In this case the centralizer
of the image of $\sigma$ is equal to $\Z_2$. The corresponding
homomorphisms $\sigma: W_F \to PGL_2$ are the arithmetic analogues of
the $SO_3=PGL_2$-local systems that we have considered above in the
geometric setting.

Let us now look at the local homomorphisms
$$
\sigma_x: W_{F_x} \to PGL_2
$$
corresponding to $\sigma$. There are two possibilities:

\begin{itemize}

\item The point $x$ is split in $E/F$. This means that the preimage of
$x$ in $\CC'$ consists of two closed points, which we will denote by
$y_1(x)$ and $y_2(x)$, and their residue fields are both equal to the
residue field of $x$. In this case $W_{F_x}$ is isomorphic to both
$W_{E_{y_1(x)}}$ and $W_{E_{y_2(x)}}$, which, when realized as
subgroups of $W_F$, are conjugate to each other:
$$
\wt\tau W_{E_{y_1(x)}} \wt\tau^{-1} = W_{E_{y_2(x)}},
$$
where $\wt\tau$ projects onto the non-trivial element $\tau \in
\on{Gal}(E/F)$ under the homomorphism $W_F \to \on{Gal}(E/F)$.

\item The point $x$ is not split. Then the preimage of $x$ in $\CC'$
consists of one closed point, which we will denote by $y(x)$, and its
residue field $k_{y(x)}$ is a quadratic extension of the residue
field $k_x$ of $x$. Thus, if $k_x \simeq {\mathbb F}_{q^n}$, then
$k_{y(x)} \simeq {\mathbb F}_{q^{2n}}$. In this case $W_{F_x}$
contains  $W_{E_{y(x)}}$ as a normal subgroup, and the quotient is
isomorphic to $\on{Gal}(E_{y(x)}/F_x) \simeq \Z_2$.

\end{itemize}

In the first case $\sigma_x$ is equivalent to the projectivization of
the two-dimensional representation $\wt\sigma_x$ of $W_{F_x}$ defined
by the formula (here we identify $W_{F_x}$ with $W_{E_{y_1(x)}}$):
\begin{equation}
g \mapsto \begin{pmatrix} \mu_{y_1(x)}(g) & 0 \\ 0 &
\mu_{y_2(x)}(\wt\tau g \wt\tau^{-1}) \end{pmatrix}, \qquad g \in
W_{E_{y_1(x)}},
\end{equation}
where $\mu_{y_i(x)}$ denotes the restriction of $\mu$ to
$W_{E_{y_i(x)}}$.

In the second case $\sigma_x$ is isomorphic to the projectivization of
the two-dimensional induced representation
\begin{equation}    \label{ind local}
\wt\sigma_x = \on{Ind}_{W_{E_{y(x)}}}^{W_{F_x}} \mu_{y(x)}.
\end{equation}

As shown by Labesse--Langlands, the local $L$-packet contains either
one or two irreducible representations of $SL_2(F_x)$. In both cases
they appear as irreducible summands of a single irreducible
representation of $GL_2(F_x)$.

{}From now on we will focus on the unramified homomorphisms from the
Weil group $W_F$ to $PGL_2$. We have already assumed that the covering
$\CC' \to \CC$ is unramified, and from now on we will assume that the
character $\mu$ of $W_E$ is unramified as well. Then our $\sigma$ is
unramified, and hence so are the local homomorphisms $\sigma_x:
W_{F_x} \to PGL_2$.\footnote{Note however that there are ramified
characters $\mu$ such that the corresponding homomorphism $\sigma$ is
unramified. Imposing the condition that $\mu$ be unramified is similar
to considering $PGL_2$-local systems with $w_2 = 0$ in the geometric
theory.} Hence $\sigma_x$ is determined by the image
$\sigma_x(\on{Fr}_x)$ in $PGL_2$ of the Frobenius conjugacy class
$\on{Fr}_x$. There are two possibilities: if $\sigma_x(\on{Fr}_x)$ is
the conjugacy class of the element
\begin{equation}    \label{trans}
\begin{pmatrix} 0 & 1 \\ 1 & 0 \end{pmatrix} \in PGL_2,
\end{equation}
whose centralizer has two connected components, then the $L$-packet
consists of two irreducible representations of $SL_2(F_x)$; otherwise
the centralizer is connected, and the $L$-packet consists of a single
irreducible representation.

The former scenario may occur in two ways. First, suppose that the
point $x$ is non-split. Choose the basis $\{ 1,\on{Fr}_x \}$ in the
induced representation \eqref{ind local}. Then we have
$$
\wt\sigma_x(\on{Fr}_x) = \begin{pmatrix} 0 & \mu(\on{Fr}_{y(x)})
\\ 1 & 0 \end{pmatrix}.
$$
Therefore we obtain that $\sigma_x(\on{Fr}_x)$ is the conjugacy class
of \eqref{trans}. The second possibility is that $x$ is split, and the
values of $\mu$ on $\on{Fr}_{y_1(x)}$ and $\on{Fr}_{y_2(x)}$ differ by
the minus sign. In this case $\wt\sigma_x(\on{Fr}_x)$ is conjugate to
\begin{equation}    \label{at split}
\mu(\on{Fr}_{y_i(x)}) \begin{pmatrix} 1 & 0
\\ 0 & -1 \end{pmatrix} \sim \begin{pmatrix} 1 & 0
\\ 0 & -1 \end{pmatrix} \in PGL_2, \qquad i=1,2,
\end{equation}
Therefore $\sigma_x(\on{Fr}_x)$ is again the conjugacy class of
\eqref{trans} in $PGL_2$. However, we will see below that the second
case does not play an important role in the endoscopy.

In both of these cases, the corresponding $L$-packet consists of two
irreducible representations, $\pi'_x$ and $\pi''_x$, of
$SL_2(F_x)$. We will discuss them in more detail in the next section.

Thus, we see that at infinitely many points of $\CC$ we have a binary
choice between the two possible irreducible representations $\pi_x$ of
$SL_2(F_x)$ in the tensor product \eqref{product}. However, it turns
out that only about one half of those choices -- at the non-split
points of $\CC$ (with respect to the covering $\CC' \to \CC$) -- gives
rise to automorphic representations of $SL_2({\mathbb A}_F)$. More
precisely, if we choose particular representations $\pi_x$ in the
local $L$-packets at all but one non-split point of $\CC$, say $y$,
then only one of the two representations $\{ \pi'_y, \pi''_y \}$ of
the local $L$-packet at the remaining point $y$ will complete the
tensor product $\bigotimes'_{x \neq y} \pi_x$ to an automorphic
representation $\pi$ of $SL_2({\mathbb A}_F)$.

The precise description of the collections of irreducible
representations whose tensor products are automorphic will be
presented in the next section. This description was first given by
Labesse and Langlands \cite{LL} using the trace formula. In the
Appendix we will give an alternative derivation of this description
using an explicit formula for the Hecke eigenfunctions due to Weil
\cite{Weil} and Jacquet--Langlands \cite{JL} using the Fourier
transform of the Whittaker functions.

\subsection{Spaces Of Invariant Vectors}    \label{inv vec}

Our goal in what follows is to pass from the classical to the
geometric setting. The first step in this direction is to cut down the
infinite-dimensional representation $\pi = \bigotimes'_{x \in \CC}
\pi_x$ of $G({\mathbb A}_F)$ to a finite-dimensional vector space
$\pi^K$ of $K$-fixed vectors, where $K$ is a compact subgroup of
$G({\mathbb A}_F)$. We take $K$ to be the product
$$
K = \prod_{x \in \CC} K_x
$$
of compact subgroups $K_x \subset G(F_x) \simeq G\ppart$. A typical
example is the subgroup $G(\OO_x) = G[[t]]$. As we discussed in
\secref{recoll}, any vector in $\pi$ is invariant under the subgroup
that is the product of $G(\OO_x)$ for all but finitely many $x$. If
$\pi$ is automorphic, then $\pi^K$ is realized in the space of
functions on the double quotient $G(F) \bs G({\mathbb A}_F)/K$, which
are Hecke eigenfunctions for all $x \in \CC$ for which $K_x =
G(\OO_x)$. This double quotient has a geometric interpretation as the
set of ${\mathbb F}_q$-points of a moduli stack of $G$-bundles on
$\CC$ with parabolic (or level) structures at those points of $\CC$
where $K_x$ is not maximal compact (e.g., choosing $K_x$ to be the
Iwahori subgroup corresponds to fixing a Borel reduction in the fibers
of the $G$-bundles at $x$, etc.), and this fact will be used below in
order to relate the classical and the geometric Langlands conjectures.

In the unramified case the existence of non-trivial $L$-packets is
related to the fact that there are inequivalent choices for a maximal
compact subgroup of $G(F_x)$. For example, for $G=SL_2$ there are two
inequivalent choices: one of them is $SL_2[[t]]$, and the other one is
\begin{equation}    \label{another compact}
\begin{pmatrix}
t & 0 \\ 0 & 1
\end{pmatrix} SL_2[[t]] \begin{pmatrix}
t^{-1} & 0 \\ 0 & 1
\end{pmatrix}
\end{equation}
(they are conjugate in $GL_2\ppart$, but not in $SL_2\ppart$).  We
will denote these two subgroups by $K'$ and $K''$,
respectively. Naively, it looks like $K'$ is a preferred choice, but
that's because we have tacitly chosen as our initial datum the
``constant'' group scheme $SL_2$ over the curve $\CC$. However, our
initial datum should really be the group $SL_2$ over the field $F$ of
rational functions on $\CC$, or, in other words, a group scheme over
the {\em generic point} of $\CC$. There are many ways to extend it to
a group scheme over the entire $\CC$, and we have the freedom of
extending it in such a way that at $x \in \CC$ we get as the group of
sections over the disc $D_x$, a subgroup of $SL_2(F_x) = SL_2\ppart$
that is conjugate to either $K'$ or $K''$.\footnote{As an analogy,
consider the datum of a line bundle over $\CC \bs \{ x_1,\ldots,x_m
\}$ -- it can be extended to a line bundle on the entire curve $\CC$
in many different ways.} This makes it clear that there is in {\em a
priori} no canonical choice between the subgroups $K'$ and $K''$.

However, in what follows we will fix a particular extension of the
group scheme $SL_2$ from the generic point of $\CC$ to the entire
$\CC$; namely, the ``constant'' group scheme $\CC \times SL_2$. Thus,
we will have a preferred compact subgroup $K'_x = SL_2[[t]]$ at each
point $x \in \CC$.

This ambiguity in the choice of a maximal compact subgroup of
$SL_2(F_x)$ is closely related to the structure of the unramified local
$L$-packets $\{ \pi'_x,\pi''_x \}$. Namely, choosing appropriate
notation, the spaces of invariant vectors $(\pi'_x)^{K'_x}$ and
$(\pi''_x)^{K''_x}$ are one-dimensional, whereas $(\pi'_x)^{K''_x} =
(\pi''_x)^{K'_x} = 0$.

Let us return to the setting of the previous section. Thus, we are
given an unramified degree two covering $\CC'$ of $\CC$ and an
unramified character $\mu$ of $W_E$, where $E$ is the field of
functions on $\CC'$. We let $\sigma$ be a homomorphism $W_F \to PGL_2$
defined as above. Now let us fix one of the two maximal compact
subgroups $K_x$ at each point of $x$ at which $\sigma_x(\on{Fr}_x)$ is
conjugate to \eqref{trans}. We will assume that
$K_x = K'_x = SL_2[[t]]$ at all but finitely many points.

Thus, we have an irreducible representation $\pi_x$ of $SL_2(F_x)$
such that $(\pi_x)^{K_x}$ is one-dimensional at all of those points.
Denote by $S$ the finite set of the {\em non-split} points $x \in \CC$
(with respect to the covering $\CC' \to \CC$) such that $K_x =
K''_x$. Let $\# S$ be its cardinality.
The following statement is due to \cite{LL} (for an alternative proof,
see the Appendix).

\begin{thm}    \label{descr aut}
The representation
\begin{equation}    \label{tensor}
\bigotimes_{x \in \CC}{}' \; \pi_x
\end{equation}
of $SL_2({\mathbb A}_F)$ is an automorphic representation if and only
if $\# S$ is even.
\end{thm}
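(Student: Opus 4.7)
The strategy is to realize the automorphic representation as the restriction to $SL_2(\mathbb{A}_F)$ of a $GL_2$-automorphic representation obtained by automorphic induction, and to detect the parity of $\#S$ as a global sign obstruction to non-vanishing of a Whittaker period. Via class field theory, the unramified character $\mu$ of $W_E$ corresponds to an unramified id\`ele class character of $E$, which by the theta series construction of Weil \cite{Weil} and Jacquet--Langlands \cite{JL} produces an irreducible cuspidal automorphic representation $\tau_\mu$ of $GL_2(\mathbb{A}_F)$ whose local Langlands parameters are exactly the induced representations $\wt\sigma_x$ described in \secref{L-packets}. The projectivization of $\tau_\mu$ has parameter $\sigma$, so the constituents of $\tau_\mu|_{SL_2(\mathbb{A}_F)}$ realize the global $L$-packet attached to $\sigma$.

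First I would analyze the local restriction. At every point $x$ where $\sigma_x(\on{Fr}_x)$ is regular semi-simple in $PGL_2$, the representation $\tau_{\mu,x}|_{SL_2(F_x)}$ stays irreducible, giving the unique element of the local $L$-packet. At the remaining points -- namely the non-split points, together possibly with split points where $\mu(\on{Fr}_{y_1(x)})=-\mu(\on{Fr}_{y_2(x)})$ -- $\sigma_x(\on{Fr}_x)$ is conjugate to \eqref{trans} and $\tau_{\mu,x}|_{SL_2(F_x)}$ decomposes as $\pi'_x\oplus\pi''_x$, where, after a convenient labeling, $(\pi'_x)^{K'_x}$ and $(\pi''_x)^{K''_x}$ are one-dimensional and the other two invariant subspaces vanish. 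Then I would use the Whittaker/Fourier expansion of $\tau_\mu$, writing any automorphic form in $\tau_\mu$ as a sum of translates of an explicit Whittaker function, and compute the pairing with a test vector $\bigotimes_x v_x \in \bigotimes_x (\pi_x)^{K_x}$. Because the Whittaker model factors as a restricted tensor product of local Whittaker models, this pairing decomposes into a product of local integrals, one at each place.

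The main step is to compute the local sign $\varepsilon_x(\pi_x)\in\{\pm 1\}$ contributed by each place: it equals $+1$ at all split and ramified-type places and at the non-split places where $K_x=K'_x$, and it equals $-1$ at the non-split places where $K_x=K''_x$. The non-split case reduces to a direct evaluation: the intertwiner exchanging the two constituents of $\tau_{\mu,x}|_{SL_2(F_x)}$ is implemented by conjugation by the element $\on{diag}(t,1)$ (cf.\ \eqref{another compact}), and on the unramified Whittaker function this gives precisely the character of the component group $\pi_0(\on{Cent}_{PGL_2}(\on{Im}\sigma_x))\cong\Z_2$ evaluated on the non-trivial element. Taking the product over all places, the global Whittaker pairing equals a non-zero factor times $(-1)^{\#S}$. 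The hard part of the argument is showing that the split contributions really are $+1$ (so that they do not enter $\#S$) and that the local sign at non-split points is indeed the distinguishing character of the $L$-packet; this is essentially where the Langlands--Shelstad transfer factors enter, and it is the main content of \cite{LL}. Once these signs are established, the pairing is non-zero exactly when $\#S$ is even, which by standard Whittaker-rigidity forces $\pi$ to occur in the automorphic spectrum if and only if $\#S$ is even, completing the proof.
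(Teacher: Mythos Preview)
Your overall strategy---lift to the $GL_2$ cuspidal representation $\tau_\mu$ via automorphic induction and study the restriction to $SL_2({\mathbb A}_F)$ through its Whittaker expansion---is exactly the route the paper's Appendix takes. However, your central step is self-contradictory: you assert that the global Whittaker pairing equals a \emph{non-zero} factor times $(-1)^{\#S}$, and then immediately conclude that this pairing is non-zero exactly when $\#S$ is even. A non-zero quantity times $\pm 1$ is never zero, so this cannot be the mechanism that detects automorphy. You have conflated a sign comparison between two candidate members of the $L$-packet with the vanishing statement that is actually needed; the ``local signs $\varepsilon_x(\pi_x)$'' you describe do not, as stated, produce a zero.

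The paper's argument is more direct and avoids transfer factors entirely. Fix a rational differential $\omega$ with divisor $\delta$. The explicit unramified Whittaker function $W_{\wt\sigma}$ at a diagonal element with exponents $(m_x,k_x)$ equals $\prod_x\on{Tr}(\wt\sigma(\on{Fr}_x),V_{m_x+\delta_x,k_x})$. At a non-split point $x$ the eigenvalues of $\wt\sigma(\on{Fr}_x)$ are of the form $\{\lambda,-\lambda\}$, so this trace is \emph{literally zero} whenever $m_x+\delta_x-k_x$ is odd. The restriction of $f_{\wt\sigma}$ to $\Bun^{\OO(D)}_{SL_2}(\Fq)$ involves exactly those $(m_x,k_x)$ with $\sum_x(m_x+k_x)[x]$ linearly equivalent to $D$; since $m_x-k_x\equiv m_x+k_x\pmod 2$, and since the reciprocity law $\prod_x\alpha(\on{Fr}_x)^{n_x}=1$ holds for any principal divisor $\sum n_x[x]$, the parity of $\sum_{x\ \text{non-split}}(m_x+\delta_x-k_x)$ is constant on this set and equals $\langle\delta+D\rangle\bmod 2$. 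A theorem of Weil gives that $\langle\delta\rangle$ is always even. Hence when $\#S=\langle D\rangle$ is odd, every relevant Whittaker value has at least one vanishing factor, so the restricted Hecke eigenfunction is identically zero; when $\#S$ is even one checks directly that the restriction is non-zero. The mechanism is a genuine zero in the Whittaker formula at a single non-split place, not a product of local $\pm 1$'s.
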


Labesse and Langlands formulate this condition in the following neat
form, which allows a generalization to other groups (see
\cite{K:cusp}). Denote by $S_\sigma$ the group of automorphisms of our
homomorphism $\sigma: W_F \to PGL_2$, that is, the centralizer of the
image of $\sigma$ in $PGL_2$.\footnote{This is the arithmetic analogue
of the group $\Gamma$ discussed in the previous section. We denote it
by $S_\sigma$ in order to follow the standard notation.} Let
$S^0_\sigma$ be its connected component. Likewise, for each $x \in
\CC$, let $S_{\sigma_x}$ be the group of automorphisms of $\sigma_x:
W_F \to PGL_2$ and $S^0_{\sigma_x}$ its connected component. We have
natural homomorphisms $S_\sigma \to S_{\sigma_x}$ and $S^0_\sigma \to
S^0_{\sigma_x}$, and hence a homomorphism
\begin{equation}    \label{S and S0}
S_\sigma/S^0_\sigma \to S_{\sigma_x}/S^0_{\sigma_x}.
\end{equation}
In our case, for generic $\sigma$ in the class that we are considering
here we have $S_\sigma = S_\sigma/S^0_\sigma = \Z_2$, generated (with
respect to the natural basis in the induced representation) by the
element
\begin{equation}    \label{glob aut}
\begin{pmatrix} 1 & 0 \\ 0 & -1 \end{pmatrix}
\end{equation}
of $PGL_2$ (this is the centralizer of $O_2 \subset PGL_2$).

Now consider the local groups. If $x$ is a split point of $\CC$ and
the ratio of the eigenvalues of $\sigma_x(\on{Fr}_x)$ is not equal to
$-1$, then $S_{\sigma_x}$ is a connected torus, and so
$S_{\sigma_x}/S^0_{\sigma_x}$ is trivial. Next, consider the case of
split points for which the ratio of the eigenvalues of
$\sigma_x(\on{Fr}_x)$ is equal to $-1$. Then the group $S_{\sigma_x}$
is the subgroup $O_2$ of $PGL_2$, which is the centralizer of
\eqref{glob aut} (see formula \eqref{at split}), and so
$S_\sigma/S^0_\sigma = \Z_2$. But the global automorphism \eqref{glob
aut} lands in the connected component $S^0_{\sigma_x}$ of
$S_{\sigma_x}$, so the homomorphism \eqref{S and S0} is trivial in
this case. Finally, if $x$ is non-split, we find that $S_{\sigma_x}$
is the centralizer of \eqref{trans}, which is also isomorphic to
$O_2$. However, now the element \eqref{glob aut} lands in the other
connected component, and so the homomorphism \eqref{S and S0} is
non-trivial in this case.

The idea of \cite{LL} is that irreducible representations of
$SL_2(F_x)$ from the local $L$-packet should be labeled by irreducible
representations of $S_{\sigma_x}/S^0_{\sigma_x}$. Thus, if this group
is trivial, there is only one irreducible representation in the
$L$-packet. If this group is isomorphic to $\Z_2$, then there are
two. According to our conventions, the representation with non-trivial
space of invariants of $K'_x$ will correspond to the trivial
representation of $\Z_2$, and the one with non-trivial invariants of
$K''_x$ will correspond to the sign representation of $\Z_2$. Now the
tensor product \eqref{tensor} gives rise to an irreducible
representation of the group $\prod_{x \in \CC}
S_{\sigma_x}/S^0_{\sigma_x}$, on which all but finitely many factors
act trivially.

\thmref{descr aut} may then be reformulated as saying that
\eqref{tensor} is automorphic if and only if $S_\sigma/S^0_\sigma$
acts trivially on the corresponding representation of $\prod_{x \in
\CC} S_{\sigma_x}/S^0_{\sigma_x}$, via the diagonal homomorphism
$$
S_\sigma/S^0_\sigma \to \prod_{x \in \CC} S_{\sigma_x}/S^0_{\sigma_x}.
$$

Note that, according to the above discussion, if $x$ is split, then
the homomorphism \eqref{S and S0} has trivial image, even if the group
$S_{\sigma_x}/S^0_{\sigma_x}$ is non-trivial.  Therefore we may choose
either of the two irreducible representations of $SL_2(F_x)$ from the
local $L$-packet associated to such a point as $\pi_x$, and in both
cases the corresponding representations \eqref{tensor} will
simultaneously be automorphic or not. In this sense, the split points
do not affect the automorphy of the representation \eqref{tensor},
unlike the non-split points, for which it is crucial which one of the
two members of the $L$-packet we choose as the local factor of
\eqref{tensor}.

Suppose now that $\# S$ is even and so the representation
\eqref{tensor} is automorphic. Then the one-dimensional vector space
\begin{equation}    \label{one dim}
\bigotimes_{x \in \CC} (\pi_x)^{K_x}
\end{equation}
may be realized in the space of functions on
\begin{equation}    \label{first component}
SL_2(F) \bs SL_2({\mathbb A}_F)/\prod_{x \in \CC} K_x.
\end{equation}
Moreover, any non-zero vector in \eqref{one dim} gives rise to a Hecke
eigenfunction $f$ on \eqref{first component} with the eigenvalues
prescribed by the conjugacy class $\sigma_x(\on{Fr}_x)$. This means
that it is an eigenfunction of the Hecke operator $T_{W,x}$
corresponding to the adjoint representation $W$ of $PGL_2$ and a point
$x \in \CC$, that is,
\begin{equation}    \label{hecke W}
T_{W,x} \cdot f = \on{Tr}(\sigma_x(\on{Fr}_x),W) f,
\end{equation}
where $\on{Fr}_x$ is the Frobenius conjugacy class corresponding to
$x$ in $W_F$. Here $T_{W,x}$ is a generator of the spherical Hecke algebra
of $K_x$ bi-invariant compactly supported functions on
$SL_2(F_x)$. For either choice of $K_x$ this algebra is canonically
isomorphic to $\Rep(PGL_2)$, and under this isomorphism $T_{W,x}$
corresponds to the class of the adjoint representation of $PGL_2$ (see
\secref{pos char} below for more details on the Hecke property).

Finally, suppose that $\# S$ is odd. Then the representation
\eqref{tensor} is not automorphic. Hence the one-dimensional vector
space \eqref{one dim} cannot possibly be realized in the space of
functions on \eqref{first component}. In other words, any function on
\eqref{first component} satisfying \eqref{hecke W} is necessarily
identically equal to zero.

\medskip

This should be contrasted with the generic situation, when the image
of $\sigma: W_F \to PGL_2$ has trivial centralizer (recall that we are
focusing here exclusively on the unramified homomorphisms
$\sigma$). In this case the group $S_\sigma/S^0_\sigma$ is trivial, so
all representations \eqref{tensor}, where the local factors $\pi_x$
are arbitrary representations from the local $L$-packets corresponding
to $\sigma_x$, will be automorphic. If $\sigma_x(\on{Fr}_x)$ is
generic, then $S_{\sigma_x}/S^0_{\sigma_x}$ is trivial, and the
corresponding $L$-packet contains one irreducible representation
$\pi_x$ of $SL_2(F_x)$. This $\pi_x$ has one-dimensional spaces of
invariants under {\em both} $K'_x$ and $K''_x$. It is also possible
that for some $x \in \CC$, $\sigma_x(\on{Fr}_x)$ is in the conjugacy
class of \eqref{tensor}. Then $S_{\sigma_x}/S^0_{\sigma_x}$ is equal
to $\Z_2$, and the $L$-packet contains two irreducible
representations, one of which has non-zero invariants with respect to
$K'_x$, and the other -- with respect to $K''_x$. However, inserting
either of them as the local factor of $\pi$ at $x$, we will obtain an
automorphic representation of $SL_2({\mathbb A}_F)$.

Thus, we find that for a generic $\sigma$, given any choices of $K_x$
(that is, $K_x=K'_x$ or $K_x=K''_x)$, the corresponding space of Hecke
eigenfunctions satisfying \eqref{first component} on \eqref{first
component} is one-dimensional.

\subsection{The Improper Hecke Operators}    \label{comparison}

In the geometric theory it was useful to consider, in addition to the
``proper'' Hecke operators corresponding to the three-dimensional
adjoint representation of $SO_3=PGL_2$, the ``improper'' ones
corresponding to the two-dimensional projective representation of this
group. These operators also have counterparts in the classical
theory. They are defined as follows. Given a point $x \in \CC$ and a
coordinate $t$ at $x$, the operator $\wt{T}_x$ is the integral
operator acting from functions on the double quotient \eqref{first
component} with $K_x = K'_x = SL_2[[t]]$ to functions on the same
double quotient, but with $K'_x$ replaced by the subgroup $K''_x$
from eqn. \eqref{another compact}. It is given by the formula
\begin{equation}    \label{improper hecke classical}
(\wt{T}_x \cdot f)(g) = \int_{M_x} f(gh) dh,
\end{equation}
where
$$
M_x = \begin{pmatrix}
t & 0 \\ 0 & 1
\end{pmatrix} SL_2[[t]] \begin{pmatrix}
t & 0 \\ 0 & 1
\end{pmatrix}.
$$

Now suppose that we have a Hecke eigenfunction $f'$ (resp., $f''$) on
\eqref{first component} with $K_x=K'_x$ (resp., $K_x=K''_x$), and with
$K_y, y \neq x$, being the same, and satisfying the (proper) Hecke
eigenfunction property \eqref{hecke W}. Each of these two functions is
unique up to a scalar. Suppose that we can lift $\sigma$ to an
unramified homomorphism $\wt\sigma: W_F \to GL_2$. Then we can
normalize the functions $f'$ and $f''$ in such a way that both are
equal to the restrictions to the appropriate double quotient
\eqref{first component} of a Hecke eigenfunction $\wt{f}$ for
$GL_2({\mathbb A}_F)$ corresponding to $\wt\sigma$ (see the Appendix
for more details). Since all Hecke operators for $GL_2$ commute with
each other, we find that the function $\wt{T}_x \cdot f'$ is a
function on \eqref{first component} with $K_x=K''_x$, which also
satisfies \eqref{hecke W}. Moreover, we have
\begin{equation}    \label{Tx prime}
\wt{T}_x \cdot f' = \on{Tr}(\wt\sigma_x(\on{Fr}_x),V) f'',
\end{equation}
where $V$ is the two-dimensional representation of $GL_2$. Thus,
$\wt{T}_x$ is an intertwining operator between the two spaces of Hecke
eigenfunctions for $SL_2({\mathbb A}_F)$ if and only if
the trace $\on{Tr}(\wt\sigma_x(\on{Fr}_x),V)$ is non-zero.

But this trace is equal to zero precisely when $\sigma_x(\on{Fr}_x)$
is the conjugacy class of \eqref{trans}, and this is the special case
when we have a non-trivial $L$-packet at $x$!  In this case $f'$ and
$f''$ correspond to two {\em non-isomorphic} representations of
$SL_2(F_x)$ (one of which could be automorphic and the other one
not). Formula \eqref{Tx prime} shows that in this case $\wt{T}_x \cdot
f' = 0$. Thus, the improper Hecke operators give us another way to
observe the non-triviality of the $L$-packets. They underscore the
discrepancy between two spaces of Hecke eigenfunctions on the double
quotients \eqref{first component} corresponding to $K_x=K'_x$ and
$K_x=K''_x$ in the case of the endoscopic $\sigma$: one of the two
spaces could be one-dimensional and the other equal to zero. An
analogue of this phenomenon may be observed geometrically, as we will
see in \secref{improp} below.

\section{From Hecke Eigensheaves To Hecke Eigenfunctions}
\label{from}

We now wish to replace Hecke eigenfunctions by Hecke eigensheaves,
geometric objects that allow us to link the classical Langlands
correspondence to the geometric Langlands correspondence and
ultimately to the mirror symmetry of the Hitchin fibrations for the
dual groups discussed in the previous sections.

\subsection{Hecke Eigensheaves In Positive Characteristic}
\label{pos char}

In our previous discussion of Hecke eigensheaves in \secref{hecke
sheaves}, we had assumed that our curve was defined over $\C$. Then a
Hecke eigensheaf corresponding to an $\LG$-local system $\E$ on $C$ is
a ${\mc D}$-module ${\mc F}$ on $\Bun_G$ together with the additional
data of isomorphisms $\al_V$ (see formula \eqref{al V}).\footnote{It
is expected that this ${\mc D}$-module is holonomic and has regular
singularities.} By using the Riemann--Hilbert correspondence, we may
then switch from ${\mc D}$-modules to perverse sheaves (this is
explained, e.g., in \cite{F:houches}, Section 3.4). Thus, Hecke
eigensheaves may be viewed as objects of the category of perverse
sheaves on $\Bun_G$, equipped with the isomorphisms \eqref{al V}.

Now we replace a complex curve by a curve $C$ defined over a finite
field $k = {\mathbb F}_q$. The notion of perverse sheaves in
characteristic 0 has an analogue for algebraic varieties (or algebraic
stacks) over a finite field (these are objects of the derived category
of $\ell$-adic sheaves \cite{BBD}). We have the moduli stack $\Bun_G$
of $G$-bundles on our curve $C$ defined over $k$. This is an
algebraic stack over $k$. Therefore we have the notion of a Hecke
eigensheaf on $\Bun_G$ corresponding to an unramified homomorphism
$\sigma: W_F \to \LG$.  Namely, we view $\sigma$ as an $\ell$-adic
$\LG$-local system $\E$ on $C$. In other words, for each
representation $V$ of $\LG$ the corresponding twist
$$
V_\E = \E \underset{\LG}\times V
$$
is a locally constant $\ell$-adic sheaf on $C$, and these sheaves are
compatible with respect to the tensor product structure on
representations of $\LG$. We also have Hecke functors $H_V, V \in
\Rep(\LG)$, defined in the same way as over $\C$.

A Hecke eigensheaf with ``eigenvalue'' $\E$ (or $\sigma$) is, by
definition, a perverse ($\ell$-adic) sheaf ${\mc F}$ on $\Bun_G$
together with the additional data of isomorphisms (compare with
\eqref{al V})
\begin{equation}    \label{al V1}
\alpha_V: H_V({\mc F}) \overset{\sim}\longrightarrow V_\E \boxtimes
{\mc F}.
\end{equation}
These isomorphisms should be compatible with the tensor product
structures and associativity on both sides. We will see below that to
ensure the passage from Hecke eigensheaves to Hecke eigenfunctions we
need to impose an additional {\em equivariance condition}.

To explain this passage in more detail, we recall that for any
algebraic variety (or algebraic stack) $Y$ over ${\mathbb F}_q$, we
may assign a function on the set of ${\mathbb F}_q$-points of $Y$ to
any $\ell$-adic sheaf (or a complex) ${\mc F}$ on $Y$ (see
\cite{De,Laumon:const}). Indeed, let $y$ be an $\Fq$-point of $Y$ and
$\ol{y}$ the $\ol{{\mathbb F}}_q$-point corresponding to an inclusion
$\Fq \hookrightarrow \ol{{\mathbb F}}_q$. Then the pull-back of $\F$
with respect to the composition $\ol{y} \to y \to Y$ is a
$(\ell$-adic) sheaf on a point $\on{Spec} \ol{{\mathbb F}}_q$. The
data of such a sheaf is the same as the data of a $\ol{\mathbb
Q}_\ell$-vector space, which we may think of as the stalk
$\F_{\ol{y}}$ of $\F$ at $\ol{y}$. There is an additional piece of
data on this vector space. Indeed, the Galois group
$\on{Gal}(\ol{{\mathbb F}}_q/\Fq)$ is the symmetry group of the
morphism $\ol{y} \to y$, and therefore it acts on $\F_{\ol{y}}$. In
particular, we have an action of the (geometric) Frobenius element
$\on{Fr}_{{y}}$, corresponding (the inverse of) the generator of the
Galois group of ${\mathbb F}_q$, acting as $x \mapsto x^q$. This
automorphism depends on the choice of the morphism $\ol{y} \to y$, but
its conjugacy class is independent of any choices. Thus, we obtain a
conjugacy class of automorphisms of the stalk $\F_{\ol{y}}$. Therefore
the trace of the geometric Frobenius automorphism is canonically
assigned to ${\mc F}$ and $y$. We will denote it by
$\on{Tr}(\on{Fr}_y,{\mc F})$.

More generally, if ${\mc F}$ is a complex of $\ell$-adic sheaves, we
take the alternating sum of the traces of $\on{Fr}_{{y}}$ on the
stalk cohomologies of ${\mc F}$ at $\ol{y}$. Hence we obtain a
function $\text{\tt f}_{\mc F,\Fq}$ on the set of ${\mathbb
F}_q$-points of $Y$, whose value at $y \in Y({\mathbb F}_q)$ is
$$
\text{\tt f}_{\mc F,\Fq}(y) = \sum_i (-1)^i
\on{Tr}(\on{Fr}_{y},H^i_{\ol{y}}({\mc F})).
$$
Similarly, for each $n>1$ we define a function $\text{\tt f}_{\mc
F,{\mathbb F}_{q^n}}$ on the set of ${\mathbb F}_{q^n}$-points of $Y$
by the formula
$$
\text{\tt f}_{\mc F,{\mathbb F}_{q^n}}(y) = \sum_i (-1)^i
\on{Tr}(\on{Fr}_{y},H^i_{\ol{y}}({\mc F})), \qquad y \in
Y({\mathbb F}_{q^n})
$$
(now $\on{Fr}_{{y}}$ corresponds to the automorphism $y \mapsto
y^{q^n}$).

The maps ${\mc F} \to \text{\tt f}_{\mc F,{\mathbb F}_{q^n}}$
intertwine the natural operations on complexes of sheaves with natural
operations on functions (see \cite{Laumon:const}, Sect. 1.2). For
example, pull-back of a sheaf corresponds to the pull-back of a
function, and push-forward of a sheaf with compact support corresponds
to the fiberwise integration of a function. This passage from sheaves
to functions is referred to as Grothendieck's {\em
faisceaux--fonctions dictionnaire}.

If $Y = \Bun_G$, then the set of ${\mathbb F}_q$-points of $Y$ is
naturally isomorphic to the double quotient
\begin{equation}    \label{double quot}
G(F) \bs G({\mathbb A}_F)/G({\mc O}_F),
\end{equation}
where
$$
{\mc O}_F = \prod_{x \in C} \OO_x
$$
(see, e.g., \cite{F:houches}, Section 3.2). Therefore any perverse
sheaf ${\mc F}$ on $\Bun_G$ gives rise to a function $\text{\tt
f}_{\mc F,\Fq}$ on the double quotient \eqref{double quot}. Suppose
now that $({\mc F},(\al_V))$ is a Hecke eigensheaf on
$\Bun_G$. Consider the corresponding function $\text{\tt f}_{\mc
F,\Fq}$ on the set $\Bun_G(\Fq)$, isomorphic to the double quotient
\eqref{double quot}, and its transform under the Hecke functor $H_V$,
restricted to
$$
(\CC \times \Bun_G)(\Fq) = \CC(\Fq) \times \Bun_G(\Fq).
$$
The action of the Hecke functor $H_V$ on sheaves becomes the action of
the corresponding Hecke operators $T_{V,x}$ on functions. Hence for
each $x \in \CC(\Fq)$ the left hand side of \eqref{al V1} gives rise
to the function $T_{V,x} \cdot \text{\tt f}_{\mc F,\Fq}$, whereas the
right hand side becomes $\on{Tr}(\on{Fr}_{x},V_{\E}) \text{\tt f}_{\mc
F,\Fq}$. Hence the isomorphism \eqref{al V1} implies that
\begin{equation}    \label{geom hecke}
T_{V,x} \cdot \text{\tt f}_{\mc F,\Fq} = \on{Tr}(\on{Fr}_{x},V_{\E})
\text{\tt f}_{\mc F,\Fq} = \on{Tr}(\sigma_x(\on{Fr}_x),V) \text{\tt
f}_{\mc F,\Fq}, \qquad \forall x \in C(\Fq)
\end{equation}
(see \cite{F:houches}, Section 3.8, for more details).

This is the sought-after Hecke eigenfunction property, but there is a
caveat: {\em a priori} this condition is satisfied only for the
$\Fq$-points of $\CC$. In contrast, an unramified Hecke eigenfunction
with respect to $\sigma$ is supposed to be an eigenfunction of the
Hecke operators for {\em all} closed points of $\CC$, with arbitrary
residue fields. To ensure that this property holds for the function
$\text{\tt f}_{\mc F,\Fq}$ at all points $x \in \CC$, we have to
impose an additional condition on the perverse sheaf ${\mc F}$;
namely, the $S_2$-equivariance of the iterated Hecke functor from
\cite{FGV}, Sect. 1.1. This will be discussed in the next section.

\subsection{Equivariance And Commutativity Conditions For Hecke
  Eigensheaves}    \label{comm cond}

Recall that the Hecke functor $H_V$ acts from the derived category
of sheaves on $\Bun_G$ to the derived category of sheaves on $\CC
\times \Bun_G$. Applying this functor again, we obtain the {\em
iterated Hecke functor} $H^{\boxtimes 2}_V$ from the derived
category of sheaves on $\Bun_G$ to the derived category of sheaves
on $\CC \times \CC \times \Bun_G$. A Hecke eigensheaf ${\mc F}$ with
``eigenvalue'' $\E$ is equipped with an isomorphism
$$
\al_V: H_V({\mc F}) \simeq V_\E \boxtimes {\mc F},
$$
which gives rise to an isomorphism
$$
\al_V^{\boxtimes 2}: H_V^{\boxtimes 2}({\mc F}) \simeq V_\E \boxtimes
V_\E \boxtimes {\mc F}.
$$
Away from the diagonal $\Delta \subset \CC \times \CC$ we have a natural
action of the symmetric group $S_2$ on both sides of this
isomorphism. The extra condition that we need to impose is that
$\al_V^{\boxtimes 2}$ {\em is an $S_2$-equivariant isomorphism}.

This condition implies that for the $m$th iterated Hecke functor
$H_V^{\boxtimes m}$ acting from from the derived category of sheaves
on $\Bun_G$ to the derived category of sheaves on $\CC^m \times \Bun_G$,
the isomorphism
$$
\al_V^{\boxtimes m}: H_V^{\boxtimes m}({\mc F}) \simeq
(V_\E)^{\boxtimes m} \boxtimes {\mc F}
$$
is $S_m$-equivariant outside the union $\Delta$ of pairwise diagonals
in $\CC^m$.

Suppose that the $S_2$-equivariance condition holds. Then ${\mc F}$ is
an eigensheaf with respect to the symmetrized Hecke functor
$H_V^{(m)}$ acting from the derived category of sheaves on $\Bun_G$ to
the derived category of sheaves on $(\on{Sym}^m \CC \bs \Delta) \times
\Bun_G$, that is, we have an isomorphism
$$
\al_V^{(m)}: H_V^{(m)}({\mc F}) \simeq V_\E^{(m)} \boxtimes {\mc F}
$$
on $(\on{Sym}^m \CC \bs \Delta) \times \Bun_G$, where
\begin{equation}    \label{symm}
V_\E^{(m)} = (p_*((V_\E)^{\boxtimes m}))^{S_m},
\end{equation}
and $p: \CC^m \to \on{Sym}^m \CC$ is the symmetrization map.

Now observe that any closed point $x$ of $\CC$ of degree $m$ gives rise
to an $\Fq$-point $D(x)$ in $\on{Sym}^m \CC$ (an effective divisor of
degree $m$). Moreover, it is easy to see that
$$
\on{Tr}(\on{Fr}_{x},V_\E) = \on{Tr}(\on{Fr}_{D(x)},V_\E^{(m)}).
$$
Therefore, restricting $\al_V^{(m)}({\mc F})$ to $D(x) \times \Bun_G$
and evaluating the traces of the Frobenius on $\Fq$-points there, we
find that formula \eqref{geom hecke} holds for all closed points $x$
of degree $m$. Thus, the $S_2$-equivariance condition guarantees that
the function $\text{\tt f}_{\mc F,\Fq}$ is truly a Hecke eigenfunction
on \eqref{double quot} with respect to the local system $\E$ (or
homomorphism $\sigma: W_F \to \LG$).

\medskip

The fact that the ``naive'' Hecke eigensheaf property \eqref{al V1}
does not by itself imply the Hecke eigenfunction property for those
closed points whose residue field is a non-trivial extension of $\Fq$,
the field of definition of our curve $\CC$, comes as a bit of a
surprise. However, the $S_2$-equivariance that is needed to ensure
that the Hecke eigenfunction property does hold everywhere is a very
natural condition. In fact, it is a special case of the following
general {\em commutativity condition} for the Hecke functors,
introduced in \cite{FGV}, Sect. 1.4.

For $V, W \in \Rep(\LG)$, let $H_V$ and $H_W$ be the corresponding
Hecke functors from the derived category of sheaves on $\Bun_G$ to the
derived category of sheaves on $\CC \times \Bun_G$. We then have the
iterated functors $H_V \circ H_W$ from the derived category of sheaves
on $\Bun_G$ to the derived category of sheaves on $\CC \times \CC
\times \Bun_G$. Given a Hecke eigensheaf $({\mc F},(\al_V))$, we have
isomorphisms
$$
\al_V \circ \al_W: (H_V \circ H_W)({\mc F}) \simeq \E_V \boxtimes \E_W
\boxtimes {\mc F}.
$$
On the other hand, over $\CC \times \CC \bs \Delta$ we have a natural
identification
$$
(H_V \circ H_W)({\mc F})|_{\CC \times \CC \bs \Delta} \simeq
\sigma^* \circ (H_W \circ H_V)({\mc F})|_{\CC \times \CC \bs \Delta},
$$
where $\sigma$ is the transposition on $\CC \times \CC \bs \Delta$.
The commutativity condition is that the diagram
$$
\begin{CD}
(H_V \circ H_W)({\mc F})|_{\CC \times \CC \bs \Delta} @>{\al_V \circ
    \al_W}>> \E_V \boxtimes \E_W \boxtimes {\mc F}|_{\CC \times \CC \bs
    \Delta} \\ @VVV @VVV \\ \sigma^* \circ (H_W \circ H_V)({\mc
    F})|_{\CC \times \CC \bs \Delta} @>{\sigma^*(\al_W \circ \al_V)}>>
    \sigma^*(\E_W \boxtimes \E_V) \boxtimes {\mc F}|_{\CC \times \CC \bs
    \Delta}
\end{CD}
$$
is commutative. If $V=W$, we obtain the above $S_2$-equivariance
condition.

To explain the meaning of this commutativity condition, let us recall
from \cite{MV} the geometric Satake equivalence between the category
of Hecke functors supported at a fixed point $x \in \CC$, which is the
category of equivariant perverse sheaves on the affine Grassmannian,
and the category $\Rep(\LG)$. This is an equivalence of tensor
categories, which means that in addition to being compatible with the
tensor products in both categories, it is also compatible with the
commutativity and associativity constraints. On the former category the
commutativity constraint is defined (following V. Drinfeld) as a
certain limit of the transposition of the Hecke functors defined at
distinct points of $\CC$, when the points coalesce.

The notion of (regular) Hecke eigensheaf may be viewed as a natural
generalization of the notion of a fiber functor from the category of
the Hecke functors supported at one point $x \in C$ to $\Rep(\LG)$,
when we allow the point $x$ to move along the curve. From this point
of view, asking that the isomorphisms $\al_V$ be compatible with the
tensor product structures and associativity is akin to asking for the
fiber functor to be a monoidal functor (that is, one compatible with
the tensor products and associativity constraint). But we know from
the Tannakian theory that this is not sufficient for establishing an
equivalence of a tensor category and the category of representations
of an algebraic group. For that we also need the fiber functor to be
compatible with the commutativity constraint. Since the commutativity
constraint on the category of Hecke functors supported at one point
appears as the limit the transposition of the two Hecke functors
supported at two different points of $\CC$, the commutativity
constraint itself appears as the limit of the above commutativity
condition when the two points coalesce.

Therefore we see that it is quite natural to require that a Hecke
eigensheaf satisfy the commutativity condition. An interesting fact
that we have observed above is that part of this condition (for $V=W$)
is also necessary for ensuring that, when working in positive
characteristic, the function associated to a Hecke eigensheaf is a
Hecke eigenfunction for all closed points of the curve.

\subsection{Back To $SL_2$}

As we have seen in the previous section, the geometric counterpart of
the double quotient \eqref{double quot} is the moduli stack $\Bun_G$
of $G$-bundles on $\CC$. In fact, \eqref{double quot} is the set of
$\Fq$-points of $\Bun_G$. Therefore Hecke eigensheaves on $\Bun_G$
give rise to Hecke eigenfunctions on \eqref{double quot}, as explained
above.

On the other hand, we have seen in \secref{inv vec} that in order to
understand properly the $L$-packets of (unramified) automorphic
representations for $G=SL_2$ we need to consider more general double
quotients \eqref{first component}, where $K_x = K'_x$ or $K''_x$, and
$K_x=K'_x=SL_2[[t]]$ for all but finite many closed points $x \in
\CC$. If all $K_x=SL_2[[t]]$, then \eqref{first component} is the set
of $\Fq$-points of $\Bun_{SL_2}$. What about the more general
quotients \eqref{first component}? The answer is clear: these are the
sets of $\Fq$-points of the ``improper'' versions of $\Bun_{SL_2}$;
namely, the moduli stacks $\Bun^{{\mc L}}_{SL_2}$ of rank two vector
bundles with the determinant being the line bundle ${\mc L} = {\mc
O}(D)$. Here $D$ is the set of points where $K_x=K''_x$, which we view
as an effective divisor on $\CC$.

We have already encountered these moduli stacks in the case of curves
over $\C$ in Sections \ref{secomp}, \ref{improper} and
\ref{improperly}. At that time we remarked that if ${\mc L} = {\mc L}'
\otimes {\mc N}^2$, where ${\mc N}$ is a line bundle on $\CC$, then we
may identify $\Bun^{{\mc L}}_{SL_2}$ with $\Bun^{{\mc L}'}_{SL_2}$ by
tensoring a rank two vector bundle with ${\mc N}$. Therefore
$\Bun^{{\mc L}}_{SL_2}$ really depends not on ${\mc L}$ but on its
image in the quotient of the Picard group $\on{Pic}(\C)$ (which is the
set of $\C$-points of the Picard scheme $\on{Pic}$ of $\CC$) by the
subgroup of squares. This quotient is isomorphic to $\Z_2$, and so
there is a unique improper component $\Bun^{\mc L}_{SL_2}$ in this
case (for which we may choose ${\mc L} = \OO(p)$, where $p$ is a point
of $\CC$), up to a non-canonical isomorphism.

Now consider a curve $\CC$ defined over $\Fq$. Here again the improper
stacks $\Bun^{\mc L}_{SL_2}$ are classified, up to an isomorphism, by
the quotient $\on{Pic}(\Fq)/\on{Pic}(\Fq)^2$. But now this quotient is
much bigger. To describe it more precisely, let us recall \cite{Serre}
that the abelian class field theory identifies $\on{Pic}(\Fq)$ with
the maximal unramified abelian quotient of the Weil group $W_F$ of the
function field $F$ of $\CC$. In other words, $\on{Pic}(\Fq)$ is
isomorphic to a dense subgroup of the Galois group of the maximal
unramified abelian extension $F^{\on{ab,un}}$ of $F$, defined in the
same way as the Weil group of $F$. Namely, it is the preimage of $\Z
\subset \wh\Z$ under the homomorphism $\on{Gal}(F^{\on{ab,un}}/F) \to
\on{Gal}(\ol{\mathbb F}_q/\Fq) = \wh\Z$. Therefore we obtain that
$\on{Pic}(\Fq)/\on{Pic}(\Fq)^2$ is the maximal quotient of
$\on{Gal}(F^{\on{ab,un}}/F)$ such that all of its elements have order
$2$. It is also the dual group of the group of unramified quadratic
extensions of $F$. Indeed, each such extension $E/F$ gives rise to a
quadratic character of $\on{Pic}(\Fq)$, which factors through
$\on{Pic}(\Fq)/\on{Pic}(\Fq)^2$.

Let us choose a representative ${\mc L} = {\mc O}(D)$ of this group,
where $D$ is a subset of the set of closed points of $\CC$. Then we
have the algebraic moduli stack $\Bun^{\mc L}_{SL_2}$ of rank two
vector bundles on $\CC$ with the determinant ${\mc L}$, whose set of
$\Fq$-points is the double quotient \eqref{first component} with the
above choice of subgroups $K_x$. In the geometric theory the notion of
Hecke eigenfunction on this set becomes that of (regular) Hecke
eigensheaf, defined in the same way as for the proper moduli stack
$\Bun_{SL_2}$ (corresponding to ${\mc L} = \OO$).

\subsection{From Curves Over $\C$ To Curves Over $\Fq$}

Let us go back to a curve $\CC$ over $\C$ and choose a $PGL_2$-local
system $\E$ whose structure group is reduced to $O_2 \subset PGL_2$,
but not to its proper subgroup. Since $\E$ comes from an irreducible
rank two local system, we expect that the category of regular Hecke
eigensheaves (in the sense of \secref{type 2}) with eigenvalue $\E$ on
$\Bun_{SL_2}$ has one irreducible object (up to an isomorphism). Let
${\mc F}$ be the underlying ${\mc D}$-module on $\Bun_{SL_2}$. In
\secref{spectral} we have discussed the $A$-brane ${\mc A}$
corresponding to ${\mc F}$, which is represented by a rank one unitary
local system on the singular Hitchin fiber, which has two irreducible
components. We have observed that ${\mc A}$ splits into two
$A$-branes, ${\mc A}_+$ and ${\mc A}_-$ supported on the two
irreducible components of the Hitchin fiber. Therefore we expect that
the ${\mc D}$-module ${\mc F}$ also splits into a direct sum,
\begin{equation}    \label{dir sum}
{\mc F} = {\mc F}_+ \oplus {\mc F}_-,
\end{equation}
of two irreducible ${\mc D}$-modules on $\Bun_{SL_2}$ corresponding to
the two ${\mc A}$-branes on the singular Hitchin fiber. Moreover,
since the $A$-branes $\A_\pm$ are fractional eigenbranes with respect
to the 't Hooft operators, we expect that the sheaves ${\mc F}_\pm$
satisfy the fractional Hecke property introduced in \secref{type 2}.

This leads us to postulate that {\em the same phenomenon should also
occur for curves over a finite field $\Fq$}. Namely, the regular Hecke
eigensheaf ${\mc F}$ corresponding to an $\ell$-adic local system $\E$
on a curve $\CC$ defined over $\Fq$, of the kind discussed above,
should also split as a direct sum \eqref{dir sum}. Moreover, these
sheaves should satisfy the fractional Hecke property introduced in
\secref{type 2} and hence give rise to a category of fractional Hecke
eigensheaves.  Next, in the setting of curves over finite fields we
can pass from $\ell$-adic perverse sheaves on $\Bun_{SL_2}$, to
functions. Thus, each of the sheaves ${\mc F}_\pm$ should give rise to
a function $f_\pm$ on the double quotient \eqref{double quot}, which
is the set of $\Fq$-points of $\Bun_{SL_2}$. The fractional Hecke
property of the sheaves ${\mc F}_\pm$ translates into a certain
property of the corresponding functions $f_\pm$.

Thus, we started with $A$-branes and ended up with automorphic
functions satisfying the fractional Hecke property. Schematically,
this passage looks as follows:

$$
\boxed{\text{$A$-branes}} \; \overset{\on{over} \C}\Longrightarrow \;
\boxed{\D\text{-modules}}  \; \overset{\on{over} \C}\Longrightarrow \;
\boxed{\text{perverse sheaves}} \; \overset{\on{over}
\Fq}\Longrightarrow \; \boxed{\text{functions}}
$$

\bigskip

We will see below that the fractional Hecke property means in
particular that not only $f_+ + f_-$ is a Hecke eigenfunction, in the
ordinary sense, but $f_+ - f_-$ is a Hecke eigenfunction as well, but
with respect to a {\em different} homomorphism $\sigma': W_F \to
PGL_2$. We will show that $\sigma'$ really exists, and is in fact
canonically attached to the original homomorphism $\sigma$. This
will provide the first consistency check for our predictions.

\subsection{Fractional Hecke Property}

Let $\CC$ be a curve over $\Fq$ and $\E$ an endoscopic $\ell$-adic
$PGL_2$-local system on $\CC$ (corresponding to an unramified
homomorphism $\sigma: W_F \to PGL_2$). This means that its structure
group is reduced to $O_2$, but not to a proper subgroup. Then the
group of automorphisms of $\E$ (equivalently, the centralizer of the
image of $\sigma$) is $\Z_2$. Let $D$ be a finite set of closed points
of $\CC$. Denote by ${\mc F}^D$ a regular Hecke eigensheaf on
$\Bun^{\OO(D)}_{SL_2}$ with the ``eigenvalue'' $\E$ (in the sense of
\secref{type 2}). Motivated by our results on $A$-branes in the
analogous situation for curves over $\C$, we conjecture that ${\mc
F}^D$ splits as a direct sum
\begin{equation}    \label{dir sum1}
{\mc F}^D = {\mc F}^D_+ \oplus {\mc F}^D_-
\end{equation}
of perverse sheaves ${\mc F}^D_\pm$ which satisfy the following
fractional Hecke property with respect to $\E$, introduced in
\secref{type 2} (and so we also call them the {\em fractional Hecke
eigensheaves}):
\begin{align}    \label{FS1}
\al_{+}: H_W({\mc F}^D_+) &\overset{\sim}\longrightarrow (\det U_\E
\boxtimes {\mc F}^D_+) \oplus (U_\E \boxtimes {\mc F}^D_-), \\
\label{FS2} \al_-: H_W({\mc F}^D_-) &\overset{\sim}\longrightarrow
(U_\E \boxtimes {\mc F}^D_+) \oplus (\det U_\E \boxtimes {\mc F}^D_-).
\end{align}

Here $W$ is the adjoint representation of $PGL_2$ and we use the
decomposition of the rank three local system $W_\E$ on $\CC$ with
respect to the action of its group $\Z_2$ of automorphisms as in
formula \eqref{WE},
\begin{equation}    \label{decomp W}
W_\E = \left( \det U_\E \otimes I \right) \oplus \left( U_\E \otimes S
\right),
\end{equation}
where $I$ and $S$ are the trivial and sign representations of $\Z_2$,
respectively, and $\det U_\E$ and $U_\E$ are the rank one and two
local systems on $\CC$ defined as follows. Recall that by our
assumption the $PGL_2$-local system $\E$ is reduced to $O_2$, so we
view it as an $O_2$-local system. We then set
$$
U_\E = \E \underset{O_2}\times U,
$$
where, as before, $U$ is the defining two-dimensional representation
of $O_2$.

\subsection{Fractional Hecke Eigenfunctions}    \label{splitting}

We now analyze the implications of formulas \eqref{FS1} and
\eqref{FS2} for the functions associated to ${\mc F}^D_\pm$,
$$
f^D_\pm = \text{\tt f}_{{\mc F}^D_\pm,\Fq},
$$
on the set $\Bun^{\OO(D)}_{SL_2}(\Fq)$, which is isomorphic to the
double quotient \eqref{first component}. Formula \eqref{dir sum1}
implies that
\begin{equation}    \label{sum}
f^D = f^D_+ + f^D_-,
\end{equation}
where $f^D = \text{\tt f}_{{\mc F}^D,\Fq}$ is the function on
$\Bun^{\OO(D)}_{SL_2}(\Fq)$ associated to the regular Hecke eigensheaf
${\mc F}^D$.

To simplify our notation, in what follows, when no ambiguity arises,
we will suppress the upper index $D$.

By restricting the Hecke correspondence to $x \times
\Bun^{\OO(D)}_{SL_2}$, where $x \in \CC(\Fq)$ and evaluating the trace
of the Frobenius at the $\Fq$-points there, we obtain from formulas
\eqref{FS1} and \eqref{FS2} that the functions $f_\pm$ satisfy the
following property:
\begin{equation}    \label{mod hecke fun}
T_{W,x} \cdot \begin{pmatrix} f_+ \\ f_- \end{pmatrix} =
\begin{pmatrix} a_x & b_x \\ b_x & a_x \end{pmatrix} \begin{pmatrix}
  f_+ \\ f_- \end{pmatrix},  \qquad  x \in \CC(\Fq),
\end{equation}
where
\begin{align*}
a_x &= \on{Tr}(\on{Fr}_x,\det U_\E) = \det(\sigma_x(\on{Fr}_x),U), \\
b_x &= \on{Tr}(\on{Fr}_x,U_\E) = \on{Tr}(\sigma_x(\on{Fr}_x),U).
\end{align*}
Here we view $\sigma_x$ as a homomorphism $W_{F_x} \to O_2$. To
compute these numbers, we recall the description of the Frobenius
conjugacy classes from \secref{L-packets}. We find that the conjugacy
class of $\sigma_x(\on{Fr}_x)$ in $O_2$ contains the matrix
$$
\begin{pmatrix}
  \frac{\mu(\on{Fr}_{y_1(x)})}{\mu(\on{Fr}_{y_2(x)})} & 0 \\
  0 & \frac{\mu(\on{Fr}_{y_2(x)})}{\mu(\on{Fr}_{y_1(x)})}
\end{pmatrix},
$$
if $x$ is split, and the matrix
$$
\begin{pmatrix} 1 & 0 \\ 0 & -1 \end{pmatrix},
$$
if $x$ is non-split. Therefore we find that
\begin{align}    \label{ax}
a_x &= \begin{cases} 1, & \on{ if \;} x
  \on{\; is \; split}, \\ -1, & \on{ if \;} x \on{\; is \; non-split},
  \end{cases} \\ \label{bx} b_x &=
  \begin{cases}   \dfrac{\mu(\on{Fr}_{y_1(x)})}{\mu(\on{Fr}_{y_2(x)})} +
  \dfrac{\mu(\on{Fr}_{y_2(x)})}{\mu(\on{Fr}_{y_1(x)})}, & \on{ if \;}
  x \on{\; is \; split}, \\ 0, & \on{ if \;} x \on{\; is \;
  non-split},
\end{cases}
\end{align}

Formula \eqref{mod hecke fun} implies that the sum $f_+ + f_-$
is an eigenfunction of the Hecke operators
\begin{equation}    \label{hecke f+}
T_{W,x} \cdot (f_+ + f_-) = (a_x+b_x) (f_+ + f_-), \qquad x \in
\CC(\Fq),
\end{equation}
where
\begin{align} \notag
a_x+b_x = \on{Tr}(\on{Fr}_x,W_\E) &= \on{Tr}(\sigma_x(\on{Fr}_x),W)
\\ \label{al+} &= \begin{cases} 1 +
  \dfrac{\mu(\on{Fr}_{y_1(x)})}{\mu(\on{Fr}_{y_2(x)})} +
  \dfrac{\mu(\on{Fr}_{y_2(x)})}{\mu(\on{Fr}_{y_1(x)})}, & \on{ if \;}
  x \on{\; is \; split}, \\ -1, & \on{ if \;} x \on{\; is \;
  non-split}. \end{cases}
\end{align}
Thus, formula \eqref{hecke f+} expresses the usual Hecke property of
the function $f = f_+ + f_-$ associated to the sheaf ${\mc F} = {\mc
F}_+ \oplus {\mc F}_-$ with respect to $\E$ (or $\sigma$).

But we also find that the difference $f_+ - f_-$ is a Hecke
eigenfunction with a different set of eigenvalues; namely,
\begin{equation}    \label{hecke f-}
T_{W,x} \cdot (f_+ - f_-) = (a_x-b_x) (f_+ - f_-), \qquad x \in
\CC(\Fq),
\end{equation}
where
\begin{equation}    \label{al-}
a_x-b_x = \begin{cases} 1 -
  \dfrac{\mu(\on{Fr}_{y_1(x)})}{\mu(\on{Fr}_{y_2(x)})} -
  \dfrac{\mu(\on{Fr}_{y_2(x)})}{\mu(\on{Fr}_{y_1(x)})}, & \on{ if \;}
  x \on{\; is \; split}, \\  -1, &
  \on{ if \;} x \on{\; is \; non-split}, \end{cases}.
\end{equation}
However, we have to remember that the Hecke property \eqref{hecke f-}
holds only for $\Fq$-points of $\CC$. Indeed, we have started with the
geometric Hecke property \eqref{FS1}--\eqref{FS2}. The Hecke
correspondence relates $\Bun^{\OO(D)}_{SL_2}$ and $\CC \times
\Bun^{\OO(D)}_{SL_2}$. The functions $f_\pm$ are obtained from ${\mc
F}_\pm$ by taking the trace of the Frobenius at the $\Fq$-points of
$\Bun^{\OO(D)}_{SL_2}$. To obtain a Hecke property for them, we need
to consider the Hecke correspondence on the sets of $\Fq$-points of
these two stacks. The only Hecke operators we can reach this way are
those corresponding to the $\Fq$-points of $\CC$. The resulting action
of the Hecke operators is expressed by equation \eqref{mod hecke fun}.

This formula does not uniquely determine the function $f_+ - f_-$. It
would be uniquely determined (at least for generic $\sigma$'s of the
type we are considering, which correspond to irreducible two-dimensional
representations of $W_F$) only if it were a Hecke eigenfunction for
{\em all} closed points of $\CC$, not just its $\Fq$-points.

In order to incorporate closed points of $\CC$ with the residue field
${\mathbb F}_{q^m}, m>1$, we need to consider more general Hecke
correspondence $H_W^{(m)}$ over the $m$th symmetric power of $\CC$
(with the union $\Delta$ of pairwise diagonals removed). This requires
an additional $S_2$-equivariance condition on the isomorphisms
$\al_{\pm}$, similar to the one in the case of regular Hecke
eigensheaves. This condition is defined in exactly the same way as in
\secref{comm cond}.

Assuming that this $S_2$-equivariance condition holds, we obtain
isomorphisms
\begin{multline*}
\al_+^{(2)}: H_W^{(2)}({\mc F}_+) \overset{\sim}\longrightarrow \\
((\det U_\E)^{(2)} \oplus U_\E^{(2)})|_{\on{Sym}^2 \CC \bs \Delta}
\boxtimes {\mc F}_+ \oplus \on{Sym}(\det U_\E \boxtimes U_\E \oplus
U_\E \boxtimes \det U_\E)|_{\on{Sym}^2 \CC \bs \Delta} \boxtimes {\mc
F}_-,
\end{multline*}
\begin{multline*}
\al_-^{(2)}: H_W^{(2)}({\mc F}_-) \overset{\sim}\longrightarrow \\
\on{Sym}(\det U_\E \boxtimes U_\E \oplus U_\E \boxtimes \det
U_\E)|_{\on{Sym}^2 \CC \bs \Delta} \boxtimes {\mc F}_+ \oplus ((\det
U_\E)^{(2)} \oplus U_\E^{(2)})|_{\on{Sym}^2 \CC \bs \Delta} \boxtimes
{\mc F}_-.
\end{multline*}
Here we use notation \eqref{symm}.

Suppose that this condition is satisfied. Recall from \secref{comm
cond} that any closed point $x$ of $\CC$ such that $\deg(x)=m$ (for
the definition of $\deg(x)$, see \secref{recoll}) gives rise to an
$\Fq$-point $D(x)$ in $\on{Sym}^m \CC$ (an effective divisor of degree
$m$), and we have
$$
\on{Tr}(\on{Fr}_{D(x)},\E^{(m)}) = \on{Tr}(\on{Fr}_{x},\E)
$$
for any local system $\E$ on $\CC$. In particular, suppose that $x$ is
a closed point of $\CC$ of degree $2$, that is, with the residue field
isomorphic to ${\mathbb F}_{q^2}$, and let $D(x)$ be the corresponding
$\Fq$-point of $\on{Sym}^2 \CC$. Then we have
$$
\on{Tr}(\on{Fr}_{D(x)},\det U_\E^{(2)} \oplus U_\E^{(2)}) =
\on{Tr}(\on{Fr}_{D(x)},W_\E^{(2)}) = \on{Tr}(\on{Fr}_{x},W_\E) =
a_x+b_x,
$$
but
$$
\on{Tr}(\on{Fr}_{D(x)},\on{Sym}(\det U_\E \boxtimes U_\E \oplus U_\E
\boxtimes \det U_\E)) = 0.
$$
Therefore the isomorphisms $\al_\pm^{(2)}$ imply that {\em both} $f_+$
and $f_-$ are Hecke eigenfunctions (in the ordinary sense) with the
eigenvalue $(a_x+b_x)$, and so we have
\begin{equation}    \label{even}
T_{W,x} \cdot (f_+ - f_-) = (a_x+b_x)(f_+ - f_-), \qquad x \in
\CC, \quad \deg(x) = 2.
\end{equation}

Next, we analyze in the same way what happens at the closed points of
$\CC$ of arbitrary degree $m$. The $S_2$-invariance condition implies
the $S_m$-invariance condition, as in \secref{comm cond}. We then find
that for odd $m$ the function $f_+ - f_-$ satisfies formula
\eqref{hecke f-}, and for even $m$ it satisfies formula
\eqref{even}. Thus, we have
\begin{equation}    \label{f- final}
T_{W,x} \cdot (f_+ - f_-) = (a_x + (-1)^m b_x)(f_+ - f_-), \qquad x \in
\CC, \quad \deg(x) = m.
\end{equation}

According to the Langlands correspondence for $GL_2$
\cite{Dr,Drinfeld}, a formula like this may only hold if the
eigenvalues of $T_{W,x}$, the numbers $a_x + (-1)^m b_x$, are equal to
$\on{Tr}(\sigma'_x(\on{Fr}_x,W)$ for some homomorphism $\sigma': W_F
\to PGL_2$.  This gives us an opportunity to test our prediction that
there exists a decomposition \eqref{dir sum1}.

In fact, it is easy to construct a homomorphism $\sigma'$ with this
property. Observe that any homomorphism $\sigma: W_F \to O_2$ may be
twisted by a quadratic character $\rho: W_F \to \Z_2 = \{ \pm 1 \}$,
where the group $\Z_2$ is identified with the center of $O_2$. We
denote this operation by $\sigma \mapsto \sigma \otimes \rho$.

In particular, the quadratic extension of the scalars ${\mathbb
F}_{q^2}/\Fq$ defines a quadratic extension ${\mathbb
F}_{q^2}(\CC)/\Fq(\CC)$ (recall that $F = \Fq(\CC)$) and hence a
quadratic character of $W_F$, which we will denote by $\nu$. This
character is determined by the following property:
\begin{equation}    \label{nu}
\nu(\on{Fr}_x) = (-1)^m, \qquad x \in \CC, \quad \deg(x) = m.
\end{equation}
Let $\sigma' = \sigma \otimes \nu$. Then we find that
$$
\on{Tr}(\sigma'_x(\on{Fr}_x),W) = \begin{cases} 1 + (-1)^m
  \dfrac{\mu(\on{Fr}_{y_1(x)})}{\mu(\on{Fr}_{y_2(x)})} + (-1)^m
  \dfrac{\mu(\on{Fr}_{y_2(x)})}{\mu(\on{Fr}_{y_1(x)})}, & \on{ if \;}
  x \on{\; is \; split}, \\ -1, & \on{ if \;} x \on{\; is \;
  non-split}, \end{cases}
$$
for all $x \in \CC(\Fq)$. Hence
$$
\on{Tr}(\sigma'_x(\on{Fr}_x),W) = a_x + (-1)^m b_x.
$$
Therefore we obtain that a Hecke eigenfunction with the eigenvalues
$a_x + (-1)^m b_x$, as in formula \eqref{f- final}, does exist! Let us
denote this function by $f'$.

Using this function, we can now solve for $f_+$ and $f_-$:
\begin{equation}    \label{solve}
f_\pm = \frac{1}{2} (f \pm f').
\end{equation}
These are the functions corresponding to the fractional Hecke
eigensheaves ${\mc F}_\pm$ whose existence we have
conjectured.\footnote{After this paper appeared on the arXiv, we
learned from S. Lysenko that the sheaves ${\mc F}_\pm$ may be
constructed using his results on theta-lifting \cite{Ly1,Ly2}.}

It is natural to ask: what is the representation theoretical meaning
of the functions $f_\pm$ and the equations that they satisfy? These
equations are given by formula \eqref{mod hecke fun} for $x \in
\CC$ of odd degree $m$, and
\begin{equation}    \label{mod hecke fun1}
T_{W,x} \cdot f_\pm = (a_x + b_x)f_\pm,
\end{equation}
for $x \in \CC$ of even degree $m$.

Recall that the Hecke eienfunctions, such as $f = f_+ + f_-$ and $f' =
f_+ - f_-$, may be interpreted as matrix coefficients of automorphic
representations of $SL_2({\mathbb A}_F)$, and their (regular) Hecke
property is the result of the Satake isomorphism identifying the
spherical Hecke algebra with the representation ring of the Langlands
dual group $PGL_2$. It would be interesting to find a similar
interpretation of the {\em fractional Hecke eigenfunctions} $f_\pm$
and equations \eqref{mod hecke fun}, \eqref{mod hecke fun1}.

\subsection{The Improper Hecke Functors}    \label{improp}

In addition to the ``proper'' Hecke functors $H_W$ acting on the
categories of ${\mc D}$-modules on $\Bun_{SL_2}^{{\mc O}(D)}$, there
are also ``improper'' Hecke functors $\wt{H}_x$ acting from the
category of ${\mc D}$-modules on $\Bun_{SL_2}^{{\mc O}(D)}$ to the
category of ${\mc D}$-modules on $\Bun_{SL_2}^{{\mc O}(D+x)}$. They
are defined via the Hecke correspondence between the two moduli stacks
consisting of pairs of rank two bundles ${\mc M} \in \Bun_{SL_2}^{{\mc
O}(D)}$ and ${\mc M}' \in \Bun_{SL_2}^{{\mc O}(D+x)}$ such that ${\mc
M} \subset {\mc M}'$ as a coherent sheaf. These functors are
categorical analogues of the improper Hecke operators $\wt{T}_x$
introduced in \secref{comparison}. The corresponding operators on
$A$-branes are the improper 't Hooft operators discussed in
\secref{Hecke}.

In formula \eqref{schema} we have computed the action of the improper
't Hooft operators on the branes $\A_\pm$. Based in this formula, we
conjecture that the improper Hecke operators should act on the
fractional Hecke eigensheaves ${\mc F}_\pm^D$ as follows:
\begin{align*}
\wt{H}_x({\mc F}^D_+) &\simeq {\mc F}^{D+x}_+ \oplus {\mc F}^{D+x}_-,
\\ \wt{H}_x({\mc F}^D_-) &\simeq {\mc F}^{D+x}_+ \oplus {\mc
F}^{D+x}_-.
\end{align*}
This should hold for all points $x \in \CC$ if $\CC$ is defined over
$\C$, and all split points, if $\CC$ is defined over $\Fq$.

This formula indicates that the improper Hecke functor fails to
establish an equivalence between the categories of fractional Hecke
eigensheaves on $\Bun_{SL_2}^{{\mc O}(D)}$ and $\Bun^{{\mc
O}(D+x)}_{SL_2}$ for the endoscopic local systems. This may be viewed
as a geometric counterpart of the vanishing of the improper Hecke
operator acting on functions discussed in \secref{comparison}, which,
as we have seen, is closely related to the structure of the global
$L$-packets of automorphic representations associated to endoscopic
$\sigma: W_F \to PGL_2$.

It would be more difficult to see an analogue of the phenomenon of
$L$-packets in the framework of the categories of regular Hecke
eigensheaves. Indeed, for a regular Hecke eigensheaf ${\mc F}^D = {\mc
F}_+^D \oplus {\mc F}_-^D$ we have
\begin{equation}    \label{multipl}
\wt{H}_x({\mc F}^D) \simeq V \otimes {\mc F}^{D+x},
\end{equation}
where $V$ is a two-dimensional vector space. In the classical setting
explained in \secref{comparison} $V$ is replaced by the trace
$\on{Tr}(\wt\sigma_x(\on{Fr}_x),V)$. It vanishes precisely when
$\sigma_x(\on{Fr}_x)$ is the conjugacy class of \eqref{trans}, and
this vanishing manifests the non-trivial structure of the $L$-packets
in this case. In contrast, in the geometric setting, $V$ itself
appears as the multiplier in formula \eqref{multipl}. It is not clear
what should replace the vanishing of the trace in this context.

However, in the framework of the categories of fractional Hecke
eigensheaves the picture is more transparent. Now we have a category
with two irreducible objects, ${\mc F}_+^D$ and ${\mc F}_-^D$. The
functor $\wt{H}_x$ sends both of them to ${\mc F}^{D+x} = {\mc
F}^{D+x}_+ \oplus {\mc F}^{D+x}_-$, and hence does not set up an
equivalence between the categories corresponding to $D$ and $D+x$.

\subsection{$L$-packets Associated To $\sigma$ And $\sigma'$}
\label{rel pac}

The above discussion shows that the representations $\pi =
\bigotimes'_x \pi_x$ of $SL_2({\mathbb A})$ corresponding to $\sigma:
W_F \to O_2 \subset PGL_2$ and $\sigma' = \sigma \otimes \nu$, where
$\nu$ is given by formula \eqref{nu}, are linked together. Let us
compare the $L$-packets (or, equivalently, the multiplicities of the
representations $\pi$) corresponding to $\sigma$ and $\sigma'$.

Recall that for each finite subset $D \subset \CC$ we have the space
of $\prod_x K_x$-invariant vectors in $\pi$, where $K_x = K''_x$, if
$x \in D$, and $K_x = K'_x$, otherwise. If $\pi$ is automorphic, then
this space of invariants is realized in the space of Hecke
eigenfunctions on the double quotient \eqref{first component}, which
is $\Bun^{\OO(D)}_{SL_2}(\Fq)$. As explained in \secref{inv vec}, this
space is either one- or zero-dimensional. In the former case, there is
a unique Hecke eigenfunction (up to a scalar), which we denote by
$f^D_\sigma$. In the latter case, any Hecke eigenfunction that we
construct has to vanish. A criterion as to whether it is one- or
zero-dimensional is given in \thmref{descr aut} (following \cite{LL}),
and it amounts to a description of the global $L$-packets.

This criterion is as follows. Consider the case when we can lift
$\sigma$ to $\wt\sigma: W_F \to GL_2$ and represent $\wt\sigma$ as
$\on{Ind}_{W_E}^{W_F} \mu$, where $E$ is the field of functions on a
double covering $\CC' \to \CC$, and $\mu$ is a character of
$W_E$. Then let $S \subset D$ be the set of points in $D$ which are
non-split in $E$. The dimension of the space of Hecke eigenfunctions
on $\Bun^{\OO(D)}_{SL_2}(\Fq)$ with respect to $\sigma$ is then equal
to $1$, if $\# S$ is even, and to $0$, if $\# S$ is odd.

Note that the extension $E/F$ corresponds to the quadratic character
of $W_F$ obtained via the composition $\kappa: W_F
\overset{\sigma}\longrightarrow O_2 \to \Z_2$. We will call $E$ the
field {\em affiliated with} $\sigma$.

\begin{lem}
There exists a quadratic character $\phi: W_E \to \Z_2$, where $E$ is
affiliated with $\sigma$, such that $\sigma' = \sigma \otimes \nu$ is
equivalent to the projectivization of the two-dimensional
representation $\on{Ind}_{W_E}^{W_F} (\mu \otimes \phi)$ of $W_F$.
\end{lem}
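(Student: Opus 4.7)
The plan is to reduce the statement to a standard identity for induced representations. By hypothesis, we are in the situation where $\sigma$ lifts to a two-dimensional representation $\wt\sigma: W_F \to GL_2$ given by $\wt\sigma = \on{Ind}_{W_E}^{W_F} \mu$, and twisting $\sigma$ by $\nu$ (viewed as valued in the center $\Z_2$ of $O_2 \subset PGL_2$) lifts to tensoring $\wt\sigma$ with $\nu$ as an ordinary one-dimensional representation of $W_F$. So it is enough to exhibit $\phi$ such that $\wt\sigma \otimes \nu \cong \on{Ind}_{W_E}^{W_F}(\mu \otimes \phi)$ as two-dimensional representations, and then projectivize.

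The natural candidate is
$$\phi := \on{Res}^{W_F}_{W_E} \nu,$$
that is, the restriction of the quadratic character $\nu$ to the index-two subgroup $W_E \subset W_F$. First I would verify that $\phi$ is indeed a quadratic character: this is automatic from $\nu^2 = 1$, giving $\phi^2 = 1$. Next, I would invoke the projection formula for induction,
$$\on{Ind}_{W_E}^{W_F}\bigl(\mu \otimes \on{Res}^{W_F}_{W_E} \nu\bigr) \;\cong\; \bigl(\on{Ind}_{W_E}^{W_F} \mu\bigr) \otimes \nu,$$
a standard and easily verified identity (for instance, by a direct computation on the model $\on{Ind}_{W_E}^{W_F}\mu = \mu \oplus \wt\tau \cdot \mu$ obtained from a choice of coset representative $\wt\tau$ for $W_F/W_E$, as used earlier in the paper). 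Finally, projectivizing both sides yields $\on{Ind}_{W_E}^{W_F}(\mu \otimes \phi)/Z \cong (\wt\sigma \otimes \nu)/Z = \sigma \otimes \nu = \sigma'$, which is the statement of the lemma.

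The only point requiring even a moment's attention is the bookkeeping in the last step: twisting a $GL_2$-representation by a character of order $2$ commutes with the projection $GL_2 \to PGL_2$ because the character takes values in the center, so the projectivization of $\wt\sigma \otimes \nu$ really is $\sigma \otimes \nu$ in the sense in which $\sigma'$ was defined. In short, there is no essential obstacle here: the lemma is a direct instance of the projection formula, made visible by a judicious choice of $\phi$, and the proof amounts to tracing through definitions rather than proving anything substantial.
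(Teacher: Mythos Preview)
Your argument contains a genuine error at its first step. You assert that twisting $\sigma$ by $\nu$, where $\nu$ is valued in the center $\Z_2$ of $O_2\subset PGL_2$, lifts to tensoring $\wt\sigma$ by $\nu$ viewed as a scalar character of $W_F$ in $GL_2$. This is false: tensoring a $GL_2$-representation by a scalar character does not change its projectivization at all, since scalars lie in the kernel of $GL_2\to PGL_2$. Concretely, the projectivization of $\wt\sigma\otimes\nu$ is $\sigma$, not $\sigma'$. The nontrivial central element of $O_2\subset PGL_2$ lifts to the class of $\mathrm{diag}(1,-1)$ in $GL_2$, which is \emph{not} a scalar, so the two ``twists by $\nu$'' are entirely different operations. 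With your choice $\phi=\on{Res}^{W_F}_{W_E}\nu$, the projection formula indeed gives $\on{Ind}_{W_E}^{W_F}(\mu\otimes\phi)\cong\wt\sigma\otimes\nu$, but upon projectivizing you recover $\sigma$, not $\sigma'$.

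Working out what is actually required: for $h\in W_E$ one has $\sigma(h)=[\mathrm{diag}(\mu(h)/\mu^\tau(h),1)]$, and multiplying by the central element of $O_2$ replaces this ratio by its negative. Hence $\sigma'(h)=[\mathrm{diag}(\nu(h)\,\mu(h)/\mu^\tau(h),1)]$. Comparing with the projectivization of $\on{Ind}_{W_E}^{W_F}(\mu\phi)$, one finds the condition
\[
\frac{\phi(\wt\tau h\wt\tau^{-1})}{\phi(h)}=\nu(h),\qquad h\in W_E,
\]
exactly as the paper states. Your $\phi=\nu|_{W_E}$ is $\tau$-invariant (since $\nu$ is defined on all of $W_F$), so the left side is identically $1$, while the right side is nontrivial whenever $E$ is not the constant-field extension $\Fq^2\cdot F$. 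Thus the condition genuinely fails for your choice. The existence of a $\phi$ satisfying this ``twisted Hilbert~90'' condition for quadratic characters is a nontrivial arithmetic fact; the paper attributes the argument (via the Hasse--Minkowski theorem) to B.~Poonen and does not give the details. So the lemma is not a formal consequence of the projection formula, and your proposed proof does not go through.
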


\begin{proof}
Let us choose $\wt\tau \in W_F$ which projects onto the non-trivial
element $\tau$ of $W_F/W_E = \Z_2$. By Cebotarev's theorem, the
condition stated in the lemma is equivalent to saying that $\phi$
satisfies
$$
\frac{\phi(\wt{\tau} h \wt\tau^{-1})}{\phi(h)} = \nu(h), \qquad h \in
W_E.
$$
The existence of such $\phi$ may be derived from the Hasse--Minkowski
theorem, as was explained to us by B. Poonen. We will not present his
argument here, as this would take us too far afield.
\end{proof}

Since the same quadratic extension is affiliated with both $\sigma$
and $\sigma'$, we find that the criterion for the dimensionality of
the space of Hecke eigenfunctions on $\Bun^{\OO(D)}_{SL_2}(\Fq)$ with
respect to $\sigma$ and $\sigma'$ is {\em the same}. Therefore the
Hecke eigenfunctions $f^D_{\sigma}$ and $f^D_{\sigma'}$ have to vanish
simultaneously when $\# S$ is odd, where $S \subset D$ is the subset
of points of $D$ which are non-split in $E$.

Now recall that the functions $f^D_{\sigma,\pm}$, corresponding to the
sheaves ${\mc F}^D_{\sigma,\pm}$ on $\Bun^{\OO(D)}_{SL_2}$, are given
by formula \eqref{solve},
\begin{equation}    \label{formula pm}
f^D_{\sigma,\pm} = \frac{1}{2}(f^D_{\sigma} \pm f^D_{\sigma'}).
\end{equation}
Hence we find that both of these functions have to vanish if $\#
S$ is odd.

Therefore we obtain that for odd $\# S$ the fractional Hecke
eigensheaves ${\mc F}^D_{\sigma,\pm}$ on $\Bun^{\OO(D)}_{SL_2}$ are
such that the corresponding functions $f^D_{\sigma,\pm}$ on the set of
$\Fq$-points of $\Bun^{\OO(D)}_{SL_2}$ are identically equal to
$0$. However, this does not mean that the sheaves themselves are equal
to $0$. This only means that the traces of the Frobenius on the stalks
of ${\mc F}^D_{\sigma,\pm}$ at the $\Fq$-points of
$\Bun^{\OO(D)}_{SL_2}$ are equal to $0$. But this does not mean that
the traces of the Frobenius on the stalks at ${\mathbb
F}_{q^n}$-points are equal to $0$ for $n>1$ (which would have implied
that the sheaves are identically zero, see \cite{Laumon:const}). In
fact, it is easy to see that the latter are non-zero for general $n$.

Before explaining this, we note a general fact about compatibility
of Hecke eigensheaves with base change. For each $n>1$ we have
the curve
$$
\CC_n = \CC \underset{\on{Spec} \Fq}\times \on{Spec} {\mathbb F}_{q^n}
$$
over ${\mathbb F}_{q^n}$. The moduli stack $\Bun_{G,\CC_n}$ of
$G$-bundles on $\CC_n$ is equivalent to the base change of
the moduli stack $\Bun_G = \Bun_{G,\CC}$ of $G$-bundles on $\CC$,
$$
\Bun_{G,\CC_n} = \Bun_G \underset{\on{Spec} \Fq}\times \on{Spec}
{\mathbb F}_{q^n}.
$$
Let $\E_n$ be the pull-back of the $^L\neg G$-local system $\E$ on $\CC$
to $\CC_n$. The geometric Langlands correspondence is compatible with
base change, in the sense that the pull-back ${\mc F}_n$ of a Hecke
eigensheaf ${\mc F}$ with eigenvalue $\E$ from $\Bun_{G,\CC}$ to
$\Bun_{G,\CC_n}$ is a Hecke eigensheaf with the eigenvalue $\E_n$ (see
\cite{Laumon:duke}).

Let us consider now the traces of the Frobenius on the stalks of our
sheaves ${\mc F}^D_{\sigma,\pm}$ at ${\mathbb F}_{q^m}$-points of
$\Bun^{\OO(D)}_{SL_2,\CC}$ with $m>1$. As an example, let us look at
the set of ${\mathbb F}_{q^2}$-points of $\Bun^{\OO(D)}_{SL_2,\CC}$,
which is is the same as the set of ${\mathbb F}_{q^2}$-points of the
moduli stack $\Bun^{\OO(D)}_{SL_2,\CC_2}$ of $SL_2$-bundles on
$\CC_2$. The pull-back of a Hecke eigensheaf ${\mc F}^D_{\sigma}$ to
$\Bun^{\OO(D)}_{SL_2,\CC_2}$ is a Hecke eigensheaf ${\mc
F}^D_{\sigma_2}$, where $\sigma_2$ is the restriction of $\sigma$ to
$W_{F_2}$, with $F_2 = {\mathbb F}_{q^2}(\CC)$. Therefore the
pull-back of ${\mc F}^D_{\sigma,\pm}$ is ${\mc
F}^D_{\sigma_2,\pm}$. Suppose, for example, that $D = [y]$, where $y$
is an $\Fq$-point of $\C$ which is non-split in the quadratic
extension $E$ of $F$ used in defining $\sigma$. Then the set $S$
appearing in the statement of \thmref{descr aut} consists of a single
point $y$, and according to this theorem, the functions
$f^{[y]}_{\sigma,\pm}$ on the set $\Bun^{\OO(y)}_{SL_2,\CC}(\Fq)$
associated to ${\mc F}^D_{\sigma,\pm}$ have to vanish.

However, the ${\mathbb F}_{q^2}$-point of $\CC_2$ corresponding to $y$
(which we will also denote by $y$) is split in the corresponding
quadratic extension $E_2$ of $F_2$. Therefore, by \thmref{descr aut},
the functions $f^D_{\sigma_2,\pm}$ on
$\Bun^{\OO(y)}_{SL_2,\CC_2}({\mathbb F}_{q^2}) =
\Bun^{\OO(y)}_{SL_2,\CC}({\mathbb F}_{q^2})$ are non-zero. According
to the above compatibility property with base change, these functions
coincide with the functions on $\Bun^{\OO(D)}_{SL_2,\CC}({\mathbb
F}_{q^2})$ corresponding to the sheaves ${\mc
F}^{[y]}_{\sigma,\pm}$. Hence the sheaves themselves are non-zero!

This elementary example shows that even if the functions on the set of
$\Fq$-points of $\Bun^{\OO(D)}_{SL_2,\CC}$ associated to the sheaves
${\mc F}^D_{\sigma,\pm}$ are equal to zero, the corresponding
functions on the sets of ${\mathbb F}_{q^m}$-points with $m>1$, are
not all equal to zero simultaneously, and hence the sheaves ${\mc
F}^D_{\sigma,\pm}$ are non-zero.

\subsection{Abelian Case}

In \secref{other examples} we have discussed other examples of
fractional Hecke eigensheaves. We now revisit them in the case when
the underlying curve $\CC$ is defined over $\Fq$. It is instructive to
look at the corresponding functions on the sets of $\Fq$-points of
$\Bun_G$ and to express them in terms of the ordinary Hecke
eigenfunctions, the way we did in the endoscopic example for $G=SL_2$
above (see formula \eqref{formula pm}).

Consider first the case when $G$ is a one-dimensional torus. The
corresponding moduli space, the Picard variety $\Pic$, breaks into
connected components $\Pic_n, n \in \Z$, and the Hecke eigensheaf
${\mc F}_\sigma$ corresponding to a one-dimensional ($\ell$-adic)
representation $\sigma$ of the Weil group $W_F$, breaks into a direct
sum
\begin{equation}    \label{dir sum Z1}
{\mc F}_\sigma = \bigoplus_{n \in \Z} {\mc F}_{\sigma,n},
\end{equation}
where ${\mc F}_{\sigma,n}$ is supported on $\Pic_n$. This is an
analogue of the decomposition \eqref{dir sum1}. Let $f_\sigma$
(resp., $f_{\sigma,n}$) be the function on $\Pic(\Fq)$ (resp.,
$\Pic_n(\Fq)$) corresponding to ${\mc F}_\sigma$ (resp., ${\mc
F}_{\sigma,n}$). Then we have
$$
f_\sigma = \sum_{n \in \Z} f_{\sigma,n}.
$$
This is analogous to formula \eqref{sum}. We now wish to express the
functions $f_{\sigma,n}$ in terms of (ordinary) Hecke eigenfunctions
$f_{\sigma'}$, similarly to formula \eqref{formula pm}.

This is achieved by a simple Fourier transform. Namely, for each
non-zero number $\ga \in \C^\times$ (in what follows we identify
$\overline{\mathbb Q}_\ell$ with $\C$) we define a one-dimensional
representation $\al_\ga$ of $W_F$ as the composition of the
homomorphism
\begin{equation}    \label{res}
\on{res}: W_F \to W_{\Fq} = \Z,
\end{equation}
obtained by restricting to the scalars $\Fq \subset F$, and the
homomorphism
$$
\Z \to \C^\times, \qquad 1 \mapsto \ga.
$$
Now let $\sigma_\ga = \sigma \otimes \al_\ga$ be the twist of $\sigma$
by $\al_\ga$. Then we have the following obvious formula
\begin{equation}    \label{f n}
f_{\sigma,n} = \frac{1}{2\pi i} \int_{|\ga| = 1} f_{\sigma_\ga}
\; \ga^{-n-1} d\ga,
\end{equation}
expressing the functions $f_{\sigma,n}$ as integrals of the ordinary
Hecke eigenfunctions corresponding to the twists $\sigma_\ga$ of
$\sigma$ by $\al_\ga, |\ga|=1$.

Formula \eqref{f n} is an analogue of formula \eqref{formula pm} which
we had in the endoscopic case, in the sense that in both cases the
functions satisfying the fractional Hecke property (that is,
$f_{\sigma,\pm}$ in the endoscopic case, and $f_{\sigma,n}, n \in \Z$,
in the abelian case) are expressed via Fourier transform of ordinary
Hecke eigenfunctions. The difference is that in the first case the
Fourier transform is performed with respect to the finite group
$\Z_2$, which is the group of automorphisms of an endoscopic
homomorphism $\sigma: W_F \to PGL_2$, whereas in the second case it is
performed with respect to a continuous group of automorphisms (or
rather, its compact form $U_1$). This is the reason why a finite sum
in \eqref{formula pm} is replaced by an integral in \eqref{f n}. We
will see other examples of this kind of Fourier transform with respect
to more general finite groups of automorphisms in \secref{more
general}.

In a similar way we can obtain functions satisfying the fractional
Hecke property associated to other types of local systems discussed in
\secref{other examples}: for more general tori, for local systems
whose group of automorphisms is the center of $\LG$, and for the
Eisenstein series. It would be interesting to find a direct
representation-theoretic interpretation of these functions and the
fractional Hecke property that they satisfy.

\subsection{The Iwahori Case}

We have discussed above the Hecke eigensheaves on the moduli stacks
$\Bun^{\OO(D)}_{SL_2}$ and the corresponding Hecke eigenfunctions.
However, in our most detailed example of $A$-branes corresponding to
the elliptic curves in \secref{genus one} we have considered a
slightly different moduli space corresponding to ramified Higgs
bundles. In this case the relevant moduli stack is
$\Bun^{\OO(D)}_{SL_2,I_p}$ which parametrizes rank two vector
bundles on $\CC$ with determinant $\OO(D)$ and a parabolic structure
at a fixed point $p$ of $\CC$ (that is, a choice of a line in the
fiber of the bundle at $p$). It is instructive to look at how the
story with $L$-packets discussed in \secref{inv vec} plays out in
this case.

Let $\CC$ be again defined over $\Fq$. Then the set of $\Fq$-points of
$\Bun^{\OO(D)}_{SL_2,I_p}$ is isomorphic to the double quotient
\begin{equation}    \label{Iw double}
SL_2(F) \bs SL_2({\mathbb A}_F)/\left( \prod_{x \neq p} K_x \times I_p
\right).
\end{equation}
Here $I_p = K'_p \cap K''_p$ is the Iwahori subgroup of $SL_2(F_p)$,
and $K_x = K''_x$ for $x \in D$, $K_x = K'_x$, otherwise. Let us
suppose that $p$ is a non-split point of $\CC$, with respect to the
unramified covering $\CC' \to \CC$ affiliated with an unramified
homomorphism $\sigma: W_F \to O_2$. Then the local $L$-packet
corresponding to $p$ and a homomorphism $\sigma: W_F \to PGL_2$
constructed as above consists of two irreducible representations,
$\pi'_p$ and $\pi''_p$, but now both $(\pi'_p)^{I_p}$ and
$(\pi''_p)^{I_p}$ are one-dimensional.

Let us fix the local factors $\pi_x, x \neq p$. Then we have two
non-isomorphic irreducible representations of $SL_2({\mathbb A}_F)$,
$$
\bigotimes_{x \neq p} \pi_x \otimes \pi'_p \qquad
\on{and} \qquad
\bigotimes_{x \neq p} \pi_x \otimes \pi''_p.
$$
According to \thmref{descr aut}, only one of them is automorphic;
that is, may be realized as a constituent of an appropriate space of
functions on $SL_2(F) \bs SL_2({\mathbb A}_F)$. However, their spaces of
invariants with respect to the subgroup
$$
\prod_{x \neq p} K_x \times I_p
$$
are both one-dimensional. Therefore no matter which one of them is
automorphic, we will have a one-dimensional space of Hecke
eigenfunctions on the double quotient \eqref{Iw double}. Thus, the
function on the set of $\Fq$-points of $\Bun^{\OO(D)}_{SL_2,I_p}$
associated to a regular Hecke eigensheaf will be non-zero. In the
same way as above, we then obtain that the functions $f^D_\pm$
associated to fractional Hecke eigensheaves are also non-zero in this
case. This constitutes an important difference between the double
quotients \eqref{first component} and \eqref{Iw double}.

\section{Other groups}    \label{other}

In this section we sketch a generalization of our results and
conjectures to the case of an arbitrary semi-simple simply-connected
Lie group $G$ (the latter assumption is not essential and is made to
simplify the exposition). Then $\LG$ is a semi-simple Lie group of
adjoint type.

\subsection{Overview}

Recall that we have two dual moduli spaces of Higgs bundles, ${\mc
M}_H(G)$ and ${\mc M}_H(\LG)$, and the corresponding dual Hitchin
fibrations \eqref{celp}. The geometric Langlands correspondence is
interpreted in \cite{KW} as the homological mirror symmetry between
these two moduli spaces that reduces to the fiberwise $T$--duality on
generic fibers which are smooth dual tori. Under this mirror symmetry,
the categories of $B$-branes on ${\mc M}_H(G)$ and $A$-branes on ${\mc
M}_H(\LG)$ are supposed to be equivalent. We are interested in the
$A$-branes on ${\mc M}_H(\LG)$ corresponding to the $B$-branes
supported at the orbifold singular points of ${\mc M}_H(\LG)$. Such a
singular point may be viewed as an $\LG$-local system $\E$ with a
non-trivial, but finite, group of automorphisms $\Gamma$. We call such
a local system ``elliptic endoscopic'', or simply ``endoscopic'', for
brevity. Then $\E$ is reduced to one or more of the dual (elliptic)
endoscopic subgroups $\LH \subset \LG$, which are defined as the
centralizers of non-trivial elements of $\Gamma$.

The category of $B$-branes (or, equivalently, coherent sheaves)
supported at such a point $\E$ is equivalent to the category
$\Rep(\Gamma)$ of representations of $\Gamma$. The objects of the
corresponding category of $A$-branes are supported on the Hitchin
fiber $\FF_b$ in ${\mc M}_H(G)$ dual to the Hitchin fiber $^L\neg
\FF_b$ in ${\mc M}_H(\LG)$ containing the point $\E$. Thus, $b$ is the
image of $\E \in {\mc M}_H(\LG)$ in the Hitchin base $\BB$. The
question that we take up in this section is to describe the categories
of $A$-branes corresponding to the elliptic endoscopic $\LG$-local
systems and their properties under the action of the 't Hooft/Hecke
operators.

In the previous sections we analyzed in detail the case of
$G=SL_2$. In this case, the only dual elliptic endoscopic subgroup is
$O_2 \subset SO_3 = \LG$, and generic local systems which are reduced
to $O_2$ have the automorphism group $\Gamma = \Z_2$. The
corresponding category of $B$-branes is equivalent to
$\Rep(\Z_2)$.\footnote{This equivalence is non-canonical due to the
twist by a gerbe described below in \secref{gerbes}, but we will
ignore this issue for now.} On the $A$-model side this corresponds to
the fact that the Hitchin fiber $\FF_b$ has two irreducible
components. Therefore the dual category of $A$-branes has two
irreducible objects supported on those components. These fractional
$A$-branes have additional parameters; namely, rank one unitary local
systems, which correspond to $O_2$-local systems. Thus, we obtain a
concrete realization of the transfer (also known as the functoriality
principle, or, in the physics interpretation, the domain wall
phenomenon) corresponding to the homomorphism $O_2 \to SO_3$ in the
geometric setting, as explained in \secref{transfer}. Finally, the two
fractional $A$-branes satisfy the fractional Hecke property, as
explained in \secref{Hecke}. In \secref{from} we have interpreted
these results for $A$-branes in terms of the corresponding ${\mc
D}$-modules on $\Bun_{SL_2}$ and automorphic functions when the curve
$\CC$ is defined over a finite field (see \eqref{passage}). In this
section we propose a generalization of this picture.

\subsection{Categories Of Branes Corresponding To The Endoscopic Local
  Systems}

Let $G$ be a semi-simple simply-connected complex Lie group, and $\LG$
its Langlands dual group (of adjoint type, with the trivial center).

An $\LG$-local system $\E$ will be called {\em elliptic endoscopic}, or
simply {\em endoscopic}, if its group of automorphisms is a
non-trivial finite group, which we will denote by $\Gamma$. The {\em
dual endoscopic groups} $\LH_s$ associated with such a local
system\footnote{Here we follow the tradition of calling $H_s$ rather
than $\LH_s$ the endoscopic groups.} are by definition the
centralizers of non-trivial elements $s \in \Gamma, s \neq 1$. The
structure group of $\E$ may be reduced to any of the $\LH_s$. As we
have already pointed out in \secref{transfer}, for any subgroup $\LH
\subset \LG$ we have a natural inclusion ${\mc M}_H(\LH) \subset {\mc
M}_H(\LG)$. Therefore we see that an endoscopic local system $\E$ lies
in the intersection of the images of ${\mc M}_H(\LH_s), s \in \Gamma,
s \neq 1$, in ${\mc M}_H(\LG)$.

Note that $\E \in {\mc M}_H(\LG)$ may also be viewed as a Higgs
bundle (in the complex structure $I$). Its group of automorphisms
as a Higgs bundle will also be $\Gamma$, and this Higgs bundle
will be reduced to the subgroups $\LH_s, s \in \Gamma$.

An endoscopic local system $\E$, viewed as a point of the moduli space
${\mc M}_H(\LG)$, is an orbifold point. Denote by $B\on{-branes}_\E$
the category of $B$-branes (coherent sheaves) supported at $\E$. This
category is equivalent to $\Rep(\Gamma)$, although there may not be a
canonical equivalence, as explained in \secref{gerbes} below (in
that case, let us choose such an equivalence). Then for each
representation $R$ of $\Gamma$ we have a $B$-brane ${\mc B}_R$ in the
category $B\on{-branes}_\E$. In the same way as in \secref{borbifold},
we find that the action of the Wilson operators $W_{V,p}$, where $V
\in \Rep(\LG)$ and $p \in \CC$, on these branes is given by the
formula
\begin{equation}    \label{Wp}
W_{V,p} \cdot {\mc B}_R = \sum_{R' \in \Irrep(\Gamma)} V(R')_{\E_p}
\otimes {\mc B}_{R' \otimes R}, \qquad R \in \Rep(\Gamma).
\end{equation}
Here we use the decomposition of $V$ with respect to the action of
$\LG \times \Gamma$,
\begin{equation}    \label{decomp0}
V = \bigoplus_{R' \in \Irrep(\Gamma)} V(R') \otimes R',
\end{equation}
where $\Irrep(\Gamma)$ is the set of equivalence classes of
irreducible representations of $\Gamma$. Also, for any representation
$U$ of $\LG$ we use the notation $$U_{\E_p} = \E_p
\underset{\LG}\times U,$$ where $\E_p$ is the $\LG$-torsor which is
the fiber of $\E$ at $p \in \CC$.

Formula \eqref{Wp} implies that the eigenbranes of the Wilson
operators are direct sums of copies of the $B$-brane corresponding to
the {\em regular representation}
$$
\on{Reg}(\Gamma) = \bigoplus_{R \in \Irrep(\LG)} R^* \otimes R
$$
of $\Gamma$, where $R^*$ is the dual of $R$. This $B$-brane is
\begin{equation}    \label{reg B}
{\mc B}_{\on{Reg}(\Gamma)} = \bigoplus_{R \in \Irrep(\LG)} R^* \otimes
{\mc B}_R.
\end{equation}
We have
\begin{equation}    \label{eig wilson}
W_{V,p} \cdot {\mc B}_{\on{Reg}(\Gamma)} = V_{\E_p} \otimes {\mc
  B}_{\on{Reg}(\Gamma)}.
\end{equation}

Now we consider the mirror dual category $A\on{-branes}_\E$ of
$A$-branes on $\MH(G)$. By analogy with the case of $G=SL_2$ that was
explained in detail in the previous sections, we expect that these
$A$-branes are supported on the Hitchin fiber $\FF_b$ in ${\mc
M}_H(G)$, where $b$ is the image of $\E \in {\mc M}_H(\LG)$ in the
Hitchin base $\BB$. In the case of $G=SL_2$ we saw that for an
endoscopic local system $\E$ different $A$-branes correspond to
different components of $\FF_b$. We would like to understand what
happens in general.

One complication is that it is quite possible that there are points
on the Hitchin fiber $^L\neg \FF_b$ which correspond to $\LG$-local
systems with infinite groups of automorphisms. An example is the
zero fiber $^L\neg \FF_0$ at $0 \in \BB$, which is the nilpotent
cone. Points of this fiber may have either finite or infinite groups
of automorphisms. For instance, for $\LG = SO_3$, it includes the
trivial local system, for which $\Gamma = SO_3$, as well as local
systems that reduce to the subgroup $\Z_2 \times \Z_2$, for which
$\Gamma = \Z_2 \times \Z_2$, and also irreducible local systems.

If the Hitchin fiber $^L\neg \FF_b$ contains local systems with
infinite automorphism groups, then we cannot expect that the structure
of the dual Hitchin fiber $\FF_b$ is controlled by endoscopic points
of $^L\neg \FF_b$. Indeed, the same $\FF_b$ would carry objects of the
categories of $A$-branes mirror dual to the categories of $B$-branes
supported at those local systems. Therefore the structure of $\FF_b$
should be more complicated in this case. We hope to discuss this
more general case elsewhere, but for now we will restrict ourselves to
the situation when infinite groups of automorphisms do not occur.

For $SL_n$, a useful condition that ensures that automorphism groups
are finite is that the spectral curve is reduced and irreducible.
This is equivalent to requiring that the characteristic polynomial
of the Higgs field $\varphi$ is irreducible (which in particular
implies that $\varphi(x)$ is regular semi-simple for generic $x\in
C$). This criterion has a simple analog\footnote{Regard $\varphi$ as
a matrix in the adjoint representation and set
$P(y)=\det(y-\varphi)$. For $G$ simple of rank $r$, we have
generically $P(y)=y^rQ(r)$ if $Q$ is simply-laced; the condition we
want is then that $Q$ is irreducible. For $G$ not simply-laced,
generically $P(y)=y^rQ(r)R(r)$, where $Q(r)$ and $R(r)$ are
contributions from long and short roots, respectively. In this case,
the condition is that $Q$ or equivalently $R$ should be irreducible.
Note that if $(E,\varphi)$ is a Higgs bundle such that $\varphi$
obeys this criterion of irreducibility, then $(E,\varphi)$ is
automatically stable; $E$ has no non-trivial $\varphi$-invariant
subsheaves and hence no destabilizing ones. So over this locus, one
can apply Hecke operators while working with stable Higgs bundles
only. } for any $G$ which, however, is stronger than needed to
ensure that automorphism groups are finite. A weaker criterion is
given by Ng\^{o} in \cite{Ngo1}, Definition 7.5. By \cite{Ngo1},
Corollaire 7.6, it is equivalent to the following. Let $\PP_b$ be
the generalized Prym variety associated to $b$, defined in
\cite{Ngo1}, Section 4 (in the case when $G=SL_2$ this is the Prym
variety of the spectral curve associated to $b$ discussed in
\secref{prym}). Then the condition is that the group $\pi_0(\PP_b)$ of
components of $\PP_b$ is finite. Following \cite{Ngo2}, we write
$\BB^{\on{ani}}$ for the corresponding locus in $\BB$.

Let us suppose then that $b \in \BB^{\on{ani}}$. In this case the
Hitchin fiber is reduced and contains an open dense subset which is a
torsor over the abelian group $\PP_b$ \cite{Ngo1}. Thus, the set of
irreducible components of $\FF_b$ is a torsor over the group
$\pi_0(\PP_b)$ of components of $\PP_b$, and all components of $\FF_b$
have multiplicity one. The group $\pi_0(\PP_b)$ is finite (by our
assumption that $b \in \BB^{\on{ani}}$) and {\em abelian}.

Now recall from \cite{KW} that for generic $b \in \BB$ the $A$-branes
corresponding to any rank one unitary local system on $\FF_b$ (which
is a smooth torus) are eigenbranes of the 't Hooft operators that are
dual to the Wilson operators acting on the $B$-branes (see
\secref{Hecke}). We conjecture that the same is true for any $b \in
\BB^{\on{ani}}$. In addition, we conjecture that each irreducible
object ${\mc A}_R$ of the category $A\on{-branes}_\E$ corresponding to
the irreducible object ${\mc B}_R$ of $B\on{-branes}_\E$ under the
equivalence $A\on{-branes}_\E \simeq B\on{-branes}_\E$ is supported on
a union of irreducible components of $\FF_b$.

In particular, suppose that $\Gamma = \Gamma_b$ is the largest
possible group of automorphisms among the local systems in the dual
Hitchin fiber $^L\neg \FF_b$. Then it is natural to expect that each
${\mc A}_R$ is supported on a particular irreducible component of
$\FF_b$ and that there is a bijection (perhaps, non-canonical, as for
$G=SL_2$) between $\Irrep(\Gamma_b)$ and the set of irreducible
components of $\FF_b$, and hence the set $\pi_0(\PP_b)$. But
$\pi_0(\PP_b)$ is an abelian group. This suggests that
$\Irrep(\Gamma_b)$ also has a natural abelian group structure and that
$\Gamma_b$ is in fact an abelian group that is dual to
$\pi_0(\PP_b)$. Thus, we arrive at the following conjecture.

\begin{conj}    \label{conj 1}
Let $\E$ be an elliptic endoscopic $\LG$-local system with the group
of automorphisms $\Gamma$ such that the image $b$ of $\E$ in $\BB$
lies in $\BB^{\on{ani}}$. Then the group $\Gamma$ is abelian and its
dual group $\wh\Gamma$ may be identified with a quotient of the
group $\pi_0(\PP_b)$ of components of the generalized Prym variety
$\PP_b$ corresponding to $b$.

Furthermore, if $\Gamma_b$ is the largest group of automorphisms of
the local systems in the dual Hitchin fiber $^L\neg \FF_b$, then
$\Gamma_b$ is isomorphic to the dual group of $\pi_0(\PP_b)$.
\end{conj}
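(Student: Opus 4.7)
The plan is to deduce the conjecture from the mirror-symmetry matching between $B$-branes supported at the orbifold point $\E \in \MH(\LG)$ and $A$-branes supported on the dual Hitchin fiber $\FF_b \subset \MH(G)$, combined with Ng\^o's description of $\FF_b$ over the anisotropic locus. First, I would invoke the geometric input recalled just before the statement: for $b \in \BB^{\on{ani}}$ the fiber $\FF_b$ is reduced and contains an open dense subset which is a torsor for $\PP_b$, so the finite abelian group $\pi_0(\PP_b)$ acts simply transitively on the set $\Pi_b$ of irreducible components of $\FF_b$, giving $|\Pi_b| = |\pi_0(\PP_b)|$.

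Next, I would apply the category equivalence that is the backbone of the paper. The category of $B$-branes supported at $\E$ is, up to the gerbe ambiguity of \secref{gerbes}, equivalent to $\Rep(\Gamma)$, so its irreducible objects are in bijection with $\Irrep(\Gamma)$. Mirror symmetry must match these with irreducible $A$-branes supported on $\FF_b$; extending the $SL_2$ analysis of Sections \ref{spectral} and \ref{Hecke}, the fractional $A$-branes obtained as the limits of rank-one $A$-branes on smooth nearby fibers should be supported on individual irreducible components of $\FF_b$. This produces an injection $\Pi_b \hookrightarrow \Irrep(\Gamma)$, a bijection in the maximal case. Moreover, the translation action of $\pi_0(\PP_b)$ on $\FF_b$ is $T$-dual to the tensor product by characters of $\Gamma$ on $\Rep(\Gamma)$, so $\pi_0(\PP_b)$ acts on $\Irrep(\Gamma)$ through a homomorphism to the character group $\wh\Gamma$. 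Transitivity of this action on the image $\Pi_b$ produces a surjection $\pi_0(\PP_b) \twoheadrightarrow \wh\Gamma$, and in the maximal case an isomorphism $\pi_0(\PP_b) \cong \wh{\Gamma_b}$. The abelianness of $\Gamma$ then follows from the fact that $|\Irrep(\Gamma)| = |\Gamma|$ if and only if $\Gamma$ is abelian, together with the equality of cardinalities in the maximal situation; for a non-maximal $\E$ one reduces to the maximal one by noting that $\Gamma$ is contained in $\Gamma_b$.

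The main obstacle, and the step most in need of a rigorous substitute, is the appeal to mirror symmetry at singular Hitchin fibers: for general $G$ and higher genus, the promised equivalence between the category of $B$-branes at an orbifold singularity of $\MH(\LG)$ and the category of $A$-branes on the reducible dual Hitchin fiber remains a physics-motivated conjecture, not a theorem. A route to bypass this would be to work entirely on the $\LG$-side: parametrize endoscopic local systems in $^L\neg\FF_b$ via the maps $\MH(\LH) \to \MH(\LG)$ for endoscopic subgroups $\LH$ of $\LG$, identify $\Gamma$ with the centralizer of $\LH$ in $\LG$, and invoke Ng\^o's endoscopic decomposition of the direct image of the constant sheaf on $\MH(G)$ over $\BB^{\on{ani}}$ by characters of $\pi_0(\PP_b)$ to match these centralizers directly with the character groups on the dual side. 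The $SL_2$ analysis of Sections \ref{struc}--\ref{otwo} and \ref{analog}, where $\LH = O_2$ and $\Gamma = \wh{\pi_0(\PP_b)} = \Z_2$ are both verified by direct computation, is the prototype this general argument should specialize to.
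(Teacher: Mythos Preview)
Your proposal is not so much a proof as a reconstruction of the heuristic motivation, and that is exactly what the paper offers as well: the statement is labeled a \emph{Conjecture}, not a theorem, and the paper gives no proof. Immediately preceding the conjecture, the authors sketch the same chain of reasoning you do --- $\FF_b$ is reduced with $\pi_0(\PP_b)$ acting simply transitively on its components, mirror symmetry should match irreducible $B$-branes at $\E$ (indexed by $\Irrep(\Gamma)$) with fractional $A$-branes on those components, hence $\Irrep(\Gamma_b)$ should inherit the abelian group structure of $\pi_0(\PP_b)$ --- and then say ``Thus, we arrive at the following conjecture.'' After the statement they add only that the $SL_2$ results of \secref{spectral} confirm it in that case, and footnote that Ng\^o was independently aware of the statement.

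You have correctly identified the genuine gap: the equivalence of brane categories at singular Hitchin fibers for general $G$ is a physics expectation, not a theorem, so the argument cannot be made rigorous as written. Your suggested bypass via Ng\^o's endoscopic decomposition on the $\LG$-side is a reasonable direction, but the paper does not pursue it and does not claim the conjecture is proved. So there is nothing to compare your argument against beyond the motivating paragraph; your write-up and the paper's are essentially the same heuristic, and both stop short of a proof for the same reason.
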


The results presented in \secref{spectral} confirm this conjecture in
the case when $G=SL_2$ (see also the footnote on page 10).

\subsection{Fractional Eigenbranes And Eigensheaves}    \label{conjec}

Let $\E$ be an endoscopic $\LG$-local system. We have the mirror dual
categories $B\on{-branes}_\E$ and $A\on{-branes}_\E$ discussed in the
previous subsection. The former is equivalent to $\Rep(\Gamma)$ and
contains irreducible objects $\B_R$ attached to irreducible
representations $R$ of $\Gamma$. The corresponding $A$-branes are
denoted by $\A_R$. Therefore the $B$-brane \eqref{reg B} corresponds
to the $A$-brane
\begin{equation}    \label{reg A}
{\mc A}_{\on{Reg}(\Gamma)} = \bigoplus_{R \in \Irrep(\LG)} R^* \otimes
{\mc A}_R.
\end{equation}
In light of \conjref{conj 1}, in the case when the projection of
$\E$ onto $\BB$ is in $\BB^{\on{ani}}$ this decomposition should
reflect the decomposition of the Hitchin fiber $\FF_b$ into (unions
of) irreducible components.

Since the $B$-brane $\B_{\on{Reg}(\Gamma)}$ is an eigenbrane of the
Wilson operators (see formula \eqref{eig wilson}),
$\A_{\on{Reg}(\Gamma)}$ should be an eigenbrane of the 't Hooft
operators:
\begin{equation}    \label{reg eig}
T_{V,p} \cdot {\mc A}_{\on{Reg}(\Gamma)} = V_{\E_p} \otimes {\mc
  A}_{\on{Reg}(\Gamma)}.
\end{equation}
We call ${\mc A}_{\on{Reg}(\Gamma)}$ the {\em regular
eigenbrane}.

Furthermore, formula \eqref{Wp} for the action of the Wilson operators
on the $B$-branes $B_R, R \in \Irrep(\Gamma)$, implies the following
formula for the action of the 't Hooft operators $T_{V,p}, V \in
\Rep(\LG)$, on the corresponding $A$-branes $\A_R$:
\begin{equation}    \label{Tp}
T_{V,p} \cdot {\mc A}_R = \sum_{R' \in \Irrep(\Gamma)} V(R')_{\E_p}
\otimes {\mc A}_{R' \otimes R}, \qquad R \in \Rep(\Gamma).
\end{equation}
We call the $A$-branes $\A_R$ the {\em fractional eigenbranes}.

Note that we expect formulas \eqref{reg eig} and \eqref{Tp} to hold
regardless of whether the projection of $\E$ onto $\BB$ lies in
$\BB^{\on{ani}}$. If it is, then we expect $\Gamma$ to be abelian (see
\conjref{conj 1}); otherwise, it may well be non-abelian, as can be
seen from explicit examples.

As explained in \secref{dmodules}, we expect that to each $A$-brane on
$\MH(G)$ one may associate a ${\mc D}$-module on
$\Bun_G$. Furthermore, the properties of the $A$-branes under the
action of the 't Hooft operators should translate to similar
properties of the corresponding ${\mc D}$-modules under the action of
the Hecke operators. Therefore we predict that any Hecke eigensheaf on
$\Bun_G$ with the eigenvalue $\E$ (which is an endoscopic local
system) is a direct sum of copies of the following ${\mc D}$-module:
\begin{equation}    \label{reg F}
{\mc F}_{\on{Reg}(\Gamma)} = \bigoplus_{R \in \Irrep(\LG)} R^* \otimes
{\mc F}_R,
\end{equation}
satisfying the regular Hecke property
$$
H_V({\mc F}_{\on{Reg}(\Gamma)}) \simeq V_\E \boxtimes {\mc
F}_{\on{Reg}(\Gamma)}
$$
(see formula \eqref{al V}). Furthermore, we predict that its
constituents ${\mc F}_R$ are irreducible ${\mc D}$-modules on $\Bun_G$
which satisfy an analogue of formula \eqref{Tp},
\begin{equation}    \label{Res Isom1}
H_V({\mc F}_R) \simeq \sum_{R' \in \Irrep(\Gamma)} V(R')_{\E}
\boxtimes {\mc F}_{R' \otimes R}, \qquad R \in \Rep(\Gamma).
\end{equation}
This is a variant of formula \eqref{Res isom}, which means that the
${\mc D}$-modules ${\mc F}_R, R \in \Irrep(\Gamma)$, satisfy the {\em
fractional Hecke property} and hence are {\em fractional Hecke
eigensheaves}. In the case when $G=SL_2$ these are the ${\mc
D}$-modules ${\mc F}_\pm$ discussed in \secref{type 2}, and formula
\eqref{Res Isom1} coincides with formula \eqref{pm}.

Thus, we obtain a concrete conjecture about the structure of (regular)
Hecke eigensheaves corresponding to endoscopic local systems: they
split into direct sums of irreducible ${\mc D}$-modules satisfying the
fractional Hecke property \eqref{Res Isom1}. We have derived this
conjecture from the homological mirror symmetry of the dual Hitchin
fibrations, using the passage from $A$-branes to ${\mc
D}$-modules. Alternatively, one may look at it from the point of view
of a non-abelian version of the Fourier--Mukai transform
\cite{Laumon:fm,Roth}, suggested by A. Beilinson and V. Drinfeld,
which is supposed to be an equivalence of certain categories (whose
precise definition is presently unknown) of ${\mc O}$-modules on the
moduli stack of $\LG$-local systems on $\CC$ and ${\mc D}$-modules on
$\Bun_G$ (see, e.g., \cite{F:houches}, Section 6.2).

\subsection{Computations With Hecke Eigenfunctions}    \label{more
  general}

In the previous section we have made conjectures about the
structure of Hecke eigensheaves corresponding to endoscopic local
systems. So far, we have discussed local systems on a complex
curve $\CC$. However, we conjecture that the same pattern will
also hold if we consider instead $\ell$-adic local systems defined
on a curve over a finite field, or equivalently, $\ell$-adic
homomorphisms $\sigma: W_F \to \LG$, where $W_F$ is the Weil group
of the function field $F$ of this curve. Then the analogue of the
group $\Gamma$ is the centralizer of the image of $\sigma$, which
is traditionally denoted by $S_\sigma$. But here we will stick to
the same notation $\Gamma$. In this context there is a new
feature; namely, the Grothendieck {\em faisceaux--fonctions
dictionnaire}. This enables us to pass from Hecke eigensheaves
(which are now viewed as perverse sheaves on $\Bun_G$) to the
corresponding automorphic functions on a double quotient of the
ad\`elic group $G({\mathbb A}_F)$ and gives us an opportunity
to test our conjectures. We have already done this in the case
when $G=SL_2$ in \secref{splitting} and shown that such functions
indeed exist. Here we extend our analysis to the general situation
considered above.

\subsubsection{Abelian Case}

Suppose that we have an endoscopic homomorphism $\sigma: W_F \to
\LG$. This means that the centralizer $\Gamma$ of its image is a
non-trivial finite group (we are still under the assumption that $\LG$
is a semi-simple group of adjoint type). Let us start with the case
when $\Gamma$ is abelian. Recall that in a similar situation over $\C$
we expect to have the irreducible ${\mc D}$-modules labeled (perhaps
slightly non-canonically) by one-dimensional representations
(characters) of $\Gamma$. Thus, we have a ${\mc D}$-module ${\mc
F}_\chi$ for each $\chi \in \wh{\Gamma} = \Irrep(\Gamma)$. They have
to satisfy the fractional Hecke property \eqref{Res Isom1}. After
passing to curves over a finite field, we should have the
corresponding perverse sheaves on $\Bun_G$ satisfying the same
property, to which we associate automorphic functions $f_{\chi}$. The
fractional Hecke property for the sheaves translates to the following
equations on these functions:
\begin{equation}    \label{T}
T_{V,x} \cdot f_\chi = \sum_{\mu \in \wh\Gamma} a_{V,\mu,x} f_{\chi
  \cdot \mu}, \qquad \chi \in \wh\Gamma.
\end{equation}
Here $V$ is a representation of $\LG$, which decomposes as follows
\begin{equation}    \label{decomp}
V = \bigoplus_{\mu \in \wh\Gamma} V(\mu)
\end{equation}
under the action of $\Gamma$, and
\begin{equation}    \label{aV}
a_{V,\mu,x} = \on{Tr}(\sigma(\on{Fr}_x),V(\mu)),
\end{equation}
where $\sigma: W_F \to \LG$ is the object replacing the local system
$\E$ (since $\sigma$ lands in the centralizer of $\Gamma$, by our
assumption, the right hand side is well-defined). $T_{V,x}$ is a
classical Hecke operator corresponding to a closed point $x \in \CC$
(see \secref{recoll}).

We will now show that functions $f_\chi$ satisfying the fractional Hecke
property \eqref{T} do exist and may be obtained by a kind of Fourier
transform over $\Gamma$ from the ordinary Hecke eigenfunctions. This
will generalize the formulas obtained in \secref{splitting} in the
case of $G=SL_2$.

For simplicity we will assume here that $x$ is a closed point of $C$
with the residue field $\Fq$ equal to the ground field. For other
closed points the computation needs to be modified along the lines of
\secref{splitting}.

{}From \eqref{T}, by doing Fourier transform on $\Gamma$, we find the
eigenfunctions of $T_{V,x}$:
\begin{equation}    \label{eigenfunction}
\wh{f}_\ga = \sum_{\chi \in \wh\Gamma} \chi(\ga)
f_{\chi}, \qquad \ga \in \Gamma,
\end{equation}
with the eigenvalues
\begin{equation}    \label{A}
A_{x,\ga} = \sum_{\mu \in \wh\Gamma} \mu(\ga) a_{V,\mu,x}.
\end{equation}
In particular,
$$
A_{x,1} = \sum_{\mu \in \wh\Gamma} a_{V,\mu,x} =
\on{Tr}(\sigma(\on{Fr}_x),V),
$$
so
$$
\wh{f}_1 = \sum_{\chi \in \wh\Gamma} f_{\chi}
$$
is a Hecke eigenfunction corresponding to $\sigma$, as expected.

But what about the other functions $\wh{f}_\ga$ with $\ga \neq 1$?
We claim that they are also Hecke eigenfunctions, but corresponding to
other homomorphisms
$$
\sigma_\ga: W_F \to \LG.
$$
Namely, recall that we have a homomorphism
$$
\on{res}: W_F \to W_{\Fq} = \Z,
$$
by restricting to the scalars $\Fq \subset F$. Let
$$
\al_\ga: W_F \to \Gamma
$$
be the homomorphism given by the composition of $\on{res}$ and
the homomorphism $\Z \to \Gamma$ sending $1 \mapsto \gamma$.

Since $\Gamma$ centralizes the image of $\sigma$, the formula
$$
\sigma_\ga(g) = \sigma(g) \al_\ga(g)
$$
defines a homomorphism $W_F \to \LG$, for each $\ga \in \Gamma$.
We claim that
$$
A_{x,\ga} = \on{Tr}(\sigma_\ga(\on{Fr}_x),V),
$$
and so the function $\wh{f}_\ga$ is in fact a Hecke eigenfunction
corresponding to $\sigma_\ga: W_F \to \LG$!  (We recall that in the
above computation we have assumed that $x$ is an $\Fq$-point of $C$.
For other closed points we obtain the same result by applying the
analysis of \secref{splitting}.)

Now, making the inverse Fourier transform, we express the functions
$f_\chi$ corresponding to the sheaves ${\mc F}_\chi$ in terms of the
ordinary Hecke eigenfunctions:
\begin{equation}    \label{inverse}
f_\chi = \frac{1}{|\Gamma|} \sum_{\ga \in \Gamma} \chi(\ga)
\wh{f}_\ga.
\end{equation}
This generalizes our formula
$$
f_\pm = \frac{1}{2}(f_\sigma \pm f_{\sigma'})
$$
in the case of $SL_2$ (see formula \eqref{formula pm}).

The existence of the functions $f_\chi$ satisfying the function
theoretic analogue of the fractional Hecke property \eqref{T} (which we
had learned from the $A$-branes) provides a consistency check for
our predictions.

To summarize: we have found that the geometrically ``correct'' objects
(corresponding to irreducible perverse sheaves ${\mc F}_\chi$, the
``fractional'' Hecke eigensheaves) are not the ordinary Hecke
eigenfunctions, but their linear combinations (obtained by a finite
Fourier transform) corresponding to a collection of Galois
representations $\{ \sigma_\ga \}$ labeled by $\ga \in \Gamma$. These
are constructed as simple twists of $\sigma$. We note that the Fourier
transform in formula \eqref{inverse} is somewhat reminiscent of the
Fourier transform observed by Lusztig in the theory of character
sheaves \cite{Lu}.

\subsubsection{Non-abelian Case}

Let us consider now the case when $\Gamma$ is non-abelian. Over $\C$
this means, assuming \conjref{conj 1}, that the corresponding point of
the Hitchin moduli space $\MH(\LG)$ is not generically regular
semi-simple. In this case, we expect that some of the components of
the dual Hitchin fiber $\FF_b$ have multiplicities greater than $1$,
which should be equal to the dimensions of the corresponding
irreducible representations $R$ of $\Gamma$.

According to the conjectures of \secref{conjec}, transported to the
realm of curves over finite fields, we have irreducible perverse
sheaves ${\mc F}_R, R \in \Irrep(\Gamma)$, on $\Bun_G$ satisfying the
fractional Hecke property \eqref{Res Isom1}. Let $f_R, R \in
\Irrep(\Gamma)$, be the corresponding automorphic functions. We then
have an analogue of formula \eqref{T},
\begin{equation}    \label{T1}
T_{V,x} \cdot f_R = \sum_{R' \in \Irrep(\Gamma)} a_{V,R',x} f_{R
  \otimes R'}, \qquad R \in \Irrep(\Gamma),
\end{equation}
where we use the decomposition \eqref{decomp0}, and set
\begin{equation}    \label{aV1}
a_{V,R',x} = \on{Tr}(\sigma(\on{Fr}_x),V(R')).
\end{equation}
As before,
$$
R \otimes R' = \bigoplus_{R'' \in \Irrep(\Gamma)} (R'')^{\oplus
  m^{R,R'}_{R''}},
$$
and, by definition,
$$
f_{R \otimes R'} = \sum_{R'' \in \Irrep(\Gamma)} m^{R,R'}_{R''}
f_{R''}.
$$

Let us find the eigenfunctions of $T_{V,x}$. They are labeled by
the conjugacy classes $[\ga]$ in $\Gamma$ and are given by the formula
\begin{equation}    \label{eigenfunction1}
\wh{f}_{[\ga]} = \sum_{R \in \Irrep(\Gamma)} \on{Tr}([\ga],R)
f_R, \qquad [\ga] \in \Gamma.
\end{equation}
The corresponding eigenvalue is
\begin{equation}    \label{A1}
A_{x,[\ga]} = \sum_{R' \in \Irrep(\Gamma)} \on{Tr}([\ga],R')
a_{V,R',x}.
\end{equation}
In particular,
$$
A_{x,[1]} = \sum_{R' \in \Irrep(\Gamma)} \dim(R') a_{V,R',x} =
\on{Tr}(\sigma(\on{Fr}_x),V),
$$
and so
$$
\wh{f}_{[1]} = \sum_{R \in \Irrep(\Gamma)} \dim(R) f_{R}
$$
is a Hecke eigenfunction corresponding to $\sigma$, as
expected.

The other functions $\wh{f}_{[\ga]}$ with $[\ga] \neq [1]$ are also
Hecke eigenfunctions, but corresponding to other homomorphisms
$$
\sigma_{[\ga]}: W_F \to \LG,
$$
defined by the formula
$$
\sigma_{[\ga]}(g) = \sigma(g) \al_\ga(g),
$$
where
$$
\al_\ga: W_F \to \Gamma
$$
is the homomorphism given by the composition of $\on{res}$ (see
formula \eqref{res}) and the homomorphism $\Z \to \Gamma$ sending $1
\mapsto \gamma$, and $\gamma$ is an arbitrary element of
$[\ga]$. Clearly, the equivalence class of $\sigma_{[\ga]}$ depends
only on $[\ga]$ and not on the choice of $\gamma$.

The corresponding Hecke eigenvalue \eqref{A1} is
$$
A_{x,[\ga]} = \on{Tr}(\sigma_\ga(\on{Fr}_x),V),
$$
and so the function $\wh{f}_\ga$ is in fact a Hecke eigenfunction
corresponding to $\sigma_\ga: W_F \to \LG$, as desired.

Now we can express the functions $f_R$ corresponding to the sheaves
${\mc F}_R$ in terms of the ordinary Hecke eigenfunctions:
$$
f_\chi = \frac{1}{|\Gamma|} \sum_{[\ga] \in \Gamma} \on{Tr}([\ga],R)
\wh{f}_{[\ga]},
$$
as in the abelian case. Thus, functions satisfying the fractional
Hecke property \eqref{T1} do exist in the non-abelian case as well.
This provides a consistency check for our conjectures from
\secref{conjec}.

\section{Gerbes}    \label{gerbes}

The goal of this section is to elucidate a tricky point that arose in
\secref{braneduals}.  If $r$ is a point in $\MH(SO_3)$ corresponding
to an $SO_3$ local system with automorphism group $\Gamma=\Z_2$, then
there are two branes supported at $r$, namely $\B_+$ and $\B_-$.  The
corresponding fiber of the Hitchin fibration for $SL_2$ is the union
of two components $\FF_1$ and $\FF_2$, and accordingly in the dual
$A$-model there are two $A$-branes.  The central claim of this paper
is that the $A$-branes $\FF_1$ and $\FF_2$ are dual to the $B$-branes
$\B_+$ and $\B_-$.  But which of $\FF_1$ and $\FF_2$ is dual to $\B_+$
and which is dual to $\B_-$?  There is no natural way to decide, and
indeed, $\FF_1$ and $\FF_2$ are exchanged by the symmetry group
$Q=\Z_2\times \Z_2$.  By contrast, the branes $\B_+$ and $\B_-$ are
not equivalent; $\B_+$ corresponds to the trivial representation of
$\Z_2$, and $\B_-$ to a non-trivial representation.

\subsection{A Subtlety}\label{subtlety}

We claim that this question reflects a subtlety in the mirror symmetry
of $\MH(G)$ and $\MH(\LG)$ that has nothing to do with endoscopy.  Let
us start by asking whether the Hitchin fibration has a
section.\footnote{For a general study of this type of question in a
much more general context, see \cite{DG}.} As explained in
\cite{Hi,Hi4}, a section can always be constructed if one picks a spin
structure on the Riemann surface $C$, that is, a square root $K^{1/2}$
of the canonical bundle of $C$.  Let $E=K^{-1/2}\oplus K^{1/2}$.
Although $E$ is unstable, it is possible for a Higgs bundle
$(E,\varphi)$ to be stable.  This is so precisely if, modulo a
(unique) automorphism of $E$,
\begin{equation}\label{polho} \varphi=\begin{pmatrix}0 & 1 \\ w & 0
\end{pmatrix},\end{equation}
where $w$ is a quadratic differential.  Then $\det\,\varphi = -w$,
so the pair $(E,\varphi)$ maps, under the Hitchin fibration, to
the point in the base $\BB$ determined by the quadratic
differential $-w$.  Since every point in $\BB$ arises for a
unique $(E,\varphi)$ of this form, this gives a section of the
Hitchin fibration.  (Higgs bundles of this form are sometimes
called classical opers, reflecting their analogy to the opers of
\cite{BD}.)

The section obtained this way is not completely canonical, since it
depends on the choice of $K^{1/2}$.  However, the same construction
(replacing $E$ by $H={\rm ad}(E)$) makes sense for $SO_3$, and here
the choice of $K^{1/2}$ does not matter.  So for $SO_3$, the Hitchin
fibration has a natural section, but for $SL_2$, a choice of section
depends on a choice of $K^{1/2}$.

 This distinction is actually visible in the formulas of
\secref{dualbranes}.  The $SO_3$ moduli space is described in eqn.
(\ref{zot}); the Hitchin fibration has a natural section given by
$z=t=\infty$.  The $SL_2$ moduli space is described in
(\ref{elmy}), and there is no natural section of the Hitchin
fibration.

For any $G$, one repeats this construction, starting with a
$G$-bundle that is associated to $E$ via the choice of a principal
$\mathfrak{sl}_2$ subalgebra of $\mathfrak g$.  Higgs bundles
$(E,\varphi)$ with such an $E$ always give a section of the Hitchin
fibration. This section is independent of the choice of $K^{1/2}$ if
and only if the subgroup of $G$ that corresponds to the principal
$\mathfrak{sl}_2$ subalgebra is $SO_3$ rather than $SL_2$.  (For
example, if $G$ is of adjoint type, the Hitchin fibration has a
natural section, and similarly if $G$ is $SL_{2n+1}$.)

\subsection{A Conundrum For Mirror Symmetry}\label{conundrum}

These facts lead to a puzzle for the proper statement of the
mirror symmetry between $\MH(SO_3)$ and $\MH(SL_2)$.

Let $^L\neg \FF$ and $\FF$ be corresponding fibers of the Hitchin
fibrations of $SO_3$ and $SL_2$.  Naively speaking, they are dual
tori, meaning that $^L\neg \FF$ parametrizes flat unitary line bundles
over $\FF$, and vice-versa.  However, there is an immediate problem:
the space of flat unitary line bundles over a complex torus always
has a distinguished point, corresponding to the trivial line bundle.
So if $^L\neg \FF$ and $\FF$ are dual tori in this sense, then each must
have a distinguished point, associated with the trivial line bundle
on the other.  This contradicts the fact that, although $^L\neg \FF$
does have a distinguished point (its intersection with the section
of the Hitchin fibration described above), $\FF$ does not.

In fact, what is proved in \cite{HT} is not that $^L\neg \FF$ and $\FF$
are dual in this naive sense, but that the abelian variety $^L\neg
\FF$ is dual to the abelian variety for which  $\FF$ is a torsor.

For mirror symmetry between $SO_3$ and $SL_2$ theories, what this
means is that the $A$-model of $\MH(SL_2)$ is dual, not quite to the
$B$-model of $\MH(SO_3)$, but to a slightly twisted version of this
$B$-model.

A $B$-brane on a complex manifold $X$ is a coherent sheaf on $X$ (or
an object of the corresponding derived category, that is, a complex of
such sheaves, modulo a certain equivalence).  Now let $\cal G$ be a
$\C^\times$ gerbe on $X$.  For every $\cal G$, there is a $\cal
G$-twisted version of the category of $B$-branes; for the notion of a
$\cal G$-twisted sheaf, see for example Section 2.3 of \cite{DJ} or
Definition 2.1.2.2 of \cite{Li}.  A $\cal G$-twisted coherent sheaf of
rank 1 is a trivialization of $\cal G$.  A direct sum of $n$
trivializations is an example of a $\cal G$-twisted sheaf of rank $n$.

For our present problem, we need a $\C^\times$ gerbe over $\MH(SO_3)$
that is trivial but not canonically trivial.  In general, let $X$ be
any space and ${\cal L}\to X$ a complex line bundle.  Then there is a
canonically defined gerbe $\cal G$ whose (local) trivializations are
square roots of ${\cal L}$. More precisely, the objects of the
category associated to an open subset $U \subset X$ are pairs $({\mc
M},\alpha)$, where ${\mc M}$ is a line bundle on $U$ and $\alpha$ is
an isomorphism between ${\mc M}^2$ and ${\mc L}$. This gerbe $\cal G$
is trivial globally if and only if ${\cal L}$ has a global square
root; and it can be trivialized in a unique way (up to sign) if and
only if ${\cal L}$ has a unique global square root. ${\cal G}$ is a
$\C^\times$ gerbe, but actually it is associated with a $\Z_2$ gerbe
via the embedding $\{\pm 1\}\subset\C^\times$, so it has a natural
flat gerbe connection (in physical language, it is associated with a
flat $B$-field over $X$).

We apply this construction to the case that $X$ is $\MH(G)$ for
some reductive Lie group $G$ and $\cal L$ is $K_X$, the canonical
bundle of $X$.  Considering the square roots of $K_X$ gives a flat
$\C^\times$ gerbe $\cal G$ over $X$.  This gerbe is actually
trivial, because $K_X$ does have global square roots.  This point
is explained in great detail in Section 4 of \cite{BD}, where it
enters for reasons somewhat analogous to our present
considerations.  The construction is as follows.  Given any spin
structure ${\cal S}$ on $C$ (that is, a square root of the
canonical bundle of $C$) and a $G$-bundle $E\to C$, one considers
the Pfaffian line ${\cal L}_{\cal S,E}$ of the Dirac operator for
spin structure ${\cal S}$ twisted by ${\rm ad}(E)$. As $E$ varies,
${\cal L}_{\cal S,E}$ varies as the fiber of a line bundle  ${\cal
L}_{\cal S}\to\MH(G)$ that is a square root of the canonical
bundle of $\MH(G)$.  So the gerbe $\cal G$ has a natural
trivialization for each choice of spin structure ${\cal S}\to C$.

For $G$ simply-connected, $\MH(G)$ is also simply-connected, so the
square root of the canonical bundle obtained this way is independent
of the choice of $\cal S$, up to isomorphism.  This is so, for
example, for $G=SL_2$.  In this situation, ${\cal G}$ is canonically
trivial.

In general, if ${\cal S}$ and ${\cal S}'$ are two spin structures
on $C$,  the Pfaffian construction gives two square roots ${\cal
L}_{{\cal S}'}$ and ${\cal L}_{{\cal S}}$ of the canonical bundle
of $\MH(G)$.  They must differ by the tensor product with a line
bundle ${\cal U}({\cal S}',{\cal S})$ of order 2:
\begin{equation}{\cal L}_{{\cal S}'}= {\cal L}_{\cal S}\otimes
{\cal U}({\cal S}',{\cal S}).\end{equation} Obviously, for three
spin structures ${\cal S},{\cal S}',{\cal S}''\to C$, we have
\begin{equation}\label{ojo} {\cal U}({\cal S}'',{\cal S})= {\cal
    U}({\cal S}'',{\cal S}') \otimes{\cal U}({\cal S}',{\cal S}).
\end{equation}

A particularly simple example of this is for $G=SO_3$.  There is a
natural isomorphism between $H^1(C,\Z_2)$ and the orbifold
fundamental group $\pi_1(\MH(SO_3))$.  So there is a natural map
from a line bundle ${\cal V}\to C$ of order 2 to an orbifold line
bundle $\cal T(\cal V)\to \MH(SO_3)$ of order 2.  If ${\cal
S}',{\cal S}\to C$ are two spin structures, then ${\cal S}'={\cal
S}\otimes {\cal V}$ for some line bundle ${\cal V}$ of order 2,
and
\begin{equation}{\cal U}(\cal S',\cal S)={\cal T}({\cal V}),
\end{equation}
a statement that is clearly compatible with eqn.
(\ref{ojo}).

The precise statement of mirror symmetry between $\MH(SO_3)$ and
$\MH(SL_2)$ is as follows.  The $\cal G$-twisted $B$-model of $SO_3$
is dual to the $A$-model of $SL_2$.  And conversely, the $\cal
G$-twisted $A$-model of $\MH(SO_3)$ is dual to the $B$-model of
$\MH(SL_2)$. In general, the $A$-model can be twisted by a {\it flat}
complex gerbe, such as ${\cal G}$.

A similar twisting by a gerbe should also be implemented in the
non-abelian Fourier--Mukai transform formulation of the geometric
Langlands correspondence suggested by Beilinson and Drinfeld.

\subsection{Application}\label{application}

Now let us reconsider the question with which we began: the duality
between the $B$-branes $\cal B_+$ and $\cal B_-$ and the $A$-branes
$\FF_1$ and $\FF_2$.  Which $A$-brane corresponds to $\cal B_+$?

First of all, the problem only exists because the Hitchin fibration
for $SL_2$ has no natural section.  Given such a section $s$, we
would be able to pick out a distinguished component $\FF_1$ or $\FF_2$
of the special Hitchin fiber, namely the one that intersects $s$.

The resolution of the problem is that the duality involves, not the
ordinary $B$-model of $\MH(SO_3)$, but the ${\cal G}$-twisted
$B$-model.  In the ordinary $B$-model, as between $\cal B_+$ and
$\cal B_-$, there is a distinguished one, namely the one on which
the automorphism group $\Z_2$ of the $SO_3$ local system acts
trivially.  But in the $\cal G$-twisted $B$-model, things are
different.  Although the two $B$-branes transform oppositely under
$\Gamma=\Z_2$, to make sense of which transforms trivially and which
transforms non-trivially, we would have to first trivialize $\cal
G$.

$\cal G$, however, has no natural trivialization; rather, it has a
family of trivializations depending on the choice of a spin
structure on $C$.  Different trivializations would give different
interpretations of which of the two $B$-branes is invariant under
$\Gamma$ and which is not.

Indeed, two of these trivializations differ by tensoring by one of
the line bundles $\T({\cal V})$, for some ${\cal V}\in
H^1(C,\Z_2)$. If $r\in \MH(SO_3)$ is one of the orbifold singularities,
with symmetry group $\Gamma=\Z_2$, then for suitable ${\cal V}$,
the non-trivial element of $\Gamma$ acts on the fiber of $\T({\cal
V})$ as multiplication by $-1$.  When this is the case, the choice
of which $B$-brane is $\Gamma$-invariant and which is not is
reversed by tensoring by $\T({\cal V})$.

\subsection{Dual Symmetry Groups}\label{dualg}

For more understanding, we should describe another interpretation of
some of the facts that we have exploited.

The group $Q=H^1(C,\Z_2)$ acts on $\MH(SL_2)$ in a manner familiar
from \secref{symgroup}: an element of $Q$ corresponds to a line
bundle ${\cal V}\to C$ of order 2, which acts on a Higgs bundle
$(E,\varphi)$ by $E\to E\otimes {\cal V}$.  This gives a
geometrical action of $Q$ on $\MH(SL_2)$, preserving its
hyper-Kahler structure, so it gives an action of $Q$ on the
$A$-model and the $B$-model of $\MH(SL_2)$.

Dually, an isomorphic group must act\footnote{See Section 7.2 of
\cite{KW} for another explanation.}  on the $B$-model and the
$A$-model of the dual moduli space $\MH(SO_3)$. The key to this is
something we already exploited above: the natural correspondence
${\cal V}\to \T({\cal V})$ from a line bundle ${\cal V}\to C$ of order
2 to a line bundle $\T({\cal V})\to \MH(SO_3)$ of order 2. Since
$\T({\cal V})$ is a coherent sheaf, the tensor product with $\T({\cal
V})$ makes sense as a symmetry of the $B$-model; since it is a flat
line bundle, it makes sense as a symmetry of the $A$-model.

The duality between $\MH(SL_2)$ and $\MH(SO_3)$ exchanges the action
of $Q$ on the $A$- and $B$-models of $\MH(SL_2)$ coming from its
geometrical action on this space with the action of $Q$ on the
${\cal G}$-twisted $B$- and $A$- models of $\MH(SO_3)$ by tensor
product.

  $Q$
exchanges the two $A$-branes supported on the special Hitchin
fiber of $\MH(SL_2)$ for geometrical reasons.  It exchanges the
two $B$-branes supported at the orbifold singularity because the
non-trivial element of $\Gamma=\Z_2$ acts as $-1$ on the relevant
fiber of some of the line bundles $\T({\cal V})$.

\subsection{Relation To The Usual Statement Of Geometric
Langlands}\label{usual}

In this paper, in order to explore endoscopy, we have primarily
compared the $\cal G$-twisted $B$-model of $SO_3$ to the $A$-model
of $SL_2$.  However, it is also of interest to compare the
$B$-model of $SL_2$ to the $\cal G$-twisted $A$-model of $SO_3$.
What does the $\cal G$-twisting do in that context?

It is shown in Section 11 of \cite{KW} that, for any $G$, an
$A$-brane on $\MH(G)$ is equivalent to a twisted ${\cal D}$-module
on $\M(G)$, the moduli space of $G$-bundles.  Here\footnote{For
simplicity, we consider only the unramified case. Ramification
leads to a further twisting that does not affect our main claim.}
a twisted ${\cal D}$-module is a sheaf of modules for a sheaf of
algebras that we call ${\cal D}^*$, the differential operators
acting on a square root of the canonical bundle $K_\M$ of $\M$.
The sheaf of algebras ${\cal D}^*$ does not depend on a global
choice of square root of $K_\M$ (or even on the global existence
of such a square root, though it does in fact exist).

There is a slight tension between this and the usual statement of
geometric Langlands duality: the right hand side of the duality is
supposed to involve an ordinary ${\cal D}$-module, rather than a
twisted ${\cal D}$-module.

Now if there is a canonical global square root of $K_\M$, then
this distinction is inessential.  Given such a line bundle, we can
consider the differential operators that map $\CO_\M$ to
$K_\M^{1/2}$ and the sheaf of such operators is a ``bi-module''
for the pair of (sheaves of) algebras $\D$ and $\D^*$, i.e. the
ordinary and twisted differential operators.  This bi-module is a
``Morita equivalence bi-module'' that establishes an equivalence
between the categories of ordinary and twisted $\D$-modules.

For $SL_2$, there is such a canonical choice of $K_\M^{1/2}$, but
for $SO_3$ there is not.  So a $\D^*$-module on $\M(SO_3)$, such
as we would get from an $A$-brane of $\MH(SO_3)$, is not
canonically the same thing as an ordinary $\D$-module on
$\M(SO_3)$, such as we expect in the geometric Langlands program.

What reconciles the two viewpoints is that the $A$-model on
$\MH(SO_3)$ that arises in $S$-duality is $\cal G$-twisted. While
an ordinary $A$-brane maps to a  $\D^*$-module on $\M(SO_3)$, a
$\cal G$-twisted $A$-brane maps to a $\cal G$-twisted
$\D^*$-module on the same space. But a $\cal G$-twisted
$\D^*$-module on $\M(SO_3)$ maps canonically to an ordinary
$\D$-module on the same space.  The reason for this is that
although a square root $K_\M^{1/2}$ does not exist canonically as
a line bundle, it does exist canonically -- and tautologically --
as a trivialization of the gerbe $\cal G$.

We have described this for the dual pair of groups $SO_3$ and
$SL_2$ that has been our main example.  More generally, for any
reductive group $G$, one defines the gerbe $\cal G$ of square
roots of the canonical bundle.  Given any dual pair $^L\neg G$ and
$G$, the underlying gauge theory duality is an isomorphism between
the $\cal G$-twisted $B$-model of $^L\neg G$ and the $\cal
G$-twisted $A$-model of $G$ (and vice-versa).  In our example, we
have seen the twisting on only one side, simply because the gerbe
$\cal G$ is canonically trivial for $SL_2$.

\section{Appendix. $L$-packets for $SL_2$.}

In this Appendix we sketch a proof of \thmref{descr aut} using the
Whittaker functions. The construction of automorphic functions for
$GL_2$ via a Fourier transform of Whittaker functions was introduced
by H. Jacquet and R. Langlands \cite{JL} using a result of A. Weil
\cite{Weil}. In what follows we use the presentation and notation of
\cite{Drinfeld}.

We will fix a non-zero rational differential $\omega$ on $\CC$ and
denote by $\delta = \sum_x \delta_x [x]$ its divisor of zeros and
poles. Let $\psi: \Fq \to \ol{\mathbb Q}^\times_\ell$ be a non-trivial
additive character. It gives rise to a character $\Psi: {\mathbb
A}_F \to \ol{\mathbb Q}^\times_\ell$ defined by formula
$$
\Psi((f_x)) = \prod_{x \in \CC} \psi\left( \on{Tr}_{k_x/\Fq}(\on{Res}_x
(f_x \omega)) \right).
$$
By residue formula, its value on $F \subset {\mathbb A}_F$ is equal to
$1$. Hence $\Psi$ gives rise to a character of ${\mathbb A}_F/F$.

Let $B \subset GL_2$ be the Borel subgroup of upper triangular
matrices. Denote by $V$ the space of locally constant functions $f:
GL_2({\mathbb A}_F) \to \ol{\mathbb Q}_\ell$ such that

\begin{itemize}

\item[(1)] $f(\al x) = f(x)$ for all $x \in GL_2({\mathbb A}_F), \al
\in B(F)$;

\item[(2)] $\ds \int_{{\mathbb A}_F/F} f\left( \begin{pmatrix} 1 & z
\\ 0 & 1 \end{pmatrix} x \right) dz = 0$ for all $x \in GL_2({\mathbb
A}_F)$;

\item[(3)] $f(xu) = f(x)$ for all $x \in GL_2({\mathbb A}_F), u \in
GL_2(\OO_F)$.

\end{itemize}

\medskip

Denote by $W$ the space of locally constant functions $\phi:
GL_2({\mathbb A}_F) \to \ol{\mathbb Q}_\ell$ such that

\begin{itemize}

\item[(1')] $\ds \phi\left(\begin{pmatrix} a & z \\ 0 & a
\end{pmatrix} x \right) = \Psi(z) \phi(x)$ for all $x \in
GL_2({\mathbb A}_F), z \in {\mathbb A}_F, a \in F^\times$;

\item[(2')] $f(xu) = f(x)$ for all $x \in GL_2({\mathbb A}_F), u \in
GL_2(\OO_F)$.

\end{itemize}

\medskip

Define maps between these two spaces by the formulas
\begin{align}    \label{V to W}
f \in V &\mapsto \phi \in W; \qquad \phi(x) = \int_{{\mathbb A}_F/F}
f\left( \begin{pmatrix} 1 & z \\ 0 & 1 \end{pmatrix} x \right)
\Psi(-z) dz, \\ \label{W to V}
\phi \in W &\mapsto f \in V; \qquad f(x) = \sum_{a \in F^\times}
\phi\left( \begin{pmatrix} a & 0 \\ 0 & 1 \end{pmatrix} x \right).
\end{align}

According to \cite{JL}, these maps are mutually inverse
isomorphisms. In addition, they intertwine the (right) action of the
spherical Hecke algebra of $GL_2$ on both spaces. Therefore the spaces
of Hecke eigenfunctions in the two spaces are isomorphic. The
corresponding eigenvalues are determined by a collection of $GL_2$
conjugacy classes $\ga_x, x \in \CC$.

It is known that, for any such collection $(\ga_x)_{x \in \CC}$, the
space of Hecke eigenfunctions in $W$ is spanned by the so-called {\em
Whittaker function}. To write down an explicit formula for this
function \cite{Weil}, let us denote by $V_{m,k}, m \geq k$, the
irreducible representation of $GL_2$ with the highest weight $(m,k)$,
that is $\on{Sym}^{m-k} V \otimes (\det V)^{\otimes k}$, where $V$ is
the defining two-dimensional representation. Note that using the above
conditions (1') and (2'), a function $\phi \in W$ is uniquely
determined by its values on elements of the form
\begin{equation}    \label{special element}
\ds \left( \begin{pmatrix} t_x^{m_x} & 0 \\ 0 & t_x^{k_x} \end{pmatrix}
\right)_{x \in \CC} \; \in \; GL_2({\mathbb A}_F)
\end{equation}
(here, as before, $t_x$ denotes a uniformizer, that is, a formal
coordinate, at $x$). Given a collection $(\ga_x)_{x \in X}$ of
conjugacy classes, the corresponding Hecke eigenfunction in $W$ is
then determined (up to some inessential non-zero factors) by the
formula
\begin{equation}    \label{whit}
\phi \left( \begin{pmatrix} t_x^{m_x} & 0 \\ 0 & t_x^{k_x} \end{pmatrix}
\right) = \prod_{x \in \CC} \on{Tr}(\ga_x,V_{m_x+\delta_x,k_x})
\end{equation}
(if $m_x+\delta_x<k_x$ for some $x \in \CC$, then the right hand side
is equal to $0$, by definition).

A cuspidal automorphic Hecke eigenfunction for $GL_2$ with the
eigenvalues corresponding to a collection $(\ga_x)_{x \in X}$ as above
is, by definition, a Hecke eigenfunction that satisfies the above
conditions (2), (3), and a stronger condition than (1); namely, that
$f(\al x) = f(x)$ for all $x \in GL_2({\mathbb A}_F), \al \in
GL_2(F)$. In other words, it is a function on the double quotient
\begin{equation}    \label{gl2}
\Bun_{GL_2}(\Fq) = GL_2(F) \bs GL_2({\mathbb A}_F)/GL_2(\OO_F).
\end{equation}
The above results that imply that, if it exists, this function must be
obtained by applying the transform \eqref{W to V} to the Whittaker
function with the eigenvalues $(\ga_x)_{x \in \CC}$. In particular, if
exists, it is unique up to a scalar multiple.

According to the results of Drinfeld \cite{Dr,Drinfeld}, if
$\wt\sigma: W_F \to GL_2$ is irreducible and unramified, then the
vector space of Hecke eigenfunction on \eqref{gl2} with respect to
$\wt\sigma$ is one-dimensional and consists of cuspidal
functions. Therefore a generator of this space is given by the
operator \eqref{W to V} applied to the Whittaker function with the
eigenvalues $(\wt\sigma(\on{Fr}_x))$. Let us denote this automorphic
function by $f_{\wt\sigma}$ and the corresponding Whittaker function
by $W_{\wt\sigma}$.

Now we switch from $GL_2$ to $SL_2$. Recall that we would like to find
the dimension of the space of Hecke eigenfunctions on
$\Bun^{\OO(D)}_{SL_2}(\Fq)$, which is the double quotient \eqref{first
component} (here, as before, $D$ is a finite subset of $\CC$, which we
view as an effective divisor). We embed this component into the double
quotient \eqref{gl2} by sending $(g_x) \mapsto (\ol{g}_x)$, where
$$
\ol{g}_x = g_x \begin{pmatrix} t_x & 0 \\ 0 & 1 \end{pmatrix},
$$
if $x \in D$ and $\ol{g}_x = g_x$, otherwise. We will assume that
$\sigma: W_F \to O_2 \subset PGL_2$ may be lifted to a homomorphism
$\wt\sigma: W_F \to GL_2$. It is easy to see that the space of Hecke
eigenfunctions on $\Bun^{\OO(D)}_{SL_2}(\Fq)$ with respect to $\sigma$
is equal to the restriction of the space of Hecke eigenfunctions on
\eqref{gl2} to the image of this embedding with respect to
$\wt\sigma$.

Observe that for generic $\sigma$ of the above form the representation
$\wt\sigma$ will be irreducible. Thus, to determine whether the space
of Hecke eigenfunctions on $\Bun^{\OO(D)}_{SL_2}(\Fq)$ with respect to
$\sigma$ is zero- or one-dimensional, we need to determine whether the
restriction of the function $f_{\wt\sigma}$ constructed above to the
image of $\Bun^{\OO(D)}_{SL_2}(\Fq)$ is zero or not. This, in turn, is
determined by whether the Whittaker function $W_{\wt\sigma}$ is equal
to zero on all elements of the form \eqref{special element}, where the
divisor
$$
\sum_{x \in \CC} (m_x+k_x) [x]
$$
is linearly equivalent to $D$.

Let $\al: W_F \to \Z_2$ be the quadratic character obtained as the
composition of $\sigma$ and the homomorphism $O_2 \to \Z_2$. It
corresponds to a quadratic extension $E/F$, which we have called in
\secref{rel pac} affiliated with $\sigma$. We have
$$
\alpha(\on{Fr}_x) = \begin{cases} 1, & x \on{ \; is \; split \; in \;
  } E \\ -1, & x \on{ \; is \; non-split \; in \; } E. \end{cases}
$$
For any divisor $M = \sum_{x \in \CC} M_x [x]$ on $\CC$ let us set
$$
\langle M \rangle \; = \sum_{x \; \on{non-split \; in} E} M_x,
$$

Recall that we have the divisor $\delta$ of the differential $\omega$
used in the definition of the character $\Psi$, and we have the
divisor $D = \sum_x D_x [x]$, with $D_x=0$ or $1$.

We claim that the values of the Whittaker function $W_{\wt\sigma}$ are
zero on all elements of the form \eqref{special element}, with
$$
\sum_{x \in \CC} (m_x+k_x) [x]
$$
linearly equivalent to $D$, if and only if $\langle \delta+D \rangle$
is odd. Indeed, this set contains the element $(g_x)$, where
$$
g_x = \begin{pmatrix} t_x & 0 \\ 0 & 1 \end{pmatrix}, \qquad x \in D,
$$
and $g_x = 1$, otherwise. According to formula \eqref{whit}, the value
of $W_{\wt\sigma}$ on this element is equal to
$$
\prod_{x \in \CC} \on{Tr}(\wt\sigma(\on{Fr}_x),V_{\delta_x+D_x,0})
$$
If $\langle \delta+D \rangle$ is odd, then there is at least one
non-split point $y$ such that $\delta_y+D_y$ is odd. But since
$\wt\sigma(\on{Fr}_y)$ is conjugate to a scalar multiple of the matrix
\eqref{trans} in this case, we find that
$$
\on{Tr}(\wt\sigma(\on{Fr}_y),V_{\delta_y+D_y,0}) = 0.
$$
Therefore $W_{\wt\sigma}$ is equal to $0$ at this point. All other
points that we need to check have the form \eqref{special element},
with
$$
\sum_{x \in \CC} (m_x+k_x) [x] = D + (F),
$$
where
$$
(F) = \sum_{x \in \CC} n_x [x]
$$
is the divisor of zeros and poles of a rational function $F$ on
$\CC$. But it follows from the abelian class field theory (see, e.g.,
\cite{Serre}) that for any rational function $F$ on $\CC$ we have
\begin{equation}    \label{reciprocity}
\prod_{x \in \CC} \al(\on{Fr}_x)^{n_x} = \prod_{x \on{non-split \; in}
E} (-1)^{n_x} = 1.
\end{equation}

Therefore for any element \eqref{special element} satisfying the above
conditions there again exists at least one non-split point $z \in
\CC$ such that $m_z + \delta_z - k_z$ is odd. Formula \eqref{whit}
then shows that the value of $W_{\wt\sigma}$ on all such elements is
$0$. Therefore we find that in this case the restriction of
$f_{\wt\sigma}$ to the image of $\Bun^{\OO(D)}_{SL_2}(\Fq)$ in
\eqref{gl2} is equal to $0$. Hence there are no non-zero Hecke
eigenfunctions on $\Bun^{\OO(D)}_{SL_2}(\Fq)$ with respect to
$\sigma$, which is what we wanted to show.

On the other hand, if $\langle \delta+D \rangle$ is even, then it is
easy to see that the restriction of $f_{\wt\sigma}$ to the image of
$\Bun^{\OO(D)}_{SL_2}(\Fq)$ is non-zero. Hence the corresponding space
of Hecke eigenfunctions on $\Bun^{\OO(D)}_{SL_2}(\Fq)$ is
one-dimensional.

To complete the proof of \thmref{descr aut}, it remains to observe
that
$$
\langle \delta \rangle = \sum_{x \; \on{non-split \; in} E} \delta_x
$$
is always even (see Theorem 13 in \cite{Weil:arith},
Ch. XIII, Section 12).\footnote{We thank B. Poonen for pointing out
this reference.} Hence the parity of $\langle \delta+D \rangle$ is the
same as that of $\langle D \rangle$, which is the same as the number
of elements in the subset $S \subset D$ consisting of all non-split
points of $D$.

\end{document}